\definecolor{MyDarkBlue}{cmyk}{0.8,0.3,0.8,0.4}
\definecolor{yellow}{rgb}{0.99,0.99,0.70}
\definecolor{white}{rgb}{1.0,1.0,1.0}
\definecolor{black}{rgb}{0.00,0.00,0.00}
\numberwithin{equation}{section}
\newcommand{\be}{\begin{eqnarray}}
\newcommand{\ee}{\end{eqnarray}}
\newcommand{\ce}{\begin{eqnarray*}}
\newcommand{\de}{\end{eqnarray*}}
\newtheorem{theorem}{Theorem}[section]
\newtheorem{lemma}{Lemma}[section]
\newtheorem{definition}{Definition}[section]
\newtheorem{corollary}{Corollary}[section]
\newtheorem{remark}[theorem]{Remark}
\newtheorem{conditionA}{A\kern-0mm}
\newtheorem{conditionH}{H\kern-0mm}
\newtheorem{conditionC}{C\kern-0mm}
\def\[{{\Big[}}
\def\]{{\Big]}}
\def\<{{\langle}}
\def\>{{\rangle}}
\def\({{\big(}}
\def\){{\big)}}
\def\bb2{{\boldsymbol{2}}}
\def\={&\!\!=\!\!&}
\def\RR{\mathbb{R}}
\def\PP{\mathbb{P}}
\def\b1{{\mathbbm 1}}
\def\geq{\geqslant}
\def\leq{\leqslant}
\def\le{\leqslant}
\def\[{{\Big[}}
\def\]{{\Big]}}
\def\<{{\langle}}
\def\>{{\rangle}}
\def\={&\!\!=\!\!&}
\def\bt{\begin{theorem}}
\def\et{\end{theorem}}
\def\bl{\begin{lemma}}
\def\el{\end{lemma}}
\def\br{\begin{remark}}
\def\er{\end{remark}}
\def\bd{\begin{definition}}
\def\ed{\end{definition}}
\def\bc{\begin{corollary}}
\def\ec{\end{corollary}}
\def\geq{\geqslant}
\def\leq{\leqslant}
\def\le{\leqslant}
\def\<{\langle} \def\>{\rangle}
\def \eref#1{\hbox{(\ref{#1})}}
\begin{document}

\title[McKean-Vlasov Stochastic Partial Differential Equations]
{McKean-Vlasov Stochastic Partial Differential Equations: Existence, Uniqueness and Propagation of Chaos$\dagger$}

\thanks{$\dagger$ This work is supported by National Key R\&D program of China (No. 2023YFA1010101). The research of W. Hong is also supported by  NSFC (No.~12401177) and  NSF of Jiangsu Province (No.~BK20241048). The research of S. Li is also supported by NSFC (No.~12371147). The research of W. Liu is also supported by NSFC (No.~12171208, 12090011,12090010) and the PAPD of Jiangsu Higher Education Institutions.}



\maketitle
\centerline{\scshape Wei Hong, Shihu Li,   Wei Liu\footnote{Corresponding author:  weiliu@jsnu.edu.cn}}

\medskip

\vspace{1mm}
 {\footnotesize
\centerline{  School of Mathematics and Statistics, Jiangsu Normal University, Xuzhou 221116, China,}}

\begin{abstract}
In this paper, we provide a general framework for investigating  McKean-Vlasov stochastic partial differential equations. We first show the existence of weak solutions by combining the localizing approximation, Faedo-Galerkin technique, compactness method and the
Jakubowski version of the Skorokhod representation theorem. Then under certain locally monotone condition we further investigate the existence and uniqueness of  (probabilistically) strong solutions.
The applications of the main results include a large class of  McKean-Vlasov stochastic partial differential equations such as stochastic 2D/3D Navier-Stokes equations, stochastic Cahn-Hilliard equations and stochastic Kuramoto-Sivashinsky equations. Finally, we show a propagation of chaos result in Wasserstein distance for weakly interacting stochastic 2D Navier-Stokes systems.

\bigskip
\noindent
\textbf{Keywords}: McKean-Vlasov equation; Interacting particle systems; SPDE; Well-posedness; Propagation of chaos.
\\
\textbf{Mathematics Subject Classification (2020)}: 60H15,~60H10,~82C22

\end{abstract}
\maketitle \rm

\tableofcontents

\section{Introduction}
\setcounter{equation}{0}
 \setcounter{definition}{0}
Stochastic systems including a large number of particles with weak interactions were initially investigated in statistical physics, and then were commonly studied in many other fields  such as biology, game theory and finance. When the number of particles gets very large, the system involves  more and more information so that  the analysis and simulation of the whole system  becomes too complicated. In order to overcome this difficulty, it is effective to replace all interactions with particles by an average interaction and to investigate the limiting behaviour of the empirical laws of the particles as the number of
particles grows to infinity. Such macroscopic behavior of the interacting particle
system is usually called the {\it{propagation of chaos}} in Kac's programme for kinetic theory (cf.~\cite{KAC}). On the other hand, it is also possible to characterize the limit through McKean-Vlasov stochastic differential equations (MVSDEs), also referred as  distribution dependent SDEs or mean-field SDEs in the literature, which first appeared in the seminal work \cite{M} by McKean.  Since McKean's work, MVSDEs have been applied extensively in the fields of queuing systems, stochastic control and mathematical finance, see e.g.\cite{CD1} and the references therein.

The typical form of a weakly interacting particle system is
  \begin{equation}\label{eq000}
  dX^i_t=\frac{1}{N}\sum_{j=1}^Nb(X^i_t,X^j_t)dt+dW^i_t,
  \end{equation}
where $W^i_t,i=1,\ldots,N,$ are independent Wiener processes. When $N$ goes to infinity, it leads to the following MVSDEs that interacts with the law of itself, i.e.
  $$dY^i_t=\int_{\mathbb{R}^d} b(Y^i_t,y)\mathscr{L}_{Y^i_t}(dy)dt+dW^i_t,$$
 where $\mathscr{L}_{Y^i_t}$ is the law of $Y^i_t$. From this viewpoint, if one observes the law of such type of $N$-particle system,  although the particles in system (\ref{eq000}) is interacting with each other, as $N\to\infty$ they become statistically independent, which can be viewed as a law of large numbers, i.e. for any $f\in C_b(\mathbb{R}^d)$,
 $$\mathscr{S}_t^N(f):=\frac{1}{N}\sum_{j=1}^Nf(X^j_t)\to \mu_t(f):=\int_{\mathbb{R}^d}f(y)\mu_t(dy),$$
where $\mu_t=\mathscr{L}_{Y^1_t}$.

The existence and uniqueness of strong solutions and the propagation of chaos of MVSDEs under global Lipschitz conditions have been intensively studied (cf.~e.g.~\cite{S1} and references therein).  Subsequently, many further investigations of MVSDEs under different assumptions on the coefficients have been done over the years. For instance, the existence and uniqueness of strong/weak solutions and ergodicity to MVSDEs  under the monotonicity conditions and with singular coefficients are systematically studied by Wang e.g.~\cite{WFY,WFY3,WFY2} in recent years.  Hao et al.~\cite{HRZ1,HRZ2,HRZ3} also studied the well-posedness and propagation of chaos for MVSDEs with singular coefficients.
On the other hand, Barbu and R\"{o}ckner \cite{BR1,BR2,BR3,BR4}  used the nonlinear Fokker-Planck equations to investigate the existence and uniqueness of weak solutions as well as existence of invariant measures for the Nemytskii type MVSDEs, see also \cite{BR5,RXZ} for the associated investigations in the L\'{e}vy noise case.
Although the global Lipschitz type conditions are very commonly used in the literature, many models do not belong to this class in the applications; for example, in the D'Orsogna et al. model (cf.~\cite{DCBC}) and the Cucker-Smale model (cf.~\cite{CS}), the interaction kernels only satisfy local Lipschitz conditions.

In addition, McKean-Vlasov stochastic partial differential equations (MVSPDEs)  as the mean-field limit of weakly interacting SPDE systems have also attracted more and more attention in recent years due to their potential applications in e.g.~neurophysiology and quantum field theory.  For instance, Chiang et al. \cite{CKS} considered the propagation of chaos problem for the interacting system of $n$-SDEs taking
values in the dual of a countably Hilbertian nuclear space. As application, the results in \cite{CKS} can be used to describe random strings and the fluctuation of voltage potentials of interacting spatially extended neurons,  which are governed by the following weakly interacting SPDE systems
 $$dX^{N,i}_t=\Big(\Delta X^{N,i}_t+\frac{1}{N}\sum_{j=1}^Nb_t(X^{N,i}_t,X^{N,j}_t)\Big)dt+dW^i_t,~1\leq i\leq N,$$
where the latter is a more realistic model for large numbers of neurons in close proximity to each other. The interested reader is referred to \cite{K1} for more background in
neurophysiology, and also to \cite{BKKX,C1,KX} for the extensions to non-nuclear spaces or systems driven by Poisson random measures. Recently, Ren et al.~\cite{RTW}  derived the global existence and uniqueness for a class of distribution-path dependent  stochastic transport type equations, e.g.~stochastic Camassa-Holm equation with distribution coefficients.  We also refer to \cite{ES1,HHL,HLL1,HKXZ,SSZZ,SZZ22} for further studies on the MVSPDEs.

\subsection*{Weak and strong solutions}
Motivated by the aforementioned points, the first aim of this work is to develop a general solution theory to investigate the well-posedness of  MVSPDEs
 \begin{equation}\label{eq00}
dX_t=A(t,X_t,\mathscr{L}_{X_t})dt+\sigma(t,X_t,\mathscr{L}_{X_t})dW_t,
\end{equation}
where $W_t$ is a cylindrical Wiener process, the drift $A$ and diffusion $\sigma$ satisfy certain assumptions we will mention later.  This general setting is motivated by various applications, some examples will be given in Subsection \ref{example}.

We first investigate the existence of weak solutions to MVSPDEs (\ref{eq00}) under the coercivity and polynomial growth assumptions (cf.~$\mathbf{H1}$-$\mathbf{H3}$  below). The first difficulty lies in proving the corresponding result in finite dimensions. Indeed, to the best of our knowledge, there is no accessible reference in the literature which work under such general assumptions. We mention that the authors in \cite{H1,MV,RZ21} proved the existence of weak solutions to MVSDEs under linear growth or integrability conditions, which is mainly based on  truncation argument in the proof. In this work, we construct a cut-off function (cf.~(\ref{cutoff}) below), which is different from the one in \cite{H1,MV,RZ21} but more suitable in the current setting, to truncate both the solution and its law of (\ref{eq00}) and then get the convergence to a weak solution of (\ref{eq00}), see Subsection \ref{sec2.2} for more details.

Based on the existence result in finite dimensions, we further show the existence of weak solutions and the existence and uniqueness of strong solutions to MVSPDEs (\ref{eq00}).
We consider the MVSPDEs (\ref{eq00}) under general coercivity and growth conditions (cf.~$\mathbf{H1}$-$\mathbf{H3}$  below) within the following spaces and embeddings
\begin{equation}\label{eq02}
\mathbb{X}\subset {\mathbb{V}}\subset\mathbb{Y}\subset {\mathbb{H}}(\simeq {\mathbb{H}}^*)\subset \mathbb{X}^*.
\end{equation}
This relationship of embedding is mainly inspired by the work \cite{GRZ}, where the authors proved the existence of martingale solutions and Markov selections for classical  (i.e.~no distribution dependent) SPDEs.

Now, we outline the main ideas presented in the proof of the well-posedness.  Our proof for the existence of weak solutions to MVSPDEs (\ref{eq00})  is based on the  Faedo-Galerkin technique, compactness methods and Jakubowski's version
of the Skorokhod theorem for nonmetric spaces, which is quite different from the work \cite{GRZ}.
More precisely, we will first establish the tightness of the sequence  $\{X^n\}_{n\in\mathbb{N}}$ to the Faedo-Galerkin approximation of (\ref{eq00}) in the space
$$\mathbb{K}_2:=C([0,T];\mathbb{X}^*)\cap L^{q}([0,T];\mathbb{Y})\cap L^{q}_w([0,T];{\mathbb{V}}),$$
 where $ L^{q}_w([0,T];{\mathbb{V}})$ denotes the space $L^{q}([0,T];{\mathbb{V}})$ endowed with the weak topology. It should be pointed out that the space $\mathbb{K}_2$ is not a Polish space. In this case, we will apply  Jakubowski's version of the Skorokhod theorem for nonmetric spaces in the form given by Brze\'{z}niak and Ondrej\'{a}t \cite{BO} to show the almost surely convergence in $\mathbb{K}_2$ of the approximating sequence   on a new probability space. To this end, we need to show $\mathbb{K}_2$ is a standard Borel space with respect to an appropriate topology (see Remark \ref{k2t} for details).
On the other hand, due to the dependence of laws and the fact that $\mathbb{K}_2$ is not Polish, one cannot  get the convergence of the  sequence of laws $\{\nu^n:=\mathscr{L}_{X^n}\}_{n\in\mathbb{N}}$ in $\mathscr{P}(\mathbb{K}_2)$. Therefore, we will consider the convergence of $\nu^n$ in a larger space $\mathscr{P}(\mathbb{K}_1)$. Here,
$\mathbb{K}_1:=C([0,T];\mathbb{X}^*)\cap L^{q}([0,T];\mathbb{Y})$
which is a Polish space under the natural norm. Employing the martingale approach, we are able to characterize the weak limit of $\{X^n\}_{n\in\mathbb{N}}$, which is proved to be a weak solution to  MVSPDEs (\ref{eq00}).

In order to investigate the pathwise uniqueness of MVSPDEs (\ref{eq00}), we impose certain local monotonicity conditions on the coefficients, see the assumption $\mathbf{H4}$ in Subsection \ref{strongsolution}.
It is worth emphasizing that there is  a counterexample from Scheutzow \cite{SCHEUTZOW} which shows that if the drift is merely local Lipschitz continuous, the uniqueness of solutions to the McKean-Vlasov equations  might not hold in general. Therefore, it is crucial to balance the influence between the nonlinear operators and  the distribution dependent coefficients.

We shall prove the pathwise uniqueness of (\ref{eq00}) under the  local monotonicity condition $\mathbf{H4}$ and then derive the existence and uniqueness of strong solutions by using a {\it decoupled} argument.
Such a local monotonicity condition is more general than the original one introduced in \cite{LR2}. Thus our results on strong solutions can be applied to a larger class of SPDE models in the distribution dependent case, which are not covered by previous works. More importantly, this result is also very useful in characterizing the mean field limit and studying the propagation of chaos for weakly interacting nonlinear SPDEs, which is another main purpose of the present paper.

\subsection*{Propagation of chaos}
The propagation of chaos for weakly interacting SDEs has been intensively studied in the past several decades. We do not
intend to give a comprehensive list, but rather refer to the surveys \cite{G1,J1,S1} and the monographs \cite{CD1,S2}.
Although the convergence of empirical measures and propagation of chaos have been intensively studied in the finite dimensional case, there are very few results on propagation of chaos in the literature for weakly interacting nonlinear SPDEs. Very recently, Shen et al.~\cite{SSZZ} studied the large $N$ limits of  $O(N)$  linear sigma model posed over $\mathbb{T}^d$ for $d=1,2$, where the limit (as $N\to\infty$) has been proved to be governed by a  mean-field singular SPDE. Such a result is generally referred to large $N$ problem in  physics and mathematics, particularly in quantum field theory and  statistical mechanics, and is also related to the mean field limit theory, see also \cite{SZZ22} for some generalization in $d=3$. We also refer to \cite{BKKX,CKS,C1,ES1,KX} on the propagation of chaos for the interacting particle systems driven by semilinear SPDEs with globally Lipschitz coefficients.

In order to illustrate our results more clearly, in this work we only focus on the propagation of chaos in a concrete model rather than in a general framework.
More precisely, we consider the following interacting stochastic 2D Navier-Stokes systems
 \begin{eqnarray}\label{ns01}
dX^{N,i}_t=\!\!\!\!\!\!\!\!&&\Big[\nu\Delta X^{N,i}_t-(X^{N,i}_t\cdot\nabla)X^{N,i}_t+\nabla p^{N,i}_t+\frac{1}{N}\sum_{k=1}^N\tilde{K}(t,X^{N,i}_t,X^{N,k}_t)\Big]dt
\nonumber\\
\!\!\!\!\!\!\!\!&&
+\frac{1}{N}\sum_{k=1}^N\tilde{\sigma}(t,X^{N,i}_t,X^{N,k}_t)dW_t^i,~~ X^{N,i}_0=\xi^i,~~ 1\leq i\leq N,
\end{eqnarray}
where the interaction external force term $\tilde{K}$ as well as $\tilde{\sigma}$ satisfies locally Lipschitz and linear growth conditions (see $\mathbf{A1}$-$\mathbf{A2}$ below), and a typical example for  the interaction term  is the {\it{Stokes drag force}} in  fluids, namely
$$\tilde{K}(t,u,v):=c_0(u-v),$$
where $c_0$ is a positive constant. The {\it{Stokes drag force}} (also called fluid resistance) is appropriate for objects or particles moving through a fluid at relatively slow speed (i.e.~low Reynolds number), which is approximately proportional to  the relative velocity (i.e.~$u-v$). This interaction force is very  classical and important also in the theory of propagation of chaos for finite-dimensional stochastic differential equations.

The second main aim of this paper is to identify the mean field limit of the interacting stochastic 2D Navier-Stokes systems \eref{ns01}. More precisely, we show (in Theorem \ref{th4}) that if $$\mathscr{S}^N_0:=\frac{1}{N}\sum_{j=1}^N\delta_{\xi^j}\to \mu_0~\text{in probability as}~ N\to\infty,$$
then there exists an accumulation point of  the empirical laws $\mathscr{S}^N_t:=\frac{1}{N}\sum_{j=1}^N\delta_{X^{N,j}_t}$ to the interacting system \eref{ns01}, which is a martingale solution of 2D McKean-Vlasov stochastic Navier-Stokes equation (2D MVSNSE) with initial law $\mu_0$. Furthermore, under additional assumptions we can also show the following convergence
\begin{equation}\label{eq.5}
\lim_{N\to\infty}\mathbb{E}\Big[\mathbb{W}_{2,T,\mathbb{H}}(\mathscr{S}^N,\Gamma)^2\Big]=0.
\end{equation}
Here, $\mathbb{H}$ is the space of Lebesgue square
integrable vector field with zero divergence, $\mathbb{W}_{2,T,\mathbb{H}}$ denotes the $L^2$-Wasserstein distance of probability measures on $C([0,T],\mathbb{H})$, and $\Gamma$ is the unique solution of the martingale problem to the following 2D MVSNSE
 \begin{eqnarray}\label{eq.6}
\left\{
 \begin{aligned}
    &dX_t=\Big[\nu\Delta X_t-(X_t\cdot\nabla)X_t+\nabla p_t+ K(t,X_t,\mathscr{L}_{X_t})\Big]dt
+{\sigma}(t,X_t,\mathscr{L}_{X_t})dW_t,\\
    &X_0\sim\mu_0,
  \end{aligned}
\right.
\end{eqnarray}
where
$$K(t,X_t,\mathscr{L}_{X_t}):=\int\tilde K(t,X_t,y)\mathscr{L}_{X_t}(dy),~~{\sigma}(t,X_t,\mathscr{L}_{X_t}):=\int \tilde{\sigma}(t,X_t,y)\mathscr{L}_{X_t}(dy).$$
Note that if the uniqueness in law holds for the McKean-Vlasov equation \eref{eq.6}, by the classical work of Sznitman (cf.~Proposition 2.2 in \cite{S1}), this result says that
the (asymptotic) independence of the law of any $k$-particles at time zero propagates to future time
instants, and for this reason this result is referred as the propagation of chaos.

The approximations  of stochastic interacting particle systems to the deterministic 2D NSEs and other PDEs have already been investigated a lot in the literature, see e.g.~\cite{CHJZ,FOS,GL} and references therein. In particular,
 Jabin and Wang \cite{JW}  showed that a mean field approximation
converges to the solution of the NSEs in vorticity form, and
they are able to obtain quantitative optimal convergence rates for all finite marginal
distributions of the particles. However, the system \eref{ns01} considered here is given by a family of interacting stochastic Navier-Stokes systems, which can be used to describe the dynamics of interaction between a large number of particles in fluids. In particular,  it allows to consider micro-organisms like bacteria ``swiming" in the fluid and to study  its microscopic limit. A related work is the paper by Flandoli et al.~\cite{FLR21}, which investigates a coupling between the fluid and a particle system, where the interaction between particles and the fluid is modelled by the Stokes drag force. In \cite{FLR21}, the empirical measure of particles is proved to converge to the Vlasov-Fokker-Planck component of the system and the velocities of the fluid coupled with the particles converge in the uniform norm to the Navier-Stokes component. See also \cite{Al91,DGR08,JO04}  which also treat links between particles and fluid.

Now, we outline the main ideas in the proof of the propagation of chaos.  The main proof relying on the uniform in $N$ energy estimates on $\mathbb V:=W_{{div}}^{1,2}(\mathscr{O})$ before a suitable stopping time and the stochastic compactness argument.
We point out that it is quite non-trivial to establish the convergence of empirical measures in a Wasserstein space with respect to the state space $\mathbb{H}$, due to the non-linear term appeared in the interacting system.
The first difficulty
lies in establishing the tightness of the sequence $\{X^{N,1}\}_{N\in\mathbb{N}}$
both in $C([0,T];\mathbb{H})$ and $L^2([0,T];\mathbb{V})$,
and of the empirical measures
$\{\mathscr{S}^N\}_{N\in\mathbb{N}}$ in
\begin{equation}\label{ti1}
\mathscr{P}_2(C([0,T];\mathbb{H})\cap L^2([0,T];\mathbb{V})).
\end{equation}
To overcome this difficulty, we assume that
the initial value has higher regularity, so that we can derive some energy estimates to the interacting system \eref{ns01} in a more regular space before a stopping time uniformly in the number of particles. With these uniform in $N$ energy estimates and the help of Lemma \ref{lem00} presented in Appendix, which plays a crucial role in finding a relatively compact subset in the Wasserstein space, we are able to prove the tightness the empirical measures
$\{\mathscr{S}^N\}_{N\in\mathbb{N}}$ in Wasserstein space (\ref{ti1}).
Then through the localization procedure and the martingale characterization, we  show that as an  accumulation point of $\{\mathscr{S}^N\}_{N\in\mathbb{N}}$ it is a solution of the martingale problem to 2D MVSNSE \eref{eq.6}.
By the weak uniqueness of solutions, we are able to show the convergence (\ref{eq.5}).

\vspace{2mm}
The main contributions of the present work can be summarized as follows.
\begin{enumerate}[(i)]
  \item The first main challenge in this work is to prove the existence of weak solutions in $\mathbb{R}^d$ to the distribution dependent stochastic equations whose coefficients only satisfy continuity, coercivity and polynomial growth conditions. To this end, we construct an effective cut-off function (cf.~(\ref{cutoff}) below) to truncate both the solution and its laws. This is more general than what was done in the existing works and should be of independent interest.
  \item The second main novel point is that we build a general framework to study the existence of weak solutions as well as existence and uniqueness of strong solutions to MVSPDEs  that can applied to various nonlinear SPDE models with distribution dependent coefficients.

  \item The third main novel point concerns the propagation of chaos for weakly interacting nonlinear SPDEs. To the best of our knowledge, this is the first result in the literature on the propagation of chaos for weakly interacting stochastic Navier-Stokes systems, and we believe that our method is also applicable to many other weakly interacting SPDE models.
\end{enumerate}

This paper is organized as follows. In Sect.~\ref{wellposed}, we  introduce the detailed framework and state the main results about weak and strong solutions in Theorems \ref{th1} and \ref{th2} respectively. Then we apply our general framework to some concrete examples to illustrate the  wide applicability of the main results. In Sect.~\ref{proof1} we give the proofs of Theorems \ref{th1} and \ref{th2}. Sect.~\ref{Poc.4} is devoted to giving the main result (Theorem \ref{th4}) and its proof, which concerns the propagation of chaos problem for weakly interacting stochastic 2D Navier-Stokes systems. We also recall some useful lemmas and postpone their proofs to  Appendix (see Sect.~\ref{appendix}).
Throughout this paper $C_{p}$  denotes some positive constant which may change from line to line, where the subscript $p$ is used to emphasize that the constant depends on certain parameter $p$.

\section{Weak and strong solutions}\label{wellposed}
\setcounter{equation}{0}
 \setcounter{definition}{0}
Let $(U,\langle\cdot,\cdot\rangle_U)$, $(\mathbb{X},\langle\cdot,\cdot\rangle_{\mathbb{X}})$ and $({\mathbb{H}}, \langle\cdot,\cdot\rangle_{\mathbb{H}}) $ be  separable Hilbert spaces and  ${\mathbb{V}}$, $\mathbb{Y}$ be reflexive Banach spaces equipped with norms $\|\cdot\|_{\mathbb{V}}$ and $\|\cdot\|_{\mathbb{Y}}$ respectively such that the following embeddings
$$\mathbb{X}\subset {\mathbb{V}}\subset\mathbb{Y}\subset {\mathbb{H}}$$
are continuous and dense, which then implies that  $\mathbb{V}$ and $\mathbb{Y}$ are also separable.
Identifying ${\mathbb{H}}$ with its dual space by the Riesz isomorphism, then we have the following Gelfand triple
\begin{equation*}
\mathbb{X}\subset {\mathbb{H}}(\simeq {\mathbb{H}}^*)\subset \mathbb{X}^*.
\end{equation*}
Since all respective spaces are separable, according to Kuratowski's theorem, it is easy to see that
$$\mathbb{X}\in\mathscr{B}({\mathbb{V}}),~{\mathbb{V}}\in\mathscr{B}(\mathbb{Y}),~\mathbb{Y}\in\mathscr{B}({\mathbb{H}}),~\mathscr{B}(\mathbb{Y})=\mathscr{B}({\mathbb{H}})\cap\mathbb{Y},~
\mathscr{B}({\mathbb{V}})=\mathscr{B}(\mathbb{Y})\cap {\mathbb{V}},~\mathscr{B}(\mathbb{X})=\mathscr{B}({\mathbb{V}})\cap\mathbb{X}.$$
The dualization between $\mathbb{X}$ and $\mathbb{X}^*$ is denoted by $_{\mathbb{X}^*}\langle\cdot,\cdot\rangle_\mathbb{X}$, then it is easy to see that $$_{\mathbb{X}^*}\langle\cdot,\cdot\rangle_\mathbb{X}|_{{\mathbb{H}}\times \mathbb{X}}=\langle\cdot,\cdot\rangle_{\mathbb{H}}.$$
Moreover, we use $L_2(U;{\mathbb{H}})$ to denote the space of all Hilbert-Schmidt operators from $U$ to ${\mathbb{H}}$, which is equipped with the Hilbert-Schmidt norm $\|\cdot\|_{L_2(U;{\mathbb{H}})}$.

For a Banach space $\mathbb{B}$ equipped with norm $\|\cdot\|_{\mathbb{B}}$,  we use $\mathscr{P}(\mathbb{B})$ to denote the space of all probability measures on $\mathbb{B}$ equipped with the weak topology. For any $p>0$, we define
$$\mathscr{P}_p(\mathbb{B})=\Big\{\mu\in\mathscr{P}(\mathbb{B}):\int_\mathbb{B}\|x\|_{\mathbb{B}}^p\mu(dx)<\infty\Big\},$$
which is a Polish space under the $p$-Wasserstein distance
$$\mathbb{W}_{p,\mathbb{B}}(\mu,\nu):=\inf_{\pi\in\mathscr{C}(\mu,\nu)}\Big(\int_{\mathbb{B}\times \mathbb{B}}\|x-y\|_{\mathbb{B}}^p\pi(dx,dy)\Big)^{\frac{1}{p\vee1}},~\mu,\nu\in\mathscr{P}_p(\mathbb{B}),$$
where $\mathscr{C}(\mu,\nu)$ stands for the set of all couplings for  $\mu$ and $\nu$.

Let $C([0,T];\mathbb{B})$ be the space of all continuous functions from $[0,T]$ to $\mathbb{B}$, which is a Banach space equipped with  the uniform norm given by
$$\|u\|_{T,\mathbb{B}}:=\sup_{t\in[0,T]}\|u_t\|_{\mathbb{B}},~u\in C([0,T];\mathbb{B}).$$
Then the corresponding $p$-Wasserstein distance on $\mathscr{P}_p(C([0,T];\mathbb{B}))$ is denoted by $\mathbb{W}_{p,T,\mathbb{B}}(\mu,\nu)$, i.e.
$$\mathbb{W}_{p,T,\mathbb{B}}(\mu,\nu)=\inf_{\pi\in\mathscr{C}(\mu,\nu)}\Big(\int_{C([0,T];\mathbb{B})\times C([0,T];\mathbb{B})}\|x-y\|_{T,\mathbb{B}}^p\pi(dx,dy)\Big)^{\frac{1}{p\vee1}},~\mu,\nu\in\mathscr{P}_p(C([0,T];\mathbb{B})).$$

Now we introduce a function class $\mathfrak{ N}^q,q\geq 1$: A lower semi-continuous function $\mathscr{N}_0:\mathbb{Y}\to[0,\infty]$ belongs to $\mathfrak{ N}^q$ if $\mathscr{ N}_0(x)=0$ implies $x=0$,
$$\mathscr{N}_0(cx)\leq c^q\mathscr{N}_0(x)~\text{for any}~c\geq0,~x\in \mathbb{Y},$$
and
$$\Big\{x\in\mathbb{Y}:\mathscr{N}_0(x)\leq 1\Big\}~\text{is compact in}~\mathbb{Y}.$$

Let $\mathscr{E}:=\big\{l_1,l_2,\ldots\big\}\subset\mathbb{X}$
 be an orthonormal basis (ONB)  of ${\mathbb{H}}$. Furthermore, we say a function $\mathscr{N}_1\in\mathfrak{M}^q,q\geq 1$ if $\mathscr{N}_1\in\mathfrak{ N}^q$ and  there exists a constant $C>0$  such that
\begin{equation}\label{es00}
\|x\|_{\mathbb{V}}^q\leq C\mathscr{N}_1(x)~~~\text{for any}~x\in\mathbb{V},
\end{equation}
 and
\begin{equation}\label{es01}
\mathscr{N}_1(x)\leq C_n\|x\|_{{\mathbb{H}}^n}^q,~x\in {\mathbb{H}}^n,~n\in\mathbb{N},
\end{equation}
where $C_n$ is a constant depending on $n$ and
$${\mathbb{H}}^n:=\text{span}\Big\{l_1,l_2,\cdots\,l_n\Big\}.$$

 For any $t\in[0,T],~R>0,~u\in C([0,T];\mathbb{H})$, we define the path $u^t_{\cdot}:[0,T]\to\mathbb{H}$ before time $t$ by
$$u^t_s:=u_{t\wedge s},~s\in[0,T],$$
and map $\pi_t(u):=u^t$. Then the marginal distribution before time $t$ of a probability measure $\mu\in\mathscr{P}(C([0,T];\mathbb{H}))$ is denoted by
$$\mu^t:=\mu\circ \pi_t^{-1}.$$
Define a time
\begin{equation*}
\tau_R^u:=\inf\Big\{t\geq 0:\|u_t\|_{\mathbb{H}}+\int_0^t\mathscr{N}_1(u_s)ds\geq R\Big\}\wedge T.
\end{equation*}
Then for any $\mu,\nu\in\mathscr{P}(C([0,T];\mathbb{H}))$, we can define the following ``local'' $L^2$-Wasserstein distance
\begin{equation}\label{localw}
\mathcal{W}_{2,R,\mathbb{H}}(\mu,\nu):=\inf_{\pi\in\mathscr{C}(\mu,\nu)}\Bigg(\int\|u^{\tau_R^u\wedge\tau_R^{v}}-v^{\tau_R^u\wedge\tau_R^{v}}\|_{T,\mathbb{H}}^2\pi(du,dv)\Bigg)^{\frac{1}{2}}.
\end{equation}

\subsection{Weak solutions}\label{weaksolution}

For the measurable maps
$$
A:[0,T]\times {\mathbb{V}}\times\mathscr{P}({\mathbb{V}})\rightarrow \mathbb{X}^*,~~\sigma:[0,T]\times {\mathbb{V}}\times\mathscr{P}({\mathbb{V}})\rightarrow L_2(U;{\mathbb{H}}),
$$
we consider the following MVSPDE
\begin{equation}\label{eqSPDE}
dX_t=A(t,X_t,\mathscr{L}_{X_t})dt+\sigma(t,X_t,\mathscr{L}_{X_t})dW_t,
\end{equation}
where $W_t$ is an $U$-valued cylindrical Wiener process defined on a complete filtered probability space $\left(\Omega,\mathscr{F},\{\mathscr{F}_t\}_{t\in[0,T]},\mathbb{P}\right)$ with right continuous filtration $\{\mathscr{F}_t\}$.

Let $T>0$. We assume that $\mathbb{X}\subset\mathbb{H}$ is compact and there exist some constants $C>0$, $\gamma_2\geq\gamma_1>1$, $\theta_1\geq 2$  such that the following conditions hold for all $t\in[0,T]$.

 \begin{conditionH}\label{H1}
 $($Demicontinuity$)$ For any $v\in \mathbb{X}$, if $u_n,u\in\mathbb{V}$ with $u_n\to u$ in $\mathbb{Y}$ and $\mu_n$ converges  to $\mu$ in $\mathscr{P}_2(\mathbb{X}^*)$, then
\begin{equation*}
\lim_{n\to\infty}{}_{\mathbb{X}^*}\langle A(t,u_n,\mu_n),v\rangle_{\mathbb{X}}={}_{\mathbb{X}^*}\langle A(t,u,\mu),v\rangle_{\mathbb{X}}
\end{equation*}
and
\begin{equation*}
\lim_{n\to\infty}\| \sigma(t,u_n,\mu_n)^*v-\sigma(t,u,\mu)^*v\|_{U}=0.
\end{equation*}

\end{conditionH}

\begin{conditionH}\label{H3}
 $($Coercivity$)$ There exists a function $\mathscr{N}_1\in\mathfrak{M}^q$ for some $q\geq 2$ such that for any $u\in \mathbb{X}$, $\mu\in\mathscr{P}_{2}({\mathbb{H}})$,
\begin{eqnarray*}
2_{\mathbb{X}^*}\langle A(t,u,\mu),u\rangle_{\mathbb{X}}+\|\sigma(t,u,\mu)\|_{L_2(U;{\mathbb{H}})}^2\leq -\mathscr{N}_1(u)+C\big(1+\|u\|_{\mathbb{H}}^2+\mu(\|\cdot\|_{\mathbb{H}}^2)\big).
\end{eqnarray*}
\end{conditionH}

\begin{conditionH}\label{H4}
 $($Growth$)$ For any $u\in \mathbb{X}$, $\mu\in\mathscr{P}_{\theta_1}({\mathbb{H}})$,
\begin{equation}\label{c1}
\|A(t,u,\mu)\|_{\mathbb{X}^*}^{\gamma_1}\leq C\big(1+\mathscr{N}_1(u)+\mu(\|\cdot\|_{\mathbb{H}}^{\theta_1})\big)\big(1+\|u\|_{\mathbb{H}}^{\gamma_2}+\mu(\|\cdot\|_{\mathbb{H}}^{\theta_1})\big)
\end{equation}
and
\begin{equation}\label{c2}
\|\sigma(t,u,\mu)\|_{L_2(U;{\mathbb{H}})}^2\leq C\big(1+\|u\|_{\mathbb{H}}^2+\mu(\|\cdot\|_{\mathbb{H}}^2)\big),
\end{equation}
where $\mathscr{N}_1$ is the same as in $\mathbf{H3}$.
\end{conditionH}


Now we recall the notion of (probabilistically) weak solutions to MVSPDE (\ref{eqSPDE}).
\begin{definition}\label{de1} $($Weak solution$)$ A pair $(X,W)$ is called a (probabilistically) weak solution to (\ref{eqSPDE}) with initial law $\mu_0\in\mathscr{P}(\mathbb{H})$, if there exists a stochastic basis $(\Omega,\mathscr{F},\{\mathscr{F}_t\}_{t\in[0,T]},\mathbb{P})$ such that $X$ is an $\{\mathscr{F}_t\}$-adapted process with $\mathbb{P} \circ X_0^{-1}=\mu_0$,   $W$ is an $U$-valued cylindrical Wiener process on $(\Omega,\mathscr{F},\{\mathscr{F}_t\}_{t\in[0,T]},\mathbb{P})$ and the following holds:

\vspace{1mm}
(i) $X\in C([0,T];\mathbb{X}^*)\cap L^q([0,T];{\mathbb{V}})$ $\mathbb{P}$-a.s., $\mathscr{L}_{X}\in\mathscr{P}(C([0,T];\mathbb{X}^*)\cap L^{q}([0,T];\mathbb{V}))$;

\vspace{2mm}
(ii) $\int_0^T\|A(s,X_s,\mathscr{L}_{X_s})\|_{\mathbb{X}^*}ds+\int_0^T\|\sigma(s,X_s,\mathscr{L}_{X_s})\|_{L_2(U;{\mathbb{H}})}^2ds<\infty$ $\mathbb{P}$-a.s.;

\vspace{2mm}
(iii) For any $t\in[0,T]$, $l\in\mathbb{X}$, $\mathbb{P}$-a.s.
$${}_{\mathbb{X}^*}\langle X_t,l\rangle_{\mathbb{X}}={}_{\mathbb{X}^*}\langle X_0,l\rangle_{\mathbb{X}}-\int_0^t{}_{\mathbb{X}^*}\langle A(s,X_s,\mathscr{L}_{X_s}),l\rangle_{\mathbb{X}}ds+
\langle\int_0^t \sigma(s,X_s,\mathscr{L}_{X_s})dW_s,l\rangle_{{\mathbb{H}}}.$$
\end{definition}
\begin{definition}\label{de4} $($Weak uniqueness$)$
We say MVSPDE (\ref{eqSPDE}) is weakly unique, if $(\tilde{X},\tilde{W})$ on the stochastic basis $(\tilde{\Omega},\tilde{\mathscr{F}},\{\tilde{\mathscr{F}}_t\}_{t\in[0,T]},\tilde{\mathbb{P}})$ and $(\bar{X},\bar{W})$ on $(\bar{\Omega},\bar{\mathscr{F}},\{\bar{\mathscr{F}}_t\}_{t\in[0,T]},\bar{\mathbb{P}})$ are two weak solutions to (\ref{eqSPDE}) in the sense of Definition \ref{de1}, then $\mathscr{L}_{\tilde{X}_0}=\mathscr{L}_{\bar{X}_0}$ implies that $\mathscr{L}_{\tilde{X}_t}=\mathscr{L}_{\bar{X}_t}$.
\end{definition}

The existence of weak solutions to MVSPDE (\ref{eqSPDE}) is given in the following theorem.
\begin{theorem}\label{th1}
Suppose that $\mathbf{H1}$-$\mathbf{H3}$ hold.
Then for any $X_0\sim\mu_0\in \mathscr{P}_p(\mathbb{H})$ with $p>\eta_0:=\theta_1\vee \frac{2(q+\gamma_2)}{\gamma_1}\vee 2(2+\gamma_2)$,
MVSPDE (\ref{eqSPDE}) has a weak solution in the sense of Definition \ref{de1}. In addition,
\begin{equation}\label{es48}
\mathbb{E}\Big[\sup_{t\in[0,T]}\|X_t\|_{{\mathbb{H}}}^p\Big]+\mathbb{E}\int_0^T\mathscr{N}_1(X_t)dt+\mathbb{E}\int_0^T\|X_t\|_{{\mathbb{H}}}^{p-2}\mathscr{N}_1(X_t)dt<\infty.
\end{equation}

\end{theorem}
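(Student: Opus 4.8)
The plan is to construct a weak solution by a Faedo--Galerkin scheme, to derive uniform a priori estimates, to prove tightness of the approximations in the (non-Polish) space $\mathbb{K}_2=C([0,T];\mathbb{X}^*)\cap L^q([0,T];\mathbb{Y})\cap L^q_w([0,T];\mathbb{V})$, to apply Jakubowski's version of the Skorokhod representation theorem in the Brze\'{z}niak--Ondrej\'{a}t form \cite{BO} to pass to a new probability space, and finally to identify the nonlinear and distribution-dependent terms via the demicontinuity $\mathbf{H\ref{H1}}$ and the compactness gained in $L^q([0,T];\mathbb{Y})$. Concretely, let $P_n$ be the orthogonal projection of $\mathbb{H}$ onto $\mathbb{H}^n=\mathrm{span}\{l_1,\dots,l_n\}$ and consider the finite-dimensional McKean--Vlasov SDE
\begin{equation*}
dX^n_t=P_nA(t,X^n_t,\mathscr{L}_{X^n_t})\,dt+P_n\sigma(t,X^n_t,\mathscr{L}_{X^n_t})\,dW_t,\qquad X^n_0=P_nX_0 ,
\end{equation*}
whose solvability comes from the finite-dimensional existence result for MVSDEs under continuity, coercivity and polynomial growth, the proof of which rests on the cut-off function \eqref{cutoff} truncating both the state and its law. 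Since $P_n$ is a contraction on $\mathbb{H}$, the conditions $\mathbf{H\ref{H3}}$ and $\mathbf{H\ref{H4}}$ are inherited by the projected coefficients; applying It\^o's formula to $\|X^n_t\|_{\mathbb{H}}^2$ and then to $\|X^n_t\|_{\mathbb{H}}^p$, and combining $\mathbf{H\ref{H3}}$, the bound \eqref{c2}, the Burkholder--Davis--Gundy inequality and a self-consistent Gr\"onwall argument for $t\mapsto\mathbb{E}\|X^n_t\|_{\mathbb{H}}^2=\mathscr{L}_{X^n_t}(\|\cdot\|_{\mathbb{H}}^2)$, one obtains \eqref{es48} uniformly in $n$. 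By \eqref{es00} this yields a uniform bound for $X^n$ in $L^q([0,T]\times\Omega;\mathbb{V})$, and then \eqref{c1} together with H\"older's inequality (this is precisely where the lower bound $p>\eta_0\geq\frac{2(q+\gamma_2)}{\gamma_1}$ enters) gives uniform bounds for $A(\cdot,X^n_\cdot,\mathscr{L}_{X^n_\cdot})$ in $L^{\gamma_1}([0,T]\times\Omega;\mathbb{X}^*)$ and for $\sigma(\cdot,X^n_\cdot,\mathscr{L}_{X^n_\cdot})$ in $L^2([0,T]\times\Omega;L_2(U;\mathbb{H}))$.

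Next I would establish tightness of the laws of $(X^n,W)$ on $\mathbb{K}_2\times C([0,T];U_0)$, where $U_0\supset U$ is a Hilbert space into which $U$ embeds by a Hilbert--Schmidt operator, so that $W\in C([0,T];U_0)$. Tightness in $L^q_w([0,T];\mathbb{V})$ is immediate from the uniform $L^q(\mathbb{V})$ bound, since bounded subsets of the separable reflexive space $L^q([0,T];\mathbb{V})$ are metrizable and compact for the weak topology. Tightness in $C([0,T];\mathbb{X}^*)$ follows from the uniform bound $\mathbb{E}\sup_{t}\|X^n_t\|_{\mathbb{H}}^p<\infty$ together with a uniform fractional-in-time regularity estimate in $\mathbb{X}^*$ (the drift contributing a bound in $W^{1,\gamma_1}([0,T];\mathbb{X}^*)$ via \eqref{c1}, the stochastic integral a bound in $C^{\beta}([0,T];\mathbb{H})$ for $\beta<1/2$ via \eqref{c2} and the Kolmogorov/BDG inequalities), using that bounded subsets of $\mathbb{H}$ are relatively compact in $\mathbb{X}^*$ because $\mathbb{X}\hookrightarrow\mathbb{H}$ is compact. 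Tightness in $L^q([0,T];\mathbb{Y})$ follows from a compactness criterion of Aubin--Lions--Simon type in which compactness is supplied by the sublevel sets $\{\mathscr{N}_1\leq R\}$ (compact in $\mathbb{Y}$ by the definition of $\mathfrak{M}^q$ and homogeneity) rather than by a Banach-space embedding, combined with the uniform bound on $\mathbb{E}\int_0^T\mathscr{N}_1(X^n_t)\,dt$ and the $\mathbb{X}^*$-equicontinuity (cf.\ the framework of \cite{GRZ}). Since $\mathbb{K}_2$ with the appropriate topology is a standard Borel space (Remark \ref{k2t}), Jakubowski's theorem then provides a new stochastic basis $(\tilde\Omega,\tilde{\mathscr F},\{\tilde{\mathscr F}_t\},\tilde{\mathbb P})$ carrying, along a subsequence, copies $(\tilde X^n,\tilde W^n)$ with $\mathscr{L}_{(\tilde X^n,\tilde W^n)}=\mathscr{L}_{(X^n,W)}$ and a limit $(\tilde X,\tilde W)$ with $\tilde X^n\to\tilde X$ $\tilde{\mathbb P}$-a.s.\ in $\mathbb{K}_2$ and $\tilde W^n\to\tilde W$ a.s.\ in $C([0,T];U_0)$; here $\tilde W$ is again an $U$-cylindrical Wiener process w.r.t.\ the augmented filtration generated by $(\tilde X,\tilde W)$, $\tilde X$ is adapted, the uniform estimates of the previous step are inherited by $\tilde X^n$, and since the $\mathbb{K}_2$-topology is finer than the $\mathbb{K}_1$-topology one also gets $\nu^n:=\mathscr{L}_{X^n}=\mathscr{L}_{\tilde X^n}\to\mathscr{L}_{\tilde X}$ in the Polish space $\mathscr{P}(\mathbb{K}_1)$.

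It then remains to pass to the limit and to recover \eqref{es48}. From $\tilde X^n\to\tilde X$ a.s.\ in $C([0,T];\mathbb{X}^*)$, the continuity of $\mathbb{H}\hookrightarrow\mathbb{X}^*$ and the uniform $p$-th moment bound with $p>2$, the time marginals $\mathscr{L}_{\tilde X^n_t}$ converge to $\mathscr{L}_{\tilde X_t}$ in $\mathscr{P}_2(\mathbb{X}^*)$ for every $t$, and along a further subsequence $\tilde X^n_t(\omega)\to\tilde X_t(\omega)$ in $\mathbb{Y}$ for a.e.\ $(t,\omega)$. Hence $\mathbf{H\ref{H1}}$ gives, for each $v\in\mathbb{X}$ and a.e.\ $(t,\omega)$,
\begin{align*}
&{}_{\mathbb{X}^*}\langle A(t,\tilde X^n_t,\mathscr{L}_{\tilde X^n_t}),v\rangle_{\mathbb{X}}\longrightarrow{}_{\mathbb{X}^*}\langle A(t,\tilde X_t,\mathscr{L}_{\tilde X_t}),v\rangle_{\mathbb{X}},\\
&\sigma(t,\tilde X^n_t,\mathscr{L}_{\tilde X^n_t})^*v\longrightarrow\sigma(t,\tilde X_t,\mathscr{L}_{\tilde X_t})^*v\ \text{ in }\ U,
\end{align*}
which, combined with the uniform $L^{\gamma_1}(\mathbb{X}^*)$ and $L^2(L_2(U;\mathbb{H}))$ bounds (hence weak convergence along a subsequence and the usual pointwise-identification argument), identifies the weak limits of $A(\cdot,\tilde X^n_\cdot,\mathscr{L}_{\tilde X^n_\cdot})$ and $\sigma(\cdot,\tilde X^n_\cdot,\mathscr{L}_{\tilde X^n_\cdot})$ as $A(\cdot,\tilde X,\mathscr{L}_{\tilde X})$ and $\sigma(\cdot,\tilde X,\mathscr{L}_{\tilde X})$ — note that here no monotonicity is needed, the compactness replacing the Minty trick. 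Passing to the limit in the Galerkin variational identity tested against each $l_j$ — using $P_nl_j=l_j$ for $n\geq j$, the above weak convergence of the drift term, $\tilde X^n_0\to\tilde X_0$ with $\mathscr{L}_{\tilde X_0}=\mathscr{L}_{X_0}$, and a lemma on convergence of stochastic integrals under $\tilde W^n\to\tilde W$ — shows that $(\tilde X,\tilde W)$ satisfies Definition \ref{de1}(iii), while (i)--(ii) follow from $\tilde X\in C([0,T];\mathbb{X}^*)$, the weak $L^q([0,T];\mathbb{V})$ limit and the integrability bounds. Finally \eqref{es48} for the limit is obtained by passing the uniform bounds through the weak lower semicontinuity of $\|\cdot\|_{\mathbb{H}}$ and the lower semicontinuity of $\mathscr{N}_1$ on $\mathbb{Y}$ (using $\mathbb{Y}\hookrightarrow\mathbb{H}$ and the a.e.-in-$(t,\omega)$ convergence in $\mathbb{Y}$) together with Fatou's lemma; alternatively, It\^o's formula for $\|\tilde X_t\|_{\mathbb{H}}^p$ applied to the limiting variational solution reproduces the estimate directly.

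I expect the main obstacles to be threefold. First, the finite-dimensional MVSDE existence under merely continuity, coercivity and polynomial growth, which requires the tailored cut-off \eqref{cutoff} truncating \emph{both} $X^n$ and $\mathscr{L}_{X^n}$, since the available references handle only linear growth or integrability conditions. Second, arranging the tightness so that the \emph{non-metric} Skorokhod theorem applies in $\mathbb{K}_2$: one must verify that $\mathbb{K}_2$ (with the appropriate topology) is a standard Borel space and set up the nonlinear compactness criterion built from the sublevel sets of $\mathscr{N}_1$. Third, the bookkeeping forced by the distribution dependence — the laws $\nu^n=\mathscr{L}_{X^n}$ cannot be controlled in $\mathscr{P}(\mathbb{K}_2)$ and must instead be handled in the Polish space $\mathscr{P}(\mathbb{K}_1)$, while one must still retain the $\mathscr{P}_2(\mathbb{X}^*)$-convergence of the time marginals needed to pass to the limit inside $A(\cdot,\cdot,\mathscr{L}_{\tilde X^n_\cdot})$ and $\sigma(\cdot,\cdot,\mathscr{L}_{\tilde X^n_\cdot})$.
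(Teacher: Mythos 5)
Your proposal is correct and follows the paper's architecture almost step for step: the cut-off \eqref{cutoff} truncating both state and law to get finite-dimensional existence, the Faedo--Galerkin scheme, the uniform moment and $\mathscr{N}_1$-estimates via It\^o, BDG and Gr\"onwall, the tightness in $\mathbb{K}_2$ built on the compact sublevel sets of $\mathscr{N}_1$ together with $\mathbb{X}^*$-equicontinuity, Jakubowski's theorem with the standard-Borel verification, the handling of the laws in the Polish space $\mathscr{P}_2(\mathbb{K}_1)$ rather than $\mathscr{P}(\mathbb{K}_2)$, and the recovery of \eqref{es48} by lower semicontinuity and Fatou. The only genuine divergence is the final identification step: the paper characterizes the limit through the martingale problem, introducing the truncated functionals $\Theta_R$ (continuous and bounded on $\mathbb{K}_2\times\mathscr{P}_2(\mathbb{K}_1)$), removing the truncation uniformly in $n$ via ${\mathbf{H\ref{H4}}}$ and the moment bounds, proving the martingale property and the quadratic variation of $\tilde{\mathscr{M}}_l$ in Lemmas \ref{lem9}--\ref{lem10}, and then invoking the martingale representation theorem to produce the driving Wiener process; you instead pass to the limit directly in the tested Galerkin identity, keeping $\tilde W^n\to\tilde W$ and appealing to a convergence lemma for stochastic integrals. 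Your route is viable but puts the burden on the integrand convergence: ${\mathbf{H\ref{H1}}}$ only gives pointwise convergence of $\sigma(t,\tilde X^n_t,\mathscr{L}_{\tilde X^n_t})^*l$ after extracting a subsequence converging $dt\otimes\tilde{\mathbb{P}}$-a.e.\ in $\mathbb{Y}$ (a.s.\ convergence in $L^q([0,T];\mathbb{Y})$ gives $\omega$-dependent subsequences, so you should upgrade to convergence in $dt\otimes\tilde{\mathbb{P}}$-measure first), and then Vitali with \eqref{c2} and the uniform $p$-moments yields convergence in probability of $\sigma(\cdot)^*l$ in $L^2(0,T;U)$, which is what the stochastic-integral lemma needs; note also that it is only these tested, $U$-valued integrands that converge, not the Hilbert--Schmidt-valued ones. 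The paper's martingale route buys exactly this: it never confronts the convergence of stochastic integrals against varying Wiener processes, needing only $L^1$-convergence of the quadratic variations, at the modest price of reconstructing the noise via the representation theorem.
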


\begin{remark}
 Since we only consider the existence of weak solutions to MVSPDE (\ref{eqSPDE}) in Theorem \ref{th1}, we do not need to assume any monotone or locally monotone conditions on the coefficients, as is usually done in the existing works for classical SPDEs or McKean-Vlasov SPDEs (cf.~e.g.~\cite{HHL,LR2,RSZ}). It  can be applied to a considerable larger class of SPDEs, in particular,  stochastic 3D Navier-Stokes equations with distribution dependent coefficients (see Subsection \ref{example} below).

\end{remark}

\subsection{Strong solutions}\label{strongsolution}
In this subsection, we intend to show the pathwise uniqueness of solutions to (\ref{eqSPDE}), which combining with Theorem \ref{th1} and the modified Yamada-Watanabe theorem  implies the existence and uniqueness of strong and weak solutions.

Recall that $$\mathbb{X}\subset {\mathbb{V}}\subset\mathbb{Y}\subset {\mathbb{H}}$$
are the spaces defined as in Subsection \ref{weaksolution}.
In order to investigate (probabilistically) strong solutions to  (\ref{eqSPDE}), we consider the following Gelfand triple
\begin{equation}\label{es0}
{\mathbb{V}}\subset\mathbb{H}(\simeq\mathbb{H}^*)\subset\mathbb{V}^*.
\end{equation}
Note that $\mathbb{X}\subset {\mathbb{V}}$ is continuous and dense, we have
$${\mathbb{V}}^*\subset \mathbb{X}^*,$$
and
\begin{equation}\label{es53}
{}_{{\mathbb{V}}^*}\langle u, v\rangle_{{\mathbb{V}}}=\langle u,v\rangle_{\mathbb{H}}={}_{\mathbb{X}^*}\langle u,v\rangle_{\mathbb{X}},~u\in {\mathbb{H}},v\in\mathbb{X}.
\end{equation}

Recall the constant $q$ given in $\mathbf{H2}$. We assume that for the maps
$$
A:[0,T]\times {\mathbb{V}}\times\mathscr{P}(\mathbb{V})\rightarrow {\mathbb{V}}^*,~~\sigma:[0,T]\times {\mathbb{V}}\times\mathscr{P}(\mathbb{V})\rightarrow L_2(U;\mathbb{H})
$$
there exist $C>0$, $\beta>1,\theta_2\geq 2$ such that the following conditions hold.
\begin{conditionH}\label{H2}
 $($Local Monotonicity$)$ There exists $C,R_0>0$ such that for any $R>R_0$,  $u,v\in \mathbb{V}$, $\mu,\nu\in\mathscr{P}_{\theta_2}(C([0,T];\mathbb{H}))$ and $t\in[0,T]$,
\begin{eqnarray*}
&&_{{\mathbb{V}}^*}\langle A(t,u,\mu_t)-A(t,v,\nu_t),u-v\rangle_{\mathbb{V}}
\nonumber\\
&&\leq
(C+\rho(u,\mu_t)+\eta(v,\nu_t))(\|u-v\|_{\mathbb{H}}^2+\mathcal{W}_{2,R,\mathbb{H}}(\mu^t,\nu^t)^2)
\end{eqnarray*}
and
\begin{eqnarray*}
&&\|\sigma(t,u,\mu_t)-\sigma(t,v,\nu_t)\|_{L_2(U;{\mathbb{H}})}^2
\nonumber\\
&&\leq
(C+\rho(u,\mu_t)+\eta(v,\nu_t))(\|u-v\|_{\mathbb{H}}^2+\mathcal{W}_{2,R,\mathbb{H}}(\mu^t,\nu^t)^2),
\end{eqnarray*}
where $\rho,\eta:{\mathbb{V}}\times \mathscr{P}_{\theta_2}(\mathbb{H})\to [0,\infty)$ are measurable functions that satisfy
\begin{equation}\label{es22}
\rho(u,\mu) + \eta(u,\mu)\leq C\big(1+\mathscr{N}_1(u)+\mu(\|\cdot\|_{\mathbb{H}}^{\theta_2})\big)\big(1+\|u\|_{\mathbb{H}}^{\beta}+\mu(\|\cdot\|_{\mathbb{H}}^{\theta_2})\big),~u\in {\mathbb{V}},\mu\in\mathscr{P}_{\theta_2}(\mathbb{H}).
\end{equation}

\end{conditionH}

\begin{conditionH}\label{H5}
$($Growth$)$ For any $t\in[0,T]$, $u\in {\mathbb{V}}$ and  $\mu\in\mathscr{P}_{ \theta_2}({\mathbb{H}})$,
\begin{equation}\label{es45}
\|A(t,u,\mu)\|_{{\mathbb{V}}^*}^{\frac{q}{q-1}}\leq C\big(1+\mathscr{N}_1(u)+\mu(\|\cdot\|_{\mathbb{H}}^{\theta_2})\big)\big(1+\|u\|_{\mathbb{H}}^{\beta}+\mu(\|\cdot\|_{\mathbb{H}}^{\theta_2})\big).
\end{equation}

\end{conditionH}

\vspace{2mm}
Now we present the definition of (probabilistically) strong solutions to  (\ref{eqSPDE}).
\begin{definition}\label{de2}$($Strong solution$)$
We say there exists a (probabilistically) strong solution to (\ref{eqSPDE}) if for every probability space $(\Omega,\mathscr{F},\{\mathscr{F}_t\}_{t\in[0,T]},\mathbb{P})$ with an $U$-valued cylindrical $\{\mathscr{F}_t\}$-Wiener process $W$, there exists
an $\{\mathscr{F}_t\}$-adapted process $X$ such that

\vspace{2mm}
(i) $X\in C([0,T];\mathbb{H})\cap L^q([0,T];\mathbb{V})$ $\mathbb{P}$-a.s., $\mathscr{L}_{X}\in\mathscr{P}(C([0,T];\mathbb{H})\cap L^{q}([0,T];\mathbb{V}))$;

\vspace{2mm}
(ii) $\int_0^T\|A(s,X_s,\mathscr{L}_{X_s})\|_{\mathbb{V}^*}ds+\int_0^T\|\sigma(s,X_s,\mathscr{L}_{X_s})\|_{L_2(U;\mathbb{H})}^2ds<\infty$ $\mathbb{P}$-a.s.;

\vspace{2mm}
(iii)
$X_t=X_0+\int_0^t A(s,X_s,\mathscr{L}_{X_s})ds+\int_0^t \sigma(s,X_s,\mathscr{L}_{X_s})dW_s,~t\in[0,T],~\mathbb{P}\text{-a.s.}$
holds in ${\mathbb{V}}^*$.
\end{definition}

The existence and uniqueness of  strong solutions to (\ref{eqSPDE}) is stated in the following theorem.
\begin{theorem}\label{th2}
Suppose that $\mathbf{H1}$-$\mathbf{H5}$ hold.
Then for any $X_0\in  L^p(\Omega,\mathscr{F}_0,\mathbb{P};{\mathbb{H}})$ with $p>\eta_1:=\eta_0\vee (\beta+2)\vee \theta_2$,
MVSPDE (\ref{eqSPDE}) has a unique  strong solution in the  sense of Definition \ref{de2}. In addition,
\begin{equation}\label{es47}
\mathbb{E}\Big[\sup_{t\in[0,T]}\|X_t\|_{{\mathbb{H}}}^p\Big]+\mathbb{E}\int_0^T\mathscr{N}_1(X_t)dt+\mathbb{E}\int_0^T\|X_t\|_{{\mathbb{H}}}^{p-2}\mathscr{N}_1(X_t)dt<\infty.
\end{equation}
\end{theorem}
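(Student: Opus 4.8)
The plan is to deduce both existence and uniqueness of strong solutions from Theorem \ref{th1} by first establishing pathwise uniqueness and then invoking the modified Yamada–Watanabe theorem (cf.~\cite[Lemma 2.1]{HW1}). I would proceed in three stages. First, I would upgrade the weak solution produced by Theorem \ref{th1}: under the extra conditions ${\mathbf{H\ref{H2}}}$ and ${\mathbf{H\ref{H5}}}$ the operator $A$ now takes values in ${\mathbb{V}}^*$ and the Gelfand triple (\ref{es0}) is available, so the a priori bound (\ref{es48}) together with the growth estimate (\ref{es45}) shows that $A(\cdot,X_\cdot,\mathscr{L}_{X_\cdot})\in L^{q/(q-1)}([0,T];{\mathbb{V}}^*)$, and hence an Itô formula in the triple ${\mathbb{V}}\subset{\mathbb{H}}\subset{\mathbb{V}}^*$ (e.g.~the one of Krylov–Rozovskii / \cite{LR2}) applies. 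This yields $X\in C([0,T];{\mathbb{H}})$ and that the equation holds as an identity in ${\mathbb{V}}^*$, i.e.~the weak solution satisfies parts (i)--(iii) of Definition \ref{de2}, and the energy estimate (\ref{es47}) follows by applying Itô's formula to $\|X_t\|_{{\mathbb{H}}}^p$ and using ${\mathbf{H\ref{H3}}}$, exactly as in the proof of (\ref{es48}) but now with $p>\eta_1\ge\beta+2\vee\theta_2$ so that all the distribution-dependent moment terms $\mu(\|\cdot\|_{{\mathbb{H}}}^{\theta_2})$ are controlled by $\mathbb{E}\sup_t\|X_t\|_{{\mathbb{H}}}^p$.

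Second, and this is the technical heart, I would prove \emph{pathwise uniqueness}. Suppose $X,Y$ are two solutions on the same stochastic basis with the same driving noise and $\mathscr{L}_{X_0}=\mathscr{L}_{Y_0}$; I first need the \emph{weak} uniqueness statement that $\mu_t:=\mathscr{L}_{X_t}=\mathscr{L}_{Y_t}=:\nu_t$ for all $t$, since ${\mathbf{H\ref{H2}}}$ is only a monotonicity estimate, not a contraction, when the laws differ. To get this I would set $Z:=X-Y$, apply Itô's formula to $\|Z_t\|_{{\mathbb{H}}}^2$, use ${\mathbf{H\ref{H2}}}$ with $\mu=\mathscr{L}_{X_t}$, $\nu=\mathscr{L}_{Y_t}$, and localize by the stopping time $\tau_R:=\inf\{t:\|X_t\|_{{\mathbb{V}}}\vee\|Y_t\|_{{\mathbb{V}}}>R\}$ so that the random coefficient $C+\rho(X_t)+\eta(Y_t)+C\mu_t(\|\cdot\|^{\theta_2})+C\nu_t(\|\cdot\|^{\theta_2})$ is bounded (using that $\rho,\eta$ are bounded on ${\mathbb{V}}$-balls, (\ref{es22}), and the moment bound (\ref{es47})). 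Taking expectations, writing $g(t):=\mathbb{E}\|Z_{t\wedge\tau_R}\|_{{\mathbb{H}}}^2$, and bounding $\mathbb{W}_{2,{\mathbb{H}}}(\mu_t,\nu_t)^2\le\mathbb{E}\|Z_t\|_{{\mathbb{H}}}^2$, a Grönwall-type argument on $[0,\tau_R]$ combined with letting $R\to\infty$ (the exit times tend to $T$ by (\ref{es47})) gives $\mathbb{E}\|Z_t\|_{{\mathbb{H}}}^2=0$, hence $X=Y$ and in particular the laws coincide — so weak uniqueness (Definition \ref{de4}) holds as a by-product. The delicate point is to make the Grönwall step work even though the "constant" multiplying $\|Z_t\|_{{\mathbb{H}}}^2$ is the random, only locally bounded quantity $C+\rho(X_t)+\eta(Y_t)+\dots$; the standard way around this is to peel it off along the stopping times $\tau_R$ and use that $\int_0^T(1+\mathscr N_1(X_s))(1+\|X_s\|_{{\mathbb{H}}}^\beta)\,ds<\infty$ a.s.~by (\ref{es47}), so that on $\{t\le\tau_R\}$ one has an integrable bound and can apply the stochastic Grönwall lemma.

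Third, I would invoke the modified Yamada–Watanabe theorem of \cite[Lemma 2.1]{HW1}: existence of a weak solution (Theorem \ref{th1}, whose hypotheses ${\mathbf{H\ref{H1}}}$--${\mathbf{H\ref{H4}}}$ are included in ${\mathbf{H\ref{H1}}}$--${\mathbf{H\ref{H5}}}$, noting $\eta_1\ge\eta_0$) plus pathwise uniqueness yields the existence and uniqueness of a (probabilistically) strong solution, and the moment estimate (\ref{es47}) is inherited from the weak solution. I expect the main obstacle to be the pathwise uniqueness argument in Stage two: one must simultaneously close the estimate in $\|X_t-Y_t\|_{{\mathbb{H}}}$ while the Wasserstein term on the right is itself controlled by that same quantity, and carefully manage the only-locally-bounded coefficients $\rho,\eta$ and the distribution moments through the localization/stochastic-Grönwall scheme; the upgrade from $C([0,T];{\mathbb{X}}^*)$-continuity to $C([0,T];{\mathbb{H}})$-continuity in Stage one is also non-routine and relies essentially on the Itô formula in the triple (\ref{es0}) being applicable, which is why conditions ${\mathbf{H\ref{H2}}}$ and ${\mathbf{H\ref{H5}}}$ are imposed.
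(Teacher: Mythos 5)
Your overall architecture is the same as the paper's: upgrade the weak solution of Theorem \ref{th1} to the triple ${\mathbb{V}}\subset{\mathbb{H}}\subset{\mathbb{V}}^*$ via the growth bounds (\ref{c2}), ${\mathbf{H\ref{H5}}}$ and the It\^{o}-formula result \cite[Theorem 4.2.5]{LR1} to get $X\in C([0,T];\mathbb{H})\cap L^q([0,T];\mathbb{V})$ with the equation holding in ${\mathbb{V}}^*$; prove pathwise uniqueness; conclude by the modified Yamada--Watanabe theorem \cite[Lemma 2.1]{HW1}, with (\ref{es47}) inherited from (\ref{es48}). The genuine gap is in your Stage two, precisely at the point you yourself identify as the technical heart. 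Localizing by $\tau_R=\inf\{t:\|X_t\|_{\mathbb{V}}\vee\|Y_t\|_{\mathbb{V}}>R\}$ does not close the Gr\"{o}nwall argument in the McKean--Vlasov setting: the term coming from ${\mathbf{H\ref{H2}}}$ is $C\big(1+\mathbb{E}\|X_s\|_{\mathbb{H}}^{\theta_2}+\mathbb{E}\|Y_s\|_{\mathbb{H}}^{\theta_2}\big)\mathbb{W}_{2,\mathbb{H}}(\mathscr{L}_{X_s},\mathscr{L}_{Y_s})^2\le c_s\,\mathbb{E}\|Z_s\|_{\mathbb{H}}^2$ with the \emph{unstopped} difference $Z_s=X_s-Y_s$, because stopping the paths does not stop the laws. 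Writing $g_R(t)=\mathbb{E}\|Z_{t\wedge\tau_R}\|_{\mathbb{H}}^2$ you only obtain $g_R(t)\le C_R\int_0^t g_R(s)\,ds+C\int_0^t\mathbb{E}\|Z_s\|_{\mathbb{H}}^2\,ds$, and even after splitting $\mathbb{E}\|Z_s\|_{\mathbb{H}}^2\le g_R(s)+\mathbb{E}[\|Z_s\|_{\mathbb{H}}^2\mathbf{1}_{\{\tau_R<s\}}]$, Gr\"{o}nwall gives a bound of the form $e^{C_R t}\sup_s\mathbb{E}[\|Z_s\|_{\mathbb{H}}^2\mathbf{1}_{\{\tau_R<s\}}]$, where the tail decays only polynomially in $R$ (Chebyshev plus (\ref{es47})) while $C_R=\sup_{\|u\|_{\mathbb{V}}\le R}(\rho(u)+\eta(u))$ grows without any controlled rate; the product need not vanish as $R\to\infty$. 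The fallback you mention, a stochastic Gr\"{o}nwall lemma, would require exponential moments of $\int_0^T(\rho(X_s)+\eta(Y_s))\,ds$ to absorb the nonzero inhomogeneity $\int_0^t c_s\mathbb{E}\|Z_s\|_{\mathbb{H}}^2\,ds$, and (\ref{es22}) together with (\ref{es47}) only gives polynomial moments of that integral, not exponential ones. A smaller but real slip: pathwise uniqueness (and the Yamada--Watanabe step) must be run with $X_0=Y_0$ a.s., not merely $\mathscr{L}_{X_0}=\mathscr{L}_{Y_0}$; your Gr\"{o}nwall start point also needs $Z_0=0$.

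The paper avoids the localization issue entirely by an exponential-weight (product-rule) argument: it applies It\^{o}'s formula to $\varphi_t\|X_t-Y_t\|_{\mathbb{H}}^2$ with $\varphi_t=\exp\big(-\int_0^t(2C+2C\mathbb{E}\|X_r\|_{\mathbb{H}}^{\theta_2}+2C\mathbb{E}\|Y_r\|_{\mathbb{H}}^{\theta_2}+\rho(X_r)+\eta(Y_r))\,dr\big)$, so the unbounded random coefficient produced by ${\mathbf{H\ref{H2}}}$ is cancelled exactly rather than bounded on a stopped interval; what remains has the deterministic prefactor $C(1+\mathbb{E}\|X_s\|_{\mathbb{H}}^{\theta_2}+\mathbb{E}\|Y_s\|_{\mathbb{H}}^{\theta_2})$ multiplying $-\|Z_s\|_{\mathbb{H}}^2+\mathbb{E}\|Z_s\|_{\mathbb{H}}^2$, whose zero-mean structure is exploited to conclude $\mathbb{E}[\varphi_t\|Z_t\|_{\mathbb{H}}^2]=0$, and positivity of $\varphi_t$ follows since (\ref{es22}) and (\ref{es47}) give $\int_0^T(\rho(X_s)+\eta(Y_s))\,ds<\infty$ a.s. No Gr\"{o}nwall iteration, no stopping times, and no exponential integrability are needed. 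To repair your proposal you should replace the localization/stochastic-Gr\"{o}nwall scheme by this weighted estimate (or an argument supplying the missing exponential moments, which the hypotheses do not provide).
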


\begin{remark}\label{re1}
By the modified Yamada-Watanabe theorem (cf.~\cite[Lemma 2.1]{HW1}), it turns out that Theorem \ref{th2} implies the uniqueness of weak solutions (also martingale solutions) to MVSPDE  (\ref{eqSPDE}) in the sense of Definition \ref{de4}.
\end{remark}

\subsection{Examples/Applications}\label{example}

 The main results of Theorem \ref{th1} and \ref{th2} can be used to get the existence of weak solutions and the existence of unique strong solutions for a large class of distribution dependent SPDEs. In particular, compared to the existing works for SDEs,  our results are also new in the finite-dimensional case. Thus we first apply our results to finite-dimensional SDEs, then we illustrate our main results by various concrete Mckean-Vlasov SPDEs such as  stochastic NSEs, stochastic Cahn-Hilliard equations and stochastic Kuramoto-Sivashinsky equations.

In addition to the examples mentioned above, our main results can also be applied to many other McKean-Vlasov type SPDEs where the coefficients satisfy local monotonicity and polynomial growth conditions.
For instance, the stochastic Burgers equation, stochastic $p$-Laplace equation, stochastic magneto-hydrodynamic equation, stochastic Boussinesq equation, stochastic magnetic B\'{e}nard problem, stochastic Leray-$\alpha$ model, stochastic Allen-Cahn equation, stochastic power law fluid equation, stochastic Ladyzhenskaya model, stochastic Swift-Hohenberg equation and stochastic Liquid crystal model,  for those models one could refer to e.g.~\cite{AV,LR2,LR13,LR1,RSZ,RSZ1}. Here we omit the details to keep down the length of the paper.

\subsubsection*{Example 1: MVSDEs in finite dimensions}\label{sec5.1}
Now we apply our results established in Theorems \ref{th1}, \ref{th2}  to the case of finite dimensions, which  seem to be not covered by the existing works for  distribution dependent SDEs with coefficients depending continuously on the measure variable.

Let $U={\mathbb{H}}=\mathbb{X}=\mathbb{Y}=\mathbb{V}=\mathbb{X}^*=\RR^d$, we use the notations $|\cdot|$ (resp. $\langle\cdot,\cdot\rangle$) to denote the norm (resp. scalar product) on $\RR^d$ and $\|\cdot\|$ to denote the Hilbert-Schmidt norm for linear maps from $\RR^d$ to $\RR^d$. Consider the following MVSDEs
\begin{equation}\label{eq4}
dX_t=b(t,X_t,\mathscr{L}_{X_t})dt+\sigma(t,X_t,\mathscr{L}_{X_t})dW_t,
\end{equation}
where $W_t$ is an $\RR^d$-valued standard Wiener process defined on  $\left(\Omega,\mathscr{F},\{\mathscr{F}_t\}_{t\in[0,T]},\mathbb{P}\right)$.

For the coefficients $b,\sigma$ we assume that
there are some constants $C>0$, $\kappa\geq 2$  such that the following conditions hold.
 \begin{conditionC}\label{C1}
For any $t\in[0,T]$, $b(t,\cdot,\cdot),\sigma(t,\cdot,\cdot)$ are continuous on $\RR^d\times\mathscr{P}_2(\RR^d)$.

\end{conditionC}

\begin{conditionC}\label{C2}
For any $t\in[0,T]$, $u\in \RR^d$ and $\mu\in\mathscr{P}_{2}(\RR^d)$,
\begin{eqnarray*}
2\langle b(t,u,\mu),u\rangle+\|\sigma(t,u,\mu)\|^2\leq C\big(1+|u|^2+\mu(|\cdot|^2)\big).
\end{eqnarray*}
\end{conditionC}

\begin{conditionC}\label{C3}
  For any $t\in[0,T]$, $u\in \RR^d$ and $\mu\in\mathscr{P}_{\kappa}(\RR^d)$,
\begin{equation*}
|b(t,u,\mu)|^2\leq C\big(1+|u|^{\kappa}+\mu(|\cdot|^{\kappa})\big),
\end{equation*}
\begin{equation*}
\|\sigma(t,u,\mu)\|^2\leq C\big(1+|u|^2+\mu(|\cdot|^2)\big).
\end{equation*}
\end{conditionC}

\begin{theorem}\label{th55}
Suppose that $\mathbf{C1}$-$\mathbf{C3}$ hold.
Then for any $X_0\in L^p(\Omega,\mathscr{F}_0,\mathbb{P};\RR^d)$ with $p>\eta_1:=4+2\kappa$,
(\ref{eq4}) has a weak solution satisfying
\begin{equation*}
\mathbb{E}\Big[\sup_{t\in[0,T]}|X_t|^p\Big]<\infty.
\end{equation*}
\end{theorem}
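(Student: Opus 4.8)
The plan is to verify that the finite-dimensional setup $U=\mathbb{H}=\mathbb{X}=\mathbb{Y}=\mathbb{V}=\mathbb{X}^*=\RR^d$ together with conditions $\mathbf{C\ref{C1}}$--$\mathbf{C\ref{C4}}$ is a special case of the abstract hypotheses $\mathbf{H\ref{H1}}$--$\mathbf{H\ref{H5}}$, and then simply invoke Theorems \ref{th1} and \ref{th2}. First I would choose the coercivity/compactness function: since $\mathbb{V}=\RR^d$ with the Euclidean norm, the natural choice is $\mathscr{N}_1(x):=|x|^2$, which clearly lies in $\mathfrak{M}^2$ (it is lower semicontinuous, positively $2$-homogeneous, vanishes only at $0$, its unit ball $\{|x|\le 1\}$ is compact in $\RR^d$, and $\mathbb{H}^n=\RR^d$ for $n\ge d$ so \eqref{es00}--\eqref{es01} hold trivially). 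With $A=b$ and $\sigma=\sigma$ I would then translate each $\mathbf{C}$-condition: $\mathbf{C\ref{C1}}$ gives demicontinuity $\mathbf{H\ref{H1}}$ (continuity on $\RR^d\times\mathscr{P}_2(\RR^d)$ implies the required convergence of $\langle b(t,u_n,\mu_n),v\rangle$ and of $\sigma(t,u_n,\mu_n)^*v$, using that convergence in $\mathscr{P}_2(\mathbb{X}^*)$ here is just $\mathbb{W}_2$-convergence); $\mathbf{C\ref{C2}}$ gives $\mathbf{H\ref{H3}}$ after absorbing $-\mathscr{N}_1(u)=-|u|^2$ into the right-hand side (enlarging $C$); $\mathbf{C\ref{C3}}$ gives the growth bound $\mathbf{H\ref{H4}}$ with, say, $\gamma_1=2$, $\gamma_2=\kappa$, $\theta_1=\kappa$, since $|b(t,u,\mu)|^2\le C(1+|u|^\kappa+\mu(|\cdot|^\kappa))\le C(1+\mathscr{N}_1(u)+\mu(|\cdot|^\kappa))(1+|u|^\kappa+\mu(|\cdot|^\kappa))$ and the bound on $\|\sigma\|^2$ is identical; finally $\mathbf{C\ref{C4}}$ gives the local monotonicity $\mathbf{H\ref{H2}}$ with $\theta_2=\kappa$, $\beta=\kappa$, $\rho(u)=C(1+|u|^\kappa)=\eta(u)$, which are bounded on balls and satisfy \eqref{es22} since $\rho(u)+\eta(u)\le C(1+|u|^\kappa)\le C(1+\mathscr{N}_1(u))(1+|u|^\kappa)$. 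Condition $\mathbf{H\ref{H5}}$ follows from $\mathbf{C\ref{C3}}$ with $q=2$, $q/(q-1)=2$.

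With these identifications, Theorem \ref{th1} applies once $p>\eta_0=\theta_1\vee \frac{2(q+\gamma_2)}{\gamma_1}\vee 2(2+\gamma_2)=\kappa\vee (2+\kappa)\vee (4+2\kappa)=4+2\kappa$, which is exactly the threshold $\eta_1=4+2\kappa$ stated in the theorem; this yields the weak solution and the moment bound \eqref{es48}, whose first term is precisely $\mathbb{E}[\sup_{t}|X_t|^p]<\infty$. Adding $\mathbf{C\ref{C4}}$ activates $\mathbf{H\ref{H2}}$, and Theorem \ref{th2} then gives the unique strong solution provided $p>\eta_1=\eta_0\vee(\beta+2)\vee\theta_2=(4+2\kappa)\vee(2+\kappa)\vee\kappa=4+2\kappa$, again matching the stated range.

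The only genuinely non-routine point is the bookkeeping of exponents: one must check that the particular numerology $\gamma_1=2$, $\gamma_2=\theta_1=\theta_2=\beta=\kappa$ simultaneously satisfies the structural constraints imposed in Section \ref{wellposed} (namely $\gamma_2\ge\gamma_1>1$, $\theta_1>1$, $\beta,\theta_2>1$, and $q\ge 2$ with $q=2$ here) and reproduces the claimed moment threshold $4+2\kappa$. I would also take a moment to confirm that in this degenerate embedding chain the weak topology on $L^q([0,T];\mathbb{V})$ and the space $\mathbb{K}_2$ used in the proof of Theorem \ref{th1} collapse harmlessly (everything is finite-dimensional, $\mathbb{X}\subset\mathbb{H}$ is trivially compact), so no extra hypothesis is needed. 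Apart from this verification, there is nothing further to prove. \hfill$\Box$
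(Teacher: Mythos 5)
Your proposal is correct and follows exactly the paper's route: the paper's proof of Theorem \ref{th55} is the one-line observation that the result follows from Theorems \ref{th1} and \ref{th2} by taking $\mathscr{N}_1(u)=|u|^2$ with $q=2$, and your exponent bookkeeping ($\gamma_1=2$, $\gamma_2=\theta_1=\theta_2=\beta=\kappa$, giving $\eta_0=\eta_1=4+2\kappa$) correctly fills in the verification the paper leaves implicit.
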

\begin{proof}
The results follow directly from Theorem \ref{th1}  by taking $\mathscr{N}_1(u)=|u|^2$ with $q=2$.
\end{proof}
\begin{remark}
The existence  of strong solutions to (\ref{eq4}) under $\mathbf{C1}$-$\mathbf{C3}$ and certain local monotonicity conditions have been studied in the recent work \cite{HHL}. However,   Theorem \ref{th55} is new for the existence of weak solutions to (\ref{eq4}) under $\mathbf{C1}$-$\mathbf{C3}$, which is not covered by  \cite{HHL} or other papers in the literature and has its own interest.
\end{remark}

\vspace{2mm}
In the sequel, we aim to illustrate our main results by applications to a number of SPDE models. To this end, we  first introduce some notations and spaces which are commonly used in analysis.

Let $\mathscr{O}\subset\mathbb{R}^d$, $d\geq 1$,  be a bounded open domain with smooth boundary.
Let $(L^p(\mathscr{O};\mathbb{R}^d),\|\cdot\|_{L^p})$ be the space of $L^p$-integrable functions on $\mathscr{O}$, and for any $m\geq0$, we denote by $(W^{m,p}(\mathscr{O};\mathbb{R}^d),\|\cdot\|_{m,p})$ the Sobolev space defined on $\mathscr{O}$ taking values in $\mathbb{R}^d$ with the norm
$$ \|f\|_{m,p} := \left( \sum_{ 1\leq|\alpha|\leq m} \int_{\mathscr{O}} |D^\alpha f|^pd x \right)^\frac{1}{p}.$$
For the sake of simplicity,  we use $\|\cdot\|_{m}$ to denote the norm on $W^{m,2}(\mathscr{O};\mathbb{R}^d)$. As usual, we also use $C_{c}^{\infty}(\mathscr{O}; \mathbb{R}^{d})$ to denote the space of all infinitely differentiable $d$-dimensional vector fields with compact support in the domain $\mathscr{O}$.

The following is the Sobolev inequality, which will be frequently used in the proof.
\begin{lemma}\label{lem7}
Let $q>1$, $p\in[q, \infty)$ and
$$\frac{1}{q}+\frac{m}{d}=\frac{1}{p}.$$
Suppose that $f\in W^{m,p}(\mathscr{O};\mathbb{R}^d)$, then $f\in L^q(\mathscr{O};\mathbb{R}^d)$ and there exists $C>0$ such that the following inequality holds
$$\|f\|_{L^q}\leq C\|f\|_{m,p}.$$
\end{lemma}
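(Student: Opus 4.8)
This is the classical Sobolev (Gagliardo--Nirenberg--Sobolev) embedding inequality on a bounded domain, so the plan is to follow the standard textbook argument. The key reduction is that it suffices to treat the first-order case $m=1$: the general case follows by iterating the first-order estimate, and because $\mathscr{O}$ is bounded and we work with Dirichlet boundary conditions, the Poincar\'e inequality lets one bound the full norm $\|f\|_{m,p}$ in terms of $\|D^m f\|_{L^p}$, so only the top-order derivatives need to be estimated.

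For $m=1$ I would first establish the Gagliardo--Nirenberg--Sobolev inequality
\[
\|g\|_{L^{d/(d-1)}(\RR^d)}\le \prod_{i=1}^d \Big(\int_{\RR^d}|\partial_i g|\,dx\Big)^{1/d}
\]
for $g\in C_c^\infty(\RR^d,\RR^d)$, by writing $|g(x)|\le\int_{-\infty}^{x_i}|\partial_i g|\,dt$ for each coordinate $i$, multiplying these $d$ pointwise bounds together, and then integrating successively in $x_1,\dots,x_d$, applying the generalized H\"older inequality at each stage. Applying this to $g=|f|^{\gamma}$ with $\gamma=\frac{(d-1)p}{d-p}$ and using H\"older once more yields $\|f\|_{L^{p^*}}\le C\|\nabla f\|_{L^p}$ with $p^*=\frac{dp}{d-p}$, first for $f\in C_c^\infty(\mathscr{O},\RR^d)$ (extended by zero to $\RR^d$) and then, by density of $C_c^\infty(\mathscr{O},\RR^d)$ in $W_0^{1,p}(\mathscr{O},\RR^d)$, for all $f\in W_0^{1,p}$.

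For general $m$ I would introduce the intermediate exponents $p_0=p$ and $\frac{1}{p_k}=\frac1p-\frac{k}{d}$ for $k=1,\dots,m$; the hypothesis $\frac1q+\frac md=\frac1p$ is precisely the statement $p_m=q$. Iterating the first-order embedding, $D^{m-1}f\in W_0^{1,p}\Rightarrow D^{m-1}f\in L^{p_1}$, hence $D^{m-2}f\in W_0^{1,p_1}\Rightarrow D^{m-2}f\in L^{p_2}$, and so on; after $m$ steps one obtains $f\in L^{p_m}=L^q$ with $\|f\|_{L^q}\le C\|D^m f\|_{L^p}$. Finally the bounded-domain Poincar\'e inequality $\|h\|_{L^p}\le C\|\nabla h\|_{L^p}$ on $W_0^{1,p}$, applied to $h=D^{|\alpha|}f$ for all $|\alpha|<m$, absorbs every lower-order term into $\|D^m f\|_{L^p}$, giving $\|f\|_{L^q}\le C\|f\|_{m,p}$.

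The one point requiring care is to check that none of the intermediate exponents $p_{k-1}$ reaches the forbidden value $d$, for which the plain embedding $W^{1,p_{k-1}}\hookrightarrow L^{p_k}$ would fail; but this is automatic here. Since $q>1$ we have $\frac1p-\frac md=\frac1q>0$, hence for $1\le k\le m$,
\[
\frac{1}{p_{k-1}}=\frac1p-\frac{k-1}{d}=\frac1q+\frac{m-k+1}{d}\ge \frac1q+\frac1d>\frac1d ,
\]
so $p_{k-1}<d$ at every step and each application of the first-order inequality stays in the subcritical regime $p_{k-1}<d$. Thus the iteration is legitimate and the proof closes; this borderline bookkeeping is the only place where the precise hypothesis $\frac1q+\frac md=\frac1p$ is used.
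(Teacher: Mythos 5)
Your proof is correct, but there is no argument in the paper to compare it against: Lemma \ref{lem7} is simply recalled as the classical Sobolev embedding and is stated without proof (just as the interpolation inequality of Lemma \ref{lem8} is quoted from Taira's book). What you give is the standard textbook derivation: the first-order Gagliardo--Nirenberg--Sobolev inequality via the coordinate-wise integration and generalized H\"older argument, applied to $|f|^{\gamma}$ to reach $\|f\|_{L^{p^*}}\le C\|\nabla f\|_{L^p}$, extended from $C_c^\infty(\mathscr{O},\mathbb{R}^d)$ to $W_0^{1,p}$ by zero extension and density, and then iterated through the exponents $\frac{1}{p_k}=\frac1p-\frac kd$. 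The only delicate point is precisely the one you verify, namely that every intermediate exponent stays subcritical; your computation $\frac{1}{p_{k-1}}=\frac1q+\frac{m-k+1}{d}>\frac1d$ (using $q<\infty$) is right, so the iteration is legitimate. Two small remarks. First, the closing Poincar\'e step is superfluous: the iteration already gives $\|f\|_{L^q}\le C\|D^mf\|_{L^p}\le C\|f\|_{m,p}$, since the top-order term is part of the norm $\|\cdot\|_{m,p}$; what the iteration does need (and you implicitly use) is that $D^{\alpha}f\in W_0^{m-|\alpha|,p}$ for $|\alpha|<m$, which follows by approximating $f$ in $W_0^{m,p}$ by $C_c^\infty$ functions. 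Second, the hypotheses of Lemma \ref{lem7} as printed are inconsistent when $m\ge1$: the relation $\frac1q+\frac md=\frac1p$ forces $q>p$, contradicting $p\in[q,\infty)$; your argument proves the intended (standard) statement in which $q$ is the larger, critical exponent, i.e.\ the range condition should read $q\in[p,\infty)$.
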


We also recall the Gagliardo-Nirenberg interpolation inequality (cf. \cite[Theorem 2.1.5]{Taira}) for later use.
\begin{lemma}\label{lem8}
If $m,n\in\mathbb{N}$ and $q\in[1,\infty]$ satisfying
$$
\frac{1}{q}=\frac{1}{2}+\frac{n}{d}-\frac{m \theta}{d},\ \frac{n}{m}\le\theta\le1,
$$
then there is a constant $C>0$ such that
\begin{equation}
\|f\|_{n,q}\le C\|f\|_{m}^{\theta}\|f\|_{L^2}^{1-\theta},\ \ f\in W^{m,2}(\mathscr{O}; \mathbb{R}^d).\label{GN_inequality}
\end{equation}
\end{lemma}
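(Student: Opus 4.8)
The plan is to prove the inequality first on all of $\mathbb{R}^d$ and then descend to the bounded domain $\mathscr{O}$. Since $f\in W_0^{m,2}(\mathscr{O},\mathbb{R}^d)$ is by definition a limit of $C_c^\infty(\mathscr{O},\mathbb{R}^d)$ functions, its extension by zero $\tilde f$ lies in $W^{m,2}(\mathbb{R}^d,\mathbb{R}^d)$ with $\|\tilde f\|_{W^{m,2}(\mathbb{R}^d)}=\|f\|_m$ and $\|\tilde f\|_{L^2(\mathbb{R}^d)}=\|f\|_{L^2}$, while $\|f\|_{n,q}\le\|\tilde f\|_{W^{n,q}(\mathbb{R}^d)}$ because restricting the integrals back to $\mathscr{O}$ only decreases them. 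Hence it suffices to establish the bound on $\mathbb{R}^d$. As a guiding principle I note that the exponent hypothesis is exactly the scaling (dimensional balance) relation: under $f\mapsto f(\lambda\,\cdot)$ the top-order term of $\|f\|_{n,q}$ scales like $\lambda^{\,n-d/q}$ and the right-hand side like $\lambda^{\,\theta m-d/2}$, and equating the two powers gives precisely $\tfrac1q=\tfrac12+\tfrac{n}{d}-\tfrac{m\theta}{d}$. This shows the relation is forced and dictates the algebra below; it also reveals that the constraint $\theta\ge n/m$ corresponds to $q\ge2$ (while $\theta\le 1$ and $q\le\infty$ cap the range).

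For the whole-space estimate I would pass to the Fourier side. Set $\sigma:=d(\tfrac12-\tfrac1q)\ge0$; the exponent hypothesis rearranges to the single identity $n+\sigma=m\theta$. Assume first $q<\infty$, so that $0\le\sigma<d/2$. I chain three standard facts. \textbf{(i)} The fractional Sobolev embedding $\dot{H}^{\sigma}(\mathbb{R}^d)\hookrightarrow L^q(\mathbb{R}^d)$, valid on this range of $\sigma$, gives $\|D^\alpha\tilde f\|_{L^q}\le C\|D^\alpha\tilde f\|_{\dot{H}^\sigma}$ for every multi-index $\alpha$ with $|\alpha|=n$. \textbf{(ii)} The Fourier-multiplier bound $\|D^\alpha\tilde f\|_{\dot{H}^\sigma}=\big\||\xi|^{\sigma}\xi^\alpha\widehat{\tilde f}\big\|_{L^2}\le\big\||\xi|^{n+\sigma}\widehat{\tilde f}\big\|_{L^2}=\|\tilde f\|_{\dot{H}^{n+\sigma}}$. \textbf{(iii)} The $L^2$-interpolation of homogeneous Sobolev norms: for $0\le s\le m$, writing $|\xi|^{2s}|\widehat{\tilde f}|^2=\big(|\xi|^{2m}|\widehat{\tilde f}|^2\big)^{s/m}\big(|\widehat{\tilde f}|^2\big)^{1-s/m}$ and applying H\"{o}lder with conjugate exponents $m/s$ and $m/(m-s)$ yields $\|\tilde f\|_{\dot{H}^{s}}\le\|\tilde f\|_{\dot{H}^m}^{\,s/m}\|\tilde f\|_{L^2}^{\,1-s/m}$. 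Taking $s=n+\sigma=m\theta$ and combining (i)--(iii) with $\|\tilde f\|_{\dot{H}^m}\le\|\tilde f\|_{W^{m,2}}=\|f\|_m$ gives $\|D^\alpha\tilde f\|_{L^q}\le C\|f\|_m^{\,\theta}\|f\|_{L^2}^{\,1-\theta}$.

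It remains to assemble the full norm $\|f\|_{n,q}$. For each lower order $0\le j<n$ I would run the identical three-step argument with $n$ replaced by $j$, obtaining $\|D^\beta\tilde f\|_{L^q}\le C\|f\|_m^{\,\theta_j}\|f\|_{L^2}^{\,1-\theta_j}$ with $\theta_j=(j+\sigma)/m\le\theta$ for $|\beta|=j$. Since $\mathscr{O}$ is bounded and $f\in W_0^{m,2}$, the Poincar\'{e} inequality gives $\|f\|_{L^2}\le C\|f\|_m$, hence $\|f\|_m^{\theta_j}\|f\|_{L^2}^{1-\theta_j}=\|f\|_m^{\theta}\|f\|_{L^2}^{1-\theta}\big(\|f\|_{L^2}/\|f\|_m\big)^{\theta-\theta_j}\le C\,\|f\|_m^{\theta}\|f\|_{L^2}^{1-\theta}$ because $\theta-\theta_j\ge0$; summing the finitely many terms of the norm then yields the claim.

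The main obstacle is the non-Hilbertian target: converting the $L^2$-based right-hand side into the $L^q$ left-hand side is exactly what forces the fractional Sobolev (Riesz-potential) embedding of step (i), whose admissible range $0\le\sigma<d/2$ is what translates into the constraint $q\ge2$ (equivalently $\theta\ge n/m$). The genuinely delicate point is the endpoint $q=\infty$ with $\theta=1$, i.e.\ $m-n=d/2$, where $\dot{H}^{d/2}\not\hookrightarrow L^\infty$ and step (i) breaks down; there one must either impose strictness or argue through a Littlewood--Paley/Besov refinement, and for that borderline I would simply invoke \cite[Theorem 2.1.5]{Taira} directly. Away from it, every step reduces to H\"{o}lder's inequality, a standard embedding, and Poincar\'{e}.
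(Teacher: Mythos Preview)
The paper does not prove this lemma; it simply quotes it from \cite[Theorem 2.1.5]{Taira} as a known tool. Your proposal, by contrast, supplies an actual argument, and the route you take---zero-extension to $\mathbb{R}^d$, the Hardy--Littlewood--Sobolev embedding $\dot H^\sigma\hookrightarrow L^q$ with $\sigma=d(\tfrac12-\tfrac1q)$, the Fourier-side H\"older interpolation $\|\tilde f\|_{\dot H^{m\theta}}\le\|\tilde f\|_{\dot H^m}^\theta\|\tilde f\|_{L^2}^{1-\theta}$, and finally Poincar\'e to absorb the lower-order terms---is correct and transparent. You have also correctly identified that the hypothesis $\theta\ge n/m$ forces $q\ge2$, so the restriction $0\le\sigma<d/2$ needed for step~(i) is automatic away from the endpoint, and you are right that at $q=\infty$ the embedding fails and one must either exclude that case or invoke a sharper Besov-type argument (or, as you do, fall back on the cited reference). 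In short: your proof is genuinely more informative than the paper's bare citation, and nothing is missing for the range in which the lemma is actually used later (namely finite~$q$).
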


\subsubsection*{Example 2: Stochastic Navier-Stokes equations}\label{SecNS}
We first apply the main results to the stochastic Navier-Stokes equations.
Let $d=2$~or~$3$, $\mathscr{O}\subset\mathbb{R}^d$ is a bounded open domain with smooth boundary.  We consider the following McKean-Vlasov type stochastic Navier-Stokes equations
 \begin{eqnarray}\label{eqns}
\left\{
 \begin{aligned}
&dX_t=\big[\nu \Delta X_t-(X_t\cdot \nabla) X_t+\nabla p_t+K(t,X_t,\mathscr{L}_{X_t})\big]dt+\sigma(t,X_t,\mathscr{L}_{X_t})dW_t,\\
   &{\rm div}(X_t)=0, \\
    &X_t=0, ~~\text{on}~\partial\mathscr{O},
  \end{aligned}
\right.
\end{eqnarray}
where $\nu$ is the viscosity constant and $W_t$ is an  $U$-valued cylindrical Wiener process defined on $\left(\Omega,\mathscr{F},\{\mathscr{F}_t\}_{t\in[0,T]},\mathbb{P}\right)$.  Without loss of generality, we assume $\nu\equiv1$ in the following.

\vspace{1mm}
Let
\begin{equation}\label{cinfty}
\mathscr{V}:=\{u\in C_{c}^{\infty}(\mathscr{O}; \mathbb{R}^{d}), {\rm div} (u)=0\}.
\end{equation}
For $m\geq0$, we define
$$W_{{div}}^{m,2}(\mathscr{O}):=\text{the completion of}~\mathscr{V}~\text{in}~ W^{m,2}(\mathscr{O}, \mathbb{R}^{d})$$
with the norm denoted by $\|\cdot\|_{m}$.
We set in particular,
$$\mathbb{Y}=\mathbb{H}:=W_{{div}}^{0,2}(\mathscr{O}),~~\mathbb V:=W_{{div}}^{1,2}(\mathscr{O}),~~\mathbb{X}:=W_{div}^{2+d,2}(\mathscr{O}).$$
In space $\mathbb{H}$, the inner product and the norm  are denoted by $(\cdot,\cdot)$ and $\|\cdot\|_{L^2}$ respectively. By the Poincar\'{e} inequality,  we consider the following inner product and (equivalent) norm in space $\mathbb V$
$$ ((u,v)):=( \nabla u,\nabla v), ~\|u\|_{1}:=\|\nabla u\|_{L^2},~u,v\in \mathbb V.$$

Identifying $\mathbb H$ with its dual space by the Riesz isomorphism, then we have
\begin{equation*}
W_{div}^{m,2}(\mathscr{O})\subset \mathbb H(\simeq \mathbb H^*)\subset {W_{div}^{m,2}(\mathscr{O})}^*.
\end{equation*}
In particular, the dual pair between ${\mathbb V}^*$ and ${\mathbb V}$ is denoted by $\langle\cdot,\cdot\rangle$, the norm of ${\mathbb V}^*$ is denoted by $\|\cdot\|_{-1}$.

Let $\mathcal{P}_{\mathbb{H}}:L^{2}(\mathscr{O};\mathbb{R}^{d})\to \mathbb H$ be the orthogonal projection operator on $L^{2}(\mathscr{O};\mathbb{R}^{d})$ onto $\mathbb H$, which is called the Leray-Helmholtz
projection. We define the Stokes operator by
$$Au:=\mathcal{P}_{\mathbb{H}}\big[\Delta u\big],~~\text{for}~u\in \mathscr{D}(A):=W_{div}^{2,2}(\mathscr{O}),$$
and define the bilinear operator $B(\cdot,\cdot): \mathbb V\times \mathbb V\rightarrow \mathbb V^*$  as
$$B(u,v):=\mathcal{P}_{\mathbb{H}}\big[(u\cdot\nabla)v\big],~~\text{for}~u,v\in\mathbb V.$$
Let
$$B(u):=B(u,u),~~\text{for}~u\in\mathbb V.$$
It is well known that for any $u,v,z\in \mathbb V$
\begin{eqnarray}
\langle B(u,v),z\rangle=-\langle B(u,z),v\rangle,\quad \langle B(u,v),v\rangle=0.\label{P1}
\end{eqnarray}

In this section, we suppose that the measurable maps
$$
K:[0,T]\times\mathbb V\times \mathscr{P}(\mathbb{V})\rightarrow \mathbb H,~~\sigma:[0,T]\times \mathbb V\times \mathscr{P}(\mathbb{V})\rightarrow L_2(U;\mathbb H),
$$
satisfy the following conditions.
\begin{conditionC}\label{H7}
For any $t\in[0,T]$ and $v\in \mathbb{X}$, if $u_n,u\in\mathbb{V}$ with $u_n\to u$ in $\mathbb{Y}$ and $\mu_n$ converges  to $\mu$ in $\mathscr{P}_2(\mathbb{X}^*)$, then
\begin{equation*}
\lim_{n\to\infty}\langle K(t,u_n,\mu_n),v\rangle_{\mathbb{H}}=\langle K(t,u,\mu),v\rangle_{\mathbb{H}}
\end{equation*}
and
\begin{equation*}
\lim_{n\to\infty}\|\sigma(t,u_n,\mu_n)^*v\|_{U}=\| \sigma(t,u,\mu)^*v\|_{U}.
\end{equation*}
\end{conditionC}

\begin{conditionC}\label{H8}
There exists a constant $C>0$ such that for any $t\in[0,T]$, $u\in \mathbb V$, $\mu\in \mathscr{P}_2(\mathbb H)$,
\begin{eqnarray*}
\|K(t,u,v)\|_{\mathbb H}+\|\sigma(t,u,\mu)\|_{L_2(U;\mathbb H)}\leq C\big(1+\|u\|_{\mathbb H}+\mu(\|\cdot\|_{\mathbb H})\big).
\end{eqnarray*}
\end{conditionC}

We now rewrite (\ref{eqns}) as
 \begin{eqnarray}\label{eqns222}
\left\{
 \begin{aligned}
&dX_t=\big[ A X_t-B(X_t)+K(t,X_t,\mathscr{L}_{X_t})\big]dt+\sigma(t,X_t,\mathscr{L}_{X_t})dW_t,\\
   &{\rm div}(X_t)=0, \\
    &X_t=0, ~~\text{on}~\partial\mathscr{O}.
  \end{aligned}
\right.
\end{eqnarray}

Moreover, if $m>\frac{d}{2}+1$, the following Sobolev embeddings hold
$$W_{{div}}^{m-1,2}(\mathscr{O})\hookrightarrow\mathscr{C}_{b}(\mathscr{O}, \mathbb{R}^{d})\hookrightarrow L^\infty(\mathscr{O}, \mathbb{R}^{d}).$$
Here, $\mathscr{C}_{b}(\mathscr{O}; \mathbb{R}^{d})$ denotes the space of continuous and bounded $\mathbb R^d$-valued functions defined on $\mathscr{O}$. If $u,v\in\mathbb{V}$ and $z\in W_{{div}}^{m,2}(\mathscr{O})$ with $m>\frac{d}{2}+1$, there exists a constant $C>0$ such that
\begin{eqnarray*}
|\langle B(u,v),z\rangle|\leq \|u\|_{L^2}\|v\|_{L^2}\|\nabla z\|_{L^\infty}\leq C\|u\|_{L^2}\|v\|_{L^2}\|z\|_{m}.
\end{eqnarray*}
Moreover, we have the following results (cf.~Lemma 6.1 in \cite{GRZ}).
\begin{lemma} \label{Property B2}
For any $u,v,z\in \mathbb V$, there exists some generic constant $C$ such that\\
$$\|A(u)-A(v)\|_{\mathbb{X}^*}\leq C\|u-v\|_{L^2}$$
and
\begin{eqnarray*}
\|B(u)-B(v)\|_{\mathbb{X}^*}\leq C(\|u\|_{L^2}+\|v\|_{L^2})\|u-v\|_{L^2}.
\end{eqnarray*}
In particular, we can extend the operators $A$ and $B$ to $\mathbb{H}$ such that for $u\in\mathbb{H}$, $A(u)\in{\mathbb{X}^*}, B(u)\in{\mathbb{X}^*}$.
\end{lemma}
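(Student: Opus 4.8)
The plan is to bound both differences by duality: since $\mathbb{X}^*=(W^{2+d,2}_{div}(\mathscr{O}))^*$, it suffices to estimate $|{}_{\mathbb{X}^*}\langle A(u)-A(v),z\rangle_{\mathbb{X}}|$ and $|{}_{\mathbb{X}^*}\langle B(u)-B(v),z\rangle_{\mathbb{X}}|$ by $C\|u-v\|_{L^2}\|z\|_{2+d}$ (resp.\ $C(\|u\|_{L^2}+\|v\|_{L^2})\|u-v\|_{L^2}\|z\|_{2+d}$) for all $z\in\mathbb{X}$, and then take the supremum over $\|z\|_{\mathbb{X}}\le 1$. The whole point of choosing $\mathbb{X}$ so regular is that a test function $z\in\mathbb{X}$ has $\Delta z\in L^2$ and, by the Sobolev embedding $W^{m-1,2}_{div}(\mathscr{O})\hookrightarrow L^\infty$ recalled above (applicable since $2+d>\tfrac d2+1$ for $d=2,3$), also $\|\nabla z\|_{L^\infty}\le C\|z\|_{2+d}$; moreover ${}_{\mathbb{X}^*}\langle\cdot,\cdot\rangle_{\mathbb{X}}$ restricted to $\mathbb{H}\times\mathbb{X}$ is just $(\cdot,\cdot)$, so we may integrate by parts freely.

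For the Stokes term I would work first with smooth divergence-free $u$ and pass to the limit by density of $\mathscr{V}$ in $\mathbb{V}$. Since $z\in\mathbb{X}\subset\mathbb{H}$ is divergence-free, $\mathcal{P}_{\mathbb{H}}$ can be dropped when pairing against $z$, and two integrations by parts give ${}_{\mathbb{X}^*}\langle A(u),z\rangle_{\mathbb{X}}=(\mathcal{P}_{\mathbb{H}}\Delta u,z)=-(\nabla u,\nabla z)=(u,\Delta z)$, the boundary terms vanishing because $u$ and $z$ both lie in the completion of $\mathscr{V}$ and hence satisfy the Dirichlet condition. Therefore $|{}_{\mathbb{X}^*}\langle A(u)-A(v),z\rangle_{\mathbb{X}}|=|(u-v,\Delta z)|\le\|u-v\|_{L^2}\|\Delta z\|_{L^2}\le C\|u-v\|_{L^2}\|z\|_{2+d}$, which is the first claim.

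For the nonlinear term I would use bilinearity to write $B(u)-B(v)=B(u-v,u)+B(v,u-v)$, then move the regular test function into the middle slot via the identity $(\ref{P1})$, getting ${}_{\mathbb{X}^*}\langle B(u)-B(v),z\rangle_{\mathbb{X}}=-\langle B(u-v,z),u\rangle-\langle B(v,z),u-v\rangle$. Each of these two terms is handled directly by the trilinear inequality displayed just above the lemma, $|\langle B(a,z),b\rangle|\le\|a\|_{L^2}\|b\|_{L^2}\|\nabla z\|_{L^\infty}\le C\|a\|_{L^2}\|b\|_{L^2}\|z\|_{2+d}$, applied with $(a,b)=(u-v,u)$ and $(a,b)=(v,u-v)$; summing yields $|{}_{\mathbb{X}^*}\langle B(u)-B(v),z\rangle_{\mathbb{X}}|\le C(\|u\|_{L^2}+\|v\|_{L^2})\|u-v\|_{L^2}\|z\|_{\mathbb{X}}$, hence the second claim.

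Finally, setting $v=0$ in the two estimates gives $\|A(u)\|_{\mathbb{X}^*}\le C\|u\|_{L^2}$ and $\|B(u)\|_{\mathbb{X}^*}\le C\|u\|_{L^2}^2$ for $u\in\mathbb{V}$; together with the (locally) Lipschitz bounds just established, $A$ and $B$ are uniformly continuous on $\mathbb{H}$-bounded subsets of $\mathbb{V}$ as maps into $\mathbb{X}^*$, and since $\mathbb{V}$ is dense in $\mathbb{H}$ they extend uniquely to continuous maps $\mathbb{H}\to\mathbb{X}^*$ obeying the same inequalities. I do not expect a genuine obstacle; the only steps that deserve a line of care are the double integration by parts for the Stokes operator (done first for $u\in\mathscr{V}$, then extended, and checking that $\mathcal{P}_{\mathbb{H}}$ may be removed because $z$ is divergence-free) and verifying that the regularity exponent $2+d$ of $\mathbb{X}$ exceeds the embedding threshold $\tfrac d2+1$, which holds for $d=2,3$.
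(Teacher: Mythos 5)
Your proof is correct and is essentially the argument the paper relies on: the paper gives no proof of its own but cites Lemma 6.1 of \cite{GRZ}, which is exactly this duality estimate — integrating by parts twice for the Stokes term against $z\in\mathbb{X}$ (using the zero trace of $u$ and $z$), splitting $B(u)-B(v)=B(u-v,u)+B(v,u-v)$ and invoking the antisymmetry \eqref{P1} together with the trilinear bound $|\langle B(a,z),b\rangle|\le\|a\|_{L^2}\|b\|_{L^2}\|\nabla z\|_{L^\infty}\le C\|a\|_{L^2}\|b\|_{L^2}\|z\|_{\mathbb{X}}$ displayed just above the lemma, then extending to $\mathbb{H}$ by density. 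Your attention to the removability of $\mathcal{P}_{\mathbb{H}}$ (since $z$ is divergence-free) and to the embedding threshold $2+d>\tfrac d2+1$ covers the only delicate points, so no gap remains.
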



In order to state the main results of this subsection, we define the function $\mathscr{N}_1$ on $\mathbb{Y}$ as follows,
$$
\mathscr{N}_1(u):=\begin{cases} \| u\|_{1}^2,~~~~\text{if}~u\in W_{{div}}^{1,2}(\mathscr{O})&\quad\\
+\infty,~~~~~\text{otherwise}.&\quad\end{cases}$$
Then we can easily deduce that $\mathscr{N}_1\in\mathfrak{M}^2$.

\vspace{1mm}
We obtain the existence and uniqueness of solutions to (\ref{eqns}) by the following theorem.
\begin{theorem}\label{thn1}
Suppose that  $\mathbf{C4}$-$\mathbf{C5}$ hold and  $\mathbf{H4}$ holds for $K,\sigma$ with $q=\beta=\theta_2=2$.
Then for any $X_0\in L^p(\Omega,\mathscr{F}_0,\mathbb{P};{\mathbb{H}})$ with $p>8$,
MVSNSE (\ref{eqns222}) has a weak solution in the sense of Definition \ref{de1}. Furthermore, the following estimates hold
\begin{equation*}
\mathbb{E}\Big[\sup_{t\in[0,T]}\|X_t\|_{L^2}^p\Big]+\mathbb{E}\int_0^T\|X_t\|_{1}^2dt<\infty.
\end{equation*}
 In particular, in the case $d=2$, (\ref{eqns222}) has a unique (probabilistically) strong solution in the  sense of Definition \ref{de2}.
\end{theorem}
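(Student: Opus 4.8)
The plan is to verify that the McKean-Vlasov stochastic Navier-Stokes equation \eqref{eqns222}, rewritten in the abstract form \eqref{eqSPDE} with $A(t,u,\mu):=A u-B(u)+K(t,u,\mu)$, fits the abstract framework of Subsection \ref{weaksolution}, so that Theorem \ref{th1} (for $d=2,3$) and Theorem \ref{th2} (for $d=2$) apply directly. First I would fix the spaces: $U$ arbitrary separable Hilbert, $\mathbb{Y}=\mathbb{H}=W_{div}^{0,2}(\mathscr{O})$, $\mathbb{V}=W_{div}^{1,2}(\mathscr{O})$, $\mathbb{X}=W_{div}^{2+d,2}(\mathscr{O})$, which gives the required continuous dense chain $\mathbb{X}\subset\mathbb{V}\subset\mathbb{Y}\subset\mathbb{H}$, with $\mathbb{X}\hookrightarrow\mathbb{H}$ compact by Rellich--Kondrachov. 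With $\mathscr{N}_1(u)=\|u\|_1^2$ on $W_{div}^{1,2}(\mathscr{O})$ and $+\infty$ otherwise, one checks $\mathscr{N}_1\in\mathfrak{M}^2$ (lower semicontinuity and compactness of its sublevel sets follow from the compact embedding $\mathbb{V}\hookrightarrow\mathbb{H}$, \eqref{es00} is immediate with $q=2$, and \eqref{es01} holds since on the finite-dimensional space $\mathbb{H}^n$ all norms are equivalent). The Gelfand triple $\mathbb{V}\subset\mathbb{H}\subset\mathbb{V}^*$ is the standard one for NSE.

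Next I would verify \textbf{H\ref{H1}} (demicontinuity). Linearity of $A$ gives continuity of $u\mapsto{}_{\mathbb{X}^*}\langle Au,v\rangle_{\mathbb{X}}=\langle u,Av\rangle_{\mathbb{H}}$ even in the $\mathbb{H}$-topology; for the bilinear term one uses the bound $|\langle B(u,v),z\rangle|\le C\|u\|_{L^2}\|v\|_{L^2}\|z\|_m$ recorded before Lemma \ref{Property B2} with $z=l\in\mathbb{X}$, so $u_n\to u$ in $\mathbb{H}$ forces ${}_{\mathbb{X}^*}\langle B(u_n),l\rangle_{\mathbb{X}}\to{}_{\mathbb{X}^*}\langle B(u),l\rangle_{\mathbb{X}}$; and the convergence of $K$ and $\sigma^*$ is exactly \textbf{C\ref{H7}}. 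For \textbf{H\ref{H3}} (coercivity), using $\langle B(u),u\rangle=0$ from \eqref{P1} and $\langle Au,u\rangle=-\|u\|_1^2=-\mathscr{N}_1(u)$ together with the linear-growth bound for $K$ and $\sigma$ from \textbf{C\ref{H8}} and Young's inequality to absorb the $\|u\|_1$ that never appears, one gets $2{}_{\mathbb{X}^*}\langle A(t,u,\mu),u\rangle_{\mathbb{X}}+\|\sigma\|_{L_2(U;\mathbb{H})}^2\le-\mathscr{N}_1(u)+C(1+\|u\|_{L^2}^2+\mu(\|\cdot\|_{\mathbb{H}}^2))$; here $q=2$ and we may take $\theta_1=2$. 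For \textbf{H\ref{H4}} (growth), Lemma \ref{Property B2} gives $\|Au\|_{\mathbb{X}^*}\le C\|u\|_{L^2}$ and $\|B(u)\|_{\mathbb{X}^*}\le C\|u\|_{L^2}^2$, and \textbf{C\ref{H8}} gives $\|K(t,u,\mu)\|_{\mathbb{H}}\le C(1+\|u\|_{\mathbb{H}}+\mu(\|\cdot\|_{\mathbb{H}}))$; combining, $\|A(t,u,\mu)\|_{\mathbb{X}^*}\le C(1+\|u\|_{L^2}^2+\mu(\|\cdot\|_{\mathbb{H}}))$, which fits \eqref{c1} with, e.g., $\gamma_1=1$, $\gamma_2=2$ (or $\gamma_1=2$, $\gamma_2=4$), and \eqref{c2} is immediate. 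Then $\eta_0=\theta_1\vee\frac{2(q+\gamma_2)}{\gamma_1}\vee 2(2+\gamma_2)$ evaluates to $8$ with these choices (the last term dominates), so Theorem \ref{th1} yields a weak solution for $p>8$ together with the stated energy estimate $\mathbb{E}[\sup_t\|X_t\|_{L^2}^p]+\mathbb{E}\int_0^T\|X_t\|_1^2\,dt<\infty$.

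Finally, for the uniqueness statement in $d=2$ I would verify \textbf{H\ref{H2}} and \textbf{H\ref{H5}} on the triple $\mathbb{V}\subset\mathbb{H}\subset\mathbb{V}^*$. The key is the classical $2$D estimate $|\langle B(u)-B(v),u-v\rangle|=|\langle B(u-v,v),u-v\rangle|\le\|u-v\|_{L^4}^2\|v\|_1\le C\|u-v\|_{L^2}\|u-v\|_1\|v\|_1$, handled by Ladyzhenskaya's inequality $\|w\|_{L^4}^2\le C\|w\|_{L^2}\|w\|_1$ (Lemma \ref{lem8} with $d=2$); applying Young's inequality one absorbs a factor $\|u-v\|_1^2=\mathscr{N}_1(u-v)$ — which is compensated by the $-\mathscr{N}_1$ already used for coercivity in the Yamada--Watanabe argument of Theorem \ref{th2} — leaving a term $C\|v\|_1^2\|u-v\|_{L^2}^2$, so one may take $\rho(u)=0$, $\eta(v)=C\|v\|_1^2=C\mathscr{N}_1(v)$, which satisfies \eqref{es22} with $\beta$ any number $\ge 0$ (say $\beta=2$, so $\mathscr{N}_1(v)(1+\|v\|_{\mathbb{H}}^\beta)$ dominates $\mathscr{N}_1(v)$). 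The contributions of $K$ and $\sigma$ to the local monotonicity inequality are bounded using \textbf{C\ref{H8}}: $\|K(t,u,\mu)-K(t,v,\nu)\|_{\mathbb{H}}^2+\|\sigma(t,u,\mu)-\sigma(t,v,\nu)\|_{L_2(U;\mathbb{H})}^2$ splits into a $\|u-v\|_{\mathbb{H}}^2$-term with coefficient $C(1+\|u\|_{\mathbb{H}}^2+\|v\|_{\mathbb{H}}^2+\mu(\|\cdot\|_{\mathbb{H}})^2+\nu(\|\cdot\|_{\mathbb{H}})^2)$ (absorbable into the $C+\mu(\|\cdot\|_{\mathbb{H}}^{\theta_2})+\nu(\|\cdot\|_{\mathbb{H}}^{\theta_2})$ coefficient by taking $\theta_2=2$ and using $\|u\|_{\mathbb{H}}^2$ times $\|u-v\|_{\mathbb{H}}^2$ estimated via the $\sup$-bound on solutions) and a $\mathbb{W}_{2,\mathbb{H}}(\mu,\nu)^2$-term with coefficient of the required form. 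For \textbf{H\ref{H5}}, $\|A(t,u,\mu)\|_{\mathbb{V}^*}\le\|Au\|_{\mathbb{V}^*}+\|B(u)\|_{\mathbb{V}^*}+\|K\|_{\mathbb{H}}$; here $\|Au\|_{\mathbb{V}^*}\le\|u\|_1$, $\|B(u)\|_{\mathbb{V}^*}\le C\|u\|_{L^2}\|u\|_1$ (again by the $2$D estimate), so $\|A(t,u,\mu)\|_{\mathbb{V}^*}^{q/(q-1)}=\|A\|_{\mathbb{V}^*}^2\le C(1+\|u\|_1^2)(1+\|u\|_{L^2}^2+\mu(\|\cdot\|_{\mathbb{H}}^2))$, matching \eqref{es45} with $q=2$, $\beta=2$, $\theta_2=2$. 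With $p>\eta_1=\eta_0\vee(\beta+2)\vee\theta_2=8\vee4\vee2=8$, Theorem \ref{th2} gives the unique strong solution in $d=2$. \textbf{The main obstacle} is the bookkeeping of the exponents: one must choose $\gamma_1,\gamma_2,\theta_1,\theta_2,\beta,q$ consistently so that all of \textbf{H\ref{H1}}--\textbf{H\ref{H5}} hold simultaneously and so that $\eta_0$ (resp. $\eta_1$) collapses to exactly $8$, and one must make sure the $\|u\|_{\mathbb{H}}^2\,\|u-v\|_{\mathbb{H}}^2$ cross term coming from the diffusion coefficient in \textbf{C\ref{H8}} really can be fed into the allowed $\rho,\eta$ and Wasserstein coefficients without destroying the balance between the dissipation $-\mathscr{N}_1$ and the drift; this is precisely the delicate point flagged by Scheutzow's counterexample in the introduction, and it works here only because $\eta(v)\le C\mathscr{N}_1(v)(1+\|v\|_{\mathbb{H}}^\beta)$ with $\mathscr{N}_1$ itself the dissipation functional.
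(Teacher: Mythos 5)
Your overall route is the same as the paper's: cast \eref{eqns222} in the abstract form with $\mathscr{N}_1(u)=\|u\|_1^2$, $q=2$, verify ${\mathbf{H\ref{H1}}}$--${\mathbf{H\ref{H4}}}$ via Lemma \ref{Property B2}, \eref{P1} and ${\mathbf{C\ref{H7}}}$--${\mathbf{C\ref{H8}}}$ to invoke Theorem \ref{th1}, and in $d=2$ verify ${\mathbf{H\ref{H2}}}$, ${\mathbf{H\ref{H5}}}$ through the estimate \eref{P21} to invoke Theorem \ref{th2}. However, two points as written do not go through. First, the exponent bookkeeping: the framework requires $\gamma_2\geq\gamma_1>1$, so your choice $\gamma_1=1,\gamma_2=2$ is inadmissible, and your alternative $\gamma_1=2,\gamma_2=4$ gives $\eta_0=\theta_1\vee\tfrac{2(q+\gamma_2)}{\gamma_1}\vee2(2+\gamma_2)=12$, i.e.\ a threshold $p>12$ rather than the claimed $p>8$. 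The choice that works (and is the one the paper makes) is $\gamma_1=\gamma_2=\theta_1=2$: the bound $\|B(u)\|_{\mathbb{X}^*}^2\leq C\|u\|_{L^2}^4$ still fits the product form of \eref{c1} because $\|u\|_{L^2}^2\leq C\mathscr{N}_1(u)$ by Poincar\'e, and then $\eta_0=2(2+2)=8$.

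Second, the mechanism you invoke for ${\mathbf{H\ref{H2}}}$ is wrong. The term $\tfrac12\|u-v\|_1^2$ produced by Young's inequality in \eref{unies} is not "compensated by the $-\mathscr{N}_1$ already used for coercivity in the Yamada--Watanabe argument": ${\mathbf{H\ref{H2}}}$ is a pointwise condition on the coefficients and the uniqueness proof (Lemma \ref{uniqueness}) uses it exactly as stated, with no dissipation available there. The absorption must happen inside the verification of ${\mathbf{H\ref{H2}}}$ itself, using the Stokes part of the drift: $2\langle A(u)-A(v),u-v\rangle=-2\|u-v\|_1^2$ dominates the $\tfrac12\|u-v\|_1^2$. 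Relatedly, you cannot take $\rho\equiv0$ and dispose of the $\|u\|_{\mathbb{H}}^2$-, $\|v\|_{\mathbb{H}}^2$-factors coming from \eref{c8} "via the sup-bound on solutions"; in this framework they must be housed in $\rho,\eta$ (measurable, bounded on $\mathbb{V}$-balls, satisfying \eref{es22}), e.g.\ $\rho(u)=\|u\|_1^2+\|u\|_{L^2}^2$ and $\eta(v)=\|v\|_{L^2}^2$ as in the paper, after which $\eta_1=\eta_0\vee(\beta+2)\vee\theta_2=8$ and Theorem \ref{th2} yields the unique strong solution for $p>8$.
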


\begin{proof}
For the existence of weak solutions to MVSNSE (\ref{eqns222}), it suffices to check that assumptions $\mathbf{H1}$-$\mathbf{H3}$ hold.
By Lemma \ref{Property B2} and assumptions $\mathbf{C4}$-$\mathbf{C5}$,
it is easy to see that the demicontinuity condition $\mathbf{H1}$ holds.

For any $u\in\mathbb X$,
$$_{\mathbb{X}^*}\langle B(u),u\rangle_{\mathbb{X}}= \langle B(u),u\rangle=0,$$
by assumption $\mathbf{C5}$, for any $\mu\in\mathscr{P}_{2}({\mathbb{H}})$, we have
\begin{eqnarray*}
2_{\mathbb{X}^*}\langle A(u)-B(u)+K(t,u,\mu),u\rangle_{\mathbb{X}}=\!\!\!\!\!\!\!\!&&-2\|u\|_{1}^2+2_{\mathbb{X}^*}\langle K(t,u,\mu),u\rangle_{\mathbb{X}}
\nonumber \\
\leq\!\!\!\!\!\!\!\!&& -\mathscr{N}_1(u)+C\big(1+\|u\|_{L^2}^2+\mu(\|\cdot\|_{L^2}^2)\big),
\end{eqnarray*}
then the coercivity condition $\mathbf{H2}$ holds.

In view of Lemma \ref{Property B2} and $\mathbf{C5}$, we have
\begin{eqnarray*}
\|A(u)-B(u)+K(t,u,\mu)\|_{\mathbb{X}^*}^2\leq C\big(1+\|u\|_{L^2}^2+\mu(\|\cdot\|_{L^2}^2)\big).
\end{eqnarray*}
Thus $\mathbf{H3}$ holds for $\gamma_1=\gamma_2=\theta_1=2$.
Then according to Theorem \ref{th1}, MVSNSE (\ref{eqns222}) has a weak solution in the sense of Definition \ref{de1}.

Now we turn to prove the local monotonicity condition $\mathbf{H4}$ in the case of $d=2$.
Recall the following estimate
\begin{eqnarray}
|\langle B(u,v),z\rangle|\leq C\|u\|_{L^4}\|v\|_{L^4}\|\nabla z\|_{L^2}\leq C\|u\|_{L^2}^{\frac{1}{2}}\|u\|_{1}^{\frac{1}{2}}\|v\|_{L^2}^{\frac{1}{2}}
\|v\|_{1}^{\frac{1}{2}}\|z\|_{1},~u,v,z\in \mathbb V. \label{P21}
\end{eqnarray}
In particular,
\begin{eqnarray}\label{unies}
|\langle B(u)-B(v),u-v\rangle|=\!\!\!\!\!\!\!\!&&|\langle B(u-v),v\rangle|
\nonumber \\
\leq\!\!\!\!\!\!\!\!&&
 C\|u-v\|_{L^2}\|u-v\|_1\|v\|_1
 \nonumber \\
\leq\!\!\!\!\!\!\!\!&&\frac{1}{2}\|u-v\|_1^2
+ C\|v\|_1^2\|u-v\|_{L^2}^2.
\end{eqnarray}
Then for any $u,v\in {\mathbb{V}}$,
\begin{equation*}
\langle A(u)-B(u)-A(v)+B(v)
,u-v\rangle
\leq
C\big(1+\|u\|_1^2+\|u\|_{L^2}^2+\|v\|_{L^2}^2\big)\|u-v\|_{L^2}^2,
\end{equation*}
which yields that the local monotonicity condition $\mathbf{H4}$ holds with $\rho(u)=\|u\|_1^2+\|u\|_{L^2}^2$ and $\eta(v)=\|v\|_{L^2}^{2}$. It is also easy to see that \eref{es22} holds for $\beta=2$.

By \eref{P21}, we get
\begin{eqnarray*}
\|B(u)\|_{-1}^2\leq C\|u\|_{L^2}^2\|u\|_{1}^2,
\end{eqnarray*}
and thus
\begin{eqnarray*}
\|A(u)-B(u)+K(t,u,\mu)\|_{-1}^2\leq C\|u\|_{L^2}^2\|u\|_{1}^2+C\big(1+\|u\|_{L^2}^2+\mu(\|\cdot\|_{L^2}^2)\big),
\end{eqnarray*}
which implies that the growth condition $\mathbf{H5}$ holds for $q=\theta_2=2$.
Therefore, by Theorem \ref{th2}, there exists a unique (probabilistically) strong solution to (\ref{eqns222}).
 The proof is completed.
\end{proof}
\begin{remark}
The existence of weak solutions for stochastic 3D NSE has been studied a lot in the literature, see e.g. \cite{BM1,GRZ,M14}.  By applying our framework, one can also get the existence of weak solutions for many other McKean-Vlasov stochastic hydrodynamical type evolution equations, such as stochastic magneto-hydrodynamic equation, stochastic Boussinesq equation and stochastic magnetic B\'{e}nard problem when $d=3$, and get the unique strong solution when $d=2$ (cf. \cite{M14}).

In Section \ref{Poc.4}, we will show that the empirical laws of the weakly interacting stochastic Navier-Stokes systems converge in the Wasserstein distance to the law of the unique solution of MVSNSE when $d=2$.
\end{remark}

\subsubsection*{Example 3: Stochastic Cahn-Hilliard equations}\label{sche}

Our next example is the McKean-Vlasov stochastic Cahn-Hilliard system.  Let $\mathscr{O}\subset\mathbb{R}^d$, $d\leq 3$,  be a bounded open domain with smooth boundary. The Cahn-Hilliard equation is a classical model to describe phase separation in a binary alloy, and it is a fundamental phase field model in material science. The deterministic Cahn-Hilliard equation has been studied in the following form
 \begin{eqnarray}\label{E41}
\left\{
 \begin{aligned}
    &\partial_t u=-\Delta^2 u +
\Delta\varphi(u),\;u(0)=u_0,\\[2pt]
    &\nabla u\cdot n = \nabla(\Delta u) \cdot n =
0,\;\text{on}~\partial\mathscr{O},
\\[2pt]&\int_{\mathscr{O}}u(x)dx=0,
  \end{aligned}
\right.
\end{eqnarray}
 where $n$ is the outer unit normal vector on the boundary $\partial\mathscr{O}$.
The reader is referred to \cite{DD96,NC} and the references therein for the study of the Cahn-Hilliard equation
in the deterministic and stochastic case.

 Suppose that $\varphi\in
C^1(\mathbb{R}),$ and that there exists
some positive constants $C>0$, $p\in[2,\frac{d+4}{d}]$ such that
\begin{equation}\label{che}
	\begin{split}
 & \varphi'(x)\geq-C,\ |\varphi(x)|\leq C(1+|x|^{p}),\ x\in\mathbb{R};\\
 & |\varphi(x)-\varphi(y)|\leq C(1+|x|^{p-1}+|y|^{p-1})|x-y|,\ x,y\in\mathbb{R}.
 	\end{split}
\end{equation}
We want to study the following McKean-Vlasov stochastic Cahn-Hilliard
equations (MVSCHEs)
 \begin{eqnarray}\label{CHE}
\left\{
 \begin{aligned}
&dX_t=\big[-\Delta^2X_t+\Delta\varphi(X_t)+ K(t,X_t,\mathscr{L}_{X_t})\big]dt+\sigma(t,X_t,\mathscr{L}_{X_t})dW_t,\\
    &\nabla X_t\cdot n = \nabla(\Delta X_t) \cdot n =
0, ~~\text{on}~\partial\mathscr{O},
\\[2pt]&\int_{\mathscr{O}}X_t(x)dx=0,
  \end{aligned}
\right.
\end{eqnarray}
where $W_t$ is an $U$-valued cylindrical Wiener process defined on $\left(\Omega,\mathscr{F},\{\mathscr{F}_t\}_{t\in[0,T]},\mathbb{P}\right)$.

 Let
$$\mathscr{V}_0:=\Big\{u\in C^\infty(\mathscr{O}):\nabla u\cdot n=\nabla(\Delta u)\cdot n=0 ~\text{on}~\partial\mathscr{O},~\int_{\mathscr{O}}u(x)dx=0\Big\}.$$
For $m\geq0$, we define
$$W_{\sigma}^{m,2}(\mathscr{O}):=\text{the completion of}~\mathscr{V}_0~\text{in}~ W^{m,2}(\mathscr{O}; \mathbb{R}^{d}).$$

We choose $$\mathbb{H}=:W_{\sigma}^{0,2}(\mathscr{O}),~~\mathbb{Y}:=L^p(\mathscr{O}),~~{\mathbb{V}}:=W_{\sigma}^{2,2}(\mathscr{O}),
~~\mathbb{X}:=W_{\sigma}^{4,2}(\mathscr{O}).$$
In particular, we use the following (equivalent) Sobolev norm on $W_{\sigma}^{2,2}(\mathscr{O})$,
 \begin{eqnarray}\label{norm2}
 \|u\|_{2}:=\|\Delta u\|_{L^2}=\left(\int_\mathscr{O}|\Delta u|^2 dx\right)^\frac{1}{2},
 \end{eqnarray}
and define the function $\mathscr{N}_1$ on $\mathbb{Y}$ as follows,

$$
\mathscr{N}_1(u):=\begin{cases} \|u\|_{2}^2,~~~~\text{if}~u\in W_{\sigma}^{2,2}(\mathscr{O})&\quad\\
+\infty,~~~~~\text{otherwise}.&\quad\end{cases}$$
Then it is easy to deduce that $\mathscr{N}_1\in\mathfrak{M}^2$.

\vspace{1mm}
Then we get the following result concerning the existence and uniqueness of solutions to MVSCHE \eref{CHE}.

\begin{theorem}\label{thch1}
Suppose that \eref{che} and $\mathbf{C4}$-$\mathbf{C5}$ hold and  $\mathbf{H4}$ holds for $K,\sigma$ with $\beta=2p-2,q=\theta_2=2$.
Then for any $X_0\in L^r(\Omega,\mathscr{F}_0,\mathbb{P};{\mathbb{H}})$ with $r>4p$,
MVSCHE (\ref{CHE})  has a unique (probabilistically) strong solution in the sense of Definition \ref{de2}. Furthermore, the following estimates hold
\begin{equation*}
\mathbb{E}\Big[\sup_{t\in[0,T]}\|X_t\|_{L^2}^p\Big]+\mathbb{E}\int_0^T\|X_t\|_{2}^2dt<\infty.
\end{equation*}
\end{theorem}

\begin{proof}
Set
$$A(u):=A_1(u)+K(t,u,\mu),$$
where
$$A_1(u):= -\Delta^2 u+\Delta\varphi(u).$$
By Sobolev's inequality (note that the embedding $W_{\sigma}^{2,2}(\mathscr{O})\subset L^\infty(\mathscr{O})$ holds by our
assumption on the dimension $d$), for $u\in\mathscr{V}_0$ we have
\begin{eqnarray*}
|_{\mathbb{V}^*}\langle A_1(u), v\rangle_\mathbb{V}|=\!\!\!\!\!\!\!\!&&|\langle -\Delta u+\varphi(u),\Delta v\rangle_{L^2}|
\nonumber \\
\leq\!\!\!\!\!\!\!\!&& \|v\|_{2}(\|u\|_{2}+\|\varphi(u)\|_{L^2})
\nonumber \\
\leq\!\!\!\!\!\!\!\!&&
  C\|v\|_{2}(1+\|u\|_{2}+\|u\|_{L^\infty}^p)
\nonumber \\
\leq\!\!\!\!\!\!\!\!&& C\|v\|_{2}(1+\|u\|_{2}+\|u\|_{2}^p),~~v\in\mathbb{V}.
\end{eqnarray*}
Therefore, by continuity $A_1$ can be extended to a map from $\mathbb{V}$ to $\mathbb{V}^*$.

By Theorem \ref{th2} and assumptions $\mathbf{C4}$-$\mathbf{C5}$, it suffices to check that $\mathbf{H1}$-$\mathbf{H5}$ hold for $A_1$.
For $v\in\mathbb{X}$ and $u_n,u\in\mathbb{V}$ with $u_n\to u$ in $\mathbb{Y}$, we have
\begin{eqnarray*}
_{\mathbb{X}^*}\langle A_1(u_n)-A_1(u), v\rangle_\mathbb{X}=\!\!\!\!\!\!\!\!&& _{\mathbb{X}^*}\langle\Delta^2(u-u_n)+\Delta(\varphi(u_n)-\varphi(u)),v\rangle_\mathbb{X}
\nonumber \\
\leq\!\!\!\!\!\!\!\!
 && \langle u-u_n,\Delta^2v\rangle_\mathbb{H}
\nonumber\\
\!\!\!\!\!\!\!\!&&+C\|\Delta v\|_{L^\infty}\int_{\mathscr{O}}|\varphi(u(x))-\varphi(u_n(x))|dx\nonumber \\
=:\!\!\!\!\!\!\!\!
 && I_1^n+I_2^n.\nonumber
\end{eqnarray*}
For $I_1^n$, we can deduce that
$$I_1^n\leq C\|v\|_{4}\|u-u_n\|_{L^2}\leq C\|v\|_{4}\|u-u_n\|_{L^p}.$$
For $I_2^n$, by \eref{che}, Poincar\'{e}'s inequality and H\"{o}lder's inequality, we obtain
\begin{eqnarray*}
I_2^n\leq\!\!\!\!\!\!\!\!&& C\|v\|_{4}\int_{\mathscr{O}}(1+|u(x)|^{p-1}+|u_n(x)|^{p-1})|u(x)-u_n(x)|dx\nonumber \\
\leq\!\!\!\!\!\!\!\! &&C\|v\|_{4}(1+\|u\|_{L^p}^{p-1}+\|u_n\|_{L^p}^{p-1})\|u_n-u\|_{L^p},
\end{eqnarray*}
where we have used the Sobolev embedding $W_{\sigma}^{4,2}(\mathscr{O}) \subseteq W_{\sigma}^{2,\infty}(\mathscr{O}) $ for $d\leq3$.
Thus, $I_1^n+I_2^n\rightarrow0$ as $n\rightarrow\infty$, which implies that $\mathbf{H1}$ holds.

Now let us prove $\mathbf{H2}$. Since $\varphi'$ is lower bounded by the interpolation inequality \eref{GN_inequality} we have for any $u\in\mathbb{X}$,
\begin{eqnarray*}
_{\mathbb{X}^*}\langle A_1(u), u\rangle_\mathbb{X}=\!\!\!\!\!\!\!\!&& _{\mathbb{X}^*}\langle-\Delta^2u+\Delta\varphi(u),u\rangle_\mathbb{X}
\nonumber \\=\!\!\!\!\!\!\!\!&&-\|u\|_{2}^2-\int_{\mathscr{O}}\varphi'(u)|\nabla u|^2dx\nonumber \\\leq\!\!\!\!\!\!\!\!&&-\|u\|_{2}^2+C\|u\|_{1}^2
\nonumber \\\leq\!\!\!\!\!\!\!\!&&-\frac{1}{2}\|u\|_{2}^2+C\|u\|_{L^2}^2
\nonumber \\=\!\!\!\!\!\!\!\!&&-\frac{1}{2}\mathscr{N}_1(u)+C\|u\|_{L^2}^2.
\end{eqnarray*}
By the interpolation inequality \eref{GN_inequality} with $n=0,q=2p,m=2$ and $\theta:=\frac{(p-1)d}{4p}$, for any $u\in\mathbb{V}$, we have
\begin{eqnarray*}
\| A_1(u)\|_{\mathbb{V}^*}=\!\!\!\!\!\!\!\!&& \|-\Delta^2u+\Delta\varphi(u)\|_{\mathbb{V}^*}
\nonumber \\\leq\!\!\!\!\!\!\!\!&&C\|u\|_{2}+\|\varphi(u)\|_{L^2}
\nonumber \\\leq\!\!\!\!\!\!\!\!&&C\|u\|_{2}+C(1+\|u\|_{L^{2p}}^p)
\nonumber \\\leq\!\!\!\!\!\!\!\!&&C\|u\|_{2}+C(1+\|u\|_{2}^{\theta p}\|u\|_{L^2}^{(1-\theta)p})
\nonumber\\=\!\!\!\!\!\!\!\!&&C\|u\|_{2}+C(1+\|u\|_{2}^{\theta p}\|u\|_{L^2}^{1-\theta p}\|u\|_{L^2}^{p-1}).
\end{eqnarray*}
 Since $p\leq\frac{4}{d}+1$ ($\theta p=\frac{(p-1)d}{4}\leq
1$) and $\|v\|_{L^2}\leq C\|v\|_2$, this implies that
\begin{eqnarray*}
\| A_1(u)\|_{\mathbb{V}^*}^2\leq C(1+\|u\|_2^2)(1+\|u\|_{L^2}^{2p-2})\leq C(1+\mathscr{N}_1(u))(1+\|u\|_{L^2}^{2p-2}),~\text{for}~ u\in\mathbb{V}.
\end{eqnarray*}
Therefore by assumptions $\mathbf{C5}$, we can see that the growth conditions
$\mathbf{H3}$ and $\mathbf{H5}$ hold with $\gamma_1=2,\gamma_2=\beta=2p-2,q=\theta_1=\theta_2=2.$

Finally, for any $u,v\in {\mathbb{V}}$, we have
\begin{eqnarray*}
_{{\mathbb{V}}^*}\langle A_1(u)-A_1(v),u-v\rangle_{\mathbb{V}}=
\!\!\!\!\!\!\!\!&&{_{{\mathbb{V}}^*}\langle}-\Delta^2(u-v)+\Delta\varphi(u)-\Delta\varphi(v),u-v\rangle_{\mathbb{V}}
\nonumber \\\leq\!\!\!\!\!\!\!\!&&
-\|u-v\|_{2}+\|u-v\|_{2}\|\varphi(u)-\varphi(v)\|_{L^2}
\nonumber \\\leq\!\!\!\!\!\!\!\!&&
-\frac{1}{2}\|u-v\|_{2}+C(1+\|u\|_{L^\infty}^{2p-2}+\|v\|_{L^\infty}^{2p-2})\|u-v\|_{L^2}^2,
\end{eqnarray*}
which yields that the local monotonicity condition $\mathbf{H2}$ holds with $\eta(u)=\rho(u)=C\|u\|_{L^\infty}^{2p-2}$. Moreover, by the interpolation inequality \eref{GN_inequality} with $n=0, q=\infty, m=2,
\theta=\frac{d}{4}$, and $2\leq p\leq \frac{d+4}{d}$, for any $u\in {\mathbb{V}}$, we have
$$\eta(u)=\rho(u)=C\|u\|_{L^\infty}^{2p-2}\leq C\|u\|_{2}^{2\theta(p-1)}\|u\|_{L^2}^{2(1-\theta(p-1))}\|u\|_{L^2}^{2(p-1)-2}\leq C\|u\|_{2}^2\|u\|_{L^2}^{2p-4},$$
which shows that \eref{es22}  holds.

Therefore, the assertion follows by applying Theorem \ref{th2}.
\end{proof}
\begin{remark}
Note that when (\ref{CHE}) is driven by additive noise and without distribution dependence, the well-posedness is studied in \cite{LR13} using the theory of pseudo-monotone operator. However, such method is quite difficult to apply in the case of multiplicative noise. Our method is essentially different from \cite{LR13}, which is not only applicable in case the noise is multiplicative, but also cover the case of the distribution dependent coefficients.
\end{remark}

\subsubsection*{Example 4: Stochastic Kuramoto-Sivashinsky equations}

The Kuramoto-Sivashinsky equation was  initially introduced in the papers by Kuramoto \cite{YK78} and Sivashinsky \cite{SG80} as a model to describe flame propagation. The
equation in the one dimension case has the following form
\begin{equation}\label{KS1}
		\partial_{t} u = -\partial_{x}^{4}u - \partial_{x}^{2} u - u \partial_{x} u.
	\end{equation}
We can see that the first two terms on the right-hand side of \eref{KS1} are of Cahn-Hilliard type (with $\varphi(x)=-x$ in (\ref{E41})), and the last term is of Burgers type. The stochastic Kuramoto-Sivashinsky equation has been studied a lot in the literature, see e.g. \cite{GLS,Yang12}.

Let $\mathscr{O} = (-L,L)$, $L > 0$. We consider the following McKean-Vlasov stochastic Kuramoto-Sivashinsky equations (MVSKSEs) with periodic boundary conditions,
 \begin{eqnarray}\label{KSE}
\left\{
 \begin{aligned}
&d X_t = \big[ -\partial_{x}^{4}X_t - \partial_{x}^{2} \varphi(X_t) - X_t \partial_{x} X_t+K(t,X_t,\mathscr{L}_{X_t})\big] dt + \sigma(t,X_t,\mathscr{L}_{X_t})dW_{t},\\
    & X_t(x-L) = X_t(x+L),~\int_{\mathscr{O}}X_t(x)dx=0,
  \end{aligned}
\right.
\end{eqnarray}
where $W_t$ is an $U$-valued cylindrical Wiener process defined on $\left(\Omega,\mathscr{F},\{\mathscr{F}_t\}_{t\in[0,T]},\mathbb{P}\right)$, and $\varphi$ satisfies the condition \eref{che} with $2\leq p\leq5$.

Let
$$\mathscr{V}_0:=\Big\{u\in C^\infty(\mathscr{O}): u(x-L,t) =  u(x+L,t), \int_{\mathscr{O}}u(x)dx=0 \Big\}.$$
For $m\geq0$, we define
$$W_{\sigma}^{m,2}(\mathscr{O}):=\text{the completion of}~\mathscr{V}_0~\text{in}~ W^{m,2}(\mathscr{O}; \mathbb{R}).$$

To formulate the main results, we choose $$\mathbb{H}:=W_{\sigma}^{0,2}(\mathscr{O}),~~\mathbb{Y}:=L^p(\mathscr{O}),~~{\mathbb{V}}:=
W_{\sigma}^{2,2}(\mathscr{O}),
~~\mathbb{X}:=W_{\sigma}^{4,2}(\mathscr{O}).$$
Define the function $\mathscr{N}_1$ on $\mathbb{Y}$ as follows,

$$
\mathscr{N}_1(u):=\begin{cases} \|u\|_{2}^2,~~~~\text{if}~u\in W_{\sigma}^{2,2}(\mathscr{O})&\quad\\
+\infty,~~~~~\text{otherwise},&\quad\end{cases}$$
where $\|\cdot\|_{2}^2$ denotes the  (equivalent) Sobolev norm on $W_{\sigma}^{2,2}(\mathscr{O})$ (see \eref{norm2}),
then it is easy to deduce that $\mathscr{N}_1\in\mathfrak{M}^2$.

\vspace{1mm}
The following gives the existence and uniqueness of strong solutions to MVSKSEs \eref{KSE}.

\begin{theorem}\label{thks1}
Suppose that $\mathbf{C4}$-$\mathbf{C5}$ hold and  $\mathbf{H4}$ holds for $K,\sigma$ with $\beta=2p-2,q=\theta_2=2$.
Then for any $X_0\in L^r(\Omega,\mathscr{F}_0,\mathbb{P};{\mathbb{H}})$ with $r>4p$,
MVSKSE (\ref{KSE})  has a unique (probabilistically) strong solution in the sense of Definition \ref{de2}. Furthermore, the following estimates hold
\begin{equation*}
\mathbb{E}\Big[\sup_{t\in[0,T]}\|X_t\|_{L^2}^p\Big]+\mathbb{E}\int_0^T\|X_t\|_{2}^2dt<\infty.
\end{equation*}
\end{theorem}

\begin{proof}
We write
$$A(u):=A_1(u)+A_2(u)+K(t,u,\mu),$$
where
$$A_1(u):=- \partial_{x}^{4} u + \partial_{x}^{2} \varphi (u), ~~A_2(u):=- u \partial_{x}u.$$
Note that $A_1$ has been extended to an operator from $\mathbb{V}$ to $\mathbb{V}^*$ as in the previous example, where the conditions $\mathbf{H1}$-$\mathbf{H5}$ hold for $A_1$. Therefore, we only need to show that $A_2$ is well defined and satisfies the conditions in Theorem \ref{th2}.
First,  for $u \in \mathbb{X}$ by the interpolation inequality
 \begin{align}
	 	\left|~_{\mathbb V^{*}}\langle u \partial_{x} u, v \rangle_{\mathbb V} \right| = \frac{1}{2} \left| \langle \partial_{x} (u^{2}), v \rangle_{{\mathbb H} } \right| \leq \frac{1}{2} \| u \|_{L^{4}}^{2} \| \partial_{x} v \|_{L^{2}} \leq C \| u \|_{2} \| u \|_{L^2} \| v \|_{2}.\label{ks3}
	 \end{align}
Thus $A_2$ can be extended to an operator from $\mathbb{V}$ to $\mathbb{V}^*$.
For $v\in\mathbb{X}$ and $u_n,u\in\mathbb{V}$ with $u_n\to u$ in $\mathbb{Y}$, we have
\begin{eqnarray*}
_{\mathbb{X}^*}\langle A_2(u_n)-A_2(u), v\rangle_\mathbb{X}=\!\!\!\!\!\!\!\!&&-_{\mathbb{X}^*}\langle u_n \partial_{x}u_n-u \partial_{x}u,v\rangle_\mathbb{X}
\nonumber \\
=\!\!\!\!\!\!\!\!
 && _{\mathbb{X}^*}\langle u_n \partial_{x}v,u_n-u\rangle_\mathbb{X}+{_{\mathbb{X}^*}}\langle (u_n-u) \partial_{x}v,u\rangle_\mathbb{X}
\nonumber \\\leq\!\!\!\!\!\!\!\!
 && \|\partial_{x}v\|_{L^\infty}\|u_n\|_{L^{2}}\|u_n-u\|_{L^{2}}+ \|\partial_{x}v\|_{L^\infty}\|u\|_{L^{2}}\|u_n-u\|_{L^{2}}
  \nonumber \\\leq\!\!\!\!\!\!\!\!
 && C\|v\|_{4}(\|u_n\|_{L^2}+\|u\|_{L^2})\|u_n-u\|_{L^p},
  \nonumber
\end{eqnarray*}
where we used the embedding $W_{\sigma}^{4,2}(\mathscr{O}) \subseteq W_{\sigma}^{2,2}(\mathscr{O}) \subseteq W_{\sigma}^{1,\infty}(\mathscr{O}) $. Thus $\mathbf{H1}$ holds. For $\mathbf{H2}$  we
note that $_{\mathbb{X}^*}\langle A_2(u),u\rangle_\mathbb{X}=0$. Moreover, by \eref{ks3} we know that the growth conditions
$\mathbf{H3}$ and $\mathbf{H5}$ hold with $\gamma_1=\gamma_2=q=2$ and $\beta=2p-2.$
Finally, for any $u,v\in {\mathbb{V}}$, we have
\begin{eqnarray*}
	_{\mathbb{V}^{*}}\langle A_2(u) - A_2(v), u - v \rangle_{\mathbb{V}}
=\!\!\!\!\!\!\!\!&&_{\mathbb{V}^{*}}\langle u \partial_{x} u - v \partial_{x} v, u - v \rangle_{\mathbb{V}}
\nonumber \\
=\!\!\!\!\!\!\!\!
 &&\frac{1}{2} \int_{\mathscr{O}} (u-v)^{2} \partial_{x} v d x\leq C \| \partial_{x} v \|_{L^{\infty}} \| u - v \|_{L^2}^{2}.
\end{eqnarray*}
Thus the local monotonicity condition $\mathbf{H4}$ holds with another term $\rho_{A_2}(v) = \|\partial_{x} v \|_{L^{\infty}}$, which is bounded on balls in $W_{\sigma}^{2,2}(\mathscr{O})$. By the embedding $W_{\sigma}^{2,2}(\mathscr{O}) \subseteq W_{\sigma}^{1,\infty}(\mathscr{O})$, it is easy to see that \eref{es22} holds.

Therefore, the assertion follows by applying Theorem \ref{th2}.
\end{proof}
\begin{remark}
Yang \cite{Yang12} studied the  stochastic Kuramoto-Sivashinsky equation with $\varphi(x)=-x$, which was extended by Gess et al. \cite{GLS} to more general $\varphi$ with growth order $p\leq2$. Here, by applying our framework, we can consider the stochastic Kuramoto-Sivashinsky equation with growth order $p\in [2,5]$, which is new even in the distribution independent case. In particular, the function $\varphi$ can be taken as the typical example $\varphi(x)=x^3-x$, which is
the derivative of the double well potential $F(x)=\frac{1}{4}(x^2-1)^2$.
\end{remark}

\section{Proofs of Theorem \ref{th1} and \ref{th2}}\label{proof1}
\setcounter{equation}{0}
 \setcounter{definition}{0}

In this part, we shall prove Theorems \ref{th1} and \ref{th2}. In Subsection \ref{sec2.2}, we will prove that the result about the existence of weak solutions  holds in finite dimensions by constructing a suitable cut-off function. In Subsections \ref{sec2.3} and \ref{sec2.4}, we will give the proof of the existence of weak solutions to MVSPDE (\ref{eqSPDE}). In Subsection \ref{proof2}, we will prove the existence and uniqueness of (probabilistically) strong solutions to MVSPDE (\ref{eqSPDE}).

\subsection{Weak existence in finite dimensions}\label{sec2.2}
In this subsection, we intend to prove Theorem \ref{th1} in the case $U={\mathbb{H}}=\mathbb{X}=\mathbb{Y}=\mathbb{V}=\mathbb{X}^*=\RR^d$. For simplicity, we use $|\cdot|$ (resp. $\langle\cdot,\cdot\rangle$) to denote the norm (resp. scalar product) on $\RR^d$ and $\|\cdot\|$ to denote the Hilbert-Schmidt norm from $\RR^d$ to $\RR^d$.

To construct a weak solution of (\ref{eqSPDE}) in $\RR^d$, we first introduce a localizing  approximation through a suitable cut-off function. More precisely, for any $n\in\mathbb{N}$ we define
\begin{equation}\label{cutoff}
\psi_n(u)=\frac{nu}{n\vee|u|},~~u\in\RR^d,
\end{equation}
and
$$A^n(t,u,\mu)=A(t, \psi_n(u),\mu\circ\psi_n^{-1}),~\sigma^n(t,u,\mu)=\sigma(t, \psi_n(u),\mu\circ\psi_n^{-1}),~t\in[0,T].$$
Note that the weak convergence is equivalent to the strong convergence in $\RR^d$. Then by $\mathbf{H1}$ and $\mathbf{H3}$, it is clear that
\begin{eqnarray}
\!\!\!\!\!\!\!\!&&(i)~~ A^n(t,\cdot,\cdot), \sigma^n(t,\cdot,\cdot)~ \text{are continuous for any $t\in[0,T]$ }.\label{well-pose1}
 \\
\!\!\!\!\!\!\!\!&&(ii)~~ A^n, \sigma^n~ \text{are bounded (the bounds depend on $n$)}.\label{well-pose2}
\end{eqnarray}
On the other hand, by (\ref{es01}) the coefficients $A^n,\sigma^n$ satisfy  $\mathbf{H3}$ with $|u|^q$ replacing $\mathscr{N}_1(u)$ and with $C>0$ which is independent of $n$. In addition, the following lemma holds.

\begin{lemma}\label{cut-off}
There exists a constant $C>0$ (independent of $n$) such that for any $t\in[0,T]$, $u\in \RR^d$ and $\mu\in\mathscr{P}_{2}(\RR^d)$,
\begin{equation}\label{es59}
\langle A^n(t,u,\mu),u\rangle+\|\sigma^n(t,u,\mu)\|^2\leq C\big(1+|u|^2+\mu(|\cdot|^2)\big).
\end{equation}
\end{lemma}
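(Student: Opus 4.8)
The plan is to verify \eqref{es59} by splitting the left-hand side into the part coming from the drift and the part coming from the diffusion, and estimating each using the fact that $\psi_n$ contracts both the spatial variable and (via pushforward) the measure variable. First I would record the two elementary facts about the cut-off function $\psi_n$ defined in \eqref{cutoff}: namely $|\psi_n(u)|\le |u|$ for all $u\in\RR^d$, and more precisely $|\psi_n(u)|=|u|\wedge n$, and that $\mu\circ\psi_n^{-1}$ is supported in the ball of radius $n$ with $(\mu\circ\psi_n^{-1})(|\cdot|^2)=\int|\psi_n(u)|^2\mu(du)\le \mu(|\cdot|^2)$. These are immediate from the definition since $\psi_n(u)=nu/(n\vee|u|)$ scales $u$ by a factor in $(0,1]$.

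Next I would apply the coercivity estimate ${\mathbf{C\ref{C2}}}$ (equivalently ${\mathbf{H\ref{H3}}}$ specialized to $\RR^d$ with $\mathscr{N}_1(u)=|u|^2$, recalling $\langle A(t,u,\mu),u\rangle$ is what appears there after halving), but the subtlety is that it gives control of $\langle A(t,\psi_n(u),\mu\circ\psi_n^{-1}),\psi_n(u)\rangle$, i.e. the inner product against $\psi_n(u)$, not against $u$. Since $\psi_n(u)=c_n(u)\,u$ with $c_n(u)=n/(n\vee|u|)\in(0,1]$ a nonnegative scalar, we have $\langle A^n(t,u,\mu),u\rangle=c_n(u)^{-1}\langle A^n(t,u,\mu),\psi_n(u)\rangle$ when $|u|>n$ (and the two agree when $|u|\le n$). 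This introduces a factor $c_n(u)^{-1}=|u|/n$ which could be large; to control it I would combine the coercivity bound with the growth bound. More precisely, on the region $|u|\le n$ we simply have $A^n(t,u,\mu)=A(t,u,\mu\circ\psi_n^{-1})$ evaluated at the genuine point $u$, so ${\mathbf{C\ref{C2}}}$ applied directly (with $\mu$ replaced by $\mu\circ\psi_n^{-1}$, using $(\mu\circ\psi_n^{-1})(|\cdot|^2)\le\mu(|\cdot|^2)$) gives exactly \eqref{es59} with the claimed constant. On the region $|u|>n$ one writes $\langle A^n(t,u,\mu),u\rangle\le |A^n(t,u,\mu)|\,|u|$ and invokes the (uniform in $n$) growth bound ${\mathbf{C\ref{C3}}}$ for $A^n$ — which, as noted just before the lemma, holds with $|u|^q=|u|^\kappa$ in place of $\mathscr{N}_1$ and a constant independent of $n$ because of \eqref{es01}, together with the boundedness of $\psi_n(u)$ by $n<|u|$. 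The hard part is checking that this crude Cauchy–Schwarz bound on $\{|u|>n\}$ still yields the \emph{quadratic} right-hand side $C(1+|u|^2+\mu(|\cdot|^2))$ rather than a higher power; this works precisely because on that set $|\psi_n(u)|\le n\le |u|$, so $|A^n(t,u,\mu)|\le |A(t,\psi_n(u),\mu\circ\psi_n^{-1})|\le C(1+n^{\kappa/2}+\mu(|\cdot|^2)^{1/2})\le C(1+|u|)$ uniformly in $n$ (absorbing the $\mu$-term which is bounded by a constant once we note it is finite for fixed $\mu$, or keeping it as $\mu(|\cdot|^2)^{1/2}$), and then $|A^n(t,u,\mu)|\,|u|\le C(1+|u|^2+\mu(|\cdot|^2))$.

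For the diffusion term I would simply use that $\|\sigma^n(t,u,\mu)\|^2=\|\sigma(t,\psi_n(u),\mu\circ\psi_n^{-1})\|^2$ and apply the linear-growth bound for $\sigma$ (the second inequality in ${\mathbf{C\ref{C3}}}$, or \eqref{c2}), getting $\|\sigma^n(t,u,\mu)\|^2\le C(1+|\psi_n(u)|^2+(\mu\circ\psi_n^{-1})(|\cdot|^2))\le C(1+|u|^2+\mu(|\cdot|^2))$ directly from $|\psi_n(u)|\le|u|$ and the pushforward inequality, with $C$ independent of $n$. Adding the drift and diffusion estimates over the two regions $\{|u|\le n\}$ and $\{|u|>n\}$ and taking the larger of the two constants gives \eqref{es59}. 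I do not expect any genuine obstacle here beyond bookkeeping the $|u|>n$ case carefully; the whole point of the specific form of $\psi_n$ in \eqref{cutoff} (as opposed to a smooth truncation) is precisely that $|\psi_n(u)|\le |u|$ with equality of scaling direction, which makes both the coercivity transfer and the pushforward estimate on the measure clean.
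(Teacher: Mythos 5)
Your treatment of the region $\{|u|\le n\}$ and of the diffusion term is correct and matches the paper, but the way you handle the drift on $\{|u|>n\}$ has a genuine gap. There you discard the coercivity structure and estimate $\langle A^n(t,u,\mu),u\rangle\le |A^n(t,u,\mu)|\,|u|$, bounding $|A^n|$ by the growth condition. Under ${\mathbf{C\ref{C3}}}$ the growth exponent $\kappa$ is only assumed to satisfy $\kappa\ge 2$ (and in the abstract setting ${\mathbf{H\ref{H4}}}$ the drift grows like $|u|^{(q+\gamma_2)/\gamma_1}$, which is superlinear in general), so on $\{|u|>n\}$ the best the growth bound gives is $|A^n(t,u,\mu)|\le C\big(1+n^{\kappa/2}+\dots\big)$; your step $n^{\kappa/2}\le C(1+|u|)$ is false unless $\kappa\le 2$. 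Concretely, for $\kappa=4$ and $|u|=n+1$ your bound produces a term of order $n^{2}|u|\sim |u|^{3}$, which is not dominated by $C(1+|u|^2+\mu(|\cdot|^2))$, so the Cauchy--Schwarz-plus-growth route cannot deliver the quadratic right-hand side in the generic superlinear case that the lemma is designed for.

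The fix is essentially contained in the identity you already wrote down but then abandoned: since $\psi_n(u)=c_n(u)u$ with $c_n(u)=n/(n\vee|u|)$, one has $\langle A^n(t,u,\mu),u\rangle=\tfrac{n\vee|u|}{n}\,\langle A(t,\psi_n(u),\mu\circ\psi_n^{-1}),\psi_n(u)\rangle$, and the \emph{coercivity} condition ${\mathbf{C\ref{C2}}}$ (not the growth condition) applied at the truncated point gives $\langle A(t,\psi_n(u),\mu\circ\psi_n^{-1}),\psi_n(u)\rangle\le C\big(1+|\psi_n(u)|^2+(\mu\circ\psi_n^{-1})(|\cdot|^2)\big)\le C(1+2n^2)$ on $\{|u|>n\}$, using $|\psi_n(u)|=n$ and $(\mu\circ\psi_n^{-1})(|\cdot|^2)\le n^2$. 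Multiplying by the nonnegative factor $\tfrac{|u|}{n}\ge 1$ (legitimate because the right-hand side is nonnegative) yields $C\tfrac{|u|}{n}(1+2n^2)\le C(1+|u|^2)$ since $n\le |u|$, with $C$ independent of $n$. This is exactly the paper's argument: the whole point is that coercivity produces a bound quadratic in $n$ regardless of how fast $A$ grows, and the scalar factor $|u|/n$ then converts it into a bound quadratic in $|u|$, whereas the growth condition alone introduces powers of $n$ that destroy the quadratic structure.
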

\begin{proof}
Firstly, by $\mathbf{H2}$, there exists a constant $C>0$ independent of $n$ such that
\begin{eqnarray*}
\langle A^n(t,u,\mu),u\rangle=\!\!\!\!\!\!\!\!&&\frac{n\vee|u|}{n}\langle A(t, \psi_n(u),\mu\circ\psi_n^{-1}),\psi_n(u)\rangle
\nonumber \\
 \leq\!\!\!\!\!\!\!\!&&C\frac{n\vee|u|}{n}\big(1+| \psi_n(u)|^2+\mu\circ\psi_n^{-1}(|\cdot|^2)\big).
\end{eqnarray*}
Note that
\begin{eqnarray}\label{cut1}
\mu\circ\psi_n^{-1}(|\cdot|^2)=\!\!\!\!\!\!\!\!&&\int_{\mathbb{R}^d}|\psi_n(y)|^2\mu(dy)
\nonumber \\
 =\!\!\!\!\!\!\!\!&&\int_{\mathbb{R}^d}\frac{n^2|y|^2}{(n\vee|y|)^2}\mu(dy)
\nonumber \\
\leq\!\!\!\!\!\!\!\!&& \int_{\{n\leq |y|\}}n^2\mu(dy)+\int_{\{|y|\leq n\}}|y|^2\mu(dy)
\nonumber \\
\leq\!\!\!\!\!\!\!\!&&\mu(|\cdot|^2).
\end{eqnarray}
On the other hand, it is easy to see that
\begin{eqnarray}\label{cut2}
\mu\circ\psi_n^{-1}(|\cdot|^2)\leq\!\!\!\!\!\!\!\!&&n^2.
\end{eqnarray}

If $n\geq|u|$, by (\ref{cut1}), it is obvious that each $A^n$ satisfies (\ref{es59}) with $C>0$ which is independent of $n$. If $n\leq |u|$, by (\ref{cut2}) we deduce that
\begin{eqnarray*}
\langle A^n(t,u,\mu),u\rangle\leq C\frac{|u|}{n}\big(1+2n^2 \big)
 \leq C\big(1+|u|^2\big),
\end{eqnarray*}
where the constant $C>0$ is independent of $n$. Therefore, we obtain that for any $t\in[0,T]$, $u\in \RR^d$ and $\mu\in\mathscr{P}_{2}(\RR^d)$, there exists a constant $C>0$ which is independent of $n$ such that
\begin{eqnarray*}
\langle A^n(t,u,\mu),u\rangle\leq C\big(1+|u|^2+\mu(|\cdot|^2)\big).
\end{eqnarray*}
By similar arguments, we obtain
$$\|\sigma^n(t,u,\mu)\|^2\leq C\big(1+|u|^2+\mu(|\cdot|^2)\big).$$
This completes the proof of the lemma.
\end{proof}

\vspace{1mm}
Let $X^0_t\equiv X_0$. For any $n\in\mathbb{N}$, we introduce the following approximating equation
\begin{equation}\label{eq2}
X^n_t=X^n_0+\int_0^tA^n(s,X^n_s,\mu^n_s)ds+\int_0^t\sigma^n(s,X^n_s,\mu^n_s)dW_s,
\end{equation}
where $\mu^n_s:=\mathscr{L}_{X^n_s}$.
Following from \cite{HSS}, by claims (\ref{well-pose1}) and (\ref{well-pose2}) we know that there exists a weak solution
$$(\Omega^n,\mathscr{F}^n,\{\mathscr{F}^n_t\}_{t\in[0,T]},\PP^n; X^n,W^n)~~\text{to (\ref{eq2})}$$
with $\PP^n \circ (X^n_0)^{-1}=\mu_0$.

In what follows, we denote by $ \mathbb{E}^n$ the expectation with respect to the measure $\PP^n$.
The following lemma contains some  uniform  estimates and time H\"{o}lder continuity estimates for the approximating solution $X^{n}_t$ to (\ref{eq2}), which are crucial in the proof of tightness.
\begin{lemma}\label{pro1}
Suppose that the assumptions in Theorem \ref{th1}  hold. Then for any  $p>\eta_0$,  there is $C_{p,T}>0$ independent of $n$ such that
\begin{equation}\label{es6}
\mathbb{E}^n\Big[\sup_{t\in[0,T]}|X^{n}_t|^p\Big]+\mathbb{E}^n\int_0^T\mathscr{N}_1(X^{n}_t)dt\leq C_{p,T}.
\end{equation}
Moreover, there exist $r>2$, $C_{p,T}>0$ independent of $n$  such that for any $t,s\in[0,T]$ we have
\begin{equation}\label{es1}
\mathbb{E}^n|X^{n}_t-X^{n}_s|^{r}\leq C_{p,T}|t-s|^{\frac{r}{2}}.
\end{equation}
\end{lemma}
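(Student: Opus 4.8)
The plan is to read off all three estimates from the integral equation \eref{eq2}, using that $A^n,\sigma^n$ are bounded and continuous by \eref{well-pose1}--\eref{well-pose2}, so that for each fixed $n$ the solution $X^n$ has finite moments of every order; the whole issue is to make the bounds independent of $n$. For the moment bound I would apply It\^o's formula to $|X^n_t|^p$ (using $\nabla|x|^p=p|x|^{p-2}x$ and that the second‑order It\^o term is dominated by $\tfrac p2|X^n_s|^{p-2}\big(2\langle A^n_s,X^n_s\rangle+\|\sigma^n_s\|^2\big)+C|X^n_s|^{p-2}\|\sigma^n_s\|^2$), insert the cut-off coercivity estimate \eref{es59} of Lemma \ref{cut-off} for the first bracket, and bound the leftover by $\|\sigma^n(t,u,\mu)\|^2\le C(1+|u|^2+\mu(|\cdot|^2))$ (valid for $\sigma^n$ with $n$‑independent constant since $|\psi_n(u)|\le|u|$ and \eref{cut1}). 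This gives
$$|X^n_t|^p\le|X_0|^p+C\int_0^t|X^n_s|^{p-2}\big(1+|X^n_s|^2+\mu^n_s(|\cdot|^2)\big)\,ds+M_t,\qquad \mu^n_s(|\cdot|^2)=\mathbb{E}|X^n_s|^2,$$
with $M$ a martingale satisfying $\langle M\rangle_t\le C\int_0^t|X^n_s|^{2p-2}\|\sigma^n_s\|^2\,ds$. Applying Young's inequality ($|X^n_s|^{p-2}\mu^n_s(|\cdot|^2)\le C|X^n_s|^p+C\mu^n_s(|\cdot|^2)^{p/2}$), then Burkholder--Davis--Gundy to $\sup_{t\le T'}M_t$ and absorbing $\tfrac12\mathbb{E}\big[\sup_{t\le T'}|X^n_t|^p\big]$, one is left with $g_n(T')\le C\mathbb{E}|X_0|^p+C\int_0^{T'}\big(1+g_n(s)+\mu^n_s(|\cdot|^2)^{p/2}\big)\,ds$, where $g_n(s):=\mathbb{E}\big[\sup_{r\le s}|X^n_r|^p\big]$; since $\mu^n_s(|\cdot|^2)^{p/2}=(\mathbb{E}|X^n_s|^2)^{p/2}\le\mathbb{E}|X^n_s|^p\le g_n(s)$ by Jensen, Gronwall's lemma yields the first term of \eref{es6} with $C_{p,T}$ independent of $n$.

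For the $\mathscr{N}_1$‑term in \eref{es6} I would apply It\^o's formula to $|X^n_t|^2$ and this time keep the negative term: on $\{|X^n_s|\le n\}$ the cut-off is inactive, so ${\mathbf{H\ref{H3}}}$ gives $2\langle A^n_s,X^n_s\rangle+\|\sigma^n_s\|^2\le-\mathscr{N}_1(X^n_s)+C(1+|X^n_s|^2+\mu^n_s(|\cdot|^2))$, while on $\{|X^n_s|>n\}$ one uses the finite‑dimensional bound $\mathscr{N}_1(X^n_s)\le C|X^n_s|^q$ coming from \eref{es01}, which is controlled by the moment bound since $p>\eta_0$. Taking expectations removes the stochastic integral, and moving $\mathbb{E}\int_0^T\mathscr{N}_1(X^n_s)\mathbbm{1}_{\{|X^n_s|\le n\}}\,ds$ to the left and estimating the remainder by the moment bound gives $\mathbb{E}\int_0^T\mathscr{N}_1(X^n_s)\,ds\le C_{p,T}(1+\mathbb{E}|X_0|^p)$, uniformly in $n$.

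For the time-increment estimate \eref{es1}, I write $X^n_t-X^n_s=\int_s^tA^n_u\,du+\int_s^t\sigma^n_u\,dW_u$ and estimate the two pieces by H\"older ($\mathbb{E}\big|\int_s^tA^n_u\,du\big|^r\le|t-s|^{r-1}\mathbb{E}\int_s^t\|A^n_u\|^r\,du$) and by Burkholder--Davis--Gundy with H\"older ($\mathbb{E}\big|\int_s^t\sigma^n_u\,dW_u\big|^r\le C_r|t-s|^{r/2-1}\mathbb{E}\int_s^t\|\sigma^n_u\|^r\,du$). Since $A^n,\sigma^n$ satisfy ${\mathbf{H\ref{H4}}}$ with $|u|^q$ replacing $\mathscr{N}_1(u)$ and with $n$‑independent constant, $\|A^n(u,x,\mu)\|^{\gamma_1}\le C(1+|x|^q+\mu(|\cdot|^{\theta_1}))(1+|x|^{\gamma_2}+\mu(|\cdot|^{\theta_1}))$ and $\|\sigma^n(u,x,\mu)\|^2\le C(1+|x|^2+\mu(|\cdot|^2))$; evaluated at $x=X^n_u$, $\mu=\mu^n_u$ the measure terms $\mu^n_u(|\cdot|^\theta)=\mathbb{E}|X^n_u|^\theta$, $\theta\in\{2,\theta_1\}$, are bounded uniformly in $n$ by the moment bound (here $p>\theta_1$ is used), so $\|A^n_u\|\le C(1+|X^n_u|^{(q+\gamma_2)/\gamma_1})$. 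As $p>\eta_0\ge\tfrac{2(q+\gamma_2)}{\gamma_1}$ one can fix $r\in\big(2,\tfrac{p\gamma_1}{q+\gamma_2}\big]$ (so $r\le p$), and then $\mathbb{E}\int_0^T\|A^n_u\|^r\,du$ and $\mathbb{E}\int_0^T\|\sigma^n_u\|^r\,du$ are finite uniformly in $n$; combining the two pieces gives $\mathbb{E}|X^n_t-X^n_s|^r\le C_{p,T}(|t-s|^r+|t-s|^{r/2})\le C_{p,T}|t-s|^{r/2}$ for $t,s\in[0,T]$.

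The only genuine obstacle is the distribution‑dependence: the terms $\mu^n_s(|\cdot|^2)$ and $\mu^n_s(|\cdot|^{\theta_1})$ in the coercivity and growth bounds could in principle spoil the Gronwall closure and the moment transfer. The point that makes everything go through is that $\mu^n_s(|\cdot|^2)=\mathbb{E}|X^n_s|^2$ (and likewise the $\theta_1$‑version) is controlled by the very quantity $g_n(s)$ appearing on the left-hand side of the inequality, so the scheme closes exactly as in the classical distribution‑free variational setting---and, crucially, with all constants independent of the truncation level $n$, thanks to Lemma \ref{cut-off} and \eref{es01}.
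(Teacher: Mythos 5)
Your proposal is correct and takes essentially the same route as the paper's proof: It\^o's formula for $|X^n_t|^p$ combined with the $n$-uniform coercivity \eref{es59} of Lemma \ref{cut-off}, Young's inequality, BDG and Gronwall for \eref{es6}, and the drift/stochastic-integral decomposition of $X^n_t-X^n_s$ estimated via H\"older and BDG using the truncated growth bounds and $p>\eta_0$ (so that $r$ can be fixed slightly above $2$) for \eref{es1}. The only deviation is the bound on $\mathbb{E}\int_0^T\mathscr{N}_1(X^n_t)\,dt$: the paper reads it off directly from \eref{es01} (i.e.\ $\mathscr{N}_1(x)\le C|x|^q$ in $\RR^d$) together with the $p$-th moment bound, whereas you retain the negative coercivity term in It\^o's formula for $|X^n_t|^2$ on $\{|X^n_s|\le n\}$ and treat $\{|X^n_s|>n\}$ via \eref{es01} --- this is precisely the argument the paper uses later for the Galerkin approximation (Lemma \ref{lem13}) and is equally valid here.
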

\begin{proof}
By (\ref{es59}) and It\^{o}'s formula, we deduce that for any $p\in(\eta_0,\infty)$,
\begin{eqnarray*}
|X^{n}_t|^p=\!\!\!\!\!\!\!\!&&|X_0^n|^p+\frac{p(p-2)}{2}\int_0^t|X^{n}_s|^{p-4}|\sigma^n(s,X^{n}_s,\mu^{n}_s)^*X^{n}_s|^2ds
\nonumber \\
 \!\!\!\!\!\!\!\!&&+\frac{p}{2}\int_0^t|X^{n}_s|^{p-2}\big(2\langle A^n(s,X^{n}_s,\mu^{n}_s),X^{n}_s\rangle
+\|\sigma^n(s,X^{n}_s,\mu^{n}_s)\|^2\big)ds
\nonumber \\
 \!\!\!\!\!\!\!\!&&+p\int_0^t|X^{n}_s|^{p-2}\langle X^{n}_s,\sigma^n(s,X^{n}_s,\mu^{n}_s)dW^n_s\rangle
\nonumber \\
 \leq\!\!\!\!\!\!\!\!&&|X_0^n|^p+C_p\int_0^t|X^{n}_s|^{p-2}(1+|X^{n}_s|^2+\mathbb{E}^n|X^{n}_s|^2)ds
\nonumber \\
 \!\!\!\!\!\!\!\!&&
+p\int_0^t|X^{n}_s|^{p-2}\langle X^{n}_s,\sigma^n(s,X^{n}_s,\mu^{n}_s)dW^n_s\rangle
\nonumber \\
 \leq\!\!\!\!\!\!\!\!&&|X_0^n|^p+C_p\int_0^t(1+|X^{n}_s|^p+\mathbb{E}^n|X^{n}_s|^p)ds
 \nonumber \\
 \!\!\!\!\!\!\!\!&&
 +p\int_0^t|X^{n}_s|^{p-2}\langle X^{n}_s,\sigma^n(s,X^{n}_s,\mu^{n}_s)dW^n_s\rangle.
\end{eqnarray*}
By BDG's inequality, we obtain
\begin{eqnarray*}
&&p\mathbb{E}^n\Bigg\{\sup_{t\in[0,T]}\Big|\int_0^t|X^{n}_s|^{p-2}\langle X^{n}_s,\sigma^n(s,X^{n}_s,\mu^{n}_s)dW^n_s\rangle\Big|\Bigg\}
 \nonumber \\
 \leq\!\!\!\!\!\!\!\!&& C_p\mathbb{E}^n\Bigg\{\int_0^{T}|X^{n}_t|^{2p-2}\|\sigma^n(t,X^{n}_t,\mu^{n}_t)\|^2dt\Bigg\}^{\frac{1}{2}}
 \nonumber \\
 \leq\!\!\!\!\!\!\!\!&&\ C_p\mathbb{E}^n\Bigg\{\int_0^{T}|X^{n}_t|^{2p-2}(1+|X^{n}_t|^2+\mathbb{E}^n|X^{n}_t|^2)dt\Bigg\}^{\frac{1}{2}}
  \nonumber \\
 \leq\!\!\!\!\!\!\!\!&&\ C_p\mathbb{E}^n\Bigg\{\sup_{t\in[0,T]}|X^{n}_t|^{p}\cdot \int_0^{T}|X^{n}_t|^{p-2}(1+|X^{n}_t|^2+\mathbb{E}^n|X^{n}_t|^2)dt\Bigg\}^{\frac{1}{2}}
 \nonumber \\
 \leq\!\!\!\!\!\!\!\!&&\ \mathbb{E}^n\Bigg\{\frac{1}{2}\sup_{t\in[0,T]}|X^{n}_t|^{p}+C_p \int_0^{T}\big(1+|X^{n}_t|^p+(\mathbb{E}^n|X^{n}_t|^2)^{\frac{p}{2}}\big)dt\Bigg\}
  \nonumber \\
 \leq\!\!\!\!\!\!\!\!&&\frac{1}{2}\mathbb{E}^n\Big[\sup_{t\in[0,T]}|X^{n}_t|^p\Big]
+ C_p\mathbb{E}^n \int_0^{T}(1+|X^{n}_t|^p+\mathbb{E}^n|X^{n}_t|^p)dt,
\end{eqnarray*}
where we have used Young's inequality in the fourth inequality.

This leads to
\begin{eqnarray*}
\!\!\!\!\!\!\!\!&&\mathbb{E}^n\Big[\sup_{t\in[0,T]}|X^{n}_t|^p\Big]
 \nonumber \\
\leq\!\!\!\!\!\!\!\!&&\mathbb{E}^n|X_0^n|^p
+ C_p\mathbb{E}^n \int_0^{T}\big(1+|X^{n}_t|^p+\mathbb{E}^n|X^{n}_t|^p\big)dt+\frac{1}{2}\mathbb{E}^n\Big[\sup_{t\in[0,T]}|X^{n}_t|^p\Big]
\nonumber \\  \leq\!\!\!\!\!\!\!\!&&C_{p,T}\big(1+\mathbb{E}^n|X_0^n|^p\big)
+ C_p\int_0^T\mathbb{E}^n\Big[\sup_{s\in[0,t]}|X^{n}_s|^p\Big]dt+\frac{1}{2}\mathbb{E}^n\Big[\sup_{t\in[0,T]}|X^{n}_t|^p\Big].
\end{eqnarray*}
By  Gronwall's inequality we have
$$ \mathbb{E}^n\Big[\sup_{t\in[0,T]}|X^{n}_t|^p\Big]\leq C_{p,T}\big(1+\mathbb{E}^n|X_0^n|^p\big)=C_{p,T}\big(1+\mu_0(|\cdot|^p)\big)\leq C_{p,T}.$$
In view of the assumption on $\mathscr{N}_1$ (i.e.~(\ref{es01})), we also obtain
$$\mathbb{E}^n\int_0^T\mathscr{N}_1(X^{n}_t)dt\leq C_{p,T}.$$

Concerning (\ref{es1}),  for some $r>2$ and any  $t,s\in[0,T]$, by $\mathbf{H3}$ and (\ref{es01}) we have
\begin{eqnarray*}
\!\!\!\!\!\!\!\!&&\mathbb{E}^n|X^{n}_t-X^{n}_s|^{r}\nonumber \\
\leq\!\!\!\!\!\!\!\!&& C|t-s|^{\frac{r(\gamma_1-1)}{\gamma_1}}\mathbb{E}^n\Big(\int_{s}^{t}|A^n(u,X^{n}_u,\mu^{n}_u)|^{\gamma_1} du\Big)^{\frac{r}{\gamma_1}}
+C\mathbb{E}^n\Big|\int_{s}^{t}\sigma^n(u,X^{n}_u,\mu^{n}_u)dW^n_u\Big|^r
\nonumber \\
\leq  \!\!\!\!\!\!\!\!&&C|t-s|^{\frac{r(\gamma_1-1)}{\gamma_1}}\mathbb{E}^n\Big(\int_{s}^{t}\big(1+|X^{n}_u|^q+|X^{n}_u|^{q+\gamma_2}+| X^{n}_u|^{\gamma_2}+(\mathbb{E}^n|X^{n}_u|^{\theta_1})^2 \big)du\Big)^{\frac{r}{\gamma_1}}
\nonumber \\
  \!\!\!\!\!\!\!\!&&
+C|t-s|^{\frac{r-2}{2}}\int_{s}^{t}\Big(1+\mathbb{E}^n
|X^{n}_u|^{r}\Big)du
\nonumber \\
\leq\!\!\!\!\!\!\!\!&&C_{p,T}|t-s|^r+C_{p,T}|t-s|^{\frac{r}{2}}
+C|t-s|^{r-1}\Big(\int_{s}^{t}\mathbb{E}^n|X^{n}_u|^{\frac{(q+\gamma_2)r}{\gamma_1}} du\Big)
\nonumber \\
\leq\!\!\!\!\!\!\!\!&&C_{p,T}|t-s|^{\frac{r}{2}},
\end{eqnarray*}
where we used (\ref{es6}) in the last step. The proof is completed.
\end{proof}

\vspace{1mm}
The existence of weak solutions to (\ref{eqSPDE}) in $\RR^d$ is the content of  the following theorem.

  \begin{theorem}\label{pro3}
Suppose that the assumptions in Theorem \ref{th1}  hold. Then for any  $X_0\sim\mu_0\in \mathscr{P}_p(\mathbb{R}^d)$ with $p>\eta_0$, there exists a weak solution $(\Omega,\mathscr{F},\{\mathscr{F}_t\}_{t\in[0,T]},\PP; X,W)$ to (\ref{eqSPDE}). Moreover, the following estimates hold
\begin{equation}\label{es19}
\mathbb{E}\Big[\sup_{t\in[0,T]}|X_t|^{p}\Big]+\mathbb{E}\int_0^T\mathscr{N}_1(X_t)dt<\infty.
\end{equation}
\end{theorem}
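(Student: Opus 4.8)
The plan is to pass to the limit in the localized approximations (\ref{eq2}) by a stochastic compactness argument. By Lemma \ref{pro1} the weak solutions $(X^n,W)$ of (\ref{eq2}) satisfy moment bounds and a time-increment estimate that are uniform in $n$; combining (\ref{es6}) with (\ref{es1}) and the Kolmogorov--Chentsov tightness criterion, the laws of the pairs $(X^n,W)$ (on path space, so that the possibly $n$-dependent stochastic bases are irrelevant) form a tight family on $C([0,T];\RR^d)\times C([0,T];\RR^d)$; moreover, since $\sup_n\mathbb{E}[\sup_{t\in[0,T]}|X^n_t|^p]<\infty$ with $p>2$, for each $s$ the time marginals $\{\mu^n_s:=\mathscr{L}_{X^n_s}\}_n$ are uniformly integrable and hence tight in $\mathscr{P}_2(\RR^d)$.

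Next I would apply the Skorokhod representation theorem: along a subsequence (not relabelled) there exist, on a new probability space $(\bar\Omega,\bar{\mathscr{F}},\bar{\PP})$, random variables $(\bar X^n,\bar W^n)$ with $\mathscr{L}_{(\bar X^n,\bar W^n)}=\mathscr{L}_{(X^n,W)}$ and a limit $(\bar X,\bar W)$ with $\bar X^n\to\bar X$, $\bar W^n\to\bar W$ $\bar{\PP}$-a.s.\ in $C([0,T];\RR^d)$. One checks in the usual way that $\bar W^n$, hence $\bar W$, is a standard $\RR^d$-Wiener process with respect to its natural (augmented) filtration, that each $(\bar X^n,\bar W^n)$ solves (\ref{eq2}) on $\bar\Omega$, and that $\mathscr{L}_{\bar X^n_s}=\mu^n_s$. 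From the a.s.\ convergence together with the uniform $p$-th moment bound ($p>2$, uniform integrability of $|\bar X^n_s|^2$) one gets $\mathbb{E}|\bar X^n_s|^2\to\mathbb{E}|\bar X_s|^2$, hence $\mu^n_s\to\mu_s:=\mathscr{L}_{\bar X_s}$ in $\mathscr{P}_2(\RR^d)$ for each $s\in[0,T]$.

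The core of the proof is the identification of the limit. For a.e.\ $(s,\omega)$ one has $\psi_n(\bar X^n_s)\to\bar X_s$ (since $\psi_n(u)\to u$ pointwise, $|\psi_n(u)|\le|u|$, and $\bar X^n_s\to\bar X_s$); moreover, since $\psi_n$ coincides with the identity on any fixed ball for $n$ large, combining this with $\mu^n_s\to\mu_s$ in $\mathscr{P}_2$ and the bound (\ref{cut1}) gives $\mu^n_s\circ\psi_n^{-1}\to\mu_s$ in $\mathscr{P}_2(\RR^d)$. Hence the demicontinuity condition ${\mathbf{H\ref{H1}}}$ yields $A^n(s,\bar X^n_s,\mu^n_s)=A(s,\psi_n(\bar X^n_s),\mu^n_s\circ\psi_n^{-1})\to A(s,\bar X_s,\mu_s)$ and $\sigma^n(s,\bar X^n_s,\mu^n_s)\to\sigma(s,\bar X_s,\mu_s)$ for a.e.\ $(s,\omega)$. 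Because the $A^n,\sigma^n$ satisfy ${\mathbf{H\ref{H4}}}$ with $|u|^q$ in place of $\mathscr{N}_1(u)$ and with constants independent of $n$, the choice $p>\eta_0$ makes $\{A^n(s,\bar X^n_s,\mu^n_s)\}_n$ and $\{\|\sigma^n(s,\bar X^n_s,\mu^n_s)\|^2\}_n$ uniformly integrable on $[0,T]\times\bar\Omega$ (indeed bounded in $L^{1+\varepsilon}$ for some $\varepsilon>0$); by Vitali's theorem $\int_0^tA^n(s,\bar X^n_s,\mu^n_s)\,ds\to\int_0^tA(s,\bar X_s,\mu_s)\,ds$, and via a standard lemma on convergence of stochastic integrals driven by converging Wiener processes $\int_0^t\sigma^n(s,\bar X^n_s,\mu^n_s)\,d\bar W^n_s\to\int_0^t\sigma(s,\bar X_s,\mu_s)\,d\bar W_s$, both in probability and uniformly in $t$. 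Letting $n\to\infty$ in (\ref{eq2}) on $\bar\Omega$ shows that $(\bar X,\bar W)$, with the augmented filtration generated by $(\bar X,\bar W)$, is a weak solution of (\ref{eqSPDE}), and (\ref{es19}) follows from (\ref{es6}) by Fatou's lemma and the lower semicontinuity of $\mathscr{N}_1$.

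The main obstacle is exactly this limit passage: one must simultaneously control the two truncations entering $A^n$ — the cut-off $\psi_n$ of the state variable and the pushforward $\mu^n_s\circ\psi_n^{-1}$ of its law — and reconcile them with the weak mode of continuity permitted by ${\mathbf{H\ref{H1}}}$ while having only the polynomial growth bound ${\mathbf{H\ref{H4}}}$. This is precisely where the threshold $p>\eta_0$ enters (to secure uniform integrability of the drift and diffusion terms along the approximation) and why the specific cut-off (\ref{cutoff}), which makes (\ref{cut1})--(\ref{cut2}) hold, was chosen. A secondary technical point is the identification of the limiting Itô integral, which requires some care because $\bar W^n\ne\bar W$ on the new probability space; alternatively one may bypass it by showing that $\bar X_t-\bar X_0-\int_0^tA(s,\bar X_s,\mu_s)\,ds$ is a continuous martingale with quadratic variation $\int_0^t\sigma\sigma^*(s,\bar X_s,\mu_s)\,ds$ and invoking a martingale representation theorem.
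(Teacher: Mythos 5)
Your proposal is correct and follows essentially the same route as the paper: localization via the cut-off (\ref{cutoff}) and the uniform bounds of Lemma \ref{pro1}, tightness and the Skorokhod representation, convergence of the truncated laws $\tilde{\mu}^{n}_s\circ\psi_n^{-1}$ to $\tilde{\mu}_s$ in $\mathbb{W}_2$, and then continuity (${\mathbf{H\ref{H1}}}$) together with Vitali/uniform integrability (which is exactly where $p>\eta_0$ enters) to pass to the limit, with (\ref{es19}) obtained by Fatou and lower semicontinuity. The only difference is at the identification step: your primary route transfers the SDE to the new space and uses a convergence lemma for stochastic integrals driven by varying Wiener processes, whereas the paper works solely with the martingale relations (\ref{mar1})--(\ref{mar4}) and concludes via the martingale representation theorem, which is precisely the alternative you mention at the end.
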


  \begin{proof}
The proof will be divided into the following two steps.

\vspace{1mm}
\textbf{Step 1:} By Lemma \ref{pro1}, it is easy to see that the sequence $\{\mu^n=\mathscr{L}_{X^{n}}\}_{n\in\mathbb{N}}$ is tight in $\mathscr{P}(\mathbb{C})$, where $\mathbb{C}:=C([0,T];\mathbb{R}^d)$, then so is $\{\Lambda^n:=\mathscr{L}_{(X^{n},W^n)}\}_{n\in\mathbb{N}}$. By the Prokhorov theorem, there exists a subsequence of $\{n\}$, we keep denoting it by $\{n\}$, such that $\mu^n$ weakly converges to $\mu$ in $\mathscr{P}(\mathbb{C})$ and $\Lambda^n\to \Lambda$ weakly in $\mathscr{P}(\mathbb{C}\times\mathbb{C})$.

The Skorokhod representation theorem (cf.~e.g.~\cite{MV}) yields that there exists a probability space $(\tilde{\Omega},\tilde{\mathscr{F}},\tilde{\mathbb{P}})$, and, on this space, $\mathbb{C}\times \mathbb{C}$-valued random variables $(\tilde{X}^{n},\tilde{W}^{n})$ whose law is equal to $\Lambda^n$. Moreover, we  obtain
\begin{equation}\label{es81}
\tilde{X}^{n}\to \tilde{X}\sim \mu~\text{in}~\mathbb{C},~\tilde{\mathbb{P}}\text{-a.s.},~\text{as}~n\to\infty,
\end{equation}
\begin{equation*}
\tilde{W}^{n}\to \tilde{W}~\text{in}~\mathbb{C},~\tilde{\mathbb{P}}\text{-a.s.},~\text{as}~n\to\infty.
\end{equation*}

Let $\tilde{\mathscr{F}}^{n}_t$ be the filtration satisfying the usual conditions and generated by $\{\tilde{X}^{n}_s,\tilde{W}^{n}_s:s\leq t\}$, then we know that $\tilde{X}^{n}_t$ is $\tilde{\mathscr{F}}^{n}_t$-adapted and continuous, and $\tilde{W}^{n}_t$ is a standard $d$-dimensional Wiener process on $(\tilde{\Omega},\tilde{\mathscr{F}},\{\tilde{\mathscr{F}}^{n}_t\}_{t\in[0,T]},\tilde{\mathbb{P}})$.

Notice that, due to (\ref{es6}) and the equality of the laws of $\tilde{X}^{n}$ and $X^{n}$, we can easily infer that for any $T>0$,  there is a constant $C_{p,T}>0$ such that
\begin{equation}\label{es70}
\sup_{n\in\mathbb{N}}\tilde{\mathbb{E}}\Big[\sup_{t\in[0,T]}|\tilde{X}^{n}_t|^p\Big]\leq C_{p,T},
\end{equation}
where $p>\eta_0$. From the convergence (\ref{es81}) and the estimate (\ref{es70}), Vitali's convergence theorem implies that
\begin{equation}\label{es9}
\tilde{\mathbb{E}}\Big[\sup_{t\in[0,T]}|\tilde{X}^{n}_t-\tilde{X}_t|^{p'}\Big]\to 0,~\text{as}~n\to\infty,
\end{equation}
where $\eta_0<p'<p$.
Moreover, by  (\ref{es81}), (\ref{es70}) and using the lower semicontinuity, we obtain  there is a constant $C_{p,T}>0$ such that
 \begin{equation}\label{es10}
\tilde{\mathbb{E}}\Big[\sup_{t\in[0,T]}|\tilde{X}_t|^{p}\Big]+\tilde{\mathbb{E}}\int_0^T\mathscr{N}_1(\tilde{X}_t)dt\leq C_{p,T}.
\end{equation}

For any $l\in\RR^d$, let us consider the process $\tilde{\mathscr{M}}^{n}_l(t)$ in $\RR$ defined by
\begin{eqnarray}
\tilde{\mathscr{M}}^{n}_l(t)=\!\!\!\!\!\!\!\!&&\langle\tilde{X}^{n}_t,l\rangle-\langle\tilde{X}^{n}_0,l\rangle-\int_0^t\langle A^n(s,\tilde{X}^{n}_s,\tilde{\mu}^{n}_s),l\rangle ds
\nonumber \\
=\!\!\!\!\!\!\!\!&&\langle\tilde{X}^{n}_t,l\rangle-\langle\tilde{X}^{n}_0,l\rangle-\int_0^t\langle A(s,\psi_n(\tilde{X}^{n}_s),\tilde{\mu}^{n}_s\circ\psi_n^{-1}),l\rangle ds,
\end{eqnarray}
where $\tilde{\mu}^{n}_t:=\mathscr{L}_{\tilde{X}^{n}_t}$, which is a square integrable martingale w.r.t.~the filtration $\tilde{\mathscr{F}}^{n}_t$ with quadratic variation
$$\langle \tilde{\mathscr{M}}^{n}_l\rangle(t)=\int_0^t |\sigma\big(s,\psi_n(\tilde{X}^{n}_s),\tilde{\mu}^{n}_s\circ\psi_n^{-1}\big)^*l|^2ds.$$
Indeed, for all $0\leq s<t\leq T$ and all bounded continuous functions $\Phi(\cdot)$ on $\mathbb{C}\times\mathbb{C}$, due to the fact that $\tilde{\mu}^{n}=\mu^{n}$, $n\in\mathbb{N}$, we get
\begin{eqnarray}\label{mar1}
\tilde{\mathbb{E}}\Big[\big(\tilde{\mathscr{M}}^{n}_l(t)-\tilde{\mathscr{M}}^{n}_l(s)\big)\Phi((\tilde{X}^{n},\tilde{W}^{n})|_{[0,s]})\Big]=0
\end{eqnarray}
and
\begin{eqnarray}\label{mar2}
\tilde{\mathbb{E}}\Big[\Big(\tilde{\mathscr{M}}^{n}_l(t)^2-\tilde{\mathscr{M}}^{n}_l(s)^2
-\int_s^t |\sigma\big(u,\psi_n(\tilde{X}^{n}_u),\tilde{\mu}^{n}_u\circ\psi_n^{-1}\big)^*l|^2du\Big)\Phi((\tilde{X}^{n},\tilde{W}^{n})|_{[0,s]})\Big]=0.
\end{eqnarray}
Then it suffices to show that for all $s\leq t\in [0,T]$ and all bounded continuous functions $\Phi(\cdot)$ on $\mathbb{C}\times \mathbb{C}$,
\begin{eqnarray}\label{mar3}
\tilde{\mathbb{E}}\Big[\big(\tilde{\mathscr{M}}_l(t)-\tilde{\mathscr{M}}_l(s)\big)\Phi((\tilde{X},\tilde{W})|_{[0,s]})\Big]=0
\end{eqnarray}
and
\begin{eqnarray}\label{mar4}
\!\!\!\!\!\!\!\!&&\tilde{\mathbb{E}}\Big[\Big(\tilde{\mathscr{M}}_l(t)^2-\tilde{\mathscr{M}}_l(s)^2
-\int_s^t |\sigma\big(u,\tilde{X}_u,\tilde{\mu}_u\big)^*l|^2du\Big)\Phi((\tilde{X},\tilde{W})|_{[0,s]})\Big]=0,
\end{eqnarray}
where $\tilde{\mathscr{M}}_l(t)$ is defined by
$$\tilde{\mathscr{M}}_l(t)=\langle\tilde{X}_t,l\rangle-\langle\tilde{X}_0,l\rangle-\int_0^t\langle A(s,\tilde{X}_s,\tilde{\mu}_s),l\rangle ds,~\tilde{\mu}_s=\mathscr{L}_{\tilde{X}_s}.$$
In fact, if (\ref{mar3}) and (\ref{mar4}) hold, it follows that the process $\tilde{\mathscr{M}}_l(t)$ is a square integrable martingale in $\mathbb{R}$ w.r.t.~the filtration $\tilde{\mathscr{F}}_t$ satisfying the usual conditions and generated by $\{\tilde{X}_s,\tilde{W}_s:s\leq t\}$,
whose quadratic variation is
$$\langle \tilde{\mathscr{M}}_l\rangle(t)=\int_0^t |\sigma\big(s,\tilde{X}_s,\tilde{\mu}_s\big)^*l|^2ds.$$
Consequently,  (\ref{eqSPDE}) has a weak solution in $\RR^d$ by the martingale representation theorem (cf.~\cite{Daz1}).

\textbf{Step 2:} Now we shall prove (\ref{mar3}) and (\ref{mar4}).
To this end, we first show that for any $t\in[0,T]$, $\tilde{\mu}^{n}_t\circ\psi_n^{-1}\to\tilde{\mu}_t$ in $\mathbb{W}_2$-sense as $n\to \infty$. 
%
%
Note that
\begin{eqnarray}\label{es300}
\mathbb{W}_2(\tilde{\mu}^{n}_t\circ\psi_n^{-1},\tilde{\mu}_t)^2\leq\!\!\!\!\!\!\!\!&&\tilde{\mathbb{E}}|\psi_n(\tilde{X}^{n}_t)-\tilde{X}_t|^2
\nonumber \\
=\!\!\!\!\!\!\!\!&&\tilde{\mathbb{E}}\Big[|\psi_n(\tilde{X}^{n}_t)-\tilde{X}_t|^2\mathbf{1}_{\{|\tilde{X}^{n}_t|\geq n\}}\Big]
+\tilde{\mathbb{E}}\Big[|\psi_n(\tilde{X}^{n}_t)-\tilde{X}_t|^2\mathbf{1}_{\{|\tilde{X}^{n}_t|< n\}}\Big]
\nonumber \\
\leq\!\!\!\!\!\!\!\!&&\tilde{\mathbb{E}}\Bigg[\Big|\frac{n\tilde{X}^{n}_t}{|\tilde{X}^{n}_t|}-\tilde{X}_t\Big|^2\mathbf{1}_{\{|\tilde{X}^{n}_t|\geq n\}}\Bigg]+\tilde{\mathbb{E}}|\tilde{X}^{n}_t-\tilde{X}_t|^2
\nonumber \\
\leq\!\!\!\!\!\!\!\!&&\tilde{\mathbb{E}}\Big[\big(|\tilde{X}^{n}_t|^2+|\tilde{X}_t|^2\big)\mathbf{1}_{\{|\tilde{X}^{n}_t|\geq n\}}\Big]+\tilde{\mathbb{E}}|\tilde{X}^{n}_t-\tilde{X}_t|^2
\nonumber \\
\leq\!\!\!\!\!\!\!\!&&\Big(\big(\tilde{\mathbb{E}}|\tilde{X}^{n}_t|^{p}\big)^{\frac{2}{p}}+\big(\tilde{\mathbb{E}}|\tilde{X}_t|^{p}\big)^{\frac{2}{p
}}\Big)\frac{\big(\tilde{\mathbb{E}}|\tilde{X}^{n}_t|\big)^{\frac{p-2}{p}}}{n^{\frac{p-2}{p}}}
+\tilde{\mathbb{E}}|\tilde{X}^{n}_t-\tilde{X}_t|^2.
\end{eqnarray}
Taking (\ref{es70})-(\ref{es10}) into account and letting $n\to\infty$ on both sides of (\ref{es300}), we have for any $t\in[0,T]$,
$$\lim_{n\to\infty}\mathbb{W}_2(\tilde{\mu}^{n}_t\circ\psi_n^{-1},\tilde{\mu}_t)^2=0.$$
By the continuity of $A,\sigma$ and the dominated convergence theorem, it is easy to show that for any $t\in[0,T]$,
\begin{equation}\label{es66}
\int_0^t|A(s,\psi_n(\tilde{X}^{n}_s),\tilde{\mu}^{n}_s\circ\psi_n^{-1})-A(s,\tilde{X}_s,\tilde{\mu}_s)|^2ds\to 0,~\tilde{\mathbb{P}}\text{-a.s.},~\text{as}~n\to\infty,
\end{equation}
and
\begin{equation}\label{es170}
\int_0^t\|\sigma(s,\psi_n(\tilde{X}^{n}_s),\tilde{\mu}^{n}_s\circ\psi_n^{-1})-\sigma(s,\tilde{X}_s,\tilde{\mu}_s)\|^2ds\to 0,~\tilde{\mathbb{P}}\text{-a.s.},~\text{as}~n\to\infty.
\end{equation}
In addition, by $\mathbf{H3}$, (\ref{es70}) and (\ref{es10}), we deduce that
\begin{eqnarray}\label{es18}
\!\!\!\!\!\!\!\!&&\tilde{\mathbb{E}}\Big|\int_0^t\|
\sigma(s,\psi_n(\tilde{X}^{n}_s),\tilde{\mu}^{n}_s\circ\psi_n^{-1})- \sigma(s,\tilde{X}_s,\tilde{\mu}_s)\|^2ds\Big|^{p'}
\nonumber \\
\leq\!\!\!\!\!\!\!\!&& C_T\tilde{\mathbb{E}}\Big[\int_0^t\Big(1+|\psi_n(\tilde{X}^{n}_s)|^{2p'}+\tilde{\mathbb{E}}|\psi_n(\tilde{X}^{n}_s)|^{2p'}
+|\tilde{X}_s|^{2p'}+\tilde{\mathbb{E}}|\tilde{X}_s|^{2p'}\Big)ds\Big]
\nonumber \\
\leq\!\!\!\!\!\!\!\!&& C_{p,T}
\end{eqnarray}
and
\begin{eqnarray}\label{es65}
\!\!\!\!\!\!\!\!&&\tilde{\mathbb{E}}\Big|\int_0^t|
A(s,\psi_n(\tilde{X}^{n}_s),\tilde{\mu}^{n}_s\circ\psi_n^{-1})- A(s,\tilde{X}_s,\tilde{\mu}_s)|^2ds\Big|^{p'}
\nonumber \\
\leq\!\!\!\!\!\!\!\!&& C_T\tilde{\mathbb{E}}\Big[\int_0^t\Big(1+|\psi_n(\tilde{X}^{n}_s)|^{\frac{2qp'}{\gamma_1}}+|\psi_n(\tilde{X}^{n}_s)|^{\frac{2\gamma_2p'}{\gamma_1}}
+(\tilde{\mathbb{E}}|\psi_n(\tilde{X}^{n}_s)|^{\theta_1})^{\frac{2 p'}{\gamma_1}}
\nonumber \\
\!\!\!\!\!\!\!\!&&
+|\tilde{X}_s|^{\frac{2qp'}{\gamma_1}}+|\tilde{X}_s|^{\frac{2\gamma_2p'}{\gamma_1}}  +(\tilde{\mathbb{E}}|\tilde{X}_s|^{\theta_1})^{\frac{2 p'}{\gamma_1}}\Big)ds\Big]
\nonumber \\
\leq\!\!\!\!\!\!\!\!&& C_{p,T},
\end{eqnarray}
for some $p'>1$.

Collecting (\ref{es66})-(\ref{es65}) and applying the Vitali convergence theorem, it leads to
\begin{eqnarray*}
\lim_{n\to\infty}\tilde{\mathbb{E}}\Big[\int_0^t\|
\sigma(s,\psi_n(\tilde{X}^{n}_s),\tilde{\mu}^{n}_s\circ\psi_n^{-1})- \sigma(s,\tilde{X}_s,\tilde{\mu}_s)\|^2ds\Big]
=0
\end{eqnarray*}
and
\begin{eqnarray*}
\lim_{n\to\infty}\tilde{\mathbb{E}}\Big[\int_0^t|
A(s,\psi_n(\tilde{X}^{n}_s),\tilde{\mu}^{n}_s\circ\psi_n^{-1})- A(s,\tilde{X}_s,\tilde{\mu}_s)|^2ds\Big]
=0.
\end{eqnarray*}
Now taking limit in (\ref{mar1}) and (\ref{mar2}), it follows that (\ref{mar3}) and (\ref{mar4}) hold.

Finally, the  estimate (\ref{es19})  follows directly from (\ref{es10}).  The proof is completed.
\end{proof}

\subsection{Faedo-Galerkin approximation}\label{sec2.3}
First we introduce the Faedo-Galerkin approximation of MVSPDE (\ref{eqSPDE}).  Define the maps
$$P_n:\mathbb{X}^{*}\rightarrow {\mathbb{H}}^n=\text{span}\big\{l_1,l_2,\cdots\,l_n\big\},~n\in\mathbb{N},$$
by
$$P_n x:=\sum\limits_{i=1}^{n}{}_{\mathbb{X}^*}\langle x,l_i\rangle_{\mathbb{X}}l_i,~x\in \mathbb{X}^*.$$

It is straightforward that the restriction of $P_n$ to ${\mathbb{H}}$, denoted by $P_n|_{{\mathbb{H}}}$,  is an orthogonal projection from $\mathbb{H}$ onto ${\mathbb{H}}^n$. Let $\{g_1,g_2,\cdots\}$ denote an ONB of $U$. Let  $\tilde{P}_n$ be an orthogonal projection from $U$ onto $U^n:=\text{span}\{g_1,g_2,\cdots,g_n\}$, which is given by
\begin{equation*}
\tilde{P}_n W_t=\sum\limits_{i=1}^{n}\langle W_t,g_i\rangle_{U}g_i,~n\in\mathbb{N}.
\end{equation*}

For any $n\in\mathbb{N}$, we consider the following approximating equation on ${\mathbb{H}}^n$:
\begin{eqnarray}\label{eqf}
dX^{n}_t=P_n A(t,X^{n}_t,\mathscr{L}_{X^{n}_t})dt
+P_n\sigma(t,X^{n}_t,\mathscr{L}_{X^{n}_t})\tilde{P}_ndW_t,
\end{eqnarray}
with initial value $X^{n}_0=P_n X_0$.

\vspace{1mm}
According to  Theorem \ref{pro3}, we have the following result.
\begin{lemma}
Under the assumptions in Theorem \ref{th1}, for any $X_0\sim\mu_0\in \mathscr{P}_p(\mathbb{H})$ with $p>\eta_0$, (\ref{eqf}) has a weak solution
$(\Omega^n,\mathscr{F}^n,\{\mathscr{F}^n_t\}_{t\in[0,T]},\PP^n; X^n,W^n)$.

\end{lemma}

In the sequel, we establish some uniform moment bounds for  solutions of the approximating equation (\ref{eqf}).
\begin{lemma}\label{lem13}
Suppose that the assumptions in Theorem \ref{th1} hold. For any $p> \eta_0$, there exists a constant $C_{p,T}>0$ independent of $n$ such that
\begin{equation}\label{es7}
\mathbb{E}^n\Big[\sup_{t\in[0,T]}\|X^{n}_t\|_{\mathbb{H}}^p\Big]+\mathbb{E}^n\Big[\int_0^{T}\|X^{n}_t\|_{\mathbb{H}}^{p-2}\mathscr{N}_1(X^{n}_t)dt\Big]+\mathbb{E}^n\Big[\int_0^{T}\mathscr{N}_1(X^{n}_t)dt\Big]\leq C_{p,T}.
\end{equation}
Moreover, there exist $C_{p,T}>0$ independent of $n$  and $r_0>2$ such that
\begin{equation}\label{es12}
\mathbb{E}^n\Big[\int_0^T\mathscr{N}_1(X^{n}_t) dt\Big]^{r_0}+\mathbb{E}^n\Big[\int_0^T\mathscr{N}_1(X^{n}_t)\|X^{n}_t\|_{\mathbb{H}}^{\gamma_2} dt\Big]^{r_0}\leq C_{p,T}.
\end{equation}

\end{lemma}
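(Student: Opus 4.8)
The plan is to run the standard Itô energy estimate for the Galerkin system \eqref{eqf}, with two adaptations forced by the present setting: every exponent that appears must be kept below $p$ — which is exactly what the restriction $p>\eta_0$ secures — and the law-coupling requires closing the estimate for the deterministic function $t\mapsto\mathbb{E}\|X^n_t\|_{\mathbb{H}}^p$ before treating the pathwise supremum. I would first record the projection identities: since $X^n$ is $\mathbb{H}^n$-valued, $\mathbb{H}^n\subset\mathbb{X}\subset\mathbb{V}$, and $P_n|_{\mathbb{H}}$ is the orthogonal projection onto $\mathbb{H}^n$, one has $\langle P_nA(s,X^n_s,\mathscr{L}_{X^n_s}),X^n_s\rangle_{\mathbb{H}}={}_{\mathbb{X}^*}\langle A(s,X^n_s,\mathscr{L}_{X^n_s}),X^n_s\rangle_{\mathbb{X}}$ and $\|P_n\sigma(s,X^n_s,\mathscr{L}_{X^n_s})\tilde P_n\|_{L_2(U;\mathbb{H})}\le\|\sigma(s,X^n_s,\mathscr{L}_{X^n_s})\|_{L_2(U;\mathbb{H})}$; in particular the Galerkin coefficients inherit ${\mathbf{H\ref{H1}}}$, ${\mathbf{H\ref{H3}}}$ and the growth bounds on the finite-dimensional space $\mathbb{H}^n$, so Theorem \ref{pro3} applies and yields, for each fixed $n$ (with an \emph{a priori} $n$-dependent bound), a weak solution of \eqref{eqf} with $\mathbb{E}\sup_{t\le T}\|X^n_t\|_{\mathbb{H}}^p<\infty$. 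Applying Itô's formula to $\|X^n_t\|_{\mathbb{H}}^p$ (valid since $x\mapsto\|x\|^p$ is $C^2$ for $p\ge2$), bounding $\|\sigma^*x\|\le\|\sigma\|_{L_2(U;\mathbb{H})}\|x\|$, invoking ${\mathbf{H\ref{H3}}}$ and \eqref{c2}, and using Young's and Jensen's inequalities together with $\|X^n_0\|_{\mathbb{H}}=\|P_nX_0\|_{\mathbb{H}}\le\|X_0\|_{\mathbb{H}}$, I arrive at the energy inequality, with $M^n$ a local martingale,
\[
\|X^n_t\|_{\mathbb{H}}^p+\tfrac{p}{2}\int_0^t\|X^n_s\|_{\mathbb{H}}^{p-2}\mathscr{N}_1(X^n_s)\,ds\le\|X_0\|_{\mathbb{H}}^p+C\int_0^t\bigl(1+\|X^n_s\|_{\mathbb{H}}^p+\mathbb{E}\|X^n_s\|_{\mathbb{H}}^p\bigr)\,ds+M^n_t.
\]

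The bound \eqref{es7} then follows in the classical way. Localizing by $\tau^n_R:=\inf\{t:\|X^n_t\|_{\mathbb{H}}>R\}$ and taking expectations (so that the martingale drops), I apply Gronwall's lemma to the function $h^n(t)=\mathbb{E}\|X^n_t\|_{\mathbb{H}}^p$, which is finite by Theorem \ref{pro3}, to get a bound on it uniform in $n$, then feed this back and use the Burkholder--Davis--Gundy inequality together with Young's inequality on $M^n$ to absorb $\tfrac12\mathbb{E}\sup_{t\le T}\|X^n_t\|_{\mathbb{H}}^p$; this gives the uniform control of $\mathbb{E}\sup_{t\le T}\|X^n_t\|_{\mathbb{H}}^p$ and, upon rearrangement, of $\mathbb{E}\int_0^T\|X^n_t\|_{\mathbb{H}}^{p-2}\mathscr{N}_1(X^n_t)\,dt$. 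Running the same computation with exponent $2$ in place of $p$ (permissible since $p>\eta_0\ge2$) and inserting the already-established bound $\sup_n\mathbb{E}\|X^n_t\|_{\mathbb{H}}^2\le C$ gives $\mathbb{E}\int_0^T\mathscr{N}_1(X^n_t)\,dt\le C_{p,T}$, completing \eqref{es7}.

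For \eqref{es12} I set $r_0:=p/(2+\gamma_2)$, which satisfies $r_0>2$ precisely because $p>\eta_0\ge2(2+\gamma_2)$. Rearranging the energy inequality above once with exponent $2$ and once with exponent $\gamma_2+2$ — the latter chosen so that the Itô drift prefactor is exactly $\|X^n_s\|_{\mathbb{H}}^{\gamma_2}$ — bounds $\int_0^T\mathscr{N}_1(X^n_s)\,ds$ and $\int_0^T\|X^n_s\|_{\mathbb{H}}^{\gamma_2}\mathscr{N}_1(X^n_s)\,ds$ pathwise by
\[
C\bigl(1+\|X_0\|_{\mathbb{H}}^{\gamma_2+2}\bigr)+C\int_0^T\bigl(1+\|X^n_s\|_{\mathbb{H}}^{\gamma_2+2}+\mathbb{E}\|X^n_s\|_{\mathbb{H}}^{\gamma_2+2}\bigr)\,ds+\sup_{t\le T}|M^n_t|
\]
for suitable local martingales $M^n$. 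Raising these to the power $r_0$, taking expectations, and using Jensen/Hölder in time, the a priori bound \eqref{es7}, $X_0\in L^p$, and the BDG inequality for the martingale terms, every resulting quantity carries an exponent at most $r_0(\gamma_2+2)=p$ and is therefore bounded uniformly in $n$; this yields \eqref{es12} and finishes the proof.

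\textbf{Main obstacle.} The genuinely delicate points are two. First, the law-coupling forces the two-stage Gronwall argument and makes it necessary to know beforehand that $t\mapsto\mathbb{E}\|X^n_t\|_{\mathbb{H}}^p$ is finite — this is why one first invokes Theorem \ref{pro3} for the finite-dimensional Galerkin system even though the bound obtained there is not yet uniform in $n$. Second, the exponent bookkeeping in \eqref{es12} closes only because $p>\eta_0\ge2(2+\gamma_2)$ leaves exactly enough room to pick $r_0>2$ with $r_0(\gamma_2+2)\le p$, keeping all initial-data and solution moments inside $L^p$; using the stronger-looking weight $\|X^n_s\|_{\mathbb{H}}^{p-2}$ here would instead require $\mathbb{E}\|X_0\|_{\mathbb{H}}^{pr_0}<\infty$ and fail.
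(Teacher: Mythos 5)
Your proposal is correct and follows essentially the same route as the paper: Itô's formula for $\|X^n_t\|_{\mathbb{H}}^p$ (and for the powers $2$ and $2+\gamma_2$), the coercivity condition ${\mathbf{H\ref{H3}}}$ with \eqref{c2}, BDG plus Gronwall to obtain \eqref{es7}, and then the pathwise rearranged energy inequalities raised to the power $r_0$ with $r_0(2+\gamma_2)\le p$ (guaranteed by $p>\eta_0\ge 2(2+\gamma_2)$) to obtain \eqref{es12}. The only differences are cosmetic: you run a two-stage Gronwall (expectation first, then the supremum via BDG) where the paper absorbs the supremum directly, and you fix $r_0=p/(2+\gamma_2)$ explicitly where the paper merely asserts the existence of some $r_0>2$.
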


\begin{proof}
Applying It\^{o}'s formula twice, for any $t\in[0,T]$, we obtain
\begin{eqnarray}\label{es5}
\|X^{n}_t\|_{\mathbb{H}}^2
=\!\!\!\!\!\!\!\!&&\|X^{n}_0\|_{\mathbb{H}}^2+\int_0^t\Big(2{}_{{\mathbb{X}}^*}\langle A(s,X^{n}_s,\mathscr{L}_{X^{n}_s}),X^{n}_s\rangle_{\mathbb{X}}
+\|P_n\sigma(s,X^{n}_s,\mathscr{L}_{X^{n}_s})\tilde{P}_n\|_{L_2(U;{\mathbb{H}})}^2\Big)ds
\nonumber \\
\!\!\!\!\!\!\!\!&&+2\int_0^t\langle X^{n}_s,P_n\sigma(s,X^{n}_s,\mathscr{L}_{X^{n}_s})\tilde{P}_ndW^{n}_s\rangle_{\mathbb{H}}
\end{eqnarray}
and
\begin{eqnarray*}
\|X^{n}_t\|_{\mathbb{H}}^p
=\!\!\!\!\!\!\!\!&&\|X^{n}_0\|_{\mathbb{H}}^p+\frac{p(p-2)}{2}\int_0^t\|X^{n}_s\|_{\mathbb{H}}^{p-4}
\|\big(P_n\sigma(s,X^{n}_s,\mathscr{L}_{X^{n}_s})\tilde{P}_n\big)^*X^{n}_s\|_U^2ds
\nonumber \\
\!\!\!\!\!\!\!\!&&+\frac{p}{2}\int_0^t\|X^{n}_s\|_{\mathbb{H}}^{p-2}\Big(2{}_{{\mathbb{X}}^*}\langle A(s,X^{n}_s,\mathscr{L}_{X^{n}_s}),X^{n}_s\rangle_{\mathbb{X}}
+\|P_n\sigma(s,X^{n}_s,\mathscr{L}_{X^{n}_s})\tilde{P}_n\|_{L_2(U;{\mathbb{H}})}^2\Big)ds
\nonumber \\
\!\!\!\!\!\!\!\!&&+p\int_0^t\|X^{n}_s\|_{\mathbb{H}}^{p-2}\langle X^{n}_s,P_n\sigma(s,X^{n}_s,\mathscr{L}_{X^{n}_s})\tilde{P}_ndW^{n}_s\rangle_{\mathbb{H}}.
\end{eqnarray*}
By condition $\mathbf{H2}$ we have
\begin{eqnarray}\label{es2}
\|X^{n}_t\|_{\mathbb{H}}^p
\leq\!\!\!\!\!\!\!\!&&\|X^{n}_0\|_{\mathbb{H}}^p-\frac{p}{2}\int_0^t\|X^{n}_s\|_{\mathbb{H}}^{p-2}\mathscr{N}_1(X^{n}_s)ds
+C_p\int_0^t\Big(\|X^{n}_s\|_{\mathbb{H}}^p+\mathbb{E}^n\|X^{n}_s\|_{\mathbb{H}}^p+1\Big)ds
\nonumber \\
\!\!\!\!\!\!\!\!&&+p\int_0^t\|X^{n}_s\|_{\mathbb{H}}^{p-2}\langle X^{n}_s,P_n\sigma(s,X^{n}_s,\mathscr{L}_{X^{n}_s})\tilde{P}_ndW^{n}_s\rangle_{\mathbb{H}}.~
\end{eqnarray}
Using  BDG's inequality, we obtain
\begin{eqnarray}\label{es3}
\!\!\!\!\!\!\!\!&&\mathbb{E}^n\Big[\sup_{t\in[0,T]}\Big|\int_0^t\|X^{n}_s\|_{\mathbb{H}}^{p-2}\langle X^{n}_s,P_n\sigma(s,X^{n}_s,\mathscr{L}_{X^{n}_s})\tilde{P}_ndW^{n}_s\rangle_{\mathbb{H}}\Big|\Big]
\nonumber \\
\leq\!\!\!\!\!\!\!\!&& C_p\mathbb{E}^n\Big[\int_0^{T}\|X^{n}_s\|_{\mathbb{H}}^{2p-2}\|\sigma(s,X^{n}_s,\mathscr{L}_{X^{n}_s})\|_{L_2(U;{\mathbb{H}})}^2ds\Big]^{\frac{1}{2}}
\nonumber \\
\leq\!\!\!\!\!\!\!\!&& C_p\mathbb{E}^n\Big[\int_0^{T}\big(\|X^{n}_t\|_{\mathbb{H}}^p+\mathbb{E}^n\|X^{n}_t\|_{\mathbb{H}}^p+1\big)dt\Big]
+\frac{1}{2}\mathbb{E}^n\Big[\sup_{t\in[0,T]} \|X^{n}_t\|_{\mathbb{H}}^p\Big].
\end{eqnarray}
Combining (\ref{es2}) with (\ref{es3})  we have
\begin{eqnarray}\label{es4}
\!\!\!\!\!\!\!\!&&\mathbb{E}^n\Big[\sup_{t\in[0,T]}\|X^{n}_t\|_{\mathbb{H}}^p\Big]+p\mathbb{E}^n\int_0^{T}\|X^{n}_t\|_{\mathbb{H}}^{p-2}\mathscr{N}_1(X^{n}_t)dt
\nonumber \\
\leq\!\!\!\!\!\!\!\!&&  \mathbb{E}^n\|X^{n}_0\|_{\mathbb{H}}^p+C_p\mathbb{E}^n\int_0^{T}\|X^{n}_t\|_{\mathbb{H}}^pdt+C_{p,T}.
\end{eqnarray}
Applying  Gronwall's lemma, we find
\begin{eqnarray*}
\!\!\!\!\!\!\!\!&&\mathbb{E}^n\Big[\sup_{t\in[0,T]}\|X^{n}_t\|_{\mathbb{H}}^p\Big]+\mathbb{E}^n\int_0^{T}\|X^{n}_t\|_{\mathbb{H}}^{p-2}\mathscr{N}_1(X^{n}_t)dt
\nonumber \\
\leq\!\!\!\!\!\!\!\!&&
 C_{p,T}\big(1+\mathbb{E}^n\|X_0^n\|_{\mathbb{H}}^p\big)\leq C_{p,T}\big(1+\mu_0(\|\cdot\|_{\mathbb{H}}^p)\big)\leq C_{p,T}.
\end{eqnarray*}
Furthermore, due to (\ref{es5}) we also have
$$\mathbb{E}^n\int_0^{T}\mathscr{N}_1(X^{n}_t)dt\leq C_{T}.$$

By BDG's inequality and (\ref{es7}), this leads to
\begin{eqnarray*}
\mathbb{E}^n\Big[\int_0^T\mathscr{N}_1(X^{n}_t) dt\Big]^{r_0}
\leq\!\!\!\!\!\!\!\!&&C_T(1+\mathbb{E}^n\|X^{n}_0\|_{\mathbb{H}}^{2r_0})
+C_T\mathbb{E}^n\int_0^T\| X^{n}_t\|_{\mathbb{H}}^{2r_0}dt
\nonumber \\
\!\!\!\!\!\!\!\!&&
+ C_T\mathbb{E}^n\Big|\int_0^T \langle X^{n}_t,P_n\sigma(t,X^{n}_t,\mathscr{L}_{X^{n}_t})\tilde{P}_ndW^{n}_t\rangle_{\mathbb{H}}\Big|^{r_0}
\nonumber \\
\leq\!\!\!\!\!\!\!\!&&C_{p,T}
\end{eqnarray*}
and
\begin{eqnarray*}
\!\!\!\!\!\!\!\!&&\mathbb{E}^n\Big[\int_0^T\mathscr{N}_1(X^{n}_t)\|X^{n}_t\|_{\mathbb{H}}^{\gamma_2} dt\Big]^{r_0}
\nonumber \\
\leq\!\!\!\!\!\!\!\!&&C_{p,T}(1+\mathbb{E}^n\|X^{n}_0\|_{\mathbb{H}}^{(2+\gamma_2)r_0})
+C_{p,T}\mathbb{E}^n\int_0^T\| X^{n}_t\|_{\mathbb{H}}^{(2+\gamma_2)r_0}dt
\nonumber \\
\!\!\!\!\!\!\!\!&&
+ C_{p,T}\mathbb{E}^n\Big|\int_0^T \|X^{n}_t\|_{\mathbb{H}}^{\gamma_2}\langle X^{n}_t,P_n\sigma(t,X^{n}_t,\mathscr{L}_{X^{n}_t})\tilde{P}_ndW^{n}_t\rangle_{\mathbb{H}}\Big|^{r_0}
\nonumber \\
\leq\!\!\!\!\!\!\!\!&&C_{p,T}.
\end{eqnarray*}
Thus (\ref{es12}) holds and  the proof is completed.
\end{proof}

\vspace{1mm}
The following lemma contains the time H\"{o}lder continuity of $X^{n}_t$, which plays an important role in the proof of tightness for laws of the sequence $\{X^{n}\}_{n\in\mathbb{N}}$.
\begin{lemma}\label{lem2}
Suppose that the assumptions in Theorem \ref{th1} hold. If $1<\gamma_1< 2$, there exist $r_1>\frac{\gamma_1}{\gamma_1-1}$ and $C_{p,T}>0$ such that for any $\gamma\in(0,\frac{\gamma_1-1}{\gamma_1}-\frac{1}{r_1})$,
\begin{equation}\label{es13}
\mathbb{E}^n\|X^{n}_t-X^{n}_s\|_{\mathbb{X}^*}^{r_1}\leq C_{p,T}|t-s|^{\frac{r_1(\gamma_1-1)}{\gamma_1}},
\end{equation}
and
\begin{equation}\label{es8}
\sup_{n\in\mathbb{N}}\mathbb{E}^n\Big[\sup_{s\neq t\in[0,T]}\frac{\|X^{n}_t-X^{n}_s\|_{\mathbb{X}^*}^{r_1}}{|t-s|^{r_1\gamma}}\Big]<\infty.
\end{equation}
If $\gamma_1\geq2$, there exist $r_1>2$ and $C_{p,T}>0$ such that for any $\gamma\in(0,\frac{1}{2}-\frac{1}{r_1})$,
\begin{equation}\label{es14}
\mathbb{E}^n\|X^{n}_t-X^{n}_s\|_{\mathbb{X}^*}^{r_1}\leq C_{p,T}|t-s|^{\frac{r_1}{2}},
\end{equation}
and
\begin{equation}\label{es11}
\sup_{n\in\mathbb{N}}\mathbb{E}^n\Big[\sup_{s\neq t\in[0,T]}\frac{\|X^{n}_t-X^{n}_s\|_{\mathbb{X}^*}^{r_1}}{|t-s|^{r_1\gamma}}\Big]<\infty.
\end{equation}
\end{lemma}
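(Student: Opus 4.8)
The plan is to estimate the increments $X^n_t-X^n_s$ in the $\mathbb{X}^*$-norm by splitting the equation \eref{eqf} into its drift and stochastic parts. Write
$$X^n_t-X^n_s=\int_s^t P_n A(r,X^n_r,\mathscr{L}_{X^n_r})\,dr+\int_s^t P_n\sigma(r,X^n_r,\mathscr{L}_{X^n_r})\,dW^n_r=:J^n_{1}(s,t)+J^n_{2}(s,t),$$
and treat the two terms separately. For $J^n_1$, since $P_n$ is a contraction on $\mathbb{X}^*$ (this follows from the definition of $P_n$ and the fact that $\|l_i\|_{\mathbb{X}^*}$-type bounds are uniform, or more simply from $\|P_n x\|_{\mathbb{X}^*}\le\|x\|_{\mathbb{X}^*}$ on the Gelfand scale), I would use Hölder's inequality in time to get, for any exponent $\gamma_1$,
$$\|J^n_1(s,t)\|_{\mathbb{X}^*}\le |t-s|^{\frac{\gamma_1-1}{\gamma_1}}\Big(\int_s^t\|A(r,X^n_r,\mathscr{L}_{X^n_r})\|_{\mathbb{X}^*}^{\gamma_1}\,dr\Big)^{1/\gamma_1},$$
and then control the right side in $L^{r_1}(\Omega)$ using the growth bound \eref{c1} from $\mathbf{H\ref{H4}}$ together with the moment estimates \eref{es7} and \eref{es12} of Lemma \ref{lem13}. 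Concretely, \eref{c1} gives $\|A\|_{\mathbb{X}^*}^{\gamma_1}\lesssim(1+\mathscr{N}_1(X^n_r)+\mathbb{E}\|X^n_r\|_{\mathbb H}^{\theta_1})(1+\|X^n_r\|_{\mathbb H}^{\gamma_2}+\mathbb{E}\|X^n_r\|_{\mathbb H}^{\theta_1})$, so after expanding, the time integral is bounded in $L^{r_1}$ by terms like $\int_0^T\mathscr{N}_1(X^n_r)\,dr$, $\int_0^T\mathscr{N}_1(X^n_r)\|X^n_r\|_{\mathbb H}^{\gamma_2}\,dr$ and supremum-of-norm terms — all uniformly bounded by Lemma \ref{lem13}, provided $r_1$ is chosen small enough (but still $>\gamma_1/(\gamma_1-1)$ when $1<\gamma_1<2$, or $>2$ when $\gamma_1\ge2$) relative to the moment exponent $p>\eta_0$; here the definition $\eta_0\ge\frac{2(q+\gamma_2)}{\gamma_1}\vee2(2+\gamma_2)$ is exactly what makes the required moments finite. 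This yields the $|t-s|^{r_1(\gamma_1-1)/\gamma_1}$ bound in \eref{es13} (resp.\ one can always take the exponent $\frac{\gamma_1-1}{\gamma_1}\le\frac12$, which in the case $\gamma_1\ge2$ we instead replace by the better Burkholder scaling argument for \eref{es14} — see below).

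For $J^n_2$, I would apply the Burkholder–Davis–Gundy inequality in $\mathbb{X}^*$ (or first in $\mathbb{H}$, then embed $\mathbb{H}\hookrightarrow\mathbb{X}^*$ continuously). Since $P_n$ and $\tilde P_n$ are contractions, one gets
$$\mathbb{E}\|J^n_2(s,t)\|_{\mathbb{X}^*}^{r_1}\le C\,\mathbb{E}\Big(\int_s^t\|\sigma(r,X^n_r,\mathscr{L}_{X^n_r})\|_{L_2(U;\mathbb H)}^2\,dr\Big)^{r_1/2}\le C|t-s|^{\frac{r_1}{2}-1}\int_s^t\mathbb{E}\|\sigma(r,X^n_r,\mathscr{L}_{X^n_r})\|_{L_2(U;\mathbb H)}^{r_1}\,dr,$$
using Hölder in time inside the $r_1/2$-power; the linear-growth bound \eref{c2} of $\mathbf{H\ref{H4}}$ then gives $\|\sigma\|_{L_2}^{r_1}\lesssim1+\|X^n_r\|_{\mathbb H}^{r_1}+(\mathbb{E}\|X^n_r\|_{\mathbb H}^2)^{r_1/2}$, which is uniformly integrable by \eref{es7} as long as $p\ge r_1$. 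This produces the clean $|t-s|^{r_1/2}$ bound, which dominates the drift contribution when $\gamma_1\ge2$ (giving \eref{es14}) and is the subleading term when $1<\gamma_1<2$ (so the drift exponent $\frac{\gamma_1-1}{\gamma_1}<\frac12$ dominates in \eref{es13}). Combining, for $1<\gamma_1<2$ we obtain \eref{es13} with the exponent $r_1(\gamma_1-1)/\gamma_1$, and for $\gamma_1\ge2$ we obtain \eref{es14} with exponent $r_1/2$.

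Finally, the pathwise-Hölder bounds \eref{es8} and \eref{es11} follow from \eref{es13}, \eref{es14} by the Kolmogorov–Čentsov (Garsia–Rodemich–Rumsey) continuity theorem: a moment estimate of the form $\mathbb{E}\|X^n_t-X^n_s\|_{\mathbb{X}^*}^{r_1}\le C|t-s|^{1+\alpha}$ with $\alpha>0$ (here $\alpha=\frac{r_1(\gamma_1-1)}{\gamma_1}-1>0$ once $r_1>\frac{\gamma_1}{\gamma_1-1}$, resp.\ $\alpha=\frac{r_1}{2}-1>0$ once $r_1>2$) yields, for every $\gamma<\alpha/r_1=\frac{\gamma_1-1}{\gamma_1}-\frac1{r_1}$ (resp.\ $\gamma<\frac12-\frac1{r_1}$), a bound on $\mathbb{E}\big[\sup_{s\ne t}\|X^n_t-X^n_s\|_{\mathbb{X}^*}^{r_1}/|t-s|^{r_1\gamma}\big]$ with a constant depending only on $C$, $r_1$, $\gamma$, $T$ — hence uniform in $n$. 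I expect the main obstacle to be bookkeeping: verifying that the particular exponents appearing after expanding \eref{c1} (namely $\frac{(q+\gamma_2)r_1}{\gamma_1}$, $(2+\gamma_2)r_1$, and so on) stay below the available moment order $p>\eta_0$ simultaneously with the constraint $r_1>\gamma_1/(\gamma_1-1)$ (resp.\ $r_1>2$); this is a matter of choosing $r_1$ just slightly above the threshold and tracing through the definition of $\eta_0$, but it is where the argument could go wrong if the thresholds were set carelessly. Everything else — contraction of $P_n$, BDG, Hölder, Garsia–Rodemich–Rumsey — is standard and parallels the finite-dimensional estimate \eref{es1} of Lemma \ref{pro1}.
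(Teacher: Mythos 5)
Your proposal is correct and follows essentially the same route as the paper: decompose the increment into drift and stochastic parts, bound the drift via H\"older in time together with ${\mathbf{H\ref{H4}}}$ and the uniform moment bounds of Lemma \ref{lem13}, bound the stochastic integral via BDG and \eref{c2}, and then deduce \eref{es8}, \eref{es11} from the moment estimates by Kolmogorov's continuity criterion. The only cosmetic difference is that you track the projections $P_n$, $\tilde P_n$ explicitly, whereas the paper suppresses them; the exponent bookkeeping you flag is exactly what the definition of $\eta_0$ and the bound \eref{es12} are designed to absorb, as in the paper's proof.
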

\begin{proof}
Note that the following equality holds in $\mathbb{X}^*$
$$X^{n}_t-X^{n}_s=\int_s^tP_nA(r,X^{n}_r,\mathscr{L}_{X^{n}_r})dr+\int_s^tP_n\sigma(r,X^{n}_r,\mathscr{L}_{X^{n}_r})\tilde{P}_ndW^n_r.$$
Thus for some $r_1>1$
\begin{equation}\label{es15}
\mathbb{E}^n\|X^{n}_t-X^{n}_s\|_{\mathbb{X}^*}^{r_1}\leq C(\text{I}+\text{II}),
\end{equation}
where
$$\text{I}:=\mathbb{E}^n\Big(\int_s^t\|A(r,X^{n}_r,\mathscr{L}_{X^{n}_r})\|_{\mathbb{X}^*}dr\Big)^{r_1},$$
$$\text{II}:=\mathbb{E}^n\Big\|\int_s^tP_n\sigma(r,X^{n}_r,\mathscr{L}_{X^{n}_r})\tilde{P}_ndW^n_r\Big\|_{\mathbb{X}^*}^{r_1}.$$
According to (\ref{c1}) and H\"{o}lder's inequality,
\begin{eqnarray}\label{es16}
\text{I}\leq\!\!\!\!\!\!\!\!&&C\mathbb{E}^n\Big[\int_s^t\Big(1+\mathscr{N}_1(X^{n}_r)^{\frac{1}{\gamma_1}}+\mathscr{N}_1(X^{n}_r)^{\frac{1}{\gamma_1}}\|X^{n}_r\|_{\mathbb{H}}^{\frac{\gamma_2}{\gamma_1}}+\|X^{n}_r\|_{\mathbb{H}}^{\frac{\gamma_2}{\gamma_1}}+\big(\mathbb{E}^n\|X^{n}_r\|_{\mathbb{H}}^{\theta_1}\big)^{\frac{2}{\gamma_1}}\Big)dr\Big]^{r_1}
\nonumber \\
\leq\!\!\!\!\!\!\!\!&&C|t-s|^{r_1}+C\mathbb{E}^n\Big(\int_0^T\mathscr{N}_1(X^{n}_r)dr\Big)^{\frac{r_1}{\gamma_1}}|t-s|^{\frac{r_1(\gamma_1-1)}{\gamma_1}}
\nonumber \\
\!\!\!\!\!\!\!\!&&
+C\mathbb{E}^n\Big(\int_0^T\mathscr{N}_1(X^{n}_r)\|X^{n}_r\|_{\mathbb{H}}^{\gamma_2}dr\Big)^{\frac{r_1}{\gamma_1}}|t-s|^{\frac{r_1(\gamma_1-1)}{\gamma_1}}
\nonumber \\
\!\!\!\!\!\!\!\!&&
+C\mathbb{E}^n\Big(\sup_{r\in[0,T]}\|X^{n}_r\|_{\mathbb{H}}^{\frac{ r_1\gamma_2}{\gamma_1}}\Big)|t-s|^{r_1}
\nonumber \\
\!\!\!\!\!\!\!\!&&
+C\Big[\mathbb{E}^n\Big(\sup_{r\in[0,T]}\|X^{n}_r\|_{\mathbb{H}}^{{\theta_1}}\Big)\Big]^{\frac{2r_1}{\gamma_1}}|t-s|^{r_1}
\nonumber \\
\leq\!\!\!\!\!\!\!\!&&C_{p,T}|t-s|^{r_1}+C_{p,T}|t-s|^{\frac{r_1(\gamma_1-1)}{\gamma_1}}.
\end{eqnarray}
By BDG's inequality and (\ref{c2}), we deduce
\begin{eqnarray}\label{es17}
\text{II}\leq\!\!\!\!\!\!\!\!&&C\mathbb{E}^n\Big(\int_s^t\|\sigma(r,X^{n}_r,\mathscr{L}_{X^{n}_r})\|_{L_2(U;{\mathbb{H}})}^2dr\Big)^{\frac{r_1}{2}}
\nonumber \\
\leq\!\!\!\!\!\!\!\!&&C|t-s|^{\frac{r_1-2}{2}}\int_s^t\Big(1+\mathbb{E}^n\|X^{n}_r\|_{\mathbb{H}}^{r_1}\Big)dr
\nonumber \\
\leq\!\!\!\!\!\!\!\!&&C_{p,T}|t-s|^{\frac{r_1}{2}}.
\end{eqnarray}

Therefore, (\ref{es13}) and (\ref{es14}) follow by combining (\ref{es15})-(\ref{es17}). Then it is clear that the estimates (\ref{es8}) and (\ref{es11}) are  direct consequences of Kolmogorov's criterion. The proof is completed.
\end{proof}

\subsection{Proof of existence of weak solutions}\label{sec2.4}
In this subsection, we first investigate the tightness of laws of $\{X^{n}\}_{n\in\mathbb{N}}$.  To this end, we recall
\begin{equation}\label{es73}
\mathbb{K}_1:=C([0,T];\mathbb{X}^*)\cap L^{q}([0,T];\mathbb{Y}),
\end{equation}
which is a Polish space under the metric
$$d_T(u,v):=\sup_{t\in[0,T]}\|u_t-v_t\|_{\mathbb{X}^*}+\Big(\int_0^T\|u_t-v_t\|_{\mathbb{Y}}^q dt\Big)^{\frac{1}{q}}.$$
We further set
$$\mathbb{K}_2:=C([0,T];\mathbb{X}^*)\cap L^{q}([0,T];\mathbb{Y})\cap L^{q}_w([0,T];{\mathbb{V}}),$$
where $ L^{q}_w([0,T];{\mathbb{V}})$ denotes the space $L^{q}([0,T];{\mathbb{V}})$ endowed with the weak topology.
\begin{remark}\label{k2t}
Here the intersection space $\mathbb{K}_2$ takes the following intersection topology denoted by $\tau_{\mathbb{K}_2}$: the class of
open sets of $\mathbb{K}_2$  are generated by the sets of the form $\mathscr{O}_1\cap\mathscr{O}_2\cap\mathscr{O}_3$, where $\mathscr{O}_1$, $\mathscr{O}_2$, $\mathscr{O}_3$ are the open sets in the above three spaces, respectively. Let $\mathscr{B}(\tau_{\mathbb{K}_2})$ be the corresponding Borel $\sigma$-algebra. By similar argument as \cite[Theorem B.5]{Liang}, we have
$$\mathbb{K}_2\in \mathscr{B}(\mathbb{K}_1),~~\mathscr{B}(\tau_{\mathbb{K}_2})=\mathscr{B}(\mathbb{K}_1)\cap \mathbb{K}_2.$$

\end{remark}

Then we have the following lemma.
\begin{lemma}\label{lem6}
The set of measures $\{\mathscr{L}_{X^{n}}\}_{n\in\mathbb{N}}$ is tight  on  $\mathbb{K}_2$ $($hence on $\mathbb{K}_1$$)$.
\end{lemma}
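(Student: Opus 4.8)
The plan is to establish tightness of $\{\mathscr{L}_{X^n}\}_{n\in\mathbb{N}}$ on $\mathbb{K}_2$ by exhibiting, for any $\varepsilon>0$, a compact subset $\mathbb{K}_\varepsilon\subset\mathbb{K}_2$ with $\mathbb{P}(X^n\in\mathbb{K}_\varepsilon)\geq 1-\varepsilon$ uniformly in $n$. Since $\mathbb{K}_2$ carries the (nonmetrizable) intersection topology $\tau_{\mathbb{K}_2}$ of the three factors $C([0,T];\mathbb{X}^*)$, $L^q([0,T];\mathbb{Y})$ and $L^q_w([0,T];\mathbb{V})$, a set is $\tau_{\mathbb{K}_2}$-compact as soon as it is compact in each factor topology; so I would build $\mathbb{K}_\varepsilon$ as an intersection of three ``balls''. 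For the weak-$L^q(\mathbb{V})$ component, the key structural input is that $\mathscr{N}_1\in\mathfrak{M}^q$, so $\{x:\mathscr{N}_1(x)\le 1\}$ is compact in $\mathbb{Y}$ and $\|x\|_{\mathbb{V}}^q\le C\mathscr{N}_1(x)$; hence a set of the form $\{u:\int_0^T\mathscr{N}_1(u_t)\,dt\le R\}$ is bounded in $L^q([0,T];\mathbb{V})$, and since $\mathbb{V}$ is reflexive and separable, bounded sets in $L^q([0,T];\mathbb{V})$ are relatively compact in $L^q_w([0,T];\mathbb{V})$. By Lemma \ref{lem13} (estimate (\ref{es7})) the quantity $\mathbb{E}\int_0^T\mathscr{N}_1(X^n_t)\,dt$ is bounded uniformly in $n$, so Markov's inequality controls the probability of leaving this set.

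For the $C([0,T];\mathbb{X}^*)$ component I would use the Arzel\`a--Ascoli-type criterion: sets of the form
$$
\Big\{u\in C([0,T];\mathbb{X}^*): \sup_{t}\|u_t\|_{\mathbb{H}}\le R,\ \sup_{s\ne t}\frac{\|u_t-u_s\|_{\mathbb{X}^*}}{|t-s|^{\gamma}}\le R\Big\}
$$
are relatively compact in $C([0,T];\mathbb{X}^*)$, because the uniform $\mathbb{H}$-bound together with the compact embedding $\mathbb{H}\hookleftarrow\mathbb{X}$ (equivalently, the compact embedding of $\mathbb{H}$-balls into $\mathbb{X}^*$, which follows from $\mathbb{X}\hookrightarrow\mathbb{H}$ compact by duality) gives pointwise relative compactness in $\mathbb{X}^*$, and the H\"older bound gives equicontinuity. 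Uniform control of these two quantities in probability comes from (\ref{es7}) for the $\sup_t\|X^n_t\|_{\mathbb{H}}^p$ term and from Lemma \ref{lem2} (estimates (\ref{es8})/(\ref{es11}), according to whether $\gamma_1<2$ or $\gamma_1\ge 2$) for the fractional-Sobolev-in-time seminorm; apply Markov's inequality to each.

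For the strong-$L^q([0,T];\mathbb{Y})$ component I would combine the $L^q(\mathbb{V})$-bound just obtained (recall $\mathbb{V}\hookrightarrow\mathbb{Y}$ compactly, being a link in $\mathbb{X}\subset\mathbb{V}\subset\mathbb{Y}\subset\mathbb{H}$ with $\mathbb{X}\hookrightarrow\mathbb{H}$ compact; more precisely one uses the compactness of $\mathbb{V}$-balls in $\mathbb{Y}$) with the time-regularity bound in $\mathbb{X}^*$ from Lemma \ref{lem2}, and invoke an Aubin--Lions--Simon compactness lemma: a set bounded in $L^q([0,T];\mathbb{V})$ whose time translations are uniformly small in $C([0,T];\mathbb{X}^*)$ (which the H\"older bound guarantees) is relatively compact in $L^q([0,T];\mathbb{Y})$, using $\mathbb{V}\hookrightarrow\hookrightarrow\mathbb{Y}\hookrightarrow\mathbb{X}^*$. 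Intersecting the three sets and choosing $R=R(\varepsilon)$ large, the uniform estimates give $\mathbb{P}(X^n\notin\mathbb{K}_\varepsilon)\le\varepsilon$ for all $n$, proving tightness on $\mathbb{K}_2$; tightness on $\mathbb{K}_1$ is then immediate since $\mathbb{K}_2\hookrightarrow\mathbb{K}_1$ continuously (indeed the identity is continuous from $\tau_{\mathbb{K}_2}$ to the metric topology on $\mathbb{K}_1$, and a $\tau_{\mathbb{K}_2}$-compact set maps to a compact subset of $\mathbb{K}_1$). The main obstacle is the handling of the weak topology on $L^q([0,T];\mathbb{V})$: one must be careful that ``tightness'' there means tightness with respect to a nonmetrizable topology, so the compact sets used must genuinely be weakly compact (guaranteed by reflexivity of $\mathbb{V}$ plus boundedness), and one must verify that the resulting intersection is compact in the intersection topology rather than merely sequentially compact — this is where the measurability/topology bookkeeping of Remark \ref{k2t} is used.
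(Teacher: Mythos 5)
Your overall architecture coincides with the paper's: a candidate compact set in $\mathbb{K}_2$ cut out by a uniform $\mathbb{H}$-bound, a H\"older seminorm bound in $\mathbb{X}^*$, and the bound on $\int_0^T\mathscr{N}_1(u_t)\,dt$ (hence on $\int_0^T\|u_t\|_{\mathbb{V}}^q\,dt$ via (\ref{es00})), with the probability of exiting it controlled uniformly in $n$ by Lemmas \ref{lem13} and \ref{lem2} and Chebyshev's inequality; the Banach--Alaoglu step for $L^q_w([0,T];\mathbb{V})$ and the Arzel\`a--Ascoli step for $C([0,T];\mathbb{X}^*)$ (using that $\mathbb{X}\subset\mathbb{H}$ compact gives, by duality, $\mathbb{H}\subset\mathbb{X}^*$ compact) are exactly the ingredients of the paper's Lemma \ref{lem55}. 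However, there is one genuine flaw in your justification of the strong $L^q([0,T];\mathbb{Y})$ compactness: you assert that $\mathbb{V}\hookrightarrow\mathbb{Y}$ is compact because it is a link in the chain $\mathbb{X}\subset\mathbb{V}\subset\mathbb{Y}\subset\mathbb{H}$ with $\mathbb{X}\subset\mathbb{H}$ compact, and you then invoke Aubin--Lions--Simon with that embedding. This does not follow and is not assumed anywhere in the framework: compactness of $\mathbb{X}\subset\mathbb{H}$ says nothing about the intermediate embedding $\mathbb{V}\subset\mathbb{Y}$, and (\ref{es00}) only bounds $\|\cdot\|_{\mathbb{V}}^q$ by $\mathscr{N}_1$, not conversely, so $\mathbb{V}$-balls need not be relatively compact in $\mathbb{Y}$.

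The compactness in the $\mathbb{Y}$-direction is instead encoded precisely in the hypothesis $\mathscr{N}_1\in\mathfrak{M}^q$: by the definition of the class $\mathfrak{N}^q$, the sublevel set $\{x\in\mathbb{Y}:\mathscr{N}_1(x)\le 1\}$ is compact in $\mathbb{Y}$, and by the scaling property and lower semicontinuity every sublevel set $\{\mathscr{N}_1\le c\}$ is compact. Since your compact set already carries the constraint $\int_0^T\mathscr{N}_1(u_t)\,dt\le R$, the correct argument is the Dubinskii/Simon-type one of \cite[Lemma C.2]{GRZ}, which is exactly what the paper's Lemma \ref{lem55} invokes: combine the $\mathscr{N}_1$-integral bound (which forces the paths, outside a time set of small measure, into a fixed $\mathbb{Y}$-compact sublevel set) with the equicontinuity in $C([0,T];\mathbb{X}^*)$ to obtain relative compactness in $L^q([0,T];\mathbb{Y})$. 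With that replacement the remainder of your proof --- Markov/Chebyshev from (\ref{es7}) and (\ref{es8})/(\ref{es11}), the coincidence of the three limits which makes the set compact for the intersection topology $\tau_{\mathbb{K}_2}$, and the passage from $\mathbb{K}_2$ to $\mathbb{K}_1$ --- is sound and agrees with the paper's argument.
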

\begin{proof}
We postpone the proof  to the Appendix below.
\end{proof}

\vspace{2mm}
In the following, we shall give the detailed proof of Theorem \ref{th1}.

\medskip
Since $\{\nu^n:=\mathscr{L}_{X^{n}}\}_{n\in\mathbb{N}}$ is tight in  $\mathbb{K}_2$ (also in $\mathbb{K}_1$), then $\{\Gamma^n:=\mathscr{L}_{(X^{n},W^n)}\}_{n\in\mathbb{N}}$ is also tight. In order to apply Jakubowski's version of the Skorokhod theorem (cf.~Lemma \ref{sko1} in Appendix), we have to check that $\mathbb{K}_2$ satisfies the conditions of Lemma \ref{sko1}. Specifically, it is necessary to prove that on each space appearing in the definition of $\mathbb{K}_2$ there exists a
countable set of continuous real-valued functions separating points. Firstly, since $C([0,T];\mathbb{X}^*)$ and  $L^{q}([0,T];\mathbb{Y})$ are separable Banach spaces, it is easy to see the conditions in Lemma \ref{sko1} hold. Secondly, for the space $L^{q}_w([0,T];{\mathbb{V}})$, it suffices to put
$$f_m(u):=\int_0^T{}_{\mathbb{V}^*}\langle v_m(t),u(t)\rangle_{\mathbb{V}}dt\in\mathbb{R},~u\in L^{q}_w([0,T];{\mathbb{V}}),~m\in\mathbb{N},$$
where $\{v_m\}_{m\geq 1}$ is a dense subset of $L^{\frac{q}{q-1}}([0,T];{\mathbb{V}^*})$. Since $\{v_m\}_{m\geq 1}$ is dense in $L^{\frac{q}{q-1}}([0,T];{\mathbb{V}^*})$, it separates points of $L^{q}([0,T];{\mathbb{V}})$. Thus all the conditions of Lemma \ref{sko1} are satisfied. Moreover, the $\sigma$-algebra generated by the sequence of the above continuous functions
which separate the points in $\mathbb{K}_2$ is exactly $\mathscr{B}(\tau_{\mathbb{K}_2})$, since $(\mathbb{K}_2,\mathscr{B}(\tau_{\mathbb{K}_2}))$  is a standard Borel space (see Definition \ref{de5} in Appendix) whose  proof can refer to \cite[Theorem B.5,Corollary B.9]{Liang}.
%

Now by Lemma \ref{sko1} there exists a probability space $(\tilde{\Omega},\tilde{\mathscr{F}},\tilde{\PP})$, and on this space, $\mathbb{K}_2\times C([0,T];U_1)$-valued random variables $(\tilde{X}^{n},\tilde{W}^{n})$ (here $U_1$ is a Hilbert space such that the embedding $U\subset U_1$ is Hilbert-Schmidt) whose law is equal to $\Gamma^n$.
Moreover,
\begin{equation}\label{es80}
\tilde{X}^{n}\to \tilde{X}\sim \nu~\text{in}~\mathbb{K}_2,~\tilde{\mathbb{P}}\text{-a.s.},~\text{as}~n\to\infty,
\end{equation}
\begin{equation}\label{es42}
\tilde{W}^{n}\to \tilde{W}~\text{in}~C([0,T];U_1),~\tilde{\mathbb{P}}\text{-a.s.},~\text{as}~n\to\infty.
\end{equation}
Let $\tilde{\mathscr{F}}^{n}_t$ (resp.~$\tilde{\mathscr{F}}_t$) be the filtration satisfying the usual conditions and generated by $\{\tilde{X}^{n}_s,\tilde{W}^{n}_s:s\leq t\}$ (resp.~$\{\tilde{X}_s,\tilde{W}_s:s\leq t\}$). It is clear that $\tilde{X}_t$ is $\{\tilde{\mathscr{F}}_t\}_{t\in[0,T]}$-adapted and continuous in $\mathbb{X}^*$, and $\tilde{W}_t$ is an $\{\tilde{\mathscr{F}}_t\}_{t\in[0,T]}$-cylindrical Wiener process on $U$. Moreover, by Lemma \ref{lem13} we have the same bounds for $\tilde{X}^{n}_t$ under $\tilde{\mathbb{P}}$, i.e.
\begin{equation}\label{es32}
\tilde{\mathbb{E}}\Big[\sup_{t\in[0,T]}\|\tilde{X}^{n}_t\|_{\mathbb{H}}^p\Big]+\tilde{\mathbb{E}}\Big[\int_0^{T}\mathscr{N}_1(\tilde{X}^{n}_t)dt\Big]^{r_0}+\tilde{\mathbb{E}}\Big[\int_0^{T}\|\tilde{X}^{n}_t\|_{\mathbb{H}}^{p-2}\mathscr{N}_1(\tilde{X}^{n}_t)dt\Big]\leq C_{p,T},
\end{equation}
for some $r_0>2$.

Note that $\nu^n\to\nu$ weakly in $\mathscr{P}(\mathbb{K}_1)$ (here choosing a subsequence if necessary), then by \cite[Theorem 5.5]{CD1} and (\ref{es32}), we further obtain
\begin{equation}\label{es38}
\nu^n\to\nu~\text{in}~\mathscr{P}_2(\mathbb{K}_1),
\end{equation}
which turns out that
$$\mathbb{W}_{2,T,\mathbb{X}^*}(\nu^n,\nu)\to 0~\text{as}~n\to\infty.$$

Now we define
$$\Upsilon(T,w):=\sup_{t\in[0,T]}\|w_t\|_{\mathbb{H}}^p+\int_0^{T}\mathscr{N}_1(w_t)dt+\int_0^{T}\|w_t\|_{\mathbb{H}}^{p-2}\mathscr{N}_1(w_t)dt.$$
Then by (\ref{es80}), (\ref{es32}), the lower semicontinuity of $\Upsilon(T,\cdot)$ and Fatou's lemma, we obtain
\begin{eqnarray*}
\tilde{\mathbb{E}}\Upsilon(T,\tilde{X})\leq\tilde{\mathbb{E}}\big[\liminf_{n\to\infty}\Upsilon(T,\tilde{X}^n)]\leq\liminf_{n\to\infty}\tilde{\mathbb{E}}\Upsilon(T,\tilde{X}^n)\leq C_{p,T},
\end{eqnarray*}
which means
\begin{eqnarray}\label{es119}
\tilde{\mathbb{E}}\Big[\sup_{t\in[0,T]}\|\tilde{X}_t\|_{\mathbb{H}}^p\Big]+\tilde{\mathbb{E}}\Big[\int_0^{T}\mathscr{N}_1(\tilde{X}_t)dt\Big]+\tilde{\mathbb{E}}\Big[\int_0^{T}\|\tilde{X}_t\|_{\mathbb{H}}^{p-2}\mathscr{N}_1(\tilde{X}_t)dt\Big]\leq C_{p,T}.
\end{eqnarray}

Let $l\in\mathbb{X}$. Since $\tilde{X}^{n}$ coincides in law with $X^{n}$, define
\begin{equation}\label{eqm}
\tilde{\mathscr{M}}_l^{n}(t):={}_{\mathbb{X}^*}\langle \tilde{X}^{n}_t,l\rangle_{\mathbb{X}}-{}_{\mathbb{X}^*}\langle \tilde{X}^{n}_0,l\rangle_{\mathbb{X}}-\int_0^t{}_{\mathbb{X}^*}\langle P_nA(s,\tilde{X}^{n}_s,\mathscr{L}_{\tilde{X}^{n}_s}),l\rangle_{\mathbb{X}}ds,~t\in[0,T],
\end{equation}
which is a square integrable martingale w.r.t.~the filtration $\tilde{\mathscr{F}}^{n}_t$ with quadratic variation
$$\langle \tilde{\mathscr{M}}^{n}_l\rangle(t)=\int_0^t \|\big(P_n\sigma(s,\tilde{X}^{n}_s,\mathscr{L}_{\tilde{X}^{n}_s})\tilde{P}_n\big)^*l\|_U^2ds.$$

For any $R>0$ and $(t,w,\nu)\in[0,T]\times\mathbb{K}_2\times\mathscr{P}(\mathbb{K}_1)$, we set
\begin{eqnarray}\label{es20}
\Theta_R(t,w,\nu)=\Theta^{(1)}_R(t,w)+\Theta^{(2)}_R(t,w,\nu),
\end{eqnarray}
where
$$\Theta^{(1)}_R(t,w):={}_{\mathbb{X}^*}\langle w_t,l\rangle_{\mathbb{X}}\cdot\chi_R({}_{\mathbb{X}^*}\langle w_t,l\rangle_{\mathbb{X}})-{}_{\mathbb{X}^*}\langle w_0,l\rangle_{\mathbb{X}}\cdot\chi_R({}_{\mathbb{X}^*}\langle w_0,l\rangle_{\mathbb{X}}),$$
$$\Theta^{(2)}_R(t,w,\nu):=-\int_0^t{}_{\mathbb{X}^*}\langle A(s,w_s,\nu_s),l\rangle_{\mathbb{X}}\cdot\chi_R({}_{\mathbb{X}^*}\langle A(s,w_s,\nu_s),l\rangle_{\mathbb{X}})ds.$$
Here  $\chi_R\in C^{\infty}_c(\mathbb{R})$ is a cut-off function with
$$\chi_R(r)=\begin{cases} 1,~~~~|r|\leq R&\quad\\
0,~~~~|r|>2R.&\quad\end{cases}$$
Notice that for any $t\in[0,T]$,
$$(w,\nu)\mapsto \Theta_R(t,w,\nu)~\text{is bounded continuous on}~\mathbb{K}_2\times\mathscr{P}_2(\mathbb{K}_1).$$ In fact,
if $(w^n,\nu^n)\to(w,\nu)$ in $\mathbb{K}_2\times\mathscr{P}_2(\mathbb{K}_1)$ as $n\to\infty$, it is easy to see that
\begin{eqnarray}
\!\!\!\!\!\!\!\!&&w^n\to w~~\text{in}~C([0,T];\mathbb{X}^*);
\nonumber \\
\!\!\!\!\!\!\!\!&&
w^n\to w~~\text{in}~L^q([0,T];\mathbb{Y}); \label{es34}
 \\
\!\!\!\!\!\!\!\!&&
w^n\to w~~\text{weakly in}~L^q([0,T];{\mathbb{V}}),\nonumber
\end{eqnarray}
and for any $t\in[0,T]$,
\begin{equation}\label{es35}
\nu^n_t\to \nu_t~~\text{in}~\mathscr{P}_2(\mathbb{X}^*),
\end{equation}
which follows from the fact
$$\mathbb{W}_{2,\mathbb{X}^*}(\nu^n_t,\nu_t)\leq\mathbb{W}_{2,T,\mathbb{X}^*}(\nu^n,\nu),~t\in[0,T].$$

Then it is obvious that
$$\lim_{n\to\infty}\Theta^{(1)}_R(t,w^n)=\Theta^{(1)}_R(t,w).$$
Since $(x,\mu)\mapsto{}_{\mathbb{X}^*}\langle A(t,x,\mu),l\rangle_{\mathbb{X}}\cdot\chi_R({}_{\mathbb{X}^*}\langle A(t,x,\mu),l\rangle_{\mathbb{X}})$ is a bounded continuous function on $\mathbb{Y}\times \mathscr{P}_2(\mathbb{X}^*)$, then by (\ref{es34})-(\ref{es35}), we have
$$\lim_{n\to\infty}\Theta^{(2)}_R(t,w^n,\nu^n)=\Theta^{(2)}_R(t,w,\nu).$$
Hence the assertion follows.

On the other hand,  we denote
\begin{equation}\label{es36}
\tilde{\mathscr{M}}_l(t):={}_{\mathbb{X}^*}\langle \tilde{X}_t,l\rangle_{\mathbb{X}}-{}_{\mathbb{X}^*}\langle \tilde{X}_0,l\rangle_{\mathbb{X}}-\int_0^t{}_{\mathbb{X}^*}\langle A(s,\tilde{X}_s,\mathscr{L}_{\tilde{X}_s}),l\rangle_{\mathbb{X}}ds,~t\in[0,T],
\end{equation}
and for any $(t,w,\nu)\in[0,T]\times\mathbb{K}_2\times\mathscr{P}(\mathbb{K}_1)$
\begin{equation*}
\Theta(t,w,\nu):=\Theta^{(1)}(t,w)+\Theta^{(2)}(t,w,\nu),
\end{equation*}
where
$$\Theta^{(1)}(t,w):={}_{\mathbb{X}^*}\langle w_t,l\rangle_{\mathbb{X}}-{}_{\mathbb{X}^*}\langle w_0,l\rangle_{\mathbb{X}},$$
$$\Theta^{(2)}(t,w,\nu):=-\int_0^t{}_{\mathbb{X}^*}\langle A(s,w_s,\nu_s),l\rangle_{\mathbb{X}}ds.$$
Moreover, we also denote
\begin{equation*}
\Theta^n(t,w,\nu):={}_{\mathbb{X}^*}\langle w_t,l\rangle_{\mathbb{X}}-{}_{\mathbb{X}^*}\langle w_0,l\rangle_{\mathbb{X}}-\int_0^t{}_{\mathbb{X}^*}\langle P_nA(s,w_s,\nu_s),l\rangle_{\mathbb{X}}ds.
\end{equation*}

Now we give the following two lemmas.
%
%

\begin{lemma}\label{lem9} Under the assumptions in Theorem \ref{th1}, for any $0\leq s<t\leq T$,
\begin{equation}\label{es43}
\tilde{\mathbb{E}}\big[\tilde{\mathscr{M}}_l(t)\big|\tilde{\mathscr{F}}_s\big]=\tilde{\mathscr{M}}_l(s).
\end{equation}
\end{lemma}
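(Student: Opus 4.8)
The plan is to pass to the limit in the martingale property of the approximating processes $\tilde{\mathscr{M}}_l^{n}$. Recall from \eqref{eqm} that for each fixed $l\in\mathbb{X}$ the process $\tilde{\mathscr{M}}_l^{n}$ is a square integrable $\tilde{\mathscr{F}}^{n}_t$-martingale (because $\tilde X^n$ has the same law as the Galerkin solution $X^n$, for which this holds by construction). Hence for any $0\le s<t\le T$ and any bounded continuous $\tilde{\mathscr{F}}_s$-measurable functional $\Phi$ of the paths $(\tilde X^{n},\tilde W^{n})$ restricted to $[0,s]$ we have
\[
\tilde{\mathbb{E}}\big[(\tilde{\mathscr{M}}_l^{n}(t)-\tilde{\mathscr{M}}_l^{n}(s))\,\Phi\big]=0.
\]
The goal is to send $n\to\infty$ and identify the limit with $\tilde{\mathbb{E}}[(\tilde{\mathscr{M}}_l(t)-\tilde{\mathscr{M}}_l(s))\,\Phi]=0$, which by a standard monotone-class/density argument (test functionals $\Phi$ depending on finitely many coordinates, then all bounded $\tilde{\mathscr{F}}_s$-measurable ones) yields \eqref{es43}.

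First I would rewrite $\tilde{\mathscr{M}}_l^{n}(t)-\tilde{\mathscr{M}}_l^{n}(s)$ in terms of the truncated functional $\Theta_R$ of \eqref{es20} plus a remainder, namely
\[
\tilde{\mathscr{M}}_l^{n}(t)-\tilde{\mathscr{M}}_l^{n}(s)
=\Theta_R(t,\tilde X^{n},\nu^n)-\Theta_R(s,\tilde X^{n},\nu^n)+\big(\text{remainder on }\{|\cdot|>R\}\big),
\]
where $\nu^n=\mathscr{L}_{X^n}=\mathscr{L}_{\tilde X^n}$. For the truncated part I would use that $(w,\nu)\mapsto\Theta_R(t,w,\nu)$ is bounded and continuous on $\mathbb{K}_2\times\mathscr{P}_2(\mathbb{K}_1)$ (established just above the statement), together with the $\tilde{\mathbb{P}}$-a.s. convergence $\tilde X^{n}\to\tilde X$ in $\mathbb{K}_2$ from \eqref{es80} and the convergence $\nu^n\to\nu$ in $\mathscr{P}_2(\mathbb{K}_1)$ from \eqref{es38}; bounded convergence then passes the expectation $\tilde{\mathbb{E}}[(\Theta_R(t,\tilde X^n,\nu^n)-\Theta_R(s,\tilde X^n,\nu^n))\Phi]$ to its limit with $\tilde X,\nu$ in place of $\tilde X^n,\nu^n$. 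To control the remainder uniformly in $n$, I would use the growth bound \eqref{c1} of ${\mathbf{H\ref{H4}}}$ together with the uniform estimates \eqref{es32}: on the event where $|{}_{\mathbb{X}^*}\langle A(s,\cdot,\cdot),l\rangle_{\mathbb{X}}|>R$ or $|{}_{\mathbb{X}^*}\langle w_0,l\rangle_{\mathbb{X}}|>R$, a Chebyshev/uniform-integrability argument shows the contribution is $O(R^{-\delta})$ uniformly in $n$, and similarly for the limit process using \eqref{es119}. Letting first $n\to\infty$ and then $R\to\infty$ closes the argument, giving $\tilde{\mathbb{E}}[(\tilde{\mathscr{M}}_l(t)-\tilde{\mathscr{M}}_l(s))\Phi]=0$ for the finitely-based $\Phi$, and then for all bounded $\tilde{\mathscr{F}}_s$-measurable $\Phi$.

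The main obstacle is the distribution-dependent drift: one must justify that $\nu^n_s=\mathscr{L}_{\tilde X^n_s}\to\nu_s=\mathscr{L}_{\tilde X_s}$ in $\mathscr{P}_2(\mathbb{X}^*)$ for the relevant times and that this feeds correctly into the continuity of $(x,\mu)\mapsto{}_{\mathbb{X}^*}\langle A(t,x,\mu),l\rangle_{\mathbb{X}}$; this is exactly why the $L^2$-Wasserstein strengthening \eqref{es38} and the bound $\mathbb{W}_{2,\mathbb{X}^*}(\nu^n_t,\nu_t)\le\mathbb{W}_{2,T,\mathbb{X}^*}(\nu^n,\nu)$ were recorded just before the lemma, and why the continuity of $\Theta_R$ on $\mathbb{K}_2\times\mathscr{P}_2(\mathbb{K}_1)$ was verified. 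A secondary technical point is the uniform integrability needed to drop the truncation $\chi_R$ and to pass the expectation through the limit, which is supplied by the moment and coercivity bounds \eqref{es32} and \eqref{es119} combined with the polynomial growth in ${\mathbf{H\ref{H4}}}$; the time regularity from Lemma \ref{lem2} guarantees the a.s. convergence in $C([0,T];\mathbb{X}^*)$ used for the boundary terms $\Theta^{(1)}_R$. No genuinely new estimate is required beyond what has already been assembled.
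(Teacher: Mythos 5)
Your proposal is correct and follows essentially the same route as the paper: decompose the martingale increment via the cut-off functional $\Theta_R$, use its bounded continuity on $\mathbb{K}_2\times\mathscr{P}_2(\mathbb{K}_1)$ together with the a.s.\ Skorokhod convergence \eqref{es80} and the Wasserstein convergence \eqref{es38} for the truncated part, control the remainder uniformly in $n$ by ${\mathbf{H\ref{H4}}}$ and the uniform bounds \eqref{es32}, \eqref{es119}, then let $n\to\infty$ and $R\to\infty$ and conclude by arbitrariness of the test functional $\Phi$. The only cosmetic difference is that the paper first proves $L^1$-convergence $\tilde{\mathscr{M}}^n_l(t)\to\tilde{\mathscr{M}}_l(t)$ and then tests against $\Phi$, whereas you pass to the limit inside the tested identity directly; the two are equivalent.
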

\begin{proof}
Note that
\begin{eqnarray}\label{es24}
\!\!\!\!\!\!\!\!&&\tilde{\mathbb{E}}|\tilde{\mathscr{M}}_l^n(t)-\tilde{\mathscr{M}}_l(t)|
\nonumber \\
=\!\!\!\!\!\!\!\!&&\tilde{\mathbb{E}}|\Theta^n(t,\tilde{X}^{n},\mathscr{L}_{\tilde{X}^{n}})-\Theta(t,\tilde{X}^{n},\mathscr{L}_{\tilde{X}^{n}})|
\nonumber \\
\!\!\!\!\!\!\!\!&&+
\tilde{\mathbb{E}}|\Theta(t,\tilde{X}^{n},\mathscr{L}_{\tilde{X}^{n}})-\Theta_R(t,\tilde{X}^{n},\mathscr{L}_{\tilde{X}^{n}})|
\nonumber \\
\!\!\!\!\!\!\!\!&&
+\tilde{\mathbb{E}}|\Theta_R(t,\tilde{X}^{n},\mathscr{L}_{\tilde{X}^{n}})-\Theta_R(t,\tilde{X},\mathscr{L}_{\tilde{X}})|
\nonumber \\
\!\!\!\!\!\!\!\!&&+\tilde{\mathbb{E}}|\Theta_R(t,\tilde{X},\mathscr{L}_{\tilde{X}})-\Theta(t,\tilde{X},\mathscr{L}_{\tilde{X}})|.
\end{eqnarray}
First, by the assumption (\ref{c1}) and the estimate (\ref{es119}), we deduce that
\begin{eqnarray}\label{es21}
\!\!\!\!\!\!\!\!&&\lim_{n\to\infty}\tilde{\mathbb{E}}|\Theta^n(t,\tilde{X}^{n},\mathscr{L}_{\tilde{X}^{n}})-\Theta(t,\tilde{X}^{n},\mathscr{L}_{\tilde{X}^{n}})|
\nonumber \\
\leq\!\!\!\!\!\!\!\!&&\lim_{n\to\infty}\tilde{\mathbb{E}}\Big|\int_0^t{}_{\mathbb{X}^*}\langle A(s,\tilde{X}^{n}_s,\mathscr{L}_{\tilde{X}^{n}_s}),(P_n-I)l\rangle_{\mathbb{X}}ds\Big|
\nonumber \\
=\!\!\!\!\!\!\!\!&&0.
\end{eqnarray}
On the other hand, we know that $\Theta_R(t,\cdot,\cdot)$ is continuous and bounded, by (\ref{es80}), (\ref{es38}) and the dominated convergence theorem
\begin{equation}\label{es39}
\lim_{n\to\infty}\tilde{\mathbb{E}}|\Theta_R(t,\tilde{X}^{n},\mathscr{L}_{\tilde{X}^{n}})-\Theta_R(t,\tilde{X},\mathscr{L}_{\tilde{X}})|=0.
\end{equation}
For the second term on right hand side of (\ref{es24}),  we know
\begin{eqnarray}\label{es25}
\!\!\!\!\!\!\!\!&&\tilde{\mathbb{E}}|\Theta(t,\tilde{X}^{n},\mathscr{L}_{\tilde{X}^{n}})-\Theta_R(t,\tilde{X}^{n},\mathscr{L}_{\tilde{X}^{n}})|
\nonumber \\
\leq\!\!\!\!\!\!\!\!&&\tilde{\mathbb{E}}|\Theta^{(1)}(t,\tilde{X}^{n})-\Theta^{(1)}_R(t,\tilde{X}^{n})|
\nonumber \\
\!\!\!\!\!\!\!\!&&+\tilde{\mathbb{E}}|\Theta^{(2)}(t,\tilde{X}^{n},\mathscr{L}_{\tilde{X}^{n}})-\Theta^{(2)}_R(t,\tilde{X}^{n},\mathscr{L}_{\tilde{X}^{n}})|.
\end{eqnarray}
By assumption (\ref{c1}) and (\ref{es32}),
\begin{eqnarray*}
\!\!\!\!\!\!\!\!&&\tilde{\mathbb{E}}|\Theta^{(2)}(t,\tilde{X}^{n},\mathscr{L}_{\tilde{X}^{n}})-\Theta_R^{(2)}(t,\tilde{X}^{n},\mathscr{L}_{\tilde{X}^{n}})|
\nonumber \\
\leq\!\!\!\!\!\!\!\!&& \tilde{\mathbb{E}}\Big[\int_0^T|{}_{\mathbb{X}^*}\langle A(s,\tilde{X}^{n}_s,\mathscr{L}_{\tilde{X}^{n}_s}),l\rangle_{\mathbb{X}}|\cdot \mathbf{1}_{\big\{|{}_{\mathbb{X}^*}\langle A(s,\tilde{X}^{n}_s,\mathscr{L}_{\tilde{X}^{n}_s}),l\rangle_{\mathbb{X}}|\geq R\big\}}ds\Big]
\nonumber \\
\leq\!\!\!\!\!\!\!\!&&\|l\|_{\mathbb{X}}\Big[\Big(\int_0^T\tilde{\mathbb{E}}\|A(s,\tilde{X}^{n}_s,\mathscr{L}_{\tilde{X}^{n}_s})\|_{\mathbb{X}^*}^{\gamma_1}ds\Big)^{\frac{1}{\gamma_1}}
\cdot \Big(\int_0^T\tilde{\mathbb{P}}\big(|{}_{\mathbb{X}^*}\langle A(s,\tilde{X}^{n}_s,\mathscr{L}_{\tilde{X}^{n}_s}),l\rangle_{\mathbb{X}}|\geq R\big)ds\Big)^{\frac{\gamma_1-1}{\gamma_1}}\Big]
\nonumber \\
\leq\!\!\!\!\!\!\!\!&&\|l\|_{\mathbb{X}}\Big(\tilde{\mathbb{E}}\int_0^T\|A(s,\tilde{X}^{n}_s,\mathscr{L}_{\tilde{X}^{n}_s})\|_{{\mathbb{X}}^*}^{\gamma_1}ds\Big)\Big/R^{\gamma_1-1}
\nonumber \\
\leq\!\!\!\!\!\!\!\!&&C\|l\|_{\mathbb{X}}\Big(\tilde{\mathbb{E}}\int_0^T\big(1+\mathscr{N}_1(\tilde{X}^{n}_s)+\mathscr{N}_1(\tilde{X}^{n}_s)\|X^{n}_s\|_{\mathbb{H}}^{\gamma_2}+\|X^{n}_s\|_{\mathbb{H}}^{\gamma_2}+(\tilde{\mathbb{E}}\|X^{n}_s\|_{\mathbb{H}}^{\theta_1})^2\big)ds\Big)\Big/R^{\gamma_1-1}
\nonumber \\
\leq\!\!\!\!\!\!\!\!&&C_T\|l\|_{\mathbb{X}}\Big/R^{\gamma_1-1},
\end{eqnarray*}
where the constant $C_T$ is independent of $n$.  Thus it is easy to see
\begin{equation}\label{es27}
\lim_{R\to\infty}\sup_{n\in\mathbb{N}}\sup_{t\in[0,T]}\tilde{\mathbb{E}}|\Theta^{(2)}(t,\tilde{X}^{n},\mathscr{L}_{\tilde{X}^{n}})-\Theta_R^{(2)}(t,\tilde{X}^{n},\mathscr{L}_{\tilde{X}^{n}})|=0.
\end{equation}
Similarly,  we also get
\begin{equation}\label{es28}
\lim_{R\to\infty}\sup_{n\in\mathbb{N}}\sup_{t\in[0,T]}\tilde{\mathbb{E}}|\Theta^{(1)}(t,\tilde{X}^{n})-\Theta_R^{(1)}(t,\tilde{X}^{n})|=0.
\end{equation}
Combining (\ref{es25})-(\ref{es28}), we deduce
\begin{equation}\label{es40}
\lim_{R\to\infty}\sup_{n\in\mathbb{N}}\sup_{t\in[0,T]}\tilde{\mathbb{E}}|\Theta(t,\tilde{X}^{n},\mathscr{L}_{\tilde{X}^{n}})-\Theta_R(t,\tilde{X}^{n},\mathscr{L}_{\tilde{X}^{n}})|=0.
\end{equation}
A similar argument shows
\begin{equation}\label{es41}
\lim_{R\to\infty}\sup_{n\in\mathbb{N}}\sup_{t\in[0,T]}\tilde{\mathbb{E}}|\Theta(t,\tilde{X},\mathscr{L}_{\tilde{X}})-\Theta_R(t,\tilde{X},\mathscr{L}_{\tilde{X}})|=0.
\end{equation}
Hence in view of (\ref{es21}), (\ref{es39}), (\ref{es40}) and (\ref{es41}), it follows that for any $t\in[0,T]$,
\begin{equation}\label{es41re}
\lim_{n\to\infty}\tilde{\mathbb{E}}|\tilde{\mathscr{M}}_l^n(t)-\tilde{\mathscr{M}}_l(t)|=0.
\end{equation}
Let $\Phi(\cdot)$ be any bounded continuous function on $\mathbb{K}_2\times C([0,T];U_1)$. Due to \eref{es41re} and the martingale property of $\tilde{\mathscr{M}}_l^n$, we deduce
\begin{eqnarray*}
\!\!\!\!\!\!\!\!&&\tilde{\mathbb{E}}\Big[\Big(\tilde{\mathscr{M}}_l(t)-\tilde{\mathscr{M}}_l(s)\Big)\Phi((\tilde{X},\tilde{W})|_{[0,s]})\Big]
\nonumber \\
=\!\!\!\!\!\!\!\!&&
\lim_{n\to\infty}\tilde{\mathbb{E}}\Big[\Big(\tilde{\mathscr{M}}_l^n(t)-\tilde{\mathscr{M}}_l^n(s)\Big)\Phi((\tilde{X}^n,\tilde{W}^n)|_{[0,s]})\Big]
\nonumber \\
=\!\!\!\!\!\!\!\!&&0,
\end{eqnarray*}
which implies the assertion by the arbitrariness of $\Phi$.
\end{proof}

\begin{lemma}\label{lem10} Under the assumptions in Theorem \ref{th1}, for any $0\leq s<t\leq T$,
\begin{equation}\label{es44}
\tilde{\mathbb{E}}\Big[\tilde{\mathscr{M}}_l^2(t)-\int_0^t\|\sigma(r,\tilde{X}_r,\mathscr{L}_{\tilde{X}_r})^*l\|_U^2dr   \Big|\tilde{\mathscr{F}}_s\Big]=\tilde{\mathscr{M}}_l^2(s)-\int_0^s\|\sigma(r,\tilde{X}_r,\mathscr{L}_{\tilde{X}_r})^*l\|_U^2dr.
\end{equation}
\end{lemma}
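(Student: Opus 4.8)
The plan is to argue exactly as in the proof of Lemma \ref{lem9}, but now at the level of the quadratic variation. Since $\tilde X^n$ and $X^n$ have the same law and, by the line following (\ref{eqm}), $\tilde{\mathscr M}_l^n$ is an $\tilde{\mathscr F}^n_t$-square integrable martingale with $\langle\tilde{\mathscr M}^n_l\rangle(t)=\int_0^t\|\sigma(r,\tilde X^n_r,\mathscr L_{\tilde X^n_r})^*l\|_U^2\,dr$, the process
\[
t\mapsto \tilde{\mathscr M}_l^n(t)^2-\int_0^t\|\sigma(r,\tilde X^n_r,\mathscr L_{\tilde X^n_r})^*l\|_U^2\,dr
\]
is an $\tilde{\mathscr F}^n_t$-martingale. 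Hence for every bounded continuous $\Phi$ on $\mathbb K_2\times C([0,T];U_1)$ and $0\le s<t\le T$,
\[
\mathbb E\Big[\big(\tilde{\mathscr M}_l^n(t)^2-\tilde{\mathscr M}_l^n(s)^2-\int_s^t\|\sigma(r,\tilde X^n_r,\mathscr L_{\tilde X^n_r})^*l\|_U^2\,dr\big)\Phi\big((\tilde X^n,\tilde W^n)|_{[0,s]}\big)\Big]=0 .
\]
I would pass to the limit $n\to\infty$ in this identity; combined with the fact that $\tilde{\mathscr F}_s$ is generated by $\{\tilde X_r,\tilde W_r:r\le s\}$ and the arbitrariness of $\Phi$, the limiting identity yields (\ref{es44}).

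Two limit passages are required. First, I would upgrade the convergence $\mathbb E|\tilde{\mathscr M}_l^n(t)-\tilde{\mathscr M}_l(t)|\to 0$ from (\ref{es41re}) to $L^1$-convergence of the squares. It suffices to check that $\{\tilde{\mathscr M}_l^n(t)^2\}_{n\in\mathbb N}$ is uniformly integrable, for which in turn it is enough to produce a uniform bound $\sup_n\mathbb E|\tilde{\mathscr M}_l^n(t)|^{2+\delta}<\infty$ for some $\delta>0$. This bound follows from the definition of $\tilde{\mathscr M}_l^n$, the moment estimates (\ref{es32}) and (\ref{es12}), the growth bound (\ref{c1}) and H\"older's inequality, along exactly the lines of the estimate for the term $\mathrm I$ in the proof of Lemma \ref{lem2}; it is here that the requirement $p>\eta_0$ on the initial datum is used. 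Convergence in probability together with uniform integrability then gives $\tilde{\mathscr M}_l^n(t)^2\to\tilde{\mathscr M}_l(t)^2$ in $L^1$.

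Second, I would show $\int_0^t\|\sigma(r,\tilde X^n_r,\mathscr L_{\tilde X^n_r})^*l\|_U^2\,dr\to\int_0^t\|\sigma(r,\tilde X_r,\mathscr L_{\tilde X_r})^*l\|_U^2\,dr$ in $L^1$. From $\tilde X^n\to\tilde X$ in $L^q([0,T];\mathbb Y)$ $\tilde{\mathbb P}$-a.s.\ together with (\ref{es32}) one obtains convergence in $L^q([0,T]\times\tilde\Omega;\mathbb Y)$, so along a subsequence $\tilde X^n_r(\omega)\to\tilde X_r(\omega)$ in $\mathbb Y$ for $dr\otimes d\tilde{\mathbb P}$-a.e.\ $(r,\omega)$; using in addition $\mathscr L_{\tilde X^n_r}\to\mathscr L_{\tilde X_r}$ in $\mathscr P_2(\mathbb X^*)$ (which is (\ref{es35})) and the demicontinuity ${\mathbf{H\ref{H1}}}$, the integrands converge a.e.\ to the correct limit. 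The estimate $\|\sigma(r,u,\mu)^*l\|_U^2\le\|l\|_{\mathbb H}^2\|\sigma(r,u,\mu)\|_{L_2(U;\mathbb H)}^2\le C\|l\|_{\mathbb H}^2\big(1+\|u\|_{\mathbb H}^2+\mu(\|\cdot\|_{\mathbb H}^2)\big)$ from (\ref{c2}), together with (\ref{es32}) (note $p/2>1$), supplies uniform integrability, so Vitali's theorem gives the $L^1$-convergence along the subsequence, and a subsequence-of-every-subsequence argument extends it to the full sequence. Alternatively one can mimic the cut-off device $\Theta_R$ of Lemma \ref{lem9}, with the drift replaced by $\|\sigma(r,w_r,\nu_r)^*l\|_U^2$ and with (\ref{c2}) controlling the tails uniformly in $n$.

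With both $L^1$-convergences in hand, and since $\Phi$ is bounded while $(\tilde X^n,\tilde W^n)\to(\tilde X,\tilde W)$ $\tilde{\mathbb P}$-a.s.\ in $\mathbb K_2\times C([0,T];U_1)$ so that $\Phi((\tilde X^n,\tilde W^n)|_{[0,s]})\to\Phi((\tilde X,\tilde W)|_{[0,s]})$ a.s.\ and boundedly, I can pass to the limit in the displayed testing identity and obtain
\[
\mathbb E\Big[\big(\tilde{\mathscr M}_l(t)^2-\tilde{\mathscr M}_l(s)^2-\int_s^t\|\sigma(r,\tilde X_r,\mathscr L_{\tilde X_r})^*l\|_U^2\,dr\big)\Phi\big((\tilde X,\tilde W)|_{[0,s]}\big)\Big]=0 ,
\]
which is equivalent to (\ref{es44}). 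I expect the main obstacle to be the first limit passage --- namely securing, uniformly in $n$, the $(2+\delta)$-th moment of $\tilde{\mathscr M}_l^n(t)$ needed for uniform integrability of the squared martingales; this is precisely where the sharp requirement $p>\eta_0$ and the bracket bounds (\ref{es12}) are needed.
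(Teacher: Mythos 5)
Your proposal is correct and follows essentially the same route as the paper: a uniform moment bound of order strictly greater than $2$ for $\tilde{\mathscr{M}}^n_l(t)$ gives uniform integrability, Vitali upgrades the $L^1$-convergence (\ref{es41re}) to $L^1$-convergence of the squares and of the bracket integrals, and one then passes to the limit in the tested martingale identity exactly as in the proof of (\ref{es43}). The only cosmetic difference is that the paper obtains the uniform $p$-th moment bound ($p>2$) directly from BDG's inequality applied to $\tilde{\mathscr{M}}^n_l$ together with the growth condition (\ref{c2}) on $\sigma$, whereas you derive a $(2+\delta)$-moment bound from the pathwise definition of $\tilde{\mathscr{M}}^n_l$ using (\ref{c1}), (\ref{es32}) and (\ref{es12}); both are valid.
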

\begin{proof}
Note that by (\ref{c2}) and (\ref{es32}),
\begin{eqnarray*}
\sup_{n\in\mathbb{N}}\tilde{\mathbb{E}}|\tilde{\mathscr{M}}_l^n(t)|^p\leq\!\!\!\!\!\!\!\!&&C\sup_{n\in\mathbb{N}}\tilde{\mathbb{E}}\Big(\int_0^t\|\sigma(r,\tilde{X}_r^n,\mathscr{L}_{\tilde{X}_r^n})\|_{L_2(U;\mathbb{H})}^2\|l\|_{\mathbb{H}}^2dr\Big)^{\frac{p}{2}}
\nonumber \\
\leq\!\!\!\!\!\!\!\!&&C_T\sup_{n\in\mathbb{N}}\tilde{\mathbb{E}}\Big(\int_0^t\|\sigma(r,\tilde{X}_r^n,\mathscr{L}_{\tilde{X}_r^n})\|_{L_2(U;\mathbb{H})}^p\|l\|_{\mathbb{H}}^pdr\Big)
\nonumber \\
<\!\!\!\!\!\!\!\!&&\infty.
\end{eqnarray*}
Since $p>2$, by Vitali's convergence theorem and the convergence (\ref{es41re}), we have
$$\lim_{n\to\infty}\tilde{\mathbb{E}}|\tilde{\mathscr{M}}_l^n(t)-\tilde{\mathscr{M}}_l(t)|^2=0,$$
and
$$\lim_{n\to\infty}\tilde{\mathbb{E}}\Big|\int_0^t\|\big(P_n\sigma(r,\tilde{X}^n_r,\mathscr{L}_{\tilde{X}^n_r})\tilde{P}_n\big)^*l-\sigma(r,\tilde{X}_r,\mathscr{L}_{\tilde{X}_r})^*l\|_U^2dr\Big|=0.$$
Thus following the similar argument for proving (\ref{es43}), we obtain (\ref{es44}).
\end{proof}

\vspace{2mm}
\textbf{Proof of Theorem \ref{th1}.}  Combining (\ref{es119}), assumption $\mathbf{H3}$, Lemmas \ref{lem9} and \ref{lem10}, by the martingale representation theorem (cf.~\cite{Daz1}), it follows that (\ref{eqSPDE}) admits a weak solution in the sense of Definition \ref{de1}. Finally, (\ref{es48}) follows from (\ref{es119}).  We finish the proof.
\hspace{\fill}$\Box$

\subsection{Proof of existence and uniqueness of strong solutions}\label{proof2}

Recall Theorem \ref{th1}, there exists a weak solution denoted by $(X,W)$ to MVSPDE (\ref{eqSPDE}) and for each $l\in\mathbb{X}$ and $t\in[0,T]$,
$${}_{\mathbb{X}^*}\langle X_t,l\rangle_{\mathbb{X}}={}_{\mathbb{X}^*}\langle X_0,l\rangle_{\mathbb{X}}-\int_0^t{}_{\mathbb{X}^*}\langle A(s,X_s,\mathscr{L}_{X_s}),l\rangle_{\mathbb{X}}ds+
\langle\int_0^t \sigma(s,X_s,\mathscr{L}_{X_s})dW_s,l\rangle_{{\mathbb{H}}},$$
which satisfies the energy estimates (\ref{es48}) with $p>\eta_1:=\eta_0\vee (\beta+2)\vee \theta_2$.
From this,  by the assumptions (\ref{c2}) and  $\mathbf{H5}$ it turns out that
\begin{equation}\label{es51}
\mathbb{E}\int_0^T\|A(s,X_s,\mathscr{L}_{X_s})\|_{{\mathbb{V}}^*}^{\frac{q}{q-1}}ds<\infty,
\end{equation}
\begin{equation}\label{es52}
\mathbb{E}\int_0^T\|\sigma(s,X_s,\mathscr{L}_{X_s})\|_{L_2(U;\mathbb{H})}^{2}ds<\infty.
\end{equation} Set
$$\hat{X}_t:=X_0+\int_0^tA(s,X_s,\mathscr{L}_{X_s})ds+\int_0^t\sigma(s,X_s,\mathscr{L}_{X_s})dW_s~~\text{in}~{\mathbb{V}}^*.$$
Then due to (\ref{es53}), we know for each $l\in\mathbb{X}$ and $t\in[0,T]$,
$${}_{\mathbb{X}^*}\langle \hat{X}_t,l\rangle_{\mathbb{X}}={}_{\mathbb{X}^*}\langle X_0,l\rangle_{\mathbb{X}}-\int_0^t{}_{\mathbb{X}^*}\langle A(s,X_s,\mathscr{L}_{X_s}),l\rangle_{\mathbb{X}}ds+
\langle\int_0^t \sigma(s,X_s,\mathscr{L}_{X_s})dW_s,l\rangle_{{\mathbb{H}}},$$
which yields that
$$X_t=\hat{X}_t,~t\in[0,T],~\PP\text{-a.s.}.$$
In addition, due to Theorem \ref{th1} we have
\begin{equation}\label{es54}
\mathbb{E}\int_0^T\|X_s\|_{\mathbb{V}}^qds<\infty.
\end{equation}

Combining (\ref{es51})-(\ref{es54})  with \cite[Theorem 4.2.5]{LR1}, we conclude that (\ref{eqSPDE}) admits a weak solution
\begin{equation}\label{es55}
X\in C([0,T];\mathbb{H})\cap L^q([0,T];\mathbb{V}).
\end{equation}

Next, we show the pathwise uniqueness of solutions to (\ref{eqSPDE}) on $\mathbb{H}$.
\begin{lemma}\label{uniqueness}
Under the conditions of Theorem \ref{th2}, MVSPDE (\ref{eqSPDE}) has pathwise uniqueness.

\end{lemma}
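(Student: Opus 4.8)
The plan is to prove pathwise uniqueness by a standard It\^o-formula / Gronwall argument adapted to the distribution-dependent setting, the key novelty being how to control the Wasserstein term using the a priori moment bounds. Suppose $X$ and $Y$ are two strong solutions with $\mathscr{L}_{X_0}=\mathscr{L}_{Y_0}$ on the same stochastic basis driven by the same $W$. First I would set $Z_t:=X_t-Y_t$ and apply It\^o's formula (in the Gelfand triple $\mathbb{V}\subset\mathbb{H}\subset\mathbb{V}^*$, using \cite[Theorem 4.2.5]{LR1}) to $\|Z_t\|_{\mathbb{H}}^2$, obtaining
\begin{equation*}
\|Z_t\|_{\mathbb{H}}^2=2\int_0^t{}_{\mathbb{V}^*}\langle A(s,X_s,\mathscr{L}_{X_s})-A(s,Y_s,\mathscr{L}_{Y_s}),Z_s\rangle_{\mathbb{V}}ds+\int_0^t\|\sigma(s,X_s,\mathscr{L}_{X_s})-\sigma(s,Y_s,\mathscr{L}_{Y_s})\|_{L_2(U;\mathbb{H})}^2ds+M_t,
\end{equation*}
where $M_t$ is a local martingale. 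Then I would plug in the local monotonicity hypothesis ${\mathbf{H\ref{H2}}}$ with $\mu=\mathscr{L}_{X_s}$, $\nu=\mathscr{L}_{Y_s}$, which bounds the drift-plus-diffusion increment by $\big(C+\rho(X_s)+\eta(Y_s)+C\mathscr{L}_{X_s}(\|\cdot\|_{\mathbb{H}}^{\theta_2})+C\mathscr{L}_{Y_s}(\|\cdot\|_{\mathbb{H}}^{\theta_2})\big)\|Z_s\|_{\mathbb{H}}^2+C\big(1+\mathscr{L}_{X_s}(\|\cdot\|_{\mathbb{H}}^{\theta_2})+\mathscr{L}_{Y_s}(\|\cdot\|_{\mathbb{H}}^{\theta_2})\big)\mathbb{W}_{2,\mathbb{H}}(\mathscr{L}_{X_s},\mathscr{L}_{Y_s})^2$.

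The crucial observation is that $\mathbb{W}_{2,\mathbb{H}}(\mathscr{L}_{X_s},\mathscr{L}_{Y_s})^2\le \mathbb{E}\|Z_s\|_{\mathbb{H}}^2$ since $(X_s,Y_s)$ is itself a coupling of the two marginals. Also the energy estimate (\ref{es47}) gives $\sup_{s\in[0,T]}\big(\mathscr{L}_{X_s}(\|\cdot\|_{\mathbb{H}}^{\theta_2})+\mathscr{L}_{Y_s}(\|\cdot\|_{\mathbb{H}}^{\theta_2})\big)\le C_{p,T}<\infty$ because $p>\theta_2$, so the measure-dependent coefficients in front of $\mathbb{W}_{2,\mathbb{H}}^2$ are uniformly bounded. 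Hence, writing $f(s):=\mathbb{E}[\|Z_s\|_{\mathbb{H}}^2\,\mathbf{1}_{\{\tau_R>s\}}]$ for a suitable localizing stopping time $\tau_R$ (introduced to handle the local martingale and the unbounded weights $\rho,\eta$), I would take expectations and arrive at an inequality of the form $f(t)\le C\int_0^t f(s)\,ds + C\int_0^t g_R(s)\,\mathbb{E}\|Z_s\|_{\mathbb{H}}^2\,ds$, where $g_R(s):=\rho(X_s)+\eta(Y_s)+C+C\mathscr{L}_{X_s}(\|\cdot\|_{\mathbb{H}}^{\theta_2})+C\mathscr{L}_{Y_s}(\|\cdot\|_{\mathbb{H}}^{\theta_2})$ is integrable on $[0,T]$ $\mathbb{P}$-a.s. by (\ref{es22}), (\ref{es47}) and $p>\beta+2$; more precisely I would use a stochastic Gronwall lemma (e.g.\ \cite{LR1}) together with the pathwise integrability $\int_0^T g_R(s)ds<\infty$ to conclude $\mathbb{E}[\|Z_t\|_{\mathbb{H}}^2\mathbf{1}_{\{\tau_R>t\}}]=0$.

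Letting $R\to\infty$ (so $\tau_R\uparrow T$, which holds because $X,Y\in C([0,T];\mathbb{H})\cap L^q([0,T];\mathbb{V})$) then gives $X_t=Y_t$ for all $t$, $\mathbb{P}$-a.s., i.e.\ pathwise uniqueness. The main obstacle I anticipate is the careful handling of the weights $\rho(X_s)+\eta(Y_s)$: these are only bounded on $\mathbb{V}$-balls and grow polynomially via (\ref{es22}), so one cannot directly apply deterministic Gronwall; the resolution is to first localize by the stopping time $\tau_R:=\inf\{t:\|X_t\|_{\mathbb{H}}+\|Y_t\|_{\mathbb{H}}+\int_0^t(\|X_s\|_{\mathbb{V}}^q+\|Y_s\|_{\mathbb{V}}^q)ds\ge R\}$ and invoke a stochastic version of Gronwall's inequality, using that $\mathbb{E}\int_0^T\big(\mathscr{N}_1(X_s)(1+\|X_s\|_{\mathbb{H}}^\beta)+\mathscr{N}_1(Y_s)(1+\|Y_s\|_{\mathbb{H}}^\beta)\big)ds<\infty$ follows from (\ref{es47}) together with $p>\beta+2$ and a Young-inequality split of $\|X_s\|_{\mathbb{H}}^\beta\mathscr{N}_1(X_s)$ against $\|X_s\|_{\mathbb{H}}^{p-2}\mathscr{N}_1(X_s)$. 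Once this integrability is in hand, the Wasserstein contribution is absorbed exactly as above and the argument closes.
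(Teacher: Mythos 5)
Your ingredients largely match the paper's (It\^o's formula for $\|X_t-Y_t\|_{\mathbb{H}}^2$ in the triple $\mathbb{V}\subset\mathbb{H}\subset\mathbb{V}^*$, the coupling bound $\mathbb{W}_{2,\mathbb{H}}(\mathscr{L}_{X_s},\mathscr{L}_{Y_s})^2\le\mathbb{E}\|X_s-Y_s\|_{\mathbb{H}}^2$, and the integrability of $\rho(X_\cdot)+\eta(Y_\cdot)$ obtained from (\ref{es22}), (\ref{es47}) and $p>\beta+2$ via a Young split), but your mechanism for removing the random weights --- a localizing stopping time $\tau_R$ plus a stochastic Gronwall lemma --- is not the paper's and contains a genuine gap. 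The paper never localizes: it multiplies $\|X_t-Y_t\|_{\mathbb{H}}^2$ by the random discount factor $\varphi_t=\exp\big(-\int_0^t(2C+2C\mathbb{E}\|X_s\|_{\mathbb{H}}^{\theta_2}+2C\mathbb{E}\|Y_s\|_{\mathbb{H}}^{\theta_2}+\rho(X_s)+\eta(Y_s))ds\big)$, which is a.s.\ strictly positive by exactly the integrability you identified, and arranges the constants so that after discounting the only remaining term is $C(1+\mathbb{E}\|X_s\|_{\mathbb{H}}^{\theta_2}+\mathbb{E}\|Y_s\|_{\mathbb{H}}^{\theta_2})\big(-\|X_s-Y_s\|_{\mathbb{H}}^2+\mathbb{E}\|X_s-Y_s\|_{\mathbb{H}}^2\big)$ with a \emph{deterministic} prefactor, so taking expectations annihilates it; no Gronwall iteration in the unknown $\mathbb{E}\|Z_s\|_{\mathbb{H}}^2$ is needed and $X=Y$ follows from $\varphi_t>0$.

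The gap in your route is that the Wasserstein term produces the \emph{global} quantity $\mathbb{E}\|Z_s\|_{\mathbb{H}}^2$, not the localized $f(s)=\mathbb{E}[\|Z_s\|_{\mathbb{H}}^2\mathbf{1}_{\{\tau_R>s\}}]$. After stopping, the best you get has the form $f(t)\le e^{C_R}\int_0^t C'(s)\,\mathbb{E}\|Z_s\|_{\mathbb{H}}^2\,ds$, and the right-hand side contains $\mathbb{E}[\|Z_s\|_{\mathbb{H}}^2\mathbf{1}_{\{\tau_R\le s\}}]$, about which the localized estimate says nothing; hence your claim that stochastic Gronwall yields $\mathbb{E}[\|Z_t\|_{\mathbb{H}}^2\mathbf{1}_{\{\tau_R>t\}}]=0$ for each fixed $R$ does not follow --- you would be assuming the very quantity you are trying to kill is controlled beyond $\tau_R$. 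Sending $R\to\infty$ does not rescue this: the tail $\mathbb{E}[\|Z_s\|_{\mathbb{H}}^2\mathbf{1}_{\{\tau_R\le s\}}]\to0$ by uniform integrability from (\ref{es47}), but the Gronwall constant blows up like $e^{C_R}$, and there is no control on the product; if instead you invoke a genuine stochastic Gronwall lemma without localization, you need exponential moments of $\int_0^T(\rho(X_s)+\eta(Y_s))ds$, which are unavailable (only first moments follow from (\ref{es22}), (\ref{es47})). This self-referential coupling between the pathwise difference and the law of the solution is precisely the McKean--Vlasov obstruction that makes truncation arguments from the classical locally monotone theory fail, and is why the paper keeps the argument global through the exponential weight. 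A secondary, repairable point: your $\tau_R$ (built from $\|X_t\|_{\mathbb{H}}+\|Y_t\|_{\mathbb{H}}+\int_0^t(\|X_s\|_{\mathbb{V}}^q+\|Y_s\|_{\mathbb{V}}^q)ds$) does not bound $\int_0^{t\wedge\tau_R}(\rho(X_s)+\eta(Y_s))ds$, since by (\ref{es22}) the weights are dominated by $\mathscr{N}_1(\cdot)(1+\|\cdot\|_{\mathbb{H}}^{\beta})$ and (\ref{es00}) gives $\|\cdot\|_{\mathbb{V}}^q\le C\mathscr{N}_1(\cdot)$, not the reverse; one would have to stop on $\int_0^t(\mathscr{N}_1(X_s)+\mathscr{N}_1(Y_s))ds$ instead --- but even then the main difficulty above remains.
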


\begin{proof}
Let $X_t,Y_t$ be two solutions of (\ref{eqSPDE}) with initial values $X_0=Y_0=\xi\in L^p(\Omega,\mathscr{F}_0,\mathbb{P};{\mathbb{H}})$ with $p>\eta_1$, which fulfill that $\mathbb{P}$-a.s.,
\begin{eqnarray*}
&&X_t=\xi+\int_0^tA(s,X_s,\mathscr{L}_{X_s})ds+\int_0^t\sigma(s,X_s,\mathscr{L}_{X_s})dW_s,~t\in[0,T],
\nonumber\\
&&Y_t=\xi+\int_0^tA(s,Y_s,\mathscr{L}_{Y_s})ds+\int_0^t\sigma(s,Y_s,\mathscr{L}_{Y_s})dW_s,~t\in[0,T].
\end{eqnarray*}
Recalling (\ref{es48}), we have
\begin{equation}\label{es57}
\mathbb{E}\Big[\sup_{t\in[0,T]}\|X_t\|_{{\mathbb{H}}}^p\Big]+\mathbb{E}\int_0^T\mathscr{N}_1(X_t)dt+\mathbb{E}\int_0^T\|X_t\|_{{\mathbb{H}}}^{p-2}\mathscr{N}_1(X_t)dt<\infty,
\end{equation}
and
\begin{equation}\label{es58}
\mathbb{E}\Big[\sup_{t\in[0,T]}\|Y_t\|_{{\mathbb{H}}}^p\Big]+\mathbb{E}\int_0^T\mathscr{N}_1(Y_t)dt+\mathbb{E}\int_0^T\|Y_t\|_{{\mathbb{H}}}^{p-2}\mathscr{N}_1(Y_t)dt<\infty.
\end{equation}
For any $R>R_0$, we define the following stopping time
\begin{eqnarray*}
\tau_R:=\!\!\!\!\!\!\!\!&&\tau_R^X\wedge\tau_R^Y
\nonumber\\
=\!\!\!\!\!\!\!\!&&\inf\Bigg\{t\in[0,T]:\|X_t\|_{\mathbb{H}}+\int_0^t\mathscr{N}_1(X_s)ds\geq R\Bigg\}
\nonumber\\
\!\!\!\!\!\!\!\!&&\wedge
\inf\Bigg\{t\in[0,T]:\|Y_t\|_{\mathbb{H}}+\int_0^t\mathscr{N}_1(Y_s)ds\geq R\Bigg\}.
\end{eqnarray*}
Applying It\^{o}'s formula, in view of  (\ref{es57})-(\ref{es58}) and the assumption $\mathbf{H4}$ we obtain that for any $t\in[0,T]$,
\begin{eqnarray}\label{es211}
\!\!\!\!\!\!\!\!&&\|X_{t\wedge\tau_R}-Y_{t\wedge\tau_R}\|_{\mathbb{H}}^2
\nonumber\\
\leq\!\!\!\!\!\!\!\!&&\int_0^{t\wedge\tau_R}
\Big(2{}_{{\mathbb{V}}^*}\langle A(s,X_s,\mathscr{L}_{X_s})-A(s,Y_s,\mathscr{L}_{Y_s}),X_s-Y_s\rangle_{\mathbb{V}}
\nonumber\\
\!\!\!\!\!\!\!\!&&~~~~
+\|\sigma(s,X_s,\mathscr{L}_{X_s})-\sigma(s,Y_s,\mathscr{L}_{Y_s})\|_{L_2(U;\mathbb{H})}^2\Big)ds+2\mathcal{M}_{t\wedge\tau_R}
\nonumber\\
\leq\!\!\!\!\!\!\!\!&&C\int_0^{t\wedge\tau_R}\big(1+\rho(X_s,\mathscr{L}_{X_s})+\eta(Y_s,\mathscr{L}_{Y_s})\big)
\nonumber\\
\!\!\!\!\!\!\!\!&&\cdot
\big(\|X_s-Y_s\|_{\mathbb{H}}^2+\mathcal{W}_{2,R,\mathbb{H}}(\mathscr{L}_{X^s},\mathscr{L}_{Y^s})^2\big)ds
+2\mathcal{M}_{t\wedge\tau_R}
\nonumber\\
\leq\!\!\!\!\!\!\!\!&&C\int_0^{t\wedge\tau_R}\big(1+\rho(X_s,\mathscr{L}_{X_s})+\eta(Y_s,\mathscr{L}_{Y_s})\big)
\nonumber\\
\!\!\!\!\!\!\!\!&&\cdot\big(\|X_s-Y_s\|_{\mathbb{H}}^2+\mathbb{E}\Big[\sup_{r\in[0,s\wedge \tau_R]}\|X_r-Y_r\|_{\mathbb{H}}^2\Big]\big)ds
+2\mathcal{M}_{t\wedge\tau_R},
\end{eqnarray}
where the last step follows from the definition (\ref{localw}) and the fact  that
$$\mathcal{W}_{2,R,\mathbb{H}}(\mathscr{L}_{X^t},\mathscr{L}_{Y^t})^2\leq \mathbb{E}\Big[\sup_{s\in[0,t\wedge \tau_R]}\|X_s-Y_s\|_{\mathbb{H}}^2\Big] .$$
Here, $\mathcal{M}_t$ is a local martingale given by
$$\mathcal{M}_t:=\int_0^t\langle X_s-Y_s, \big(\sigma(s,X_s,\mathscr{L}_{X_s})-\sigma(s,Y_s,\mathscr{L}_{Y_s})\big)dW_s\rangle_{\mathbb{H}}.$$

\vspace{1mm}
By Burkholder-Davis-Gundy's inequality, the assumption $\mathbf{H4}$ and  (\ref{es57})-(\ref{es58}), it follows that
\begin{eqnarray}\label{es26}
\!\!\!\!\!\!\!\!&&\mathbb{E}\Big[\sup_{t\in[0,T]}|\mathcal{M}_{t\wedge\tau_R}|\Big]
\nonumber\\
\leq\!\!\!\!\!\!\!\!&&\mathbb{E}\Bigg\{\int_0^{T\wedge\tau_R}\|X_t-Y_t\|_{\mathbb{H}}^2
\|\sigma(t,X_t,\mathscr{L}_{X_t})-\sigma(t,Y_t,\mathscr{L}_{Y_t})\|_{L_2(U;\mathbb{H})}^2dt\Bigg\}^{\frac{1}{2}}
\nonumber\\
\leq\!\!\!\!\!\!\!\!&&\frac{1}{2}\mathbb{E}\Big[\sup_{t\in[0,T\wedge\tau_R]}\|X_{t}-Y_{t}\|_{\mathbb{H}}^2\Big]+C\mathbb{E}\int_0^{T\wedge\tau_R}\big(1+\rho(X_t,\mathscr{L}_{X_t})+\eta(Y_t,\mathscr{L}_{Y_t})\big)\|X_t-Y_t\|_{\mathbb{H}}^2dt
\nonumber\\
\!\!\!\!\!\!\!\!&&
+C\int_0^{T}\mathbb{E}\big(1+\rho(X_t,\mathscr{L}_{X_t})+\eta(Y_t,\mathscr{L}_{Y_t})\big)\cdot
\mathcal{W}_{2,R,\mathbb{H}}(\mathscr{L}_{X^s},\mathscr{L}_{Y^s})^2dt
\nonumber\\
\leq\!\!\!\!\!\!\!\!&&\frac{1}{2}\mathbb{E}\Big[\sup_{t\in[0,T\wedge\tau_R]}\|X_{t}-Y_{t}\|_{\mathbb{H}}^2\Big]+C\mathbb{E}\int_0^{T\wedge\tau_R}\big(1+\rho(X_t,\mathscr{L}_{X_t})+\eta(Y_t,\mathscr{L}_{Y_t})\big)\|X_t-Y_t\|_{\mathbb{H}}^2dt
\nonumber\\
\!\!\!\!\!\!\!\!&&
+C\int_0^{T}\mathbb{E}\big(1+\rho(X_t,\mathscr{L}_{X_t})+\eta(Y_t,\mathscr{L}_{Y_t})\big)\cdot\mathbb{E}\Big[\sup_{s\in[0,t\wedge \tau_R]}\|X_s-Y_s\|_{\mathbb{H}}^2\Big]dt.
\end{eqnarray}
Taking $\sup_{t\in[0,T]}$ and expectation on both sides of (\ref{es211}), by (\ref{es26}) we have
\begin{eqnarray*}
\!\!\!\!\!\!\!\!&&\mathbb{E}\Big[\sup_{t\in[0,T\wedge\tau_R]}\|X_{t}-Y_{t}\|_{\mathbb{H}}^2\Big]
\nonumber\\
\leq\!\!\!\!\!\!\!\!&&
 C\mathbb{E}\int_0^{T\wedge\tau_R}\big(1+\rho(X_t,\mathscr{L}_{X_t})+\eta(Y_t,\mathscr{L}_{Y_t})\big)\|X_t-Y_t\|_{\mathbb{H}}^2dt
\nonumber\\
\!\!\!\!\!\!\!\!&&
+C\int_0^{T}\mathbb{E}\big(1+\rho(X_t,\mathscr{L}_{X_t})+\eta(Y_t,\mathscr{L}_{Y_t})\big)\cdot\mathbb{E}\Big[\sup_{s\in[0,t\wedge \tau_R]}\|X_s-Y_s\|_{\mathbb{H}}^2\Big]dt.
\end{eqnarray*}
Utilizing stochastic Gronwall's lemma (cf.~e.g.~\cite[Lemma 5.3]{GZ1}), the estimates (\ref{es22}) and (\ref{es57})-(\ref{es58}), it follows that
\begin{eqnarray}\label{es23}
\!\!\!\!\!\!\!\!&&\mathbb{E}\Big[\sup_{t\in[0,T\wedge\tau_R]}\|X_{t}-Y_{t}\|_{\mathbb{H}}^2\Big]
\nonumber\\
\leq \!\!\!\!\!\!\!\!&& C_R\int_0^{T}\mathbb{E}\big(1+\rho(X_t,\mathscr{L}_{X_t})+\eta(Y_t,\mathscr{L}_{Y_t})\big)\cdot\mathbb{E}\Big[\sup_{s\in[0,t\wedge \tau_R]}\|X_s-Y_s\|_{\mathbb{H}}^2\Big]dt.
\end{eqnarray}
Using the estimates (\ref{es22}) and (\ref{es57})-(\ref{es58}) again and applying Gronwall's lemma to (\ref{es23}) yields that
\begin{equation*}
\mathbb{E}\Big[\sup_{t\in[0,T\wedge\tau_R]}\|X_{t}-Y_{t}\|_{\mathbb{H}}^2\Big]\leq  0.
\end{equation*}
Finally, by Fatou's lemma it leads to
\begin{equation*}
\mathbb{E}\Big[\sup_{t\in[0,T]}\|X_{t}-Y_{t}\|_{\mathbb{H}}^2\Big]\leq \liminf_{R\to\infty}\mathbb{E}\Big[\sup_{t\in[0,T\wedge\tau_R]}\|X_{t}-Y_{t}\|_{\mathbb{H}}^2\Big]\leq 0.
\end{equation*}
We complete the proof.
\end{proof}

\vspace{2mm}
\textbf{Proof of Theorem \ref{th2}.}  To prove Theorem \ref{th2}, we  let $\mu=\mathscr{L}_{X_{\cdot}}$ which can be viewed as a
deterministic measure flow.  Consider the following classical (distribution independent) SPDEs
\begin{equation}\label{eq7}
d\bar{X}_t=A^{\mu}(t,\bar{X}_t)dt+\sigma^{\mu}(t,\bar{X}_t)dW_t,~\bar{X}_0\sim\mu_0= \mathscr{L}_{X_0},
\end{equation}
where $A^{\mu}(t,u):=A(t,u,\mu_t)$, $\sigma^{\mu}(t,u):=\sigma(t,u,\mu_t)$. Under the assumptions in Theorem \ref{th2},  we claim that Eq.~(\ref{eq7}) also has pathwise uniqueness.  Indeed,
let $\bar{X}^1,\bar{X}^2$ be two solutions of (\ref{eq7}) with same initial value $\xi\in L^p(\Omega,\mathscr{F}_0,\mathbb{P};\mathbb{H})$. First,
we note that the estimate (\ref{es48}) also holds for $\bar{X}^1,\bar{X}^2$, i.e.,
\begin{eqnarray}
&&\mathbb{E}\Big[\sup_{t\in[0,T]}\|\bar{X}^1_t\|_{{\mathbb{H}}}^p\Big]+\mathbb{E}\int_0^T\mathscr{N}_1(\bar{X}^1_t)dt+\mathbb{E}\int_0^T\|\bar{X}^1_t\|_{{\mathbb{H}}}^{p-2}\mathscr{N}_1(\bar{X}^1_t)dt<\infty,\label{es30}
\\
&&\mathbb{E}\Big[\sup_{t\in[0,T]}\|\bar{X}^2_t\|_{{\mathbb{H}}}^p\Big]+\mathbb{E}\int_0^T\mathscr{N}_1(\bar{X}^2_t)dt+\mathbb{E}\int_0^T\|\bar{X}^2_t\|_{{\mathbb{H}}}^{p-2}\mathscr{N}_1(\bar{X}^2_t)dt<\infty.\label{es31}
\end{eqnarray}
We denote
$$\varphi^\mu_t:=C+\rho(\bar{X}^1_t,\mu_t)+\eta(\bar{X}^2_t,\mu_t).$$
By It\^{o}'s formula and the product rule we have
\begin{eqnarray*}
\!\!\!\!\!\!\!\!&&e^{-\int_0^t\varphi^\mu_sds}\|\bar{X}^1_t-\bar{X}^2_t\|_{\mathbb{H}}^2
\nonumber\\
\leq\!\!\!\!\!\!\!\!&&\int_0^te^{-\int_0^s\varphi^\mu_rdr}
\Big(2{}_{{\mathbb{V}}^*}\langle A^{\mu}(s,\bar{X}^1_s)-A^{\mu}(s,\bar{X}^2_s),\bar{X}^1_s-\bar{X}^2_s\rangle_{\mathbb{V}}
\nonumber\\
\!\!\!\!\!\!\!\!&&~~~~
+\|\sigma^{\mu}(s,\bar{X}^1_s)-\sigma^{\mu}(s,\bar{X}^2_s)\|_{L_2(U;\mathbb{H})}^2
-\varphi^\mu_s\|\bar{X}^1_s-\bar{X}^2_s\|_{\mathbb{H}}^2\Big)ds
\nonumber\\
\!\!\!\!\!\!\!\!&&+2\int_0^te^{-\int_0^s\varphi^\mu_rdr}
\langle \bar{X}^1_s-\bar{X}^2_s,\big(\sigma^{\mu}(s,\bar{X}^1_s)-\sigma^{\mu}(s,\bar{X}^2_s)\big)dW_s\rangle_{\mathbb{H}}
\nonumber\\
\leq\!\!\!\!\!\!\!\!&&2\int_0^te^{-\int_0^s\varphi^\mu_rdr}
\langle \bar{X}^1_s-\bar{X}^2_s,\big(\sigma^{\mu}(s,\bar{X}^1_s)-\sigma^{\mu}(s,\bar{X}^2_s)\big)dW_s\rangle_{\mathbb{H}},
\end{eqnarray*}
where we used the assumption $\mathbf{H4}$ in the last step. Taking expectation on both sides of the above inequality, by (\ref{es30})-(\ref{es31}) we have
\begin{equation}\label{es33}
\mathbb{E}\Big[e^{-\int_0^t\varphi^\mu_sds}\|\bar{X}^1_t-\bar{X}^2_t\|_{\mathbb{H}}^2\Big]\leq 0.
\end{equation}
Note that in view of (\ref{es22}) and (\ref{es30})-(\ref{es31}) it follows that
\begin{equation}\label{es37}
\int_0^T\big(\rho(\bar{X}^1_s,\mu_s)+\eta(\bar{X}^2_s,\mu_s)\big)ds<\infty~~\mathbb{P}\text{-a.s.}.
\end{equation}
Therefore, combining (\ref{es33}) and (\ref{es37}) we can deduce that
$$\bar{X}^1_t=\bar{X}^2_t~~\mathbb{P}\text{-a.s.},~t\in[0,T].$$
Thanks to  the path continuity on $\mathbb{H}$, the claim follows.

To proceed, it is clear that the solution $(X,W)$ of Eq.~(\ref{eqSPDE}) is also a weak solution of Eq.~(\ref{eq7}) with initial law $\mu_0$. Applying the classical Yamada-Watanabe theorem from \cite{RSZ},  the pathwise uniqueness of Eq.~(\ref{eq7}) implies the  existence and uniqueness of  solutions  to Eq.~(\ref{eq7}), for which we denote by $\bar{X}$,  with initial condition $\bar{X}_0\sim\mu_0$. This leads to
\begin{equation}\label{es46}
\mathscr{L}_{\bar{X}_t}=\mu_t,~~ t\in[0,T].
\end{equation}
Inserting (\ref{es46}) into Eq.~(\ref{eq7}), it follows that $\bar{X}_t$ is a strong solution of Eq.~(\ref{eqSPDE}).

Consequently, by Lemma \ref{uniqueness} we can get that Eq.~(\ref{eqSPDE}) admits a unique (probabilistically) strong solution in the sense of Definition \ref{de2}. We complete the proof. \hspace{\fill}$\Box$

\section{Propagation of chaos}\label{Poc.4}
\setcounter{equation}{0}
 \setcounter{definition}{0}
 In this section, we study the propagation of chaos problem of the weakly interacting stochastic 2D Navier-Stokes systems. Throughout this section, we use the notations and definitions introduced in Example 2 (with $d=2$) in Subsection \ref{example}.
\subsection{Main results}\label{propagation}
In this subsection, we are interested in the asymptotic behaviour of the weakly interacting particle system $(X^{N,1},\ldots,X^{N,N})$ governed by
the following stochastic 2D Navier-Stokes systems

 \begin{eqnarray}\label{eqi}
\left\{
 \begin{aligned}
    & dX^{N,i}_t=\Big[ A X^{N,i}_t-B(X^{N,i}_t)+\frac{1}{N}\sum_{j=1}^N\tilde{K}(t,X^{N,i}_t,X^{N,j}_t)\Big]dt+\frac{1}{N}\sum_{j=1}^N\tilde{\sigma}(t,X^{N,i}_t,X^{N,j}_t)dW_t^i, \\
    &{\rm div}(X^{N,i}_t)=0 , \\
    &X^{N,i}_t=0, ~~\text{on}~\partial\mathscr{O},\\
    &X^{N,i}_0=\xi^i,
  \end{aligned}
\right.
\end{eqnarray}
 where $i=1,\ldots,N$, $W_t^1,\ldots,W_t^N$ are  independent $U$-valued cylindrical Wiener processes defined on a complete filtered probability space $(\Omega,\mathscr{F},\{\mathscr{F}_t\}_{t\in[0,T]},\mathbb{P})$ with right continuous filtration $\{\mathscr{F}_t\}$. 
 The coefficients $\tilde{K}$ and $\tilde{\sigma}$ satisfy the following assumptions.

\begin{conditionA}\label{H33}
The measurable maps
$$
\tilde{K}:[0,T]\times\mathbb H\times \mathbb H\rightarrow \mathbb H,~~\tilde{\sigma}:[0,T]\times \mathbb H\times \mathbb H\rightarrow L_2(U;\mathbb H),
$$
are locally Lipschitz on $\mathbb H$, i.e. there exists constant $C>0$ such that for any $t\in[0,T]$, $u_1,u_2,v_1,v_2\in\mathbb H$,
\begin{eqnarray*}
\!\!\!\!\!\!\!\!&&\| \tilde{K}(t,u_1,v_1)-\tilde{K}(t,u_2,v_2)\|_{L^2}+\|\tilde{\sigma}(t,u_1,v_1)-\tilde{\sigma}(t,u_2,v_2)\|_{L_2(U;\mathbb H)}
\nonumber \\
\leq \!\!\!\!\!\!\!\!&&C\big(1+\|u_1\|_{L^2}+\|u_2\|_{L^2}+\|v_1\|_{L^2}+\|v_2\|_{L^2}\big)\big(\|u_1-u_2\|_{L^2}+\|v_1-v_2\|_{L^2}\big),
\end{eqnarray*}
and
\begin{eqnarray}\label{linear1}
\| \tilde{K}(t,u_1,v_1)\|_{L^2}+\|\tilde{\sigma}(t,u_1,v_1)\|_{L_2(U;\mathbb H)}
\leq C\big(1+\|u_1\|_{L^2}+\|v_1\|_{L^2}\big).
\end{eqnarray}
\end{conditionA}

\begin{conditionA}\label{H44}
The measurable maps
$$
\tilde{K}:[0,T]\times\mathbb V\times \mathbb V\rightarrow \mathbb V,~~\tilde{\sigma}:[0,T]\times \mathbb V\times \mathbb V\rightarrow L_2(U;\mathbb V),
$$
satisfy that there exists constant $C>0$ such that for any $t\in[0,T]$, $u,v\in \mathbb V$,
\begin{eqnarray}\label{c12}
\|\tilde{K}(t,u,v)\|_{1}+\|\tilde{\sigma}(t,u,v)\|_{L_2(U;\mathbb V)}\leq C\big(1+\|u\|_{1}+\|v\|_{1}\big).
\end{eqnarray}
\end{conditionA}

\begin{remark}
As we mentioned before, the typical and important application of the interaction external force $\tilde{K}$ (also $\tilde{\sigma}$) satisfying $\mathbf{A1}$-${\mathbf{A2}}$ is the  following Stokes drag force in  fluids
$$\tilde{K}(t,u,v):=c_0(u-v),$$
where $c_0$ is a positive constant, then the particles of the system interact through their mean, i.e.~the Curie-Weiss type interaction (cf. Dawson \cite{Da83}).
\end{remark}

 It is well known that operator $B$  have the following estimates (see e.g.~\cite{FMRT})
\begin{eqnarray}
&&|\langle B(u,v),w\rangle|\leq C\|u\|_{L^2}^{\frac{1}{2}}\|u\|_{1}^{\frac{1}{2}}\|w\|_{L^2}^{\frac{1}{2}}
\|w\|_{1}^{\frac{1}{2}}\|v\|_{1},~\text{for}~ u,v,w\in \mathbb V,\label{bb3}
\\
&&|\langle B(u,u),v\rangle|\leq C\|u\|_{2}^{\frac{1}{2}}\|u\|_{1}\|u\|_{L^2}^{\frac{1}{2}}
\|v\|_{L^2},~\text{for}~ u\in W_{div}^{2,2}(\mathscr{O}), v\in\mathbb H.\label{bb4}
\end{eqnarray}
In particular,  for any $u\in \mathbb V$,
\begin{eqnarray}
\|B(u)\|_{-1}\leq C\|u\|_{L^2}\|u\|_1.\label{P2.2}
\end{eqnarray}

Let $\mathcal{Z}_T:=C([0,T];\mathbb{V})\cap L^2([0,T];W_{div}^{2,2}(\mathscr{O}))$. We denote by $\mathcal{Z}_T^{\otimes N}$  the $N$-product space of $\mathcal{Z}_T$.  The existence and uniqueness of (probabilistically) strong solutions to the $N$-particle system (\ref{eqi}) are described as follows.
\begin{theorem}\label{th3}
Suppose that $\mathbf{A1}$-$\mathbf{A2}$ hold.
For any $N\in\mathbb{N}$ and initial random variables $\xi^i\in L^p(\Omega,\mathscr{F}_0,\mathbb{P};{\mathbb{H}})\cap L^2(\Omega,\mathscr{F}_0,\mathbb{P};{\mathbb{V}})$, $i=1,\ldots,N$, with $p\geq 4$,
there exists a unique strong solution $X^{N}:=(X^{N,1},\ldots,X^{N,N})$ to (\ref{eqi}), where $X^N\in\mathcal{Z}_T^{\otimes N} $ $\mathbb{P}$-a.s..   Moreover, the following estimates hold
\begin{equation}\label{es61}
\mathbb{E}\Big[\sup_{t\in[0,T]}\|X^{N,i}_t\|_{L^2}^p\Big]+\mathbb{E}\int_0^T\|X^{N,i}_t\|_{L^2}^2\|X^{N,i}_t\|_{1}^2dt<\infty.
\end{equation}
\end{theorem}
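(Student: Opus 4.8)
The plan is to view the coupled system \eqref{eqi} as a single (distribution-independent) SPDE on the product Gelfand triple
$$\mathbb{V}^{N}\subset\mathbb{H}^{N}\simeq(\mathbb{H}^{N})^{*}\subset(\mathbb{V}^{*})^{N},$$
driven by the cylindrical Wiener process $\mathbf{W}=(W^{1},\dots,W^{N})$ on $U^{N}$. Writing $\mathbf{u}=(u^{1},\dots,u^{N})$, the drift is $\mathcal{A}(t,\mathbf{u})_{i}:=Au^{i}-B(u^{i})+\frac1N\sum_{j}\tilde{K}(t,u^{i},u^{j})$ and the diffusion $\mathcal{S}(t,\mathbf{u})$ is block-diagonal, its $i$-th block $\frac1N\sum_{j}\tilde{\sigma}(t,u^{i},u^{j})\in L_2(U;\mathbb{H})$ acting only on the $i$-th copy of $U$, which reflects the independence of the $W^{i}$. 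The natural coercivity function is $\mathscr{N}_1(\mathbf{u}):=\sum_{i=1}^{N}\|u^{i}\|_{1}^{2}$, which lies in $\mathfrak{M}^{2}$ since $\mathbb{V}\hookrightarrow\mathbb{H}$ is compact. First I would verify the structural conditions, borrowing the 2D Navier--Stokes estimates already established in the proof of Theorem \ref{thn1}.

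Hemicontinuity follows from the continuity of $\tilde{K},\tilde{\sigma}$ in $\mathbf{A\ref{H33}}$ together with the standard demicontinuity of $u\mapsto Au-B(u)$ from $\mathbb{V}$ to $\mathbb{V}^{*}$ in dimension two. For coercivity, using $\langle B(u^{i}),u^{i}\rangle=0$ from \eqref{P1}, $\langle Au^{i},u^{i}\rangle=-\|u^{i}\|_{1}^{2}$, and the linear growth \eqref{linear1}, one obtains
$$2{}_{(\mathbb{V}^{*})^{N}}\langle\mathcal{A}(t,\mathbf{u}),\mathbf{u}\rangle_{\mathbb{V}^{N}}+\|\mathcal{S}(t,\mathbf{u})\|_{L_2(U^{N};\mathbb{H}^{N})}^{2}\le-2\mathscr{N}_1(\mathbf{u})+C\big(1+\|\mathbf{u}\|_{\mathbb{H}^{N}}^{2}\big).$$
For growth, by \eqref{P2.2} ($\|B(u^{i})\|_{-1}\le C\|u^{i}\|_{L^{2}}\|u^{i}\|_{1}$) and $\|\tilde{K}\|_{L^{2}}\le C(1+\|u^{i}\|_{L^{2}}+\|u^{j}\|_{L^{2}})$ one gets $\|\mathcal{A}(t,\mathbf{u})\|_{(\mathbb{V}^{*})^{N}}^{2}\le C(1+\mathscr{N}_1(\mathbf{u}))(1+\|\mathbf{u}\|_{\mathbb{H}^{N}}^{2})$. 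For local monotonicity, the 2D estimate \eqref{P21} gives, as in the proof of Theorem \ref{thn1}, $2\langle Au^{i}-B(u^{i})-Av^{i}+B(v^{i}),u^{i}-v^{i}\rangle\le-\|u^{i}-v^{i}\|_{1}^{2}+C\|v^{i}\|_{1}^{2}\|u^{i}-v^{i}\|_{L^{2}}^{2}$, while the local Lipschitz bounds for $\tilde{K},\tilde{\sigma}$ in $\mathbf{A\ref{H33}}$, combined with Young's inequality and summation over $i$, absorb the coupling into a term $\big(C+C\sum_{k}\|u^{k}\|_{1}^{2}+C\sum_{k}\|v^{k}\|_{1}^{2}+C\|\mathbf{u}\|_{\mathbb{H}^{N}}^{2}+C\|\mathbf{v}\|_{\mathbb{H}^{N}}^{2}\big)\|\mathbf{u}-\mathbf{v}\|_{\mathbb{H}^{N}}^{2}$, with $\rho,\eta$ bounded on $\mathbb{V}^{N}$-balls and $\rho(\mathbf{u})+\eta(\mathbf{u})\le C(1+\mathscr{N}_1(\mathbf{u}))(1+\|\mathbf{u}\|_{\mathbb{H}^{N}}^{2})$. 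With these in hand, the classical variational well-posedness theory for locally monotone SPDEs (cf.~\cite{LR1}), which only requires $\xi^{i}\in L^{2}(\Omega,\mathscr{F}_0,\mathbb{P};\mathbb{H})$, yields a unique (probabilistically) strong solution $X^{N}=(X^{N,1},\dots,X^{N,N})\in C([0,T];\mathbb{H}^{N})\cap L^{2}([0,T];\mathbb{V}^{N})$; I note that quoting Theorem \ref{th2} directly would instead force $p>8$, so the sharper assumption $p\ge4$ needs the classical framework.

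For the a priori bound \eqref{es61}, I would apply It\^{o}'s formula to $\big(\sum_{i}\|X^{N,i}_{t}\|_{L^{2}}^{2}\big)^{p/2}$, use the coercivity estimate above, and close the argument with the Burkholder--Davis--Gundy and Gronwall inequalities (after a routine stopping-time localization justified by $X^{N}\in C([0,T];\mathbb{H}^{N})$ and $\xi^{i}\in L^{p}(\Omega;\mathbb{H})$); this gives $\mathbb{E}[\sup_{t\in[0,T]}\|X^{N,i}_{t}\|_{L^{2}}^{p}]<\infty$ for each $i$, together with $\mathbb{E}\int_{0}^{T}\|X^{N,i}_{t}\|_{1}^{2}\,dt<\infty$. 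To capture the weighted term I would apply It\^{o}'s formula to $\|X^{N,i}_{t}\|_{L^{2}}^{4}$: since $\langle B(X^{N,i}_{t}),X^{N,i}_{t}\rangle=0$ and $\langle AX^{N,i}_{t},X^{N,i}_{t}\rangle=-\|X^{N,i}_{t}\|_{1}^{2}$, the dissipative term $-4\|X^{N,i}_{t}\|_{L^{2}}^{2}\|X^{N,i}_{t}\|_{1}^{2}$ appears on the right-hand side; taking expectations (again after localization), controlling the remaining terms by $\mathbb{E}\sup_{t}\|X^{N,i}_{t}\|_{L^{2}}^{p}$ and $\mathbb{E}\int_{0}^{T}\|X^{N,k}_{t}\|_{L^{2}}^{2}\,dt$ using $p\ge4$, and combining with $\mathbb{E}\int_{0}^{T}\|X^{N,i}_{t}\|_{1}^{2}\,dt<\infty$ on the set $\{\|X^{N,i}_{t}\|_{L^{2}}\le1\}$, yields $\mathbb{E}\int_{0}^{T}\|X^{N,i}_{t}\|_{L^{2}}^{2}\|X^{N,i}_{t}\|_{1}^{2}\,dt<\infty$, completing \eqref{es61}.

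The main obstacle is the verification of the local monotonicity condition on the product space: one must check that the mean-field coupling $\frac1N\sum_{j}\tilde{K}(t,\cdot,\cdot)$ and $\frac1N\sum_{j}\tilde{\sigma}(t,\cdot,\cdot)$, which are only \emph{locally} Lipschitz on $\mathbb{H}$, can be absorbed into functions $\rho,\eta$ that remain bounded on $\mathbb{V}^{N}$-balls and obey the correct polynomial growth relative to $\mathscr{N}_1$, so that the balance between the Navier--Stokes nonlinearity and the interaction terms is preserved uniformly in $N$; the remaining steps are a routine adaptation of the energy method.
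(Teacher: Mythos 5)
Your proposal follows essentially the same route as the paper: the paper's proof of Theorem \ref{th3} consists precisely of recasting \eqref{eqi} as a single SPDE on the product Gelfand triple and checking the hypotheses of a variational well-posedness result for locally monotone equations, followed by the standard It\^{o}/BDG/Gronwall energy estimates, which is exactly your plan, and your verifications of coercivity, growth and local monotonicity on the product space are correct (with constants allowed to depend on $N$). The one point to repair is the reference you lean on at the decisive step: the classical framework in \cite{LR1} (and \cite{LR2}) takes the local-monotonicity weight as a function of the second variable only, whereas the weight you (necessarily) obtain from the locally Lipschitz interaction in ${\mathbf{A\ref{H33}}}$ involves both $\mathbf{u}$ and $\mathbf{v}$ through their $\mathbb{H}^N$-norms, so the theorem you quote does not literally apply; this two-sided dependence is exactly why the paper invokes \cite{RSZ1} (or \cite{RZZ}) rather than \cite{LR1}, and why it avoids quoting its own Theorem \ref{th2} (which, as you note, would require $p>8$). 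The fix is only a change of citation: the conditions you verified, with $\rho(\mathbf{u})+\eta(\mathbf{v})\le C(1+\mathscr{N}_1)(1+\|\cdot\|_{\mathbb{H}^N}^{2})$ and compact embedding $\mathbb{V}\subset\mathbb{H}$, fit the fully locally monotone framework of \cite{RSZ1}/\cite{RZZ}, which is compatible with $\xi^i\in L^{p}(\Omega;\mathbb{H})$ for $p\ge 4$ (your $\beta=2$), so the rest of your argument, including the weighted estimate obtained from It\^{o}'s formula applied to $\|X^{N,i}_t\|_{L^2}^4$, goes through as written.
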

\begin{proof}
Similar to the finite-dimensional case (cf.~\cite{Lacker}), it suffices to check that the coefficients of particle system (\ref{eqi}) satisfy the assumptions in \cite{RSZ1} (or \cite{RZZ}) on the product spaces. We omit the details to keep down the length of the paper.
\end{proof}

\vspace{2mm}
In order to state our main result of this section,
we introduce the following  McKean-Vlasov type stochastic Navier-Stokes equation
 \begin{eqnarray}\label{eqNSE111}
\left\{
 \begin{aligned}
&dX_t=\Big[A X_t-B(X_t)+K(t,X_t,\mathscr{L}_{X_t})\Big]dt+\sigma(t,X_t,\mathscr{L}_{X_t})dW_t,\\
&{\rm div}(X_t)=0 , \\
    &X_t=0, ~~\text{on}~\partial\mathscr{O},
  \end{aligned}
\right.
\end{eqnarray}
where
$$K(t,x,\mu):=\int\tilde K(t,x,y)\mu(dy),~\sigma(t,x,\mu):=\int \tilde{\sigma}(t,x,y)\mu(dy).$$

We recall the notion of martingale solution to MVSNSE (\ref{eqNSE111}).
Let $\Omega=C([0,T];\mathbb{H})$. Then we use $x$ to denote
a path in $\Omega$ and $\pi_t(x):=x_t$ to denote the coordinate process.
  Define the natural filtration $\mathscr{F}_t$ generated by
$\big\{\pi_s:~s\leq t\big\}.$

The definition of martingale solutions to MVSNSE (\ref{eqNSE111}) is presented  as follows.
\begin{definition} \label{de3}$($Martingale solution$)$  Let $\mu_0\in\mathscr{P}({\mathbb{H}})$. A probability measure $\Gamma\in\mathscr{P}(\Omega)$ is  called a martingale solution of MVSNSE (\ref{eqNSE111}) with initial distribution $\mu_0$ if

$(M1)$~~$\Gamma\circ \pi_0^{-1}=\mu_0$ and
$$\Gamma\Big(x\in \Omega:\int_0^T\|Ax_s-B(x_s)\|_{-1} ds+\int_0^T\|K(s,x_s,\mu_s)\|_{L^2}ds+\int_0^T\|\sigma(s,x_s,\mu_s)\|_{L_2(U;{\mathbb{H}})}^2ds<\infty\Big)=1,$$
where $\mu_s:=\Gamma\circ \pi_s^{-1}$.

\vspace{2mm}
$(M2)$~~for each $l\in\mathscr{V}$ the process
$$\mathscr{M}_l(t,x,\mu)=\langle x_t,l\rangle-\langle x_0,l\rangle-\int_0^t\langle Ax_s-B(x_s)+K(s,x_s,\mu_s),l\rangle ds,~t\in[0,T],$$
is a continuous square integrable $(\mathscr{F}_t)$-martingale w.r.t.~$\Gamma$,
whose quadratic variation process is given by
$$\langle \mathscr{M}_l\rangle(t,x,\mu):=\int_0^t\|\sigma(s,x_s,\mu_s)^*l\|_U^2ds,~t\in[0,T].$$
\end{definition}

Let us denote $\mathbb{V}^{\otimes N}$ the $N$-product space of $\mathbb{V}$. To investigate the convergence of $N$-particle system (\ref{eqi}) as $N\to\infty$, we fix a sequence of initial random vector $X_0^N:=(\xi^1,\ldots,\xi^N)$ which satisfies the following assumption.

\begin{conditionA}\label{H55}
  For any $N\in\mathbb{N}$,
$$\text{the law of}~X_0^N~\text{is symmetric in}~\mathscr{P}(\mathbb{V}^{\otimes N}),$$
 and
 $$\mathscr{S}^N_0=\frac{1}{N}\sum_{j=1}^N\delta_{X_0^{N,j}}\to \mu_0~\text{in probability},$$
where $\mu_0$ is the initial law of solution to MVSNSE (\ref{eqNSE111}).

\vspace{2mm}
For any $p\geq 4$
there exists a constant $C_p>0$ such that
\begin{equation}\label{c3}
\sup_{N\in\mathbb{N}}\mathbb{E}\|X_0^{N,1}\|_{L^2}^p\leq C_p.
\end{equation}
In addition, there exists a constant $C>0$ such that
\begin{equation}\label{c4}
\sup_{N\in\mathbb{N}}\mathbb{E}\|X_0^{N,1}\|_{1}^2\leq C.
\end{equation}

\end{conditionA}
\begin{remark}
The symmetry means
that for any permutation $X^{(N)}_0:=(\xi^{(1)},\ldots,\xi^{(N)} )$, the (joint) law $$\mathscr{L}_{X^{N}_0} =\mathscr{L}_{X^{(N)}_0}.$$
Combining the symmetry of the law of initial random vector $X_0^N$ and the uniqueness in law of solutions to the interacting system (\ref{eqi}), it is clear that the law of $X^N:=(X^{N,1},\ldots,X^{N,N})$ is also symmetric.
\end{remark}

Let
$$\mathscr{S}^N_t:=\frac{1}{N}\sum_{j=1}^N\delta_{X^{N,j}_t}$$
be the empirical distributions of the particle system \eref{eqi}. The following theorem shows the propagation of chaos for weakly interacting stochastic 2D Navier-Stokes systems.
\begin{theorem}\label{th4}
Suppose that $\mathbf{A1}$-$\mathbf{A3}$ hold. Then there exists an accumulation point  $\Gamma$ of the empirical measure $\mathscr{S}^N$ in $\mathscr{P}_2(C([0,T];\mathbb{H}))$, which is a solution of the martingale problem to MVSNSE (\ref{eqNSE111})
with initial law $\mu_0$.
Furthermore, if $\mathbf{H4}$ holds for $K$ and $\sigma$, then we have the following convergence
\begin{equation}\label{es111}
\lim_{N\to\infty}\mathbb{E}\Big[\mathbb{W}_{2,T,\mathbb{H}}(\mathscr{S}^N,\Gamma)^2\Big]=0.
\end{equation}
\end{theorem}

\begin{remark}
 (i) As a consequence of Theorem \ref{th4}, we can give a different proof compared to Theorem \ref{th1} for the existence of solutions of the martingale problem to  MVSNSE (\ref{eqNSE111}), which is of independent interest.

 (ii)To the best of our knowledge, this is the first result showing the propagation of chaos for the weakly interacting stochastic 2D NSE.
The contributions here also  include developing the martingale approach to establish the convergence of the empirical measure in the Wasserstein distance with state space $\mathbb{H}$.  This is novel since the  non-linear term appears in the system and we allow the interaction terms are locally Lipschitz. The next step after this work would be to get the convergence rate, this is very challenging for the infinite dimensional system.
\end{remark}

\subsection{A priori estimates}
Now we prove some uniform a priori estimates for $X^{N,i},i=1,\ldots,N$.
\begin{lemma}\label{lem11}
Assume that the assumptions in Theorem \ref{th4} hold. For any $p\geq 4$, there exists a constant $C_{p,T}>0$ which is independent of $N$  such that
\begin{equation}\label{es116}
\mathbb{E}\Big[\sup_{t\in[0,T]}\frac{1}{N}\sum_{i=1}^N\|X^{N,i}_t\|_{L^2}^p\Big]
+\mathbb{E}\Big[\int_0^{T}\frac{1}{N}\sum_{i=1}^N\Big(\|X^{N,i}_t\|_{L^2}^{p-2}\|X^{N,i}_t\|_{1}^{2}\Big)dt\Big]\leq C_{p,T},
\end{equation}
\begin{equation}\label{es117}
\mathbb{E}\Big[\sup_{t\in[0,T]}\|X^{N,1}_t\|_{L^2}^p\Big]+\mathbb{E}\Big[\int_0^{T}\|X^{N,1}_t\|_{L^2}^{p-2}\|X^{N,1}_t\|_{1}^{2}dt\Big]\leq C_{p,T},
\end{equation}
\begin{equation}\label{es1170}
\mathbb{E}\Big(\int_0^{T}\|X^{N,1}_t\|_{1}^{2}dt\Big)^{\frac{p}{2}}\leq C_{p,T}.
\end{equation}
\end{lemma}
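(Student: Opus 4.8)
The plan is to derive \eref{es116}--\eref{es1170} from the variational It\^o formula applied to $\|X^{N,i}_t\|_{L^2}^p$ (and, for \eref{es1170}, to $\|X^{N,1}_t\|_{L^2}^2$), using the cancellations $\langle AX^{N,i}_t,X^{N,i}_t\rangle=-\|X^{N,i}_t\|_1^2$ and $\langle B(X^{N,i}_t),X^{N,i}_t\rangle=0$ from \eref{P1}, together with the linear growth \eref{linear1} of the interaction coefficients from ${\mathbf{A\ref{H33}}}$. The only structural point to keep track of is that for each $i$ one has, by Cauchy--Schwarz, \eref{linear1} and Jensen's inequality,
\[
\Big\|\tfrac1N\sum_{j=1}^N\tilde K(s,X^{N,i}_s,X^{N,j}_s)\Big\|_{L^2}^2+\Big\|\tfrac1N\sum_{j=1}^N\tilde\sigma(s,X^{N,i}_s,X^{N,j}_s)\Big\|_{L_2(U;\mathbb{H})}^2\le C\Big(1+\|X^{N,i}_s\|_{L^2}^2+\tfrac1N\sum_{j=1}^N\|X^{N,j}_s\|_{L^2}^2\Big).
\]

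First I would prove \eref{es116}. Fix $i$; by Theorem \ref{th3} we have $X^{N,i}\in C([0,T];\mathbb{H})\cap L^2([0,T];\mathbb{V})$, so the $p$-th power It\^o formula in the Gelfand triple $\mathbb{V}\subset\mathbb{H}\subset\mathbb{V}^*$ applies (cf.~\cite[Theorem 4.2.5]{LR1}). Using the two cancellations above, it expresses $\|X^{N,i}_t\|_{L^2}^p+p\int_0^t\|X^{N,i}_s\|_{L^2}^{p-2}\|X^{N,i}_s\|_1^2\,ds$ as $\|\xi^i\|_{L^2}^p$ plus the interaction drift, the It\^o correction term, and a stochastic integral. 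Bounding the interaction and correction terms by $C\int_0^t(1+\|X^{N,i}_s\|_{L^2}^p+\tfrac1N\sum_j\|X^{N,j}_s\|_{L^2}^p)\,ds$ (via the display above, Young's inequality and Jensen), then summing over $i$, dividing by $N$, taking the supremum in time and expectation, using the BDG inequality on the martingale part and absorbing the resulting $\tfrac12\mathbb{E}[\sup\cdots]$, one arrives at a Gronwall inequality for $t\mapsto\mathbb{E}[\sup_{s\le t}\tfrac1N\sum_i\|X^{N,i}_s\|_{L^2}^p]$ whose constant is independent of $N$ by \eref{c3}; this gives \eref{es116}.

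Next, \eref{es117} follows by running the same It\^o expansion for $\|X^{N,1}_t\|_{L^2}^p$ alone: the interaction and noise terms now contribute $C\int_0^t(1+\|X^{N,1}_s\|_{L^2}^p+\tfrac1N\sum_j\|X^{N,j}_s\|_{L^2}^p)\,ds$, and the already-established \eref{es116} yields $\mathbb{E}\int_0^T\tfrac1N\sum_j\|X^{N,j}_s\|_{L^2}^p\,ds\le C_{p,T}$; after BDG, absorption and Gronwall's lemma applied to $\mathbb{E}[\sup_{s\le t}\|X^{N,1}_s\|_{L^2}^p]$ we get \eref{es117}. (One cannot deduce \eref{es117} directly from \eref{es116} by exchangeability, since the time supremum and the particle average do not commute in the needed direction, hence the separate bootstrapped argument.) Finally, \eref{es1170} follows from the $p=2$ It\^o formula: it gives $2\int_0^T\|X^{N,1}_s\|_1^2\,ds\le\|\xi^1\|_{L^2}^2+C\int_0^T(1+\|X^{N,1}_s\|_{L^2}^2+\tfrac1N\sum_j\|X^{N,j}_s\|_{L^2}^2)\,ds+2\sup_{t\in[0,T]}|M_t|$, where $M_t:=\int_0^t\langle X^{N,1}_s,\tfrac1N\sum_j\tilde\sigma(s,X^{N,1}_s,X^{N,j}_s)\,dW^1_s\rangle$; raising to the power $p/2$, taking expectation, bounding the deterministic part via \eref{c3}, \eref{es116} and \eref{es117}, and estimating $\mathbb{E}[\sup_t|M_t|^{p/2}]$ by BDG followed by Young's inequality (which feeds back $\mathbb{E}[\sup_t\|X^{N,1}_t\|_{L^2}^p]$, already controlled) yields \eref{es1170}.

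The main obstacle is closing these estimates uniformly in $N$ in the presence of the mean-field coupling, and this is exactly the point of the display in the first paragraph: when $\tfrac1N\sum_j\tilde K(s,X^{N,i}_s,X^{N,j}_s)$ is paired with $\|X^{N,i}_s\|_{L^2}^{p-1}$, Young's inequality turns the bound into $\|X^{N,i}_s\|_{L^2}^p+\big(\tfrac1N\sum_j\|X^{N,j}_s\|_{L^2}\big)^p$ and Jensen controls the latter by $\tfrac1N\sum_j\|X^{N,j}_s\|_{L^2}^p$, so that after averaging over $i$ the right-hand side is dominated by the quantity being estimated; this self-referential structure is what makes the Gronwall argument for \eref{es116} close, and \eref{es117}--\eref{es1170} are then standard bootstraps from it. The remaining routine ingredient, the validity of the $p$-th power variational It\^o formula, is covered by \cite[Theorem 4.2.5]{LR1}.
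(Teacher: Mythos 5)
Your proposal follows essentially the same route as the paper's proof: Itô's formula for $\|X^{N,i}_t\|_{L^2}^p$ with the cancellations for $A$ and $B$, linear growth of the interaction plus Jensen to close the mean-field coupling after averaging over $i$, BDG and Gronwall for \eref{es116}, then a bootstrap of the same expansion for the single particle to get \eref{es117}, and the $p=2$ identity raised to the power $p/2$ for \eref{es1170}. The only detail to make explicit is that the absorption of $\tfrac12\mathbb{E}[\sup_t\cdots]$ is legitimate because Theorem \ref{th3} (estimate \eref{es61}) gives the a priori finiteness of these suprema for each fixed $N$, which the paper records before applying Gronwall.
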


\begin{proof}
Applying It\^{o}'s formula, we can get that for any $t\in[0,T]$,
\begin{eqnarray}\label{es11211}
\|X^{N,i}_t\|_{L^2}^2
\leq\!\!\!\!\!\!\!\!&&~\|\xi^i\|_{L^2}^2+\int_0^t2\langle AX^{N,i}_s,X^{N,i}_s\rangle ds
\nonumber \\
\!\!\!\!\!\!\!\!&&
+\frac{1}{N}\sum_{j=1}^N\int_0^t\Big(2(\tilde{K}(s,X^{N,i}_s,X^{N,j}_s),X^{N,i}_s)
+\|\tilde{\sigma}(s,X^{N,i}_s,X^{N,j}_s)\|_{L_2(U;{\mathbb{H}})}^2\Big)ds
\nonumber \\
\!\!\!\!\!\!\!\!&&+\frac{2}{N}\sum_{j=1}^N\int_0^t( X^{N,i}_s,\tilde{\sigma}(s,X^{N,i}_s,X^{N,j}_s)dW^{i}_s)
\nonumber \\
=:\!\!\!\!\!\!\!\!&&~\|\xi^i\|_{L^2}^2-2\int_0^t
\|X^{N,i}_s\|_{1}^{2}ds+\sum_{k=1}^3J^{N,i}_k(t)
\end{eqnarray}
and
\begin{eqnarray}\label{es112}
\|X^{N,i}_t\|_{L^2}^p
\leq\!\!\!\!\!\!\!\!&&~\|\xi^i\|_{L^2}^p+\frac{p}{2}\int_0^t\|X^{N,i}_s\|_{L^2}^{p-2}2\langle AX^{N,i}_s,X^{N,i}_s\rangle ds
\nonumber \\
\!\!\!\!\!\!\!\!&&+\frac{p}{2N}\sum_{j=1}^N\int_0^t\|X^{N,i}_s\|_{L^2}^{p-2}\Big(2(\tilde{K}(s,X^{N,i}_s,X^{N,j}_s),X^{N,i}_s)
\nonumber \\
\!\!\!\!\!\!\!\!&&~~~~~
+\|\tilde{\sigma}(s,X^{N,i}_s,X^{N,j}_s)\|_{L_2(U;{\mathbb{H}})}^2\Big)ds
\nonumber \\
\!\!\!\!\!\!\!\!&&+\frac{p(p-2)}{2N}\sum_{j=1}^N\int_0^t\|X^{N,i}_s\|_{L^2}^{p-4}\|\tilde{\sigma}(s,X^{N,i}_s,X^{N,j}_s)^*X^{N,i}_s\|_U^2ds
\nonumber \\
\!\!\!\!\!\!\!\!&&+\frac{p}{N}\sum_{j=1}^N\int_0^t\|X^{N,i}_s\|_{L^2}^{p-2}( X^{N,i}_s,\tilde{\sigma}(s,X^{N,i}_s,X^{N,j}_s)dW^{i}_s)
\nonumber \\
=:\!\!\!\!\!\!\!\!&&~\|\xi^i\|_{L^2}^p-p\int_0^t\|X^{N,i}_s\|_{L^2}^{p-2}
\|X^{N,i}_s\|_{1}^{2}ds+\sum_{k=1}^3 I^{N,i}_k(t).
\end{eqnarray}
For $I^{N,i}_1(t)$, by $\mathbf{A1}$ and Young's inequality, it leads to
\begin{eqnarray}\label{es113}
\mathbb{E}\Big[\sup_{t\in[0,T]}\Big(\frac{1}{N}\sum_{i=1}^NI^{N,i}_1(t)\Big)\Big]\leq\!\!\!\!\!\!\!\!&&~C_{p,T}+C_p\mathbb{E}\Big[\int_0^T\frac{1}{N}\sum_{i=1}^N\|X^{N,i}_t\|_{L^2}^pdt\Big]
\nonumber \\
\!\!\!\!\!\!\!\!&&
+C_p\mathbb{E}\Big[\int_0^T\frac{1}{N^2}\sum_{i,j=1}^N\Big(\|X^{N,i}_t\|_{L^2}^{p-2}\|X^{N,j}_t\|_{L^2}^{2}\Big)dt\Big]
\nonumber \\
\leq\!\!\!\!\!\!\!\!&&~C_{p,T}+C_{p}\mathbb{E}\Big[\int_0^T\frac{1}{N}\sum_{i=1}^N\|X^{N,i}_t\|_{L^2}^pdt\Big].
\end{eqnarray}
Using the assumption $\mathbf{A1}$, we can deduce
\begin{eqnarray*}
\!\!\!\!\!\!\!\!&&\mathbb{E}\Big[\sup_{t\in[0,T]}\Big(\frac{1}{N}\sum_{i=1}^NI^{N,i}_2(t)\Big)\Big]
\nonumber \\
\leq\!\!\!\!\!\!\!\!&&~C_p\frac{1}{N^2}\sum_{i,j=1}^N\int_0^T\|X^{N,i}_t\|_{L^2}^{p-2}\big(1+\|X^{N,i}_t\|_{L^2}^2+\|X^{N,j}_t\|_{L^2}^2\big)dt
\nonumber \\
\leq\!\!\!\!\!\!\!\!&&C_p\frac{1}{N^2}\sum_{i,j=1}^N\int_0^T\big(1+\|X^{N,i}_t\|_{L^2}^p+\|X^{N,j}_t\|_{L^2}^p\big)dt
\nonumber \\
\leq\!\!\!\!\!\!\!\!&&~C_{p,T}+C_{p}\mathbb{E}\Big[\int_0^T\frac{1}{N}\sum_{i=1}^N\|X^{N,i}_t\|_{L^2}^pdt\Big].
\end{eqnarray*}
To estimate  $I^{N,i}_3(t)$,
applying  BDG's inequality, it follows that
\begin{eqnarray}\label{es115}
\!\!\!\!\!\!\!\!&&\mathbb{E}\Big[\sup_{t\in[0,T]}\Big(\frac{1}{N}\sum_{i=1}^NI^{N,i}_3(t)\Big)\Big]
\nonumber \\
\leq\!\!\!\!\!\!\!\!&&~\frac{C_p}{N^2}\sum_{i,j=1}^N\mathbb{E}\Bigg\{\int_0^{T}\|X^{N,i}_t\|_{L^2}^{2p-2}\|\tilde{\sigma}(t,X^{N,i}_t,X^{N,j}_t)\|_{L_2(U;{\mathbb{H}})}^2dt\Bigg\}^{\frac{1}{2}}
\nonumber \\
\leq\!\!\!\!\!\!\!\!&&~\frac{C_p}{N^2}\sum_{i,j=1}^N\mathbb{E}\Bigg\{\sup_{t\in[0,T]}\|X^{N,i}_t\|_{L^2}^{p}\cdot\int_0^{T}\|X^{N,i}_t\|_{L^2}^{p-2}\big(1+\|X^{N,i}_t\|_{L^2}^2+\|X^{N,j}_t\|_{L^2}^2\big)dt\Bigg\}^{\frac{1}{2}}
\nonumber \\
\leq\!\!\!\!\!\!\!\!&&~\frac{1}{N^2}\sum_{i,j=1}^N\mathbb{E}\Bigg\{\frac{1}{2}\sup_{t\in[0,T]}\|X^{N,i}_t\|_{L^2}^{p}+C_p\int_0^{T}\|X^{N,i}_t\|_{L^2}^{p-2}\big(1+\|X^{N,i}_t\|_{L^2}^2+\|X^{N,j}_t\|_{L^2}^2\big)dt\Bigg\}
\nonumber \\
\leq\!\!\!\!\!\!\!\!&&~C_{p,T}+C_{p}\mathbb{E}\Big[\int_0^{T}\frac{1}{N}\sum_{i=1}^N\|X^{N,i}_t\|_{L^2}^pdt\Big]
+\frac{1}{2}\mathbb{E}\Big[\sup_{t\in[0,T]} \Big(\frac{1}{N}\sum_{i=1}^N\|X^{N,i}_t\|_{L^2}^p\Big)\Big].
\end{eqnarray}
Note that due to (\ref{es61}), for any $N\in\mathbb{N}$,
$$\mathbb{E}\Big[\sup_{t\in[0,T]} \Big(\frac{1}{N}\sum_{i=1}^N\|X^{N,i}_t\|_{L^2}^p\Big)\Big]\leq C_{p,N}<\infty.$$
Combining (\ref{es112})-(\ref{es115}),  it is easy to deduce that
\begin{eqnarray}\label{es1126}
\!\!\!\!\!\!\!\!&&\mathbb{E}\Big[\sup_{t\in[0,T]}\Big(\frac{1}{N}\sum_{i=1}^N\|X^{N,i}_t\|_{L^2}^p\Big)\Big]+2p\mathbb{E}\Big[\int_0^{T}\frac{1}{N}\sum_{i=1}^N\Big(\|X^{N,i}_t\|_{L^2}^{p-2}
\|X^{N,i}_t\|_{1}^2\Big)dt\Big]
\nonumber \\
\!\!\!\!\!\!\!\!&&\leq ~ C_{p,T}\Big(1+\mathbb{E}\Big(\frac{1}{N}\sum_{i=1}^N\|\xi^i\|_{L^2}^p\Big)\Big)+C_p\mathbb{E}\Big[\int_0^{T}\frac{1}{N}\sum_{i=1}^N\|X^{N,i}_t\|_{L^2}^pdt\Big].
\end{eqnarray}
By Gronwall's lemma it leads to
\begin{eqnarray*}
\!\!\!\!\!\!\!\!&&\mathbb{E}\Big[\sup_{t\in[0,T]}\Big(\frac{1}{N}\sum_{i=1}^N
\|X^{N,i}_t\|_{L^2}^p\Big)\Big]+2p\mathbb{E}\Big[\int_0^{T}\frac{1}{N}\sum_{i=1}^N
\Big(\|X^{N,i}_t\|_{L^2}^{p-2}\|X^{N,i}_t\|_{1}^2\Big)dt\Big]
\nonumber \\
\!\!\!\!\!\!\!\!&&
\leq~ C_{p,T}\Big(1+\frac{1}{N}\sum_{i=1}^N\mathbb{E}\|\xi^i\|_{L^2}^p\Big)\leq C_{p,T},
\end{eqnarray*}
where we used condition (\ref{c3}) and the law of $X_0^N$ is symmetric.
Thus we get (\ref{es116}).

For \eref{es117}, by \eref{es112} it is easy to see that
for any $t\in[0,T]$,
\begin{eqnarray}\label{vc1}
\|X^{N,1}_t\|_{L^2}^p
\leq\!\!\!\!\!\!\!\!&&~\|\xi^1\|_{L^2}^p+\frac{p}{2}\int_0^t\|X^{N,1}_s\|_{L^2}^{p-2}2\langle AX^{N,1}_s,X^{N,1}_s\rangle ds
\nonumber \\
\!\!\!\!\!\!\!\!&&+\frac{p}{2N}\sum_{j=1}^N\int_0^t\|X^{N,1}_s\|_{L^2}^{p-2}\Big
(2(\tilde{K}(s,X^{N,1}_s,X^{N,j}_s),X^{N,1}_s)
\nonumber \\
\!\!\!\!\!\!\!\!&&
+\|\tilde{\sigma}(s,X^{N,1}_s,X^{N,j}_s)\|_{L_2(U;{\mathbb{H}})}^2\Big)ds
\nonumber \\
\!\!\!\!\!\!\!\!&&+\frac{p(p-2)}{2N}\sum_{j=1}^N\int_0^t\|X^{N,1}_s\|_{L^2}^{p-4}
\|\tilde{\sigma}(s,X^{N,1}_s,X^{N,j}_s)^*X^{N,1}_s\|_U^2ds
\nonumber \\
\!\!\!\!\!\!\!\!&&+\frac{p}{N}\sum_{j=1}^N\int_0^t\|X^{N,1}_s\|_{L^2}^{p-2}( X^{N,1}_s,\tilde{\sigma}(s,X^{N,1}_s,X^{N,j}_s)dW^{1}_s)
\nonumber \\
\leq\!\!\!\!\!\!\!\!&&\|\xi^1\|_{L^2}^p-p\int_0^t\|X^{N,1}_s\|_{L^2}^{p-2}
\|X^{N,1}_s\|_{1}^{2}ds
\nonumber \\
\!\!\!\!\!\!\!\!&&+C_p\int_0^t(\|X^{N,1}_s\|_{L^2}^{p}+\frac{1}{N}\sum_{i=1}^N\|X^{N,i}_t\|_{L^2}^p+1)ds
\nonumber \\
\!\!\!\!\!\!\!\!&&+\frac{p}{N}\sum_{j=1}^N\int_0^t\|X^{N,1}_s\|_{L^2}^{p-2}( X^{N,1}_s,\tilde{\sigma}(s,X^{N,1}_s,X^{N,j}_s)dW^{1}_s).
\end{eqnarray}
Using  BDG's inequality and assumption $\mathbf{A1}$, we obtain
\begin{eqnarray}\label{vc2}
\!\!\!\!\!\!\!\!&&\mathbb{E}\Big[\sup_{t\in[0,T]}p\Big|\int_0^t\|X^{N,1}_s\|_{L^2}^{p-2}( X^{N,1}_s,\tilde{\sigma}(s,X^{N,1}_s,X^{N,j}_s)dW^{1}_s)\Big|\Big]
\nonumber \\
\!\!\!\!\!\!\!\!&&\leq~C_p\mathbb{E}\Big[\int_0^{T}\|X^{N,1}_s\|_{L^2}^{2p-2}\|\tilde{\sigma}(s,X^{N,1}_s,X^{N,j}_s)\|_{L_2(U;{\mathbb{H}})}^2ds\Big]^{\frac{1}{2}}
\nonumber \\
\!\!\!\!\!\!\!\!&&\leq~C_p\mathbb{E}\Big[\int_0^{T}\Big(1+\|X^{N,1}_t\|_{L^2}^p+
\|X^{N,j}_t\|_{L^2}^p\Big)dt\Big]
\nonumber \\
\!\!\!\!\!\!\!\!&&~~~
+\frac{1}{2}\mathbb{E}\Big[\sup_{t\in[0,T]} \|X^{N,1}_t\|_{L^2}^p\Big].
\end{eqnarray}
Note that for any $N$,
\begin{eqnarray*}
\mathbb{E}\Big[\sup_{t\in[0,T]}\|X^{N,1}_t\|_{L^2}^p\Big]
\leq~\mathbb{E}\Big[\sup_{t\in[0,T]}\sum_{i=1}^N\|X^{N,i}_t\|_{L^2}^p\Big]\leq C_{p,T,N}<\infty.
\end{eqnarray*}
By combining (\ref{es116}), \eref{vc1} and \eref{vc2}, we have
\begin{eqnarray}\label{vc3}
&&\mathbb{E}\Big[\sup_{t\in[0,T]}\|X^{N,1}_t\|_{L^2}^p\Big]
+2p\mathbb{E}\int_0^{T}\|X^{N,1}_t\|_{L^2}^{p-2}\|X^{N,1}_t\|_{1}^{2}dt
\nonumber \\
\leq \!\!\!\!\!\!\!\!&& \mathbb{E}\|\xi^1\|_{L^2}^p+C_p\mathbb{E}\int_0^{T}\|X^{N,1}_t\|_{L^2}^pdt+C_{p,T}.
\end{eqnarray}
Thus \eref{es117} follows from Gronwall's lemma.

Now we prove (\ref{es1170}). By (\ref{es11211}), following the same argument as proving \eref{es117},
\begin{eqnarray*}
\mathbb{E}\Big(\int_0^t
\|X^{N,1}_s\|_{1}^{2}ds\Big)^{\frac{p}{2}}
\leq C\mathbb{E}\|\xi^1\|_{L^2}^{p}+C\sum_{k=1}^3\mathbb{E}|J^{N,1}_k(t)|^{\frac{p}{2}}
\leq C_{p,T}.
\end{eqnarray*}
The proof is complete.
\end{proof}

\vspace{2mm}

For any $M>0$, define
\begin{equation}\label{tau1}
\tau_M^{N}:=\inf\Big\{t\geq0:\int_0^t\|X^{N,1}_s\|_{1}^{2}ds>M\Big\}\wedge\inf\Big\{t\geq0:\|X^{N,1}_t\|_{L^2}^2>M\Big\}\wedge T,
\end{equation}
where we take $\inf  \phi=\infty$.

\vspace{1mm}
The following lemma shows the uniform estimates of $\{X^{N,1}\}_{N\in\mathbb{N}}$ before the stopping time $\tau_M^{N}$ in the more regular space, which play an important role in the proof of the tightness of $\{X^{N,1}\}_{N\in\mathbb{N}}$ in $C([0,T];\mathbb{H})\cap L^2([0,T];\mathbb{V})$.

\begin{lemma}\label{lem vc}
Assume that the assumptions in Theorem \ref{th4} hold. For any $\xi^1\in L^2(\Omega,\mathscr{F}_0,\mathbb{P};\mathbb{V})$, there exists a constant $C_{T,M}>0$ independent of $N$ such that
\begin{equation}\label{esvv}
\mathbb{E}\Big[\sup_{t\in[0,\tau_M^{N}]}\|X^{N,1}_t\|_{1}^2\Big]+\mathbb{E}\Big[\int_0^{\tau_M^{N}}\|X^{N,1}_t\|_{2}^{2}dt\Big]
\leq C_{T,M}.
\end{equation}
\end{lemma}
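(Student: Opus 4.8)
The plan is to derive \eref{esvv} from an It\^o formula for $\|X^{N,1}_t\|_1^2=\|\nabla X^{N,1}_t\|_{L^2}^2$, i.e.\ by testing the first equation of \eref{eqi} against $AX^{N,1}$ (with $A$ the Stokes operator). Since a priori $X^{N,1}$ is only known to take values in $\mathbb V$, this computation should be performed on the Faedo--Galerkin approximations $X^{N,1,n}$ of \eref{eqi}, for which the quantities on the left of \eref{esvv} are a priori finite; the bound obtained being uniform in $n$, it then transfers to $X^{N,1}$ by weak lower semicontinuity of $\|\cdot\|_1$ and of $\int_0^\cdot\|\cdot\|_2^2\,dt$. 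Below I display only the formal identity. Using $\langle\nabla u,\nabla v\rangle=-\langle Au,v\rangle$ for $u\in W^{2,2}_{div}$, $v\in\mathbb V$, writing $\|u\|_2^2:=\|Au\|_{L^2}^2$ (comparable to the $W^{2,2}_{div}$-norm by elliptic regularity), and noting that the martingale part of $X^{N,1}$ is driven by the single process $W^1$, It\^o's formula yields, for $t\le\tau_M^N$,
\begin{align*}
\|X^{N,1}_t\|_1^2+2\int_0^t\|X^{N,1}_s\|_2^2\,ds
&\le\|\xi^1\|_1^2+2\int_0^t\big|\langle AX^{N,1}_s,B(X^{N,1}_s)\rangle\big|\,ds\\
&\quad+2\int_0^t\|X^{N,1}_s\|_2\,\Big\|\tfrac1N\sum_{j=1}^N\tilde K(s,X^{N,1}_s,X^{N,j}_s)\Big\|_{L^2}\,ds\\
&\quad+\int_0^t\Big\|\tfrac1N\sum_{j=1}^N\tilde\sigma(s,X^{N,1}_s,X^{N,j}_s)\Big\|_{L_2(U;\mathbb V)}^2\,ds+M_t,
\end{align*}
where $M_t=-2\int_0^t\big(AX^{N,1}_s,\tfrac1N\sum_{j}\tilde\sigma(s,X^{N,1}_s,X^{N,j}_s)\,dW^1_s\big)$ is a martingale with $\langle M\rangle_t\le C\int_0^t\|X^{N,1}_s\|_2^2\,\big\|\tfrac1N\sum_j\tilde\sigma(s,X^{N,1}_s,X^{N,j}_s)\big\|_{L_2(U;\mathbb H)}^2\,ds$.

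The crucial term is the trilinear one: by \eref{bb4} and Young's inequality, since $\|\cdot\|_2\sim\|A\cdot\|_{L^2}$,
$$2\big|\langle AX^{N,1}_s,B(X^{N,1}_s)\rangle\big|\le C\|X^{N,1}_s\|_2^{3/2}\|X^{N,1}_s\|_1\|X^{N,1}_s\|_{L^2}^{1/2}\le\frac14\|X^{N,1}_s\|_2^2+C\|X^{N,1}_s\|_1^4\|X^{N,1}_s\|_{L^2}^2.$$
On $[0,\tau_M^N]$ the definition \eref{tau1} provides the \emph{deterministic} bounds $\|X^{N,1}_s\|_{L^2}^2\le M$ and $\int_0^{\tau_M^N}\|X^{N,1}_s\|_1^2\,ds\le M$, so the last term is $\le\psi(s)\|X^{N,1}_s\|_1^2$ with $\psi(s):=CM\|X^{N,1}_s\|_1^2$ and $\int_0^{\tau_M^N}\psi(s)\,ds\le CM^2$ a.s. The interaction and noise contributions are handled with the linear growth in ${\mathbf{A\ref{H33}}}$--${\mathbf{A\ref{H44}}}$: $2\|X^{N,1}_s\|_2\|\tfrac1N\sum_j\tilde K(s,\cdot,\cdot)\|_{L^2}\le\frac14\|X^{N,1}_s\|_2^2+C\big(1+\|X^{N,1}_s\|_{L^2}^2+\tfrac1N\sum_j\|X^{N,j}_s\|_{L^2}^2\big)$ and $\|\tfrac1N\sum_j\tilde\sigma(s,\cdot,\cdot)\|_{L_2(U;\mathbb V)}^2\le C\big(1+\|X^{N,1}_s\|_1^2+\tfrac1N\sum_j\|X^{N,j}_s\|_1^2\big)$; the empirical averages over $j$ are not bounded pathwise, but after taking expectations they are controlled uniformly in $N$ by Lemma~\ref{lem11} and the symmetry of the law of $X^N_0$ (in particular $\mathbb E\int_0^T\tfrac1N\sum_j\|X^{N,j}_s\|_1^2\,ds=\mathbb E\int_0^T\|X^{N,1}_s\|_1^2\,ds$, finite and uniform in $N$ by the $L^2$-energy balance behind Lemma~\ref{lem11}). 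Writing $\Phi(t):=\|X^{N,1}_{t\wedge\tau_M^N}\|_1^2$ and absorbing the $\|X^{N,1}_s\|_2^2$-terms above into the dissipation, one reaches a pathwise inequality
$$\Phi(t)+\frac74\int_0^{t\wedge\tau_M^N}\|X^{N,1}_s\|_2^2\,ds\le\|\xi^1\|_1^2+\int_0^{t\wedge\tau_M^N}\psi(s)\Phi(s)\,ds+H+M_{t\wedge\tau_M^N},$$
where $H\ge0$ is nondecreasing with $\mathbb E H\le C_{T,M}$.

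To close, drop the non-negative $\int\|X^{N,1}_s\|_2^2$-term, bound $M_{t\wedge\tau_M^N}$ by $\sup_{r\le u\wedge\tau_M^N}M_r^+$, and apply the classical Gronwall inequality using $\int_0^{\tau_M^N}\psi\le CM^2$ a.s.: this gives $\sup_{t\le u}\Phi(t)\le e^{CM^2}\big(\|\xi^1\|_1^2+H+\sup_{r\le u\wedge\tau_M^N}M_r^+\big)$. Taking expectations and using \eref{c4}, $\mathbb E H\le C_{T,M}$, BDG's inequality together with $\langle M\rangle_{u\wedge\tau_M^N}\le C\big(1+M+\sup_{r\le T}\tfrac1N\sum_j\|X^{N,j}_r\|_{L^2}^2\big)\int_0^{u\wedge\tau_M^N}\|X^{N,1}_s\|_2^2\,ds$ and the Cauchy--Schwarz inequality, one obtains $Y(u):=\mathbb E\sup_{t\le u}\Phi(t)\le C_{T,M}\big(1+D(u)^{1/2}\big)$, where $D(u):=\mathbb E\int_0^{u\wedge\tau_M^N}\|X^{N,1}_s\|_2^2\,ds$. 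On the other hand, taking expectations directly in the displayed pathwise inequality (so the stopped martingale has zero mean) and using $\mathbb E\int_0^{u\wedge\tau_M^N}\psi(s)\Phi(s)\,ds\le CM^2\,Y(u)$ gives $D(u)\le C_{T,M}\big(1+Y(u)\big)$. Combining the two relations yields a quadratic inequality for $D(u)^{1/2}$ that forces $D(u)\le C_{T,M}$ uniformly in $u\in[0,T]$ and $N\in\mathbb N$, whence also $Y(u)\le C_{T,M}$; taking $u=T$ gives \eref{esvv}. The main obstacle is the trilinear term $\langle AX^{N,1},B(X^{N,1})\rangle$: it is exactly because one has to spend half a power of $\|X^{N,1}\|_2$ there, and to keep the Gronwall factor $e^{CM^2}$ finite, that $\tau_M^N$ must control both $\|X^{N,1}_t\|_{L^2}^2$ and $\int_0^t\|X^{N,1}_s\|_1^2\,ds$; the coupling of the supremum bound and the $\int\|\cdot\|_2^2$-bound through the martingale term is the secondary point requiring care.
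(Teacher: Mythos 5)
Your proposal is correct and follows essentially the same route as the paper: It\^o's formula for $\|X^{N,1}_t\|_{1}^2$, the estimate \eref{bb4} combined with Young's inequality on the trilinear term, the pathwise bound $\int_0^{\tau_M^{N}}\|X^{N,1}_s\|_{1}^2\|X^{N,1}_s\|_{L^2}^2\,ds\le M^2$ coming from the definition \eref{tau1} which feeds a stochastic Gronwall argument with factor $e^{CM^2}$, and BDG's inequality together with Lemma \ref{lem11} and symmetry to control the martingale and the empirical averages. The only differences are technical conveniences: you justify the It\^o step via a Galerkin approximation, and, because you bound the quadratic variation by $\|X^{N,1}\|_{2}\,\|\tilde\sigma\|_{L_2(U;\mathbb{H})}$ rather than the paper's $\|X^{N,1}\|_{1}\,\|\tilde\sigma\|_{L_2(U;\mathbb{V})}$, you close the estimate by the two-unknown bootstrap in $Y(u)$ and $D(u)$ instead of directly absorbing $\tfrac12\mathbb{E}\big[\sup_t\|X^{N,1}_t\|_{1}^2\big]$.
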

\begin{proof}
Recall that $X^{N,1}\in \mathcal{Z}_T$ $\mathbb{P}$-a.s., then we can apply It\^{o}'s formula to get that for any $t\in[0,T]$,
\begin{eqnarray}\label{vv2}
\|X^{N,1}_t\|_{1}^2
\leq\!\!\!\!\!\!\!\!&&\|\xi^1\|_{1}^2-2\int_0^t\|AX^{N,1}_s\|_{L^2}^2ds-2
\int_0^t(B(X^{N,1}_s),AX^{N,1}_s)ds
\nonumber \\
\!\!\!\!\!\!\!\!&&+\frac{1}{N}\sum_{j=1}^N\int_0^t\Big(2((\tilde{K}(s,X^{N,1}_s,X^{N,j}_s),X^{N,1}_s))
+\|\tilde{\sigma}(s,X^{N,1}_s,X^{N,j}_s)\|_{L_2(U;{\mathbb{V}})}^2\Big)ds
\nonumber \\
\!\!\!\!\!\!\!\!&&+\frac{2}{N}\sum_{j=1}^N\int_0^t(( X^{N,1}_s,\tilde{\sigma}(s,X^{N,1}_s,X^{N,j}_s)dW^{1}_s)).
\end{eqnarray}
Using \eref{bb4} we have,
\begin{equation}\label{vv3}
|(B(X^{N,1}_s),AX^{N,1}_s)|\leq C\|X^{N,1}_s\|_{2}^{\frac{3}{2}}\|X^{N,1}_s\|_{1}
\|X^{N,1}_s\|_{L^2}^{\frac{1}{2}}\leq\frac{1}{2}   \|X^{N,1}_s\|_{2}^2+C\|X^{N,1}_s\|_{1}^4\|X^{N,1}_s\|_{L^2}^2.
\end{equation}
By assumption $\mathbf{A2}$ and Lemma \ref{lem11}, we get
\begin{eqnarray}\label{vv4}
&&\frac{1}{N}\sum_{j=1}^N\int_0^t\Big(2((\tilde{K}(s,X^{N,1}_s,X^{N,j}_s),X^{N,1}_s))
+\|\tilde{\sigma}(s,X^{N,1}_s,X^{N,j}_s)\|_{L_2(U;{\mathbb{V}})}^2\Big)ds
\nonumber \\
\leq \!\!\!\!\!\!\!\!&& C\int_0^t\Big(1+\|X^{N,1}_s\|^{2}_{1}+\frac{1}{N}\sum_{j=1}^N\|X^{N,j}_s\|_{1}^2\Big)ds
\nonumber \\
\leq\!\!\!\!\!\!\!\!&&C\int_0^t(1+\|X^{N,1}_s\|^{2}_{1})ds+C_{T}.
\end{eqnarray}
Combining \eref{vv2}-\eref{vv4} yields
\begin{eqnarray}\label{v5}
 \!\!\!\!\!\!\!\!&&\|X^{N,1}_t\|_{1}^2+\int_0^t\|X^{N,1}_s\|_{2}^2ds
\nonumber \\
\leq\!\!\!\!\!\!\!\!&&C_T+\|\xi^1\|_{1}^2
+C\int_0^t\|X^{N,1}_s\|_{1}^4\|X^{N,1}_s\|_{L^2}^2ds
\nonumber \\
\!\!\!\!\!\!\!\!&&+C\int_0^t(1+\|X^{N,1}_s\|^{2}_{1})ds+\frac{2}{N}\sum_{j=1}^N\int_0^t(( X^{N,1}_s,\tilde{\sigma}(s,X^{N,1}_s,X^{N,j}_s)dW^{1}_s)).
\nonumber \\
\end{eqnarray}
Applying Gronwall's inequality we have that for $t\in[0,\tau_M^{N}]$,
\begin{eqnarray}\label{v6}
&&\|X^{N,1}_t\|_{1}^2+\int_0^t\|X^{N,1}_s\|_{2}^2ds
\nonumber \\
\leq\!\!\!\!\!\!\!\!&&\Big(C_T+\|\xi^1\|_{1}^2+\frac{2}{N}\sum_{j=1}^N\sup_{t\in[0, \tau_M^{N}]}\Big|\int_0^t(( X^{N,1}_s,\tilde{\sigma}(s,X^{N,1}_s,X^{N,j}_s)dW^{1}_s))\Big|\Big)
\nonumber \\
\!\!\!\!\!\!\!\!&&~~\cdot\exp\Big(C_T+C\int_0^t\|X^{N,1}_s\|^{2}_{1}\|X^{N,1}_s\|^{2}_{L^2}   ds\Big).
\end{eqnarray}
Taking supremum on $t\in[0,\tau_M^n]$ and using the definition of $\tau_M^n$ we have
\begin{eqnarray}\label{v7}
&&\mathbb{E}\Big[\sup_{t\in[0, \tau_M^{N}]}\|X^{N,1}_t\|_{1}^2\Big]+\mathbb{E}\int_0^{\tau_M^{N}}\|X^{N,1}_s\|_{2}^2ds
\nonumber \\
\leq\!\!\!\!\!\!\!\!&&\Big(C_T+\mathbb{E}\|\xi^1\|_{1}^2+\frac{2}{N}\sum_{j=1}^N\mathbb{E}\Big[\sup_{t\in[0, \tau_M^{N}]}\Big|\int_0^t(( X^{N,1}_s,\tilde{\sigma}(s,X^{N,1}_s,X^{N,j}_s)dW^{1}_s))\Big|\Big]\Big)
\nonumber \\
\!\!\!\!\!\!\!\!&&~~\cdot
\exp\Big(C_T+CM^2\Big).
\end{eqnarray}
Using  BDG's inequality and assumption $\mathbf{A2}$, we obtain
\begin{eqnarray}\label{v8}
&&\frac{2}{N}\sum_{j=1}^N\mathbb{E}\left[\sup_{t\in[0, \tau_M^{N}]}\left|\int_0^t(( X^{N,1}_s,\tilde{\sigma}(s,X^{N,1}_s,X^{N,j}_s)dW^{1}_s))\right|\right]
\nonumber \\
\leq\!\!\!\!\!\!\!\!&&\frac{C}{N}\sum_{j=1}^N\mathbb{E}\Big[\int_0^{\tau_M^{N}}\|X^{N,1}_s\|_{1}^{2}
\|\tilde{\sigma}(s,X^{N,1}_s,X^{N,j}_s)\|_{L_2(U;{\mathbb{V}})}^2ds\Big]^{\frac{1}{2}}
\nonumber \\
\leq\!\!\!\!\!\!\!\!&&\frac{1}{2}\mathbb{E}\Big[\sup_{t\in[0,\tau_M^{N}]} \|X^{N,1}_t\|_{1}^2\Big]+C\mathbb{E}\Big[\int_0^{\tau_M^{N}}\Big(1+\|X^{N,1}_t\|_{1}^2+
\frac{1}{N}\sum_{i=1}^N\|X^{N,i}_t\|_{1}^2\Big)dt\Big]
\nonumber \\
\leq\!\!\!\!\!\!\!\!&&\frac{1}{2}\mathbb{E}\Big[\sup_{t\in[0,\tau_M^n]} \|X^{N,1}_t\|_{1}^2\Big]+CT+CM.
\end{eqnarray}
By \eref{v7} and \eref{v8}, it is easy to see that
\begin{eqnarray}\label{v77}
\mathbb{E}\Big[\sup_{t\in[0, \tau_M^{N}]}\|X^{N,1}_t\|_{1}^2\Big]+\mathbb{E}\int_0^{\tau_M^{N}}\|X^{N,1}_s\|_{2}^2ds\leq
C_{T,M}\Big (1+\mathbb{E}\|\xi^1\|_{1}^2\Big).
\end{eqnarray}
 The proof is complete.
\end{proof}

\subsection{Tightness of $\{\mathscr{S}^N\}_{N\geq 1}$}
In this subsection, we first prove the tightness of $\{X^{N,1}\}_{N\in\mathbb{N}}$ in
$$\mathbb{S}:=C([0,T];\mathbb{H})\cap L^2([0,T];\mathbb{V}).$$
Based on this, then we prove the tightness of $\{\mathscr{S}^N\}_{N\in\mathbb{N}}$ in $\mathscr{P}_2(\mathbb{S})$.

\begin{lemma}\label{pro2}
Under the assumptions in Theorem \ref{th4}, the sequence  $\{X^{N,1}\}_{N\in\mathbb{N}}$ is  tight in $\mathbb{S}$.
\end{lemma}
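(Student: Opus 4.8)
The plan is to turn the a priori bounds of Lemmas \ref{lem11} and \ref{lem vc} into a single relatively compact subset of $\mathbb{S}$ carrying almost all the mass, uniformly in $N$. The two mechanisms are: the splitting of $X^{N,1}$ into a drift integral and a stochastic integral, and a localization by the stopping time $\tau_M^{N}$ from \eqref{tau1}, which is precisely what makes the strictly more regular estimate \eqref{esvv} usable; compactness is then extracted from the compact embeddings $W_{div}^{2,2}(\mathscr{O})\hookrightarrow\hookrightarrow\mathbb{V}\hookrightarrow\hookrightarrow\mathbb{H}$ through an Arzel\`a--Ascoli argument in $C([0,T];\mathbb{H})$ and an Aubin--Lions--Simon argument in $L^2([0,T];\mathbb{V})$.

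Concretely, write $\mathbb{P}$-a.s.\ in $\mathbb{V}^*$,
\[
X^{N,1}_t=\xi^1+\int_0^t F^{N}_s\,ds+\int_0^t G^{N}_s\,dW^1_s=:\xi^1+J^{N}_t+M^{N}_t,
\]
with $F^{N}_s:=AX^{N,1}_s-B(X^{N,1}_s)+\frac{1}{N}\sum_{j=1}^N\tilde K(s,X^{N,1}_s,X^{N,j}_s)$ and $G^{N}_s:=\frac{1}{N}\sum_{j=1}^N\tilde\sigma(s,X^{N,1}_s,X^{N,j}_s)$. Given $\varepsilon>0$, by \eqref{es117}--\eqref{es1170} and Chebyshev's inequality I fix $M=M(\varepsilon)$ with $\sup_N\mathbb{P}(\tau_M^{N}<T)<\varepsilon$; since the stopped processes $\bar X^{N}:=X^{N,1}_{\cdot\wedge\tau_M^{N}}$ agree with $X^{N,1}$ on $\{\tau_M^{N}=T\}$, it suffices to prove tightness in $\mathbb{S}$ of $\{\bar X^{N}\}_N$. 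For these I would collect, all with constants independent of $N$: from Lemma \ref{lem vc}, $\mathbb{E}[\sup_{t\le T}\|\bar X^{N}_t\|_1^2]+\mathbb{E}\int_0^T\|\bar X^{N}_t\|_2^2\,dt\le C_{T,M}$; using $\|Au\|_{-1}\le C\|u\|_1$, \eqref{P2.2}, \eqref{linear1} and Lemma \ref{lem11}, $\mathbb{E}\int_0^{\tau_M^{N}}\|F^{N}_s\|_{-1}^2\,ds\le C_{T,M}$, whence a further Chebyshev bound places $J^{N}_{\cdot\wedge\tau_M^{N}}$, with probability $\ge1-\varepsilon$, in a bounded subset of $C^{1/2}([0,T];\mathbb{V}^*)$; by BDG in $\mathbb{H}$, \eqref{linear1} and $p\ge4$, $\mathbb{E}\|M^{N}_t-M^{N}_s\|_{L^2}^4\le C|t-s|^2$, so Kolmogorov's criterion puts $M^{N}$, with large probability, in a bounded subset of $C^{\gamma}([0,T];\mathbb{H})$ for $\gamma<1/4$; and by BDG in $\mathbb{V}$, the growth \eqref{c12}, \eqref{es116} and the basic $L^2$-energy bound $\mathbb{E}\int_0^T\frac{1}{N}\sum_i\|X^{N,i}_t\|_1^2\,dt\le C_T$, one has $\mathbb{E}[\sup_{t\le\tau_M^{N}}\|M^{N}_t\|_1^2]\le C_{T,M}$. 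Since then $\xi^1+J^{N}_{\cdot\wedge\tau_M^{N}}=\bar X^{N}-M^{N}_{\cdot\wedge\tau_M^{N}}$ is bounded in $L^2(\Omega;L^\infty(0,T;\mathbb{V}))$, the interpolation inequality $\|w\|_{L^2}\le C\|w\|_1^{1/2}\|w\|_{-1}^{1/2}$ turns its $C^{1/2}$-in-$\mathbb{V}^*$ modulus into H\"older continuity in $\mathbb{H}$, and adding $M^{N}_{\cdot\wedge\tau_M^{N}}$ one gets that $\{\bar X^{N}\}_N$ is, off an event of probability $<c\varepsilon$, equicontinuous in $\mathbb{H}$.

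The two tightness statements then follow by deterministic compactness. For $C([0,T];\mathbb{H})$: equicontinuity in $\mathbb{H}$ together with tightness in $\mathbb{H}$ of $\{\bar X^{N}_t\}_N$ for each fixed $t$ (from the $L^2(\Omega;\mathbb{V})$ bound, $\mathbb{V}\hookrightarrow\hookrightarrow\mathbb{H}$ and Chebyshev) gives, by Arzel\`a--Ascoli in $C([0,T];\mathbb{H})$, a relatively compact set of probability $>1-c\varepsilon$. For $L^2([0,T];\mathbb{V})$: the bound on $\mathbb{E}\int_0^T\|\bar X^{N}_t\|_2^2\,dt$, the time-translation estimate $\mathbb{E}\int_0^{T-h}\|\bar X^{N}_{t+h}-\bar X^{N}_t\|_{-1}^2\,dt\le C_{T,M}\,h$ (immediate from the $C^{1/2}$-in-$\mathbb{V}^*$ bound for $J^{N}_{\cdot\wedge\tau_M^{N}}$ and from $\mathbb{E}\|M^{N}_{t+h}-M^{N}_t\|_{-1}^2\le Ch$) and the compact embedding $W_{div}^{2,2}(\mathscr{O})\hookrightarrow\hookrightarrow\mathbb{V}$ let one apply the Aubin--Lions--Simon compactness lemma to get a relatively compact set in $L^2([0,T];\mathbb{V})$ of probability $>1-c\varepsilon$. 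Intersecting the two sets and using $\mathbb{P}(\bar X^{N}=X^{N,1})>1-\varepsilon$ produces a compact $K_\varepsilon\subset\mathbb{S}$ with $\inf_N\mathbb{P}(X^{N,1}\in K_\varepsilon)>1-C\varepsilon$, i.e.\ tightness.

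The main obstacle is the nonlinearity $B(X^{N,1})$: it is only $\mathbb{V}^*$-valued and of quadratic growth, so the drift integral has at best $\mathbb{V}^*$-valued time regularity, and uniform tightness in $C([0,T];\mathbb{H})$ cannot be obtained from $\mathbb{H}$-valued increments alone — one must upgrade the uniform-in-$N$ a priori information to the $\mathbb{V}$ (indeed $W_{div}^{2,2}$) scale, and the only such estimate available is the localized one \eqref{esvv}. The delicate point is therefore to interleave the localization at level $\tau_M^{N}$ with the deterministic compactness step so that every constant $C_{T,M}$ stays independent of $N$ while $\sup_N\mathbb{P}(\tau_M^{N}<T)$ is kept small; the bilinear estimates \eqref{bb4}--\eqref{P2.2} for $B$ and the $\mathbb{V}$-growth \eqref{c12} for $\tilde K,\tilde\sigma$ are exactly what keep the argument uniform in the number of particles.
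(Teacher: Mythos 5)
Your proposal is correct, but it reaches tightness by a different mechanism than the paper. You share with the paper the essential skeleton — splitting the problem into tightness in $C([0,T];\mathbb{H})$ and in $L^2([0,T];\mathbb{V})$, and the localization by the stopping time \eqref{tau1} so that the higher-regularity estimate \eqref{esvv} can be combined with the Chebyshev bound $\sup_N\mathbb{P}(\tau_M^N<T)\le C_T/M$ coming from \eqref{es117}--\eqref{es1170}. The paper then stays at the level of probability estimates on increments: it verifies Aldous' criterion in $C([0,T];\mathbb{H})$ (a bound in probability on $\sup_t\|X^{N,1}_t\|_1$ plus smallness in probability of $\|X^{N,1}_{\tau^N+\delta}-X^{N,1}_{\tau^N}\|_{L^2}$, the latter estimated term by term using \eqref{bb4}, \eqref{P2.2} and ${\mathbf{A\ref{H33}}}$), and a probability-form of Simon's criterion in $L^2([0,T];\mathbb{V})$, where the time-translation estimate $\mathbb{E}\int_0^{T-\delta}\|X^{N,1}_{t+\delta}-X^{N,1}_t\|_{L^2}^2dt\le C_T\delta$ is obtained by applying It\^o's formula to the increment. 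You instead decompose $X^{N,1}$ into drift plus stochastic integral, upgrade the same a priori information to pathwise H\"older moduli (Cauchy--Schwarz for the drift in $\mathbb{V}^*$, Kolmogorov's criterion in $\mathbb{H}$ and a BDG bound in $\mathbb{V}$ for the martingale part via \eqref{linear1} and \eqref{c12}), and then use the interpolation $\|w\|_{L^2}\le C\|w\|_1^{1/2}\|w\|_{-1}^{1/2}$ to produce explicit relatively compact sets through Arzel\`a--Ascoli and Aubin--Lions--Simon. What your route buys is a fully constructive family of compact sets and no appeal to Aldous' criterion or to the It\^o computation for increments; what it costs is the need for fourth moments (available since $p\ge4$) for Kolmogorov, and a bit more bookkeeping because the compact sets must be deterministic.

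Two points to tighten, both cosmetic: the phrase ``it suffices to prove tightness of $\{\bar X^N\}$'' is loose because $M$ depends on $\varepsilon$ — what your argument actually delivers (and what is needed) is, for each $\varepsilon$, a single compact $K_\varepsilon$ with $\sup_N\mathbb{P}(\bar X^N\notin K_\varepsilon)<c\varepsilon$, transferred to $X^{N,1}$ via $\mathbb{P}(\tau_M^N<T)<\varepsilon$; and in the Aubin--Lions step the translation estimate must hold pathwise and uniformly in $h$ on the good event (a bound in expectation for each fixed $h$ does not define a compact set), which you should deduce from the $\mathbb{H}$-H\"older modulus you have already established rather than from $\mathbb{E}\|M^N_{t+h}-M^N_t\|_{-1}^2\le Ch$.
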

\begin{proof}
It suffices to prove that $\{X^{N,1}\}_{N\in\mathbb{N}}$ is tight in $C([0,T];\mathbb{H})$ and
$L^2([0,T];\mathbb{V})$ separately.
\\
\textbf{Step 1:} In this step, we shall prove  the tightness in $C([0,T];\mathbb{H})$.
Since the embedding $\mathbb{V}\subset \mathbb{H}$ is compact, by Aldou's tightness criterion (cf.~\cite[Theorem 1]{A1} and \cite[Lemma 3.8]{BM1}), it suffices to prove the following two claims.
 \begin{enumerate}[(i)]
  \item  For any $0<\eta<1$, there exists $R_\eta>0$ such that
\begin{eqnarray}\label{vv8}
\sup_{N\in\mathbb{N}}\mathbb{P}\left(\sup_{0\leq t\leq T}\|X^{N,1}_t\|_{1}>R_\eta\right)<\eta.
\end{eqnarray}

  \item For any $\eta>0$ and stopping time $0\leq\tau^N\leq T$,
\begin{eqnarray}\label{vv9}
\lim_{\delta\rightarrow0}\sup_{N\in\mathbb{N}}\mathbb{P}
\left(\|X^{N,1}_{(\tau^N+\delta)}-X^{N,1}_{\tau^N}\|_{L^2}>\eta\right)=0,
\end{eqnarray}
where $\tau^N+\delta:=T\wedge(\tau^N+\delta)$.
\end{enumerate}

By Lemma \ref{lem11}, we know that
 \begin{eqnarray}\label{vv10}
&&\sup_{N\in\mathbb{N}}\mathbb{P}\left(\tau_M^{N}<T\right)
\nonumber \\
\leq \!\!\!\!\!\!\!\!&&
\sup_{N\in\mathbb{N}}\mathbb{P}\left(\int_0^T\|X^{N,1}_s\|_{1}^{2}ds>M\right)
+\sup_{N\in\mathbb{N}}\mathbb{P}\left(\sup_{0\leq s\leq T}\|X^{N,1}_s\|_{L^2}^2>M\right)
\nonumber \\
\leq \!\!\!\!\!\!\!\!&&
\frac{1}{M}\sup_{N\in\mathbb{N}}\mathbb{E}\left(\int_0^T\|X^{N,1}_s\|_{1}^{2}ds\right)
+\frac{1}{M}\sup_{N\in\mathbb{N}}\mathbb{E}\left(\sup_{0\leq s\leq T}\|X^{N,1}_s\|_{L^2}^2\right)
\nonumber \\
\leq \!\!\!\!\!\!\!\!&&\frac{C_T}{M},
\end{eqnarray}
where the constant $C_{T}>0$ is independent of $M$.

 For any $R>0$, by \eref{esvv} and \eref{vv10}, we have
  \begin{eqnarray}\label{vv11}
&&\sup_{N\in\mathbb{N}}\mathbb{P}\left(\sup_{0\leq t\leq T}\|X^{N,1}_t\|_{1}>R\right)
\nonumber \\
= \!\!\!\!\!\!\!\!&&
\sup_{N\in\mathbb{N}}\mathbb{P}\left(\sup_{0\leq t\leq T}\|X^{N,1}_t\|_{1}>R,\tau_M^{N}=T\right)
+\sup_{N\in\mathbb{N}}\mathbb{P}\left(\sup_{0\leq t\leq T}\|X^{N,1}_t\|_{1}>R,\tau_M^{N}<T\right)
\nonumber \\
\leq \!\!\!\!\!\!\!\!&&
\sup_{N\in\mathbb{N}}\mathbb{P}\left(\sup_{0\leq t\leq \tau_M^{N}}\|X^{N,1}_t\|_{1}>R\right)+\sup_{N\in\mathbb{N}}\mathbb{P}\left(\tau_M^{N}<T\right)
\nonumber \\
\leq \!\!\!\!\!\!\!\!&&\frac{1}{R^2}\sup_{N\in\mathbb{N}}\mathbb{E}\left(\sup_{0\leq t\leq \tau_M^{N}}\|X^{N,1}_t\|_{1}^2\right)+\frac{C_T}{M}
\nonumber \\
\leq \!\!\!\!\!\!\!\!&&\frac{C_M}{R^2}+\frac{C_T}{M}.
\end{eqnarray}
Then, for any $\eta>0$, we can first take $M$ sufficiently large and then choose
$R$ large enough so that the right hand side of \eref{vv11} will be smaller than $\eta$. Thus the claim (i) holds.

Now we turn to prove the claim (ii). For any $\eta>0$ and stopping time $0\leq\tau^n\leq T$,
\begin{eqnarray}\label{vv12}
&&\sup_{N\in\mathbb{N}}\mathbb{P}
\left(\|X^{N,1}_{(\tau^N+\delta)}-X^{n}_{\tau^N}\|_{L^2}>\eta\right)
\nonumber \\
\leq \!\!\!\!\!\!\!\!&&
\sup_{N\in\mathbb{N}}\mathbb{P}
\left(\Big\|\int_{\tau^N}^{(\tau^N+\delta)}AX^{N,1}_sds\Big\|_{L^2}
>\frac{\eta}{4}\right)
\nonumber \\
\!\!\!\!\!\!\!\!&&+
\sup_{N\in\mathbb{N}}\mathbb{P}
\left(\Big\|\int_{\tau^N}^{(\tau^N+\delta)}B(X^{N,1}_s)ds\Big\|_{L^2}>\frac{\eta}{4}\right)
\nonumber \\
\!\!\!\!\!\!\!\!&&+
\sup_{n\in\mathbb{N}}\mathbb{P}
\left(\Big\|\frac{1}{N}\sum_{j=1}^N\int_{\tau^N}^{(\tau^N+\delta)}\tilde{K}(s,X^{N,1}_s,X^{N,j}_s)ds\Big\|_{L^2}
>\frac{\eta}{4}\right)
\nonumber \\
\!\!\!\!\!\!\!\!&&+
\sup_{N\in\mathbb{N}}\mathbb{P}
\left(\Big\|\frac{1}{N}\sum_{j=1}^N\int_{\tau^N}^{(\tau^N+\delta)}\tilde{\sigma}(s,X^{N}_s,X^{N,j}_s)dW^{1}_s\Big\|_{L^2}>\frac{\eta}{4}\right)
\nonumber \\
=: \!\!\!\!\!\!\!\!&& \text{I}+\text{II}+\text{III}+\text{IV}.
\end{eqnarray}
By H\"{o}lder's inequality, \eref{esvv} and \eref{vv10} we have
 \begin{eqnarray}\label{vv13}
\text{I}\leq\!\!\!\!\!\!\!\!&&\sup_{N\in\mathbb{N}}\mathbb{P}
\left(\delta\int_{\tau^N}^{(\tau^N+\delta)}\|AX^{N,1}_s\|_{L^2}^2ds>\frac{\eta^2}{16}\right)
\nonumber \\
=\!\!\!\!\!\!\!\!&&
\sup_{N\in\mathbb{N}}\mathbb{P}\left(\delta\int_{\tau^N}^{(\tau^N+\delta)}\|AX^{N,1}_s\|_{L^2}^2ds>\frac{\eta^2}{16},\tau_M^{N}=T\right)
\nonumber \\
\!\!\!\!\!\!\!\!&&+\sup_{N\in\mathbb{N}}\mathbb{P}\left(\delta\int_{\tau^N}^{(\tau^N+\delta)}\|AX^{N,1}_s\|_{L^2}^2ds>\frac{\eta^2}{16},\tau_M^{N}<T\right)
\nonumber \\
\leq \!\!\!\!\!\!\!\!&&
\sup_{N\in\mathbb{N}}\mathbb{P}\left(\delta\int_{0}^{\tau_M^{N}}\|AX^{N,1}_s\|_{L^2}^2ds>\frac{\eta^2}{16}\right)
+\sup_{N\in\mathbb{N}}\mathbb{P}\left(\tau_M^{N}<T\right)
\nonumber \\
\leq \!\!\!\!\!\!\!\!&&\frac{16}{\eta^2}\delta\sup_{N\in\mathbb{N}}\mathbb{E}
\int_{0}^{\tau_M^{N}}\|X^{N,1}_s\|_{2}^2ds+\frac{C_T}{M}
\nonumber \\
\leq \!\!\!\!\!\!\!\!&&\frac{C_M}{\eta^2}\delta+\frac{C_T}{M}.
\end{eqnarray}

By \eref{bb4}, there exists a constant $c>0$ such that $\|B(X^{N,1}_s)\|_{L^2}\leq c\|X^{N,1}_s\|_{2}^{\frac{1}{2}}\|X^{N,1}_s\|_{1}\|X^{N,1}_s\|_{L^2}^{\frac{1}{2}}$, then by
H\"{o}lder's inequality, \eref{esvv} and \eref{vv10} we have
 \begin{eqnarray}\label{v14}
\text{II}\leq\!\!\!\!\!\!\!\!&&\sup_{N\in\mathbb{N}}\mathbb{P}
\left(\int_{\tau^N}^{(\tau^N+\delta)}\|B(X^{N,1}_s)\|_{L^2}ds>\frac{\eta}{4}\right)
\nonumber \\
\leq\!\!\!\!\!\!\!\!&&\sup_{N\in\mathbb{N}}\mathbb{P}
\left(\int_{\tau^N}^{(\tau^N+\delta)}\|X^{N,1}_s\|_{2}^{\frac{1}{2}}\|X^{N,1}_s\|_{1}\|X^{N,1}_s\|_{L^2}^{\frac{1}{2}}ds>\frac{\eta}{4c}\right)
\nonumber \\
\leq\!\!\!\!\!\!\!\!&&
\sup_{N\in\mathbb{N}}\mathbb{P}\left(\int_{\tau^N}^{(\tau^N+\delta)}\|X^{N,1}_s\|_{2}^{\frac{1}{2}}\|X^{N,1}_s\|_{1}\|X^{N,1}_s\|_{L^2}^{\frac{1}{2}}ds>\frac{\eta}{4c},\tau_M^{N}=T\right)
+\sup_{N\in\mathbb{N}}\mathbb{P}\left(\tau_M^{N}<T\right)
\nonumber \\
\leq\!\!\!\!\!\!\!\!&&
\sup_{N\in\mathbb{N}}\mathbb{P}\left(\int_{\tau^N}^{(\tau^N+\delta)\wedge\tau_M^{N}}\|X^{N,1}_s\|_{2}^{\frac{1}{2}}\|X^{N,1}_s\|_{1}\|X^{N,1}_s\|_{L^2}^{\frac{1}{2}}ds>\frac{\eta}{4c}\right)
+\frac{C_T}{M}
\nonumber \\
\leq \!\!\!\!\!\!\!\!&&
\frac{4c}{\eta}\sup_{N\in\mathbb{N}}\Big[\Big(\mathbb{E}\int_{\tau^N}^{(\tau^N+\delta)\wedge\tau_M^{N}}\|X^{N,1}_s\|_{2}^{2}ds\Big)^{\frac{1}{4}}
\Big(\mathbb{E}\int_{\tau^N}^{(\tau^N+\delta)\wedge\tau_M^{N}}\|X^{N,1}_s\|_{L^2}^{2}ds\Big)^{\frac{1}{4}}
\nonumber \\
\!\!\!\!\!\!\!\!&&~~~~\cdot\Big(\mathbb{E}
\int_{\tau^N}^{(\tau^N+\delta)\wedge\tau_M^{N}}\|X^{N,1}_s\|_{1}^{2}ds
\Big)^{\frac{1}{2}}\Big]
+\frac{C_T}{M}
\nonumber \\
\leq \!\!\!\!\!\!\!\!&&
\frac{C_M}{\eta}\delta^{\frac{3}{4}}
\sup_{N\in\mathbb{N}}\Big(\mathbb{E}\sup_{0\leq s\leq \tau_M^{N}}\|X^{N,1}_s\|_{L^2}^{2}\Big)^{\frac{1}{4}}
\times\sup_{N\in\mathbb{N}}\Big(\mathbb{E}\int_{0}^{\tau_M^{N}}\|X^{N,1}_s\|_{2}^{2}ds
\Big)^{\frac{1}{4}}
\nonumber \\
\!\!\!\!\!\!\!\!&&~~~~\cdot\sup_{N\in\mathbb{N}}\Big(\mathbb{E}\sup_{0\leq s\leq \tau_M^{N}}\|X^{N,1}_s\|_{1}^{2}\Big)^{\frac{1}{2}}
+\frac{C_T}{M}
\nonumber \\
\leq \!\!\!\!\!\!\!\!&&\frac{C_M}{\eta^2}\delta^{\frac{3}{4}}+\frac{C_T}{M}.
\end{eqnarray}
For $\text{III}$, by the assumption $\mathbf{A1}$,
 \begin{eqnarray}\label{v15}
\text{III}\leq\!\!\!\!\!\!\!\!&&\frac{16}{\eta^2}\sup_{N\in\mathbb{N}}\frac{1}{N}\sum_{j=1}^N\mathbb{E}
\left(\int_{\tau^N}^{(\tau^N+\delta)}\|\tilde{K}(s,X^{N,1}_s,X^{N,j}_s)\|_{L^2}^2ds\right)
\nonumber \\
\leq \!\!\!\!\!\!\!\!&&
\frac{C}{\eta^2}\sup_{N\in\mathbb{N}}\mathbb{E}
\left(\int_{\tau^N}^{(\tau^N+\delta)}\Big(1+\|X^{N,1}_s\|_{L^2}^2+\frac{1}{N}\sum_{j=1}^N\|X^{N,j}_s\|_{L^2}^2\Big)ds\right)
\nonumber \\
\leq \!\!\!\!\!\!\!\!&&\frac{C}{\eta^2}\delta\Big(1+\sup_{N\in\mathbb{N}}\mathbb{E}
\big[\sup_{0\leq s\leq T}\|X^{N,1}_s\|_{L^2}^2\big]+C_T\Big)
\nonumber \\
\leq \!\!\!\!\!\!\!\!&&\frac{C_T}{\eta^2}\delta.
\end{eqnarray}
For the stochastic integral term $\text{IV}$, by the assumption $\mathbf{A1}$,
 \begin{eqnarray}\label{v16}
\text{IV}\leq\!\!\!\!\!\!\!\!&&\frac{16}{\eta^2}\sup_{N\in\mathbb{N}}\frac{1}{N}\sum_{j=1}^N\mathbb{E}
\left(\Big\|\int_{\tau^N}^{(\tau^N+\delta)}\tilde{\sigma}(s,X^{N,1}_s,X^{N,j}_s)dW^{1}_s\Big\|_{L^2}^2\right)
\nonumber \\
\leq \!\!\!\!\!\!\!\!&&
\frac{C}{\eta^2}\sup_{N\in\mathbb{N}}\frac{1}{N}\sum_{j=1}^N\mathbb{E}
\left(\int_{\tau^N}^{(\tau^N+\delta)}\|\tilde{\sigma}(s,X^{N,1}_s,X^{N,j}_s)\|_{L_2(U;\mathbb H)}^2ds\right)
\nonumber \\
\leq \!\!\!\!\!\!\!\!&&
\frac{C}{\eta^2}\sup_{N\in\mathbb{N}}\mathbb{E}
\left(\int_{\tau^N}^{(\tau^N+\delta)}\Big(1+\|X^{N,1}_s\|_{L^2}^2+\frac{1}{N}\sum_{j=1}^N\|X^{N,j}_s\|_{L^2}^2\Big)ds\right)
\nonumber \\
\leq \!\!\!\!\!\!\!\!&&\frac{C}{\eta^2}\delta\Big(1+\sup_{N\in\mathbb{N}}\mathbb{E}\big[
\sup_{0\leq s\leq T}\|X^{N,1}_s\|_{L^2}^2\big]+C_T\Big)
\nonumber \\
\leq \!\!\!\!\!\!\!\!&&\frac{C_T}{\eta^2}\delta.
\end{eqnarray}
Combining \eref{vv12}-\eref{v16} implies,
\begin{eqnarray*}
\sup_{N\in\mathbb{N}}\mathbb{P}
\left(\|X^{N,1}_{(\tau^N+\delta)}-X^{N,1}_{\tau^N}\|_{L^2}>\eta\right)\leq\frac{C_M}{\eta^2}\delta+\frac{C_M}{\eta^2}\delta^{\frac{3}{4}}+\frac{C_T}{\eta^2}\delta+\frac{C_T}{M}.
\end{eqnarray*}
Let $\delta\rightarrow0$ first, then let $M\rightarrow\infty$, we can see that the claim (ii) holds.\\
\textbf{Step 2:} Now we show that the sequence $\{X^{N,1}\}_{N\in\mathbb{N}}$ is tight in
$L^2([0,T];\mathbb{V})$. Since the embedding $W_{{div}}^{2,2}(\mathscr{O})\subset \mathbb{V}$ is compact, by the tightness criterion (cf. \cite[Theorem 5]{S3} and \cite[Lemma 5.2]{RSZ1}), we only need to show the following two claims hold.

(iii)
\begin{eqnarray}\label{vv88}
\lim_{\kappa\rightarrow\infty}\sup_{N\in\mathbb{N}}\mathbb{P}\left(\int_0^T\|X^{N,1}_t\|_{2}^2dt>\kappa\right)=0.
\end{eqnarray}

(iv) For any $\varepsilon>0$,
\begin{eqnarray}\label{vv99}
\lim_{\delta\rightarrow0^+}\sup_{N\in\mathbb{N}}\mathbb{P}
\left(\int_0^{T-\delta}\|X^{N,1}_{(t+\delta)}-X^{N,1}_{t}\|_{L^2}^2dt>\varepsilon\right)=0.
\end{eqnarray}

For the claim (iii),  by \eref{esvv} and \eref{vv10} we have
  \begin{eqnarray}\label{vv111}
&&\sup_{N\in\mathbb{N}}\mathbb{P}\left(\int_0^T\|X^{N,1}_t\|_{2}^2dt>\kappa\right)
\nonumber \\
= \!\!\!\!\!\!\!\!&&
\sup_{N\in\mathbb{N}}\mathbb{P}\left(\int_0^T\|X^{N,1}_t\|_{2}^2dt>\kappa,\tau_M^{N}=T\right)
+\sup_{N\in\mathbb{N}}\mathbb{P}\left(\int_0^T\|X^{N,1}_t\|_{2}^2dt>\kappa,\tau_M^{N}<T\right)
\nonumber \\
\leq \!\!\!\!\!\!\!\!&&
\sup_{N\in\mathbb{N}}\mathbb{P}\left(\int_0^{\tau_M^{N}}\|X^{N,1}_t\|_{2}^2dt>\kappa\right)+\sup_{N\in\mathbb{N}}\mathbb{P}\left(\tau_M^{N}<T\right)
\nonumber \\
\leq \!\!\!\!\!\!\!\!&&\frac{1}{\kappa}\sup_{N\in\mathbb{N}}\mathbb{E}\left(\int_0^{\tau_M^{N}}\|X^{N,1}_t\|_{2}^2dt\right)+\frac{C_T}{M}
\nonumber \\
\leq \!\!\!\!\!\!\!\!&&\frac{C_M}{\kappa}+\frac{C_T}{M}.
\end{eqnarray}
Let $\kappa\rightarrow\infty$ first, then let $M\rightarrow\infty$ we can see that the claim (iii) holds.

\vspace{2mm}
Finally, we prove the claim (iv).
It follows from It\^{o}'s formula that
\begin{eqnarray}
\!\!\!\!\!\!\!\!&&\|X^{N,1}_{(t+\delta)}-X^{N,1}_{t}\|_{L^2}^{2}
\nonumber \\
=\!\!\!\!\!\!\!\!&&2\int_{t} ^{t+\delta}\langle AX^{N,1}_{s}-B(X^{N,1}_{s}), X^{N,1}_{  s   }-X^{N,1}_{t}\rangle ds
\nonumber \\
 \!\!\!\!\!\!\!\!&&
+ \frac{1}{N}\sum_{j=1}^N\int_{t} ^{t+\delta}\big(2(\tilde{K}(s,X^{N,1}_s,X^{N,j}_s), X^{N,1}_{ s  }-X^{N,1}_{t})+\|\tilde{\sigma}(s,X^{N,1}_s,X^{N,j}_s)\|_{L_2(U;\mathbb H)}^2\big) ds
\nonumber \\
 \!\!\!\!\!\!\!\!&& +\frac{2}{N}\sum_{j=1}^N\int_{t} ^{t+\delta}( X^{N,1}_{ s   }-X^{N,1}_{t},  \tilde{\sigma}(s,X^{N,1}_s,X^{N,j}_s)dW^{1}_s) \nonumber\\
=:\!\!\!\!\!\!\!\!&&\text{I}(t)+\text{II}(t)+\text{III}(t).  \label{F3.9}
\end{eqnarray}
For the first term $\text{I}(t)$, by H\"{o}lder's inequality and \eref{P2.2}, there exists a constant $C>0$ such that
\begin{eqnarray}  \label{REGX1}
&&\mathbb{E}\left(\int_0^{T-\delta}\text{I}(t)dt\right)\nonumber\\
\leq\!\!\!\!\!\!\!\!&& C\mathbb{E}\left(\int_0^{T-\delta}\int_{t} ^{t+\delta}\| AX^{N,1}_{s}-B(X^{N,1}_{s})\|_{-1}
\|X^{N,1}_{ s   }-X^{N,1}_{t}\|_{1} ds dt\right)\nonumber\\
\leq\!\!\!\!\!\!\!\!&&C\left[\mathbb{E}\int_0^{T-\delta}\int_{t} ^{t+\delta}\|X^{N,1}_{s}\|_1^2+\|B(X^{N,1}_{s})\|^2_{-1}dsdt\right]^{1/2}
\cdot\left[\mathbb{E}\int_0^{T-\delta}\int_{t} ^{t+\delta}\|X^{N,1}_{ s   }-X^{N,1}_{t}\|^2_{1} dsdt\right]^{1/2}\nonumber\\
\leq\!\!\!\!\!\!\!\!&&C\left[\delta\mathbb{E}\int^{T}_0\|X^{N,1}_{s}\|^2_{1}(1+\|X^{N,1}_{s}\|_{L^2}^2)ds\right]^{1/2}\cdot\left[\delta\mathbb{E}\int^{T}_0\|X^{N,1}_{s}\|^2_{1}ds\right]^{1/2}\nonumber\\
\leq\!\!\!\!\!\!\!\!&&C_{T}\delta,
\end{eqnarray}
where we use Fubini's theorem and \eref{es117} in the third and fourth inequalities respectively.

For $\text{II}(t)$, by \eref{es116} and \eref{es117} we have
\begin{eqnarray}  \label{REGX2}
&&\mathbb{E}\left(\int_0^{T-\delta}\text{II}(t)dt\right)\nonumber\\
\leq\!\!\!\!\!\!\!\!&& C\mathbb{E}\left(\int_0^{T-\delta}\int_{t} ^{t+\delta}\Big(1+\|X^{N,1}_s\|^{2}_{L^2}+\frac{1}{N}\sum_{j=1}^N\|X^{N,j}_s\|_{L^2}^2+
\|X^{N,1}_{ s   }-X^{N,1}_{t}\|_{L^2}^2 \Big)ds dt\right)\nonumber\\
\leq\!\!\!\!\!\!\!\!&&C\delta\mathbb{E}\left[\sup_{s\in[0,T]}\left(1+\|X^{N,1}_{s}\|^2_{L^2}+\frac{1}{N}\sum_{j=1}^N\|X^{N,j}_s\|_{L^2}^2\right)\right]\nonumber\\
\leq\!\!\!\!\!\!\!\!&&C_{T}\delta.
\end{eqnarray}

For $\text{III}(t)$, by martingale property it follows that
\begin{eqnarray}  \label{REGX3}
\mathbb{E}\left(\int_0^{T-\delta}\text{III}(t)dt\right)=\!\!\!\!\!\!\!\!&&\frac{2}{N}\sum_{j=1}^N\int_0^{T-\delta}\mathbb{E}
\left[\int_{t} ^{t+\delta}( X^{N,1}_{ s   }-X^{N,1}_{t},  \tilde{\sigma}(s,X^{N,1}_s,X^{N,j}_s)dW^{1}_s)\right]dt \nonumber\\
=\!\!\!\!\!\!\!\!&&0.
\end{eqnarray}
Combining the estimates \eref{F3.9}-\eref{REGX3}, it follows that
\begin{eqnarray}
\mathbb{E}\left(\int_0^{T-\delta}\|X^{N,1}_{(t+\delta)}-X^{N,1}_{t}\|_{L^2}^{2}dt\right)\leq C_{T}\delta. \label{F3.13}
\end{eqnarray}
Then for any $\varepsilon>0$,
\begin{eqnarray*}
\sup_{N\in\mathbb{N}}\mathbb{P}
\left(\int_0^{T-\delta}\|X^{N,1}_{(t+\delta)}-X^{N,1}_{t}\|_{L^2}^2dt>\varepsilon\right)
\leq\!\!\!\!\!\!\!\!&&\frac{1}{\varepsilon}\sup_{N\in\mathbb{N}}\mathbb{E}\left(\int_0^{T-\delta}\|X^{N,1}_{(t+\delta)}-X^{N,1}_{t}\|_{L^2}^{2}dt\right)
\nonumber\\
\leq\!\!\!\!\!\!\!\!&&\frac{1}{\varepsilon}C_{T}\delta,
\end{eqnarray*}
 which implies the claim (iv) holds. The proof is complete.
\end{proof}

\vspace{2mm}
We now turn to prove the tightness of sequence  $\{\mathscr{S}^{N}\}_{N\in\mathbb{N}}$ in $\mathscr{P}_2(\mathbb{S})$.
\begin{lemma}\label{pro13}
Under the assumptions in Theorem \ref{th4}, the sequence  $\{\mathscr{S}^{N}\}_{N\in\mathbb{N}}$ is  tight in $\mathscr{P}_2(\mathbb{S})$.
\end{lemma}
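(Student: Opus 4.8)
The plan is to deduce the tightness of $\{\mathscr{S}^N\}_{N\in\mathbb{N}}$ in $\mathscr{P}_2(\mathbb{S})$ from the tightness of the single-particle laws $\{\mathscr{L}_{X^{N,1}}\}_{N\in\mathbb{N}}$ in $\mathbb{S}$ established in Lemma \ref{pro2}, together with the exchangeability of the particle system coming from ${\mathbf{A\ref{H55}}}$ (the law of $X_0^N$ is symmetric, hence so is the law of $X^N=(X^{N,1},\dots,X^{N,N})$ by uniqueness in Theorem \ref{th3}). The key observation is the standard one (cf.~Sznitman \cite{S1}): for an exchangeable system, tightness of $\{\mathscr{L}_{X^{N,1}}\}$ implies tightness of the empirical measures $\{\mathscr{L}_{\mathscr{S}^N}\}$ in $\mathscr{P}(\mathbb{S})$. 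To upgrade this to tightness in $\mathscr{P}_2(\mathbb{S})$ — i.e.~in the $2$-Wasserstein topology — one additionally needs a uniform-integrability-type control of the second moments, namely that $\sup_N \mathbb{E}\big[\int_{\mathbb{S}} \|x\|_{\mathbb{S}}^{2}\mathbf{1}_{\{\|x\|_{\mathbb{S}}^2 > R\}}\mathscr{S}^N(dx)\big]\to 0$ as $R\to\infty$, which reduces by exchangeability to a statement about $X^{N,1}$.

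First I would fix the relatively compact sets. By Lemma \ref{pro2}, for each $\varepsilon>0$ choose a compact $\mathbb{K}_\varepsilon\subset\mathbb{S}$ with $\sup_N \mathbb{P}(X^{N,1}\notin \mathbb{K}_\varepsilon)<\varepsilon$. Then I would invoke the result recorded as Lemma \ref{lem00} (cited in the introduction as "playing a crucial role in finding a relatively compact subset in the Wasserstein space"): a set of the form $\{\mu\in\mathscr{P}_2(\mathbb{S}): \mu(\mathbb{K}_n^c)\le 1/n \text{ for all } n, \ \int \|x\|_{\mathbb{S}}^{2}\mathbf{1}_{\{x\notin \mathbb{K}_n\}}\mu(dx)\le 1/n \text{ for all } n\}$ is relatively compact in $\mathscr{P}_2(\mathbb{S})$ for a suitable increasing sequence of compacts $\mathbb{K}_n$. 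So the task is to produce, for any $\eta>0$, a single relatively compact $\mathscr{K}_\eta\subset\mathscr{P}_2(\mathbb{S})$ of this type with $\sup_N\mathbb{P}(\mathscr{S}^N\notin\mathscr{K}_\eta)<\eta$. For the first (Prokhorov-type) condition I would use Chebyshev: $\mathbb{P}(\mathscr{S}^N(\mathbb{K}_n^c) > \delta_n) \le \delta_n^{-1}\mathbb{E}[\mathscr{S}^N(\mathbb{K}_n^c)] = \delta_n^{-1}\mathbb{P}(X^{N,1}\notin\mathbb{K}_n)$, by exchangeability; choosing $\mathbb{K}_n$ from Lemma \ref{pro2} with $\mathbb{P}(X^{N,1}\notin\mathbb{K}_n)$ summably small (and $\delta_n,\mathbb{K}_n$ depending on $\eta$) handles it via a union bound over $n$. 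For the second (uniform $L^2$-integrability on the tails) condition I would similarly write $\mathbb{E}\big[\int\|x\|_{\mathbb{S}}^2\mathbf{1}_{\{x\notin\mathbb{K}_n\}}\mathscr{S}^N(dx)\big] = \mathbb{E}\big[\|X^{N,1}\|_{\mathbb{S}}^2\mathbf{1}_{\{X^{N,1}\notin\mathbb{K}_n\}}\big]$, and estimate this by $\big(\mathbb{E}\|X^{N,1}\|_{\mathbb{S}}^{p'}\big)^{2/p'}\mathbb{P}(X^{N,1}\notin\mathbb{K}_n)^{1-2/p'}$ for some $p'>2$; the first factor is uniformly bounded by the a priori estimates \eref{es117}--\eref{es1170} of Lemma \ref{lem11} (which give $\sup_N\mathbb{E}[\sup_t\|X^{N,1}_t\|_{L^2}^p] + \sup_N\mathbb{E}(\int_0^T\|X^{N,1}_t\|_1^2 dt)^{p/2}<\infty$, hence $\sup_N\mathbb{E}\|X^{N,1}\|_{\mathbb{S}}^{p}<\infty$ with $p>4$), and the second is made small, uniformly in $N$, by the tightness of $\{\mathscr{L}_{X^{N,1}}\}$ in $\mathbb{S}$. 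Finally, another Chebyshev/union-bound argument over $n$ converts these expectation bounds into the desired probability estimate $\sup_N\mathbb{P}(\mathscr{S}^N\notin\mathscr{K}_\eta)<\eta$, and Lemma \ref{lem00} then gives that $\mathscr{K}_\eta$ is relatively compact in $\mathscr{P}_2(\mathbb{S})$.

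The main obstacle here is essentially bookkeeping rather than a deep difficulty: one must choose the doubly-indexed parameters (the compacts $\mathbb{K}_n$, the thresholds $\delta_n$, and how the index $n$ relates to the exceptional probability $\eta$) consistently so that the two families of conditions in Lemma \ref{lem00} hold simultaneously with total exceptional probability below $\eta$, uniformly in $N$. The one genuinely non-routine ingredient is the uniform $L^2$-tail control, and the reason it goes through is precisely the higher-moment bound $p>4$ in ${\mathbf{A\ref{H55}}}$ and Lemma \ref{lem11}: having $\mathbb{E}\|X^{N,1}\|_{\mathbb{S}}^p$ bounded for some $p$ strictly bigger than $2$ turns the $\mathbb{S}$-tightness of the marginals (a statement about probabilities of large sets) into $L^2$-uniform-integrability on those sets via Hölder, which is exactly what separates $\mathscr{P}_2$-tightness from ordinary weak tightness. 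I expect the write-up to be short, closely following the exchangeability reduction together with Lemmas \ref{pro2}, \ref{lem11}, and \ref{lem00}.
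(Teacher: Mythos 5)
Your proposal follows essentially the same route as the paper: Step~1 is the Sznitman exchangeability argument (compact sets from Lemma \ref{pro2} plus Chebyshev and a dyadic union bound) giving tightness in $\mathscr{P}(\mathbb{S})$, and Step~2 upgrades to $\mathscr{P}_2(\mathbb{S})$ by uniform second-moment tail control from the $p>4$ bounds \eref{es117}--\eref{es1170}, concluding with Lemma \ref{lem00}. The only cosmetic difference is that the paper controls the tails on the sets $\{d_T(x,0)\ge a_m\}$ via a Markov/moment estimate (matching the exact form of Lemma \ref{lem00}), whereas you use complements of compact sets and H\"older against the marginal tightness probabilities; both are valid and interchangeable here.
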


\begin{proof}
We separate the proof by the following two steps.\\
\textbf{Step 1:} In this step, we shall prove that the sequence $\{\mathscr{S}^{N}\}_{N\in\mathbb{N}}$ is  tight in $\mathscr{P}(\mathbb{S})$ w.r.t.~the topology of weak convergence, whose proof is inspired by Sznitman \cite{S1}. For any $\varepsilon>0$, in view of Proposition \ref{pro2}, we could choose a set $\mathscr{U}_\varepsilon$ that is a compact subset in $\mathbb{S}$ such that
$$\mathbb{P}(X^{N,1}\notin \mathscr{U}_\varepsilon)\leq\varepsilon^2.$$
Then by the symmetry of the law of $(X^{N,1},\ldots,X^{N,N})$, we deduce that
$$\mathbb{E}\Big[\mathscr{S}^{N}(\mathscr{U}_\varepsilon^c)\Big]=\frac{1}{N}\sum_{j=1}^N\mathbb{P}(X^{N,j}\notin \mathscr{U}_\varepsilon)\leq \varepsilon^2.$$
Let $\Phi_\varepsilon=\Big\{\Gamma\in\mathscr{P}(\mathbb{S}):\Gamma(\mathscr{U}_{\varepsilon2^{-m}})\geq 1-\varepsilon2^{-m},m\geq 1\Big\}$. Then it is easy to see that $\Phi_\varepsilon$ is compact in $\mathscr{P}(\mathbb{S})$ and
\begin{eqnarray*}
\mathbb{P}(\mathscr{S}^{N}\notin\Phi_\varepsilon)\leq \!\!\!\!\!\!\!\!&& \sum_{m=1}^\infty\mathbb{P}(\mathscr{S}^{N}(\mathscr{U}_{\varepsilon2^{-m}}^c)>\varepsilon2^{-m})
\nonumber \\
\leq\!\!\!\!\!\!\!\!&&\sum_{m=1}^\infty(\varepsilon2^{-m})^{-1}\mathbb{E}\Big[\mathscr{S}^{N}(\mathscr{U}_{\varepsilon2^{-m}}^c)\Big]
\nonumber \\
\leq\!\!\!\!\!\!\!\!&&\sum_{m=1}^\infty\varepsilon2^{-m}=\varepsilon.
\end{eqnarray*}
Thus $\{\mathscr{S}^{N}\}_{N\in\mathbb{N}}$ is  tight in $\mathscr{P}(\mathbb{S})$.\\
\textbf{Step 2:} Now we show that the sequence $\{\mathscr{S}^{N}\}_{N\in\mathbb{N}}$ is  tight in $\mathscr{P}_2(\mathbb{S})$ w.r.t.~the $L^2$-Wasserstein distance.
Since $\{\mathscr{S}^{N}\}_{N\in\mathbb{N}}$ is  tight in $\mathscr{P}(\mathbb{S})$ from \textbf{Step 1} and $\{\mathscr{S}^{N}\}_{N\in\mathbb{N}}\subset \mathscr{P}_2(\mathbb{S})$ $\mathbb{P}$-a.s., there exists a set $\mathscr{N}_\varepsilon\subset\mathscr{P}_2(\mathbb{S})$ that is relatively compact in $\mathscr{P}(\mathbb{S})$ such that we have
\begin{equation}\label{es1118}
\mathbb{P}(\mathscr{S}^{N}\notin \mathscr{N}_\varepsilon)\leq \varepsilon.
\end{equation}
For any $1<q\leq\frac{p}{2}$, let
$$a_m=m^{\frac{1}{2q-2}}2^{\frac{m}{2q-2}},~~~~~b_m=\frac{\varepsilon m}{\sup_{N\in\mathbb{N}}\mathbb{E}\Big[\sup_{t\in[0,T]}\|X^{N,1}_t\|_{L^2}^{2q}+\Big(\int_0^T\|X^{N,1}_t\|_{1}^{2}dt\Big)^{q}\Big]\vee 1},$$
and set
$$\Psi_\varepsilon=\bigcap_{m\in\mathbb{N}}\Big\{\nu\in\mathscr{P}_2(\mathbb{S}):\int d_{T}(u,0)^2\mathbf{1}_{\{d_{T}(u,0)\geq a_m\}}\nu(du)<\frac{1}{b_m}\Big\}.$$
Then we deduce from (\ref{es117}), (\ref{es1170})  that
\begin{eqnarray}\label{es1119}
\mathbb{P}(\mathscr{S}^{N}\notin\Psi_\varepsilon)\leq \!\!\!\!\!\!\!\!&& \sum_{m=1}^\infty\mathbb{P}\Big(\frac{1}{N}\sum_{i=1}^Nd_{T}(X^{N,i},0)^2\mathbf{1}_{\{d_{T}(X^{N,i},0)\geq a_m\}}\geq\frac{1}{b_m}\Big)
\nonumber \\
\leq\!\!\!\!\!\!\!\!&&\sum_{m=1}^\infty \frac{b_m}{N} \sum_{i=1}^N \mathbb{E}\Big[d_{T}(X^{N,i},0)^2\mathbf{1}_{\{d_{T}(X^{N,i},0)\geq a_m\}}\Big]
\nonumber \\
\leq\!\!\!\!\!\!\!\!&&\sum_{m=1}^\infty\frac{b_m}{a_m^{2q-2}}\mathbb{E}\Big[\sup_{t\in[0,T]}\|X^{N,1}_t\|_{L^2}^{2q}+\Big(\int_0^T\|X^{N,1}_t\|_{1}^{2}dt\Big)^{q}\Big]
\nonumber \\
\leq\!\!\!\!\!\!\!\!&&\varepsilon.
\end{eqnarray}
Thus combining (\ref{es1118}) and (\ref{es1119}), it is easy to see
\begin{equation*}
\mathbb{P}(\mathscr{S}^{N}\notin \mathscr{N}_\varepsilon\cap\Psi_\varepsilon)\leq 2\varepsilon.
\end{equation*}
According to Lemma \ref{lem00}, the set $\mathscr{N}_\varepsilon\cap\Psi_\varepsilon$ is relatively compact in $\mathscr{P}_2(\mathbb{S})$, therefore we conclude that the sequence $\{\mathscr{S}^{N}\}_{N\in\mathbb{N}}$ is  tight in $\mathscr{P}_2(\mathbb{S})$. The proof is completed.
\end{proof}

\subsection{Proof of Theorem \ref{th4}}\label{sec3.3}
In this subsection, we will give the detailed proof of Theorem \ref{th4}. By Lemma \ref{pro13} and the Prokhorov theorem,  we know that $\mathscr{S}^{N}$ (as random variable in $\mathscr{P}_2(\mathbb{S})$) converges weakly to $\mathscr{S}$ (here choose a subsequence if necessary), namely,
$$\text{the law of}~\mathscr{S}^{N}~\text{converges weakly to the law of}~\mathscr{S}~\text{in}~\mathscr{P}(\mathscr{P}_2(\mathbb{S})).$$
Moreover, by (\ref{es117}),  (\ref{es1170}) and  Fatou's lemma we can get
\begin{eqnarray*}
\mathbb{E}\Big[\int d_T(x,0)^{p}\mathscr{S}(dx)\Big]\leq \!\!\!\!\!\!\!\!&& \mathbb{E}\Big[\liminf_{R\to\infty}\int(d_T(x,0)^{p}\wedge R)\mathscr{S}(dx)\Big]
\nonumber \\
\leq \!\!\!\!\!\!\!\!&& \liminf_{R\to\infty}\liminf_{N\to\infty}\mathbb{E}\Big[\int(d_T(x,0)^{p}\wedge R)\mathscr{S}^{N}(dx)\Big]
\nonumber \\
\leq\!\!\!\!\!\!\!\!&& \sup_{N\in\mathbb{N}}\mathbb{E}\Big[\sup_{t\in[0,T]}\|X^{N,1}_t\|_{L^2}^{p}+\Big(\int_0^T\|X^{N,1}_t\|_{1}^{2}dt\Big)^{\frac{p}{2}}\Big]
<\infty.
\end{eqnarray*}
Therefore we have
\begin{equation}\label{es1132}
\mathscr{S}\in \mathscr{P}_p(\mathbb{S})\subset \mathscr{P}_2(\mathbb{S})~~\mathbb{P}\text{-a.s.}.
\end{equation}

For any $(x,\nu,l)\in \mathbb{S}\times\mathscr{P}_2(\mathbb{S})\times \mathscr{V}$, let
\begin{eqnarray}\label{eq11m}
\!\!\!\!\!\!\!\!&&\mathscr{M}_l(t,x,\nu)
\nonumber \\
:=\!\!\!\!\!\!\!\!&&\langle x_t,l\rangle-\langle x_0,l\rangle-\int_0^t\int\langle \tilde{\mathbf{F}}(s,x_s,x'_s),l\rangle \nu(dx')ds
\nonumber \\
:=\!\!\!\!\!\!\!\!&&\langle x_t,l\rangle-\langle x_0,l\rangle-\int_0^t\int\langle Ax_s-B(x_s)+\tilde{K}(s,x_s,x'_s),l\rangle \nu(dx')ds,~t\in[0,T].
\end{eqnarray}
Note that  by Lemma \ref{Property B2}  and $\mathbf{A1}$-$\mathbf{A2}$, for any $l\in\mathscr{V}$
\begin{eqnarray}\label{es446}
\!\!\!\!\!\!\!\!&&\langle \tilde{\mathbf{F}}(t,u_1,v_1)-\tilde{\mathbf{F}}(t,u_2,v_2),l\rangle
\nonumber \\
\leq\!\!\!\!\!\!\!\!&& C\|u_1-u_2\|_{L^2}\Big[1+\|u_1\|_{L^2}+\|u_2\|_{L^2}
+\|v_1\|_{L^2}+\|v_2\|_{L^2}\Big]\|l\|_{4}
\nonumber \\
\!\!\!\!\!\!\!\!&&
+C\|v_1-v_2\|_{L^2}\Big[1+\|v_1\|_{L^2}+\|v_2\|_{L^2}\Big]\|l\|_{L^2}.
\end{eqnarray}

We now denote
\begin{eqnarray*}
G(t,x,x'):=G^{(1)}(t,x)+G^{(2)}(t,x,x'),
\end{eqnarray*}
where
$$G^{(1)}(t,x):=\langle x_t,l\rangle-\langle x_0,l\rangle,$$
$$G^{(2)}(t,x,x'):=-\int_0^t\langle \tilde{\mathbf{F}}(s,x_s,x'_s),l\rangle ds,$$
and for fixed $m\in\mathbb{N}$, let $g_1,\ldots,g_m$ be a real valued sequence in $C_b(\mathbb{H})$, $\nu\in\mathscr{P}_2(\mathbb{S})$, $0\leq s<t\leq T$, $0\leq s_1<\cdots<s_m\leq s$,
$$\Pi(\nu):=\int\int\Big(G(t,x,x')-G(s,x,x')\Big)g_1(x_{s_1})\cdots g_m(x_{s_m})\nu(dx')\nu(dx).$$
For any $R>0$, we define
\begin{eqnarray}\label{es1120}
G_R(t,x,x'):=G^{(1)}_R(t,x)+G^{(2)}_R(t,x,x'),
\end{eqnarray}
where
$$G^{(1)}_R(t,x):=\langle x_t,l\rangle\cdot\chi_R(\langle x_t,l\rangle)-\langle x_0,l\rangle\cdot\chi_R(\langle x_0,l\rangle),$$
$$G^{(2)}_R(t,x,x'):=-\int_0^t\langle \tilde{\mathbf{F}}(s,x_s,x'_s),l\rangle\cdot\chi_R(\langle \tilde{\mathbf{F}}(s,x_s,x'_s),l\rangle)ds,$$
where $\chi_R\in C^{\infty}_c(\mathbb{R})$ is a cut-off function with
$$\chi_R(r)=\begin{cases} 1,~~~~|r|\leq R&\quad\\
0,~~~~|r|>2R,&\quad\end{cases}$$
and denote
$$\Pi_R(\nu):=\int\int\Big(G_R(t,x,x')-G_R(s,x,x')\Big)g_1(x_{s_1})\cdots g_m(x_{s_m})\nu(dx')\nu(dx).$$
Note that for any $t\in[0,T]$, the function
$$\mathbb{S}\times \mathbb{S}\ni(x,x')\mapsto G_R(t,x,x')~\text{is bounded continuous}.$$
In fact, the bounedness of $G_R$ is obvious, if $(x^n,{x'}^n)\to(x,x')$ in $\mathbb{S}\times \mathbb{S}$, it is straightforward that for any $t\in[0,T]$,
\begin{equation*}
\lim_{n\to\infty}G^{(1)}_R(t,x^{n})=G^{(1)}_R(t,x),
\end{equation*}
by (\ref{es446}) and the dominated convergence theorem,  we have
\begin{equation*}
\lim_{n\to\infty}G^{(2)}_R(t,x^{n},{x'}^n)=G^{(2)}_R(t,x,x').
\end{equation*}
Then we can conclude
\begin{equation*}
\lim_{n\to\infty}G_R(t,x^{n},{x'}^n)=G_R(t,x,x').
\end{equation*}
Furthermore, since $g_1,\ldots,g_m$ are bounded and continuous, it follows that
$$\nu\mapsto\Pi_R(\nu)~~\text{is continuous and bounded}. $$

Since $\mathbb{P}$-a.s. $\mathscr{S},\mathscr{S}^N\in \mathscr{P}_2(\mathbb{S})$, we have $\mathbb{P}$-a.s.
\begin{eqnarray}
\!\!\!\!\!\!\!\!&&\Pi_R(\mathscr{S}^N)=\frac{1}{N^2}\sum_{i,j=1}^{N}\Big(G_R(t,X^{N,i},X^{N,j})-G_R(s,X^{N,i},X^{N,j})\Big)g_1(X^{N,i}_{s_1})\cdots g_m(X^{N,i}_{s_m}),
\nonumber \\
\!\!\!\!\!\!\!\!&&\Pi(\mathscr{S}^N)=\frac{1}{N^2}\sum_{i,j=1}^{N}\Big(G(t,X^{N,i},X^{N,j})-G(s,X^{N,i},X^{N,j})\Big)g_1(X^{N,i}_{s_1})\cdots g_m(X^{N,i}_{s_m}),\label{es1129}
 \\
\!\!\!\!\!\!\!\!&&\Pi_R(\mathscr{S})=\int\int\Big(G_R(t,x,x')-G_R(s,x,x')\Big)g_1(x_{s_1})\cdots g_m(x_{s_m})\mathscr{S}(dx')\mathscr{S}(dx),
\nonumber \\
\!\!\!\!\!\!\!\!&&\Pi(\mathscr{S})=\int\int\Big(G(t,x,x')-G(s,x,x')\Big)g_1(x_{s_1})\cdots g_m(x_{s_m})\mathscr{S}(dx')\mathscr{S}(dx).\label{es1130}
\end{eqnarray}

We intend to prove that $\mathbb{P}$-a.s.~$\Pi(\mathscr{S})=0$ by the following two lemmas.

\begin{lemma}\label{lem5}
Under the assumptions in Theorem \ref{th4}, we have
$$\mathbb{E}|\Pi(\mathscr{S}^N)|^2\to0~~\text{as}~N\to\infty.$$
\end{lemma}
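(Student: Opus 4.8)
\textbf{Proof proposal for Lemma \ref{lem5}.}

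The plan is to expand $\Pi(\mathscr{S}^N)$ using the representation \eref{es1129} and exploit the fact that each $X^{N,i}$ solves the particle system \eref{eqi}, so that the time-increment $G(t,X^{N,i},X^{N,j})-G(s,X^{N,i},X^{N,j})$ summed against the interaction measure reconstructs (up to the stochastic integral) the $\mathbb{H}$-valued semimartingale decomposition of $X^{N,i}$. Concretely, since
$$\langle X^{N,i}_t,l\rangle-\langle X^{N,i}_s,l\rangle-\int_s^t\frac{1}{N}\sum_{j=1}^N\langle \tilde{\mathbf{F}}(r,X^{N,i}_r,X^{N,j}_r),l\rangle dr=\frac{1}{N}\sum_{j=1}^N\int_s^t(\tilde\sigma(r,X^{N,i}_r,X^{N,j}_r)^*l,dW^i_r)_U=:M^{N,i}_l(s,t),$$
one gets $\frac{1}{N}\sum_{j=1}^N\big(G(t,X^{N,i},X^{N,j})-G(s,X^{N,i},X^{N,j})\big)=M^{N,i}_l(s,t)$, and therefore
$$\Pi(\mathscr{S}^N)=\frac{1}{N}\sum_{i=1}^N M^{N,i}_l(s,t)\,g_1(X^{N,i}_{s_1})\cdots g_m(X^{N,i}_{s_m}).$$
First I would establish this identity carefully, checking that $B(X^{N,i}_r)$ is a legitimate element of $\mathbb{V}^*\subset\mathbb{X}^*$ and that all the integrals are finite $\mathbb{P}$-a.s.\ thanks to Lemma \ref{lem11}.

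Next I would compute the second moment. Writing $\Psi^{N,i}:=g_1(X^{N,i}_{s_1})\cdots g_m(X^{N,i}_{s_m})$, which is $\mathscr{F}_s$-measurable and bounded by $\prod_k\|g_k\|_\infty=:C_g$, we have
$$\mathbb{E}|\Pi(\mathscr{S}^N)|^2=\frac{1}{N^2}\sum_{i,i'=1}^N\mathbb{E}\big[M^{N,i}_l(s,t)\,M^{N,i'}_l(s,t)\,\Psi^{N,i}\Psi^{N,i'}\big].$$
The key point is that $W^1,\ldots,W^N$ are independent, so for $i\neq i'$ the cross terms involve the product of two martingale increments driven by independent Wiener processes; conditioning on $\mathscr{F}_s$ and using that $M^{N,i}_l(s,\cdot)$ is a martingale with mean zero increments (together with a standard approximation/stopping argument to handle integrability), these cross terms vanish. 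Hence only the diagonal $i=i'$ survives, giving
$$\mathbb{E}|\Pi(\mathscr{S}^N)|^2=\frac{1}{N^2}\sum_{i=1}^N\mathbb{E}\big[M^{N,i}_l(s,t)^2\,(\Psi^{N,i})^2\big]\le \frac{C_g^2}{N^2}\sum_{i=1}^N\mathbb{E}\big[M^{N,i}_l(s,t)^2\big].$$
By It\^o's isometry and assumption ${\mathbf{A\ref{H33}}}$ (the linear growth bound \eref{linear1}),
$$\mathbb{E}\big[M^{N,i}_l(s,t)^2\big]=\mathbb{E}\Big[\Big(\frac{1}{N}\sum_{j=1}^N\int_s^t(\tilde\sigma(r,X^{N,i}_r,X^{N,j}_r)^*l,dW^i_r)_U\Big)^2\Big]\le C\|l\|_{L^2}^2\,\mathbb{E}\int_s^t\Big(1+\|X^{N,i}_r\|_{L^2}^2+\frac{1}{N}\sum_{j=1}^N\|X^{N,j}_r\|_{L^2}^2\Big)dr,$$
and the right-hand side is bounded by a constant $C_{T}$ uniformly in $N$ and $i$ by the symmetry of the law and the uniform estimate \eref{es116} (or \eref{es117}). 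Therefore $\mathbb{E}|\Pi(\mathscr{S}^N)|^2\le C_{T}\,C_g^2/N\to 0$ as $N\to\infty$, which is the claim.

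The main obstacle I anticipate is the rigorous justification of the vanishing of the cross terms: although the formal argument via independence of the $W^i$ is clean, the product $M^{N,i}_l(s,t)M^{N,i'}_l(s,t)\Psi^{N,i}\Psi^{N,i'}$ is not obviously integrable before one controls the martingale moments, and the conditional-expectation manipulation needs $M^{N,i}_l(s,\cdot)$ to genuinely be a (square-integrable) martingale on $[s,t]$ rather than merely a local one. I would handle this by introducing the stopping times $\tau^N_M$ of \eref{tau1}, proving the identity and the moment bounds up to $\tau^N_M$, sending $M\to\infty$ with the help of \eref{vv10}, and only then passing to the second-moment computation; the bounded factors $\Psi^{N,i}$ cause no trouble. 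Everything else is a direct application of It\^o's isometry, BDG, and the a priori bounds already proved in Lemmas \ref{lem11} and \ref{lem vc}.
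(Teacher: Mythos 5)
Your proposal is correct and follows essentially the same route as the paper: rewrite $\Pi(\mathscr{S}^N)$ as $\frac1N\sum_i(\mathcal{M}^i_t-\mathcal{M}^i_s)g_1(X^{N,i}_{s_1})\cdots g_m(X^{N,i}_{s_m})$ with $\mathcal{M}^i$ the stochastic-integral martingale, kill the off-diagonal terms using independence of the $W^i$ (so $\mathcal{M}^i\mathcal{M}^k$ is a martingale for $i\neq k$), and bound the diagonal via It\^o's isometry, the linear growth of $\tilde\sigma$, and the uniform moment estimates of Lemma \ref{lem11}, yielding the $O(1/N)$ rate. Your explicit conditioning on $\mathscr{F}_s$ with the bounded $\mathscr{F}_s$-measurable factors and the remark on square-integrability (which in fact already follows from \eref{linear1} and \eref{es117}, so no stopping-time localization is needed) only makes the same argument slightly more careful.
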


\begin{proof}
Note that
\begin{eqnarray}\label{es77}
\Pi(\mathscr{S}^N)=\frac{1}{N}\sum_{i=1}^{N}\big(\mathcal{M}_t^i-\mathcal{M}_s^i\big)g_1(X^{N,i}_{s_1})\cdots g_m(X^{N,i}_{s_m}),
\end{eqnarray}
where
\begin{eqnarray*}
\mathcal{M}_t^i:=\!\!\!\!\!\!\!\!&&\langle X^{N,i}_t,l\rangle-\langle \xi^i,l\rangle-\int_0^t\langle AX^{N,i}_s-B(X^{N,i}_s),l\rangle ds
\nonumber \\
\!\!\!\!\!\!\!\!&&~~-\frac{1}{N}\sum_{j=1}^{N}\int_0^t\langle\tilde{K}(s,X^{N,i}_s,X^{N,j}_s),l\rangle ds
\nonumber \\
=\!\!\!\!\!\!\!\!&&\frac{1}{N}\sum_{j=1}^{N}\langle\int_0^t\tilde{\sigma}(s,X^{N,i}_s,X^{N,j}_s) dW^i_s,l\rangle,
\end{eqnarray*}
which turns out to be  a square integrable martingale. Hence we deduce that
\begin{eqnarray}\label{es75}
\mathbb{E}\Big[\mathcal{M}_t^i\mathcal{M}_s^k|\mathscr{F}_s^N\Big]=\mathbb{E}\Big[\mathcal{M}_s^i\mathcal{M}_s^k|\mathscr{F}_s^N\Big],
\end{eqnarray}
where $\mathscr{F}_s^N:=\sigma\Big\{X_r^{N,i}:r\leq s,1\leq i\leq N\Big\}$.

Since for $i\neq k$, $W^i$ and $W^k$ are independent, it leads to
$$\langle \mathcal{M}^i,\mathcal{M}^k\rangle_t=0,$$
which implies that $\mathcal{M}^i\mathcal{M}^k$ is a martingale for $i\neq k$, i.e.,
\begin{eqnarray}\label{es76}
\mathbb{E}\Big[\mathcal{M}_t^i\mathcal{M}_t^k|\mathscr{F}_s^N\Big]=\mathbb{E}\Big[\mathcal{M}_s^i\mathcal{M}_s^k|\mathscr{F}_s^N\Big].
\end{eqnarray}
Combining (\ref{es75}) and (\ref{es76}), for $i\neq k$, we have
$$\mathbb{E}\Big[\big(\mathcal{M}_t^i-\mathcal{M}_s^i\big)\big(\mathcal{M}_t^k-\mathcal{M}_s^k\big)\Big]=0.$$
Recall (\ref{es77}), noting that $g_1,\ldots,g_m$ are bounded, it follows that
\begin{eqnarray}\label{es74}
\mathbb{E}|\Pi(\mathscr{S}^N)|^2\leq\!\!\!\!\!\!\!\!&& \frac{1}{N^2}\sum_{i,k=1}^{N}\mathbb{E}\Big[\big(\mathcal{M}_t^i-\mathcal{M}_s^i\big)\big(\mathcal{M}_t^k-\mathcal{M}_s^k\big)\Big].
\nonumber \\
=\!\!\!\!\!\!\!\!&&\frac{1}{N^2}\sum_{i=1}^{N}\mathbb{E}|\mathcal{M}_t^i-\mathcal{M}_s^i|^2
\nonumber \\
\leq\!\!\!\!\!\!\!\!&&\frac{C_T}{N^2}\sum_{i=1}^{N}\Big(1+\mathbb{E}\big[\sup_{t\in[0,T]}\|X^{N,1}_t\|_{L^2}^2\big]\Big)
\nonumber \\
\leq\!\!\!\!\!\!\!\!&&\frac{C_T}{N}\to 0 ~\text{as}~N\to\infty,
\end{eqnarray}
which implies the assertion.
\end{proof}

\begin{lemma}\label{lem4}
Under the assumptions in Theorem \ref{th4}, we have
$$\mathbb{E}|\Pi(\mathscr{S}^N)|\to\mathbb{E}|\Pi(\mathscr{S})|~~\text{as}~N\to\infty.$$
\end{lemma}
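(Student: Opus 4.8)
The goal is to pass to the limit $N\to\infty$ in $\mathbb{E}|\Pi(\mathscr{S}^N)|$, so the plan is to interpolate between $\Pi(\mathscr{S}^N)$ and $\Pi(\mathscr{S})$ through the truncated functional $\Pi_R$, exactly as in the analogous argument of Lemma~\ref{lem9}. Concretely, I would write
\begin{eqnarray*}
\big|\,\mathbb{E}|\Pi(\mathscr{S}^N)|-\mathbb{E}|\Pi(\mathscr{S})|\,\big|
\leq\!\!\!\!\!\!\!\!&&\mathbb{E}\big|\Pi(\mathscr{S}^N)-\Pi_R(\mathscr{S}^N)\big|
+\mathbb{E}\big|\Pi_R(\mathscr{S}^N)-\Pi_R(\mathscr{S})\big|
\nonumber\\
\!\!\!\!\!\!\!\!&&+\,\mathbb{E}\big|\Pi_R(\mathscr{S})-\Pi(\mathscr{S})\big|,
\end{eqnarray*}
and show each of the three terms is small: the first and third uniformly in $N$ as $R\to\infty$, the middle one (for fixed $R$) as $N\to\infty$.

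\textbf{Key steps.} First, for the middle term: since $\nu\mapsto\Pi_R(\nu)$ is bounded and continuous on $\mathscr{P}_2(\mathbb{S})$ (established just above the statement) and the law of $\mathscr{S}^N$ converges weakly to the law of $\mathscr{S}$ in $\mathscr{P}(\mathscr{P}_2(\mathbb{S}))$, the mapping theorem and dominated convergence give $\mathbb{E}|\Pi_R(\mathscr{S}^N)|\to\mathbb{E}|\Pi_R(\mathscr{S})|$; to get $\mathbb{E}|\Pi_R(\mathscr{S}^N)-\Pi_R(\mathscr{S})|\to 0$ one should instead realize $\mathscr{S}^N\to\mathscr{S}$ on a common probability space via Skorokhod and use boundedness of $\Pi_R$. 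Second, for the tail terms, I would bound $G(t,x,x')-G_R(t,x,x')$ by the part of the integrand exceeding level $R$: using $|G^{(2)}-G^{(2)}_R|\leq\int_0^T|\langle\tilde{\mathbf F}(s,x_s,x'_s),l\rangle|\mathbf{1}_{\{|\langle\tilde{\mathbf F}(s,x_s,x'_s),l\rangle|\geq R\}}\,ds$, then Chebyshev in the form $\mathbf{1}_{\{|a|\geq R\}}\leq |a|^{\delta}/R^{\delta}$ for a small $\delta>0$ together with the growth of $\tilde{\mathbf F}$ (i.e.~\eqref{bb3}, \eqref{P2.2}, ${\mathbf{A\ref{H33}}}$) and the uniform estimates \eqref{es116}--\eqref{es1170}, \eqref{es1132}. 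This yields $\sup_N\mathbb{E}|\Pi(\mathscr{S}^N)-\Pi_R(\mathscr{S}^N)|\leq C_T R^{-\delta}$ and the same bound for $\mathscr{S}$, since the relevant moments of $\mathscr{S}$ are controlled by $\liminf$ of those of $\mathscr{S}^N$ (Fatou). The $G^{(1)}$ part is handled identically using $\sup_{t}\|x_t\|_{L^2}$ moments.

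\textbf{Main obstacle.} The delicate point is controlling the \emph{product} structure: $\Pi_R(\nu)$ involves a double integral $\nu(dx')\nu(dx)$ of a function that is continuous on $\mathbb{S}\times\mathbb{S}$ but only jointly, so when applying Skorokhod/continuity one must check that $(x,x')\mapsto G_R(t,x,x')\,g_1(x_{s_1})\cdots g_m(x_{s_m})$ is genuinely bounded continuous on $\mathbb{S}\times\mathbb{S}$ (done above via \eqref{es446} and dominated convergence) and that $\nu\mapsto\int\int(\cdots)\nu(dx')\nu(dx)$ inherits continuity in the $\mathbb{W}_{2,\mathbb{S}}$-topology — this follows because weak convergence plus uniform integrability of $d_T(\cdot,0)^2$ (guaranteed by $\mathscr{S}^N,\mathscr{S}\in\mathscr{P}_p(\mathbb{S})$ with $p>2$) upgrades to convergence of the product measures $\nu\otimes\nu$. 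The other subtlety is that the constant in the tail estimate must be genuinely $N$-independent, which is exactly why one needs the uniform-in-$N$ bounds of Lemma~\ref{lem11} rather than the $N$-dependent bounds of Theorem~\ref{th3}. Once these two points are in place, sending first $N\to\infty$ and then $R\to\infty$ completes the proof; combined with Lemma~\ref{lem5} this will give $\mathbb{E}|\Pi(\mathscr{S})|=0$, hence $\mathbb{P}$-a.s.~$\Pi(\mathscr{S})=0$, which is the identification of $\mathscr{S}$ with a martingale solution used in the remainder of the proof of Theorem~\ref{th4}.
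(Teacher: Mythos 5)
Your proposal is correct and follows essentially the same route as the paper: the same three-term decomposition through the truncated functional $\Pi_R$, convergence of the middle term from boundedness and continuity of $\Pi_R$ together with the (Skorokhod-realized) convergence $\mathscr{S}^N\to\mathscr{S}$, and uniform-in-$N$ tail estimates for $\mathbb{E}|\Pi-\Pi_R|$ obtained from the level-$R$ truncation of $\langle\tilde{\mathbf F},l\rangle$, the growth bounds, and the uniform moment estimates of Lemma \ref{lem11} (with the analogous bound for $\mathscr{S}$ via its moments). Your use of $\mathbf{1}_{\{|a|\geq R\}}\leq|a|^{\delta}/R^{\delta}$ is only a cosmetic variant of the paper's H\"older--Chebyshev step yielding the $1/R^{\,\cdot}$ decay.
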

\begin{proof}
Note that
\begin{eqnarray}\label{es1124}
\mathbb{E}|\Pi(\mathscr{S}^N)|-\mathbb{E}|\Pi(\mathscr{S})|=\!\!\!\!\!\!\!\!&&\mathbb{E}\Big[|\Pi(\mathscr{S}^N)|-|\Pi_R(\mathscr{S}^N)|\Big]+\mathbb{E}\Big[|\Pi_R(\mathscr{S}^N)|-|\Pi_R(\mathscr{S})|\Big]
\nonumber \\
\!\!\!\!\!\!\!\!&&+\mathbb{E}\Big[|\Pi_R(\mathscr{S})|-|\Pi(\mathscr{S})|\Big].
\end{eqnarray}
Since $\Pi_R$ is continuous and bounded, by the dominated convergence theorem
$$\lim_{N\to\infty}\mathbb{E}|\Pi_R(\mathscr{S}^N)-\Pi_R(\mathscr{S})|=0,$$
which implies
\begin{equation}\label{es69}
\lim_{N\to\infty}\mathbb{E}|\Pi_R(\mathscr{S}^N)|=\mathbb{E}|\Pi_R(\mathscr{S})|.
\end{equation}
As for the first term on right hand side of (\ref{es1124}), we have
\begin{eqnarray}\label{es1125}
\!\!\!\!\!\!\!\!&&\mathbb{E}|\Pi_R(\mathscr{S}^{N})-\Pi(\mathscr{S}^N)|
\nonumber \\
\leq\!\!\!\!\!\!\!\!&&C\sup_{t\in[0,T]}\Big\{\frac{1}{N}\sum_{i=1}^N\mathbb{E}\big|G_R^{(1)}(t,X^{N,i})-G^{(1)}(t,X^{N,i})\big|\Big\}
\nonumber \\
\!\!\!\!\!\!\!\!&&+C\sup_{t\in[0,T]}\Big\{\frac{1}{N^2}\sum_{i,j=1}^N\mathbb{E}\big|G_R^{(2)}(t,X^{N,i},X^{N,j})-G^{(2)}(t,X^{N,i},X^{N,j})\big|\Big\}.
\end{eqnarray}
Due to $\mathbf{A1}$ and Lemma \ref{lem11},
\begin{eqnarray*}
\!\!\!\!\!\!\!\!&&\frac{1}{N^2}\sum_{i,j=1}^N\mathbb{E}\big|G_R^{(2)}(t,X^{N,i},X^{N,j})-G^{(2)}(t,X^{N,i},X^{N,j})\big|
\nonumber \\
\leq\!\!\!\!\!\!\!\!&&\frac{1}{N^2}\sum_{i,j=1}^N\mathbb{E}\Big[\int_0^T|\langle \tilde{\mathbf{F}}(s,X^{N,i}_s,X^{N,j}_s),l\rangle|\cdot \mathbf{1}_{\big\{|\langle \tilde{\mathbf{F}}(s,X^{N,i}_s,X^{N,j}_s),l\rangle|\geq R\big\}}ds\Big]
\nonumber \\
\leq\!\!\!\!\!\!\!\!&&\|l\|_{1}\frac{1}{N^2}\sum_{i,j=1}^N\Big[\Big(\int_0^T\mathbb{E}\|\tilde{\mathbf{F}}(s,X^{N,i}_s,X^{N,j}_s)\|_{-1}^{2}ds\Big)^{\frac{1}{2}}
\nonumber \\
\!\!\!\!\!\!\!\!&&
~~\cdot \Big(\int_0^T\mathbb{P}\big(|\langle \tilde{\mathbf{F}}(s,X^{N,i}_s,X^{N,j}_s),l\rangle|\geq R\big)ds\Big)^{\frac{1}{2}}\Big]
\nonumber \\
\leq\!\!\!\!\!\!\!\!&&\|l\|_{1}\frac{1}{N^2}\sum_{i,j=1}^N\Big(\mathbb{E}\int_0^T\|AX^{N,i}_s-B(X^{N,i}_s)+\tilde{K}(s,X^{N,i}_s,X^{N,j}_s)\|_{-1}^{2}ds\Big)\Big/R
\nonumber \\
\leq\!\!\!\!\!\!\!\!&&C\|l\|_{1}\Big(\mathbb{E}\int_0^T\big(1+\frac{1}{N}\sum_{i=1}^N\|X^{N,i}_s\|_{L^2}^{2}\|X^{N,i}_s\|_{1}^{2}
\nonumber \\
\!\!\!\!\!\!\!\!&&
~+\frac{1}{N}\sum_{i=1}^N\|X^{N,i}_s\|_{1}^{2}+\frac{1}{N}\sum_{j=1}^N\|X^{N,j}_s\|_{L^2}^{2}\big)ds\Big)\Big/R
\nonumber \\
\leq\!\!\!\!\!\!\!\!&&C_{T}\|l\|_{1}\Big/R.
\end{eqnarray*}
Therefore it follows that
\begin{equation}\label{es1127}
\lim_{R\to\infty}\sup_{N\in\mathbb{N}}\sup_{t\in[0,T]}\Big\{\frac{1}{N^2}\sum_{i,j=1}^N\mathbb{E}\big|G_R^{(2)}(t,X^{N,i},X^{N,j})-G^{(2)}(t,X^{N,i},X^{N,j})\big|\Big\}=0.
\end{equation}
Analogously, in view of Lemma \ref{lem11}, it gives that
\begin{equation}\label{es1128}
\lim_{R\to\infty}\sup_{N\in\mathbb{N}}\sup_{t\in[0,T]}\Big\{\frac{1}{N}\sum_{i=1}^N\mathbb{E}\big|G_R^{(1)}(t,X^{N,i})-G^{(1)}(t,X^{N,i})\big|\Big\}=0.
\end{equation}
Using (\ref{es1125})-(\ref{es1128}), we conclude
\begin{equation}\label{es71}
\lim_{R\to\infty}\sup_{N\in\mathbb{N}}\mathbb{E}|\Pi_R(\mathscr{S}^{N})-\Pi(\mathscr{S}^N)|=0.
\end{equation}
Following similar calculations shows that
\begin{equation}\label{es72}
\lim_{R\to\infty}\mathbb{E}|\Pi_R(\mathscr{S})-\Pi(\mathscr{S})|=0.
\end{equation}
Consequently, combining (\ref{es69}), (\ref{es71}) and (\ref{es72}), we complete the proof.
\end{proof}

\vspace{1mm}
We now return to the proof of Theorem \ref{th4}.  According to Lemmas \ref{lem5} and \ref{lem4}, it is easy to show that $\mathbb{P}$-a.s.~$\Pi(\mathscr{S})=0$.
Since $C_b(\mathbb{H})$ is separable, there exists a countable and dense subsect $\mathscr{D}_0\subset C_b(\mathbb{H})$. Let
$$\tilde{\Omega}:=\Big\{\tilde{x}\in\Omega:\mathscr{S}(\tilde{x})\in\mathscr{P}_2(\mathbb{S})~\text{and}~ \Pi(\mathscr{S})=0~\text{for}~g_1,\ldots,g_m\in\mathscr{D}_0\Big\}.$$
Then $\mathbb{P}(\tilde{\Omega})=1$, and for all $\tilde{x}\in \tilde{\Omega}$,
\begin{eqnarray}\label{mar11}
\mathscr{M}_l(t,x,\mathscr{S}(\tilde{x}))=\!\!\!\!\!\!\!\!&&\langle x_{t},l\rangle-\langle x_{0},l\rangle-\int_0^t\langle Ax_{s}-B(x_{s}),l\rangle ds
\nonumber \\
\!\!\!\!\!\!\!\!&&-\int_0^t\langle \int\tilde{K}(s,x_{s},x'_s)\mathscr{S}(\tilde{x})(dx'),l\rangle ds,
\end{eqnarray}
is a $\mathscr{S}(\tilde{x})$-martingale. By (\ref{P2.2}), (\ref{linear1}) and (\ref{es1132}), once we can prove that the quadratic variation process of (\ref{mar11}) is
\begin{equation}\label{es62}
\int_0^{t}\Big\|\Big(\int\tilde{\sigma}(s,x_s,x'_s)\mathscr{S}(\tilde{x})(dx')\Big)^*l\Big\|_U^2ds,
\end{equation}
then $\mathscr{S}(\tilde{x})$ is a solution of martingale problem (\ref{eqNSE111}) in the sense of Definition \ref{de3}.

We turn to prove (\ref{es62}). In fact, it is sufficient to prove that
\begin{eqnarray}\label{es1131}
\!\!\!\!\!\!\!\!&&\mathbb{E}\Big|\int\int\Big(G^2(t,x,x')-G^2(s,x,x')
\nonumber \\
\!\!\!\!\!\!\!\!&&-\int_s^t\Big\|\Big(\int\tilde{\sigma}(r,x_r,x'_r)\mathscr{S}(dx')\Big)^*l\Big\|_U^2dr\Big)g_1(x_{s_1})\cdots g_m(x_{s_m})\mathscr{S}(dx')\mathscr{S}(dx)\Big|=0.~~~~
\end{eqnarray}
By (\ref{es116}), H\"{o}lder's inequality and BDG's inequality, we have for some $p'>1$,
\begin{eqnarray*}
\!\!\!\!\!\!\!\!&&\mathbb{E}\Big|\int\int G^2(t,x,x')\mathscr{S}^N(dx')\mathscr{S}^N(dx)\Big|^{p'}
\nonumber \\
=\!\!\!\!\!\!\!\!&&\mathbb{E}\Big|\frac{1}{N^2}\sum_{i,j=1}^{N}G^2(t,X^{N,i},X^{N,j})\Big|^{p'}
\nonumber \\
\leq\!\!\!\!\!\!\!\!&&\frac{C}{N^{2}}\sum_{i,j=1}^{N}\mathbb{E}\Big|G(t,X^{N,i},X^{N,j})\Big|^{2p'}
\nonumber \\
\leq\!\!\!\!\!\!\!\!&&\frac{C_T}{N^2}\sum_{i,j=1}^{N}\mathbb{E}\int_0^T\|\tilde{\sigma}(s,X^{N,i}_s,X^{N,j}_s)\|_{L_2(U;{\mathbb{H}})}^{2p'}ds
\nonumber \\
\leq\!\!\!\!\!\!\!\!&&C_{p',T}.
\end{eqnarray*}
Note that $p'>1$, by a similar argument as the proof of Lemma \ref{lem4}, it is easy to see
\begin{eqnarray*}
\!\!\!\!\!\!\!\!&&\lim_{N\to\infty}\mathbb{E}\Big|\int\int G^2(t,x,x')\mathscr{S}^N(dx')\mathscr{S}^N(dx)
\nonumber \\
\!\!\!\!\!\!\!\!&&~~~~~~~~~~-\int\int G^2(t,x,x')\mathscr{S}(dx')\mathscr{S}(dx)\Big|=0
\end{eqnarray*}
and
\begin{eqnarray*}
\!\!\!\!\!\!\!\!&&\lim_{N\to\infty}\mathbb{E}\Big|\int\int_0^t\Big\|\Big(\int\tilde{\sigma}(s,x_s,x'_s)\mathscr{S}^N(dx')\Big)^*l\Big\|_{U}^2ds\mathscr{S}^N(dx)
\nonumber \\
\!\!\!\!\!\!\!\!&&~~~~~~~~~~-\int\int_0^t\Big\|\Big(\int\tilde{\sigma}(s,x_s,x'_s)\mathscr{S}(dx')\Big)^*l\Big\|_{U}^2ds\mathscr{S}(dx)\Big|=0,
\end{eqnarray*}
which implies that (\ref{es1131}) holds.

Since $\mathscr{S}^N_0$ converges to $\mu_0$  in probability, we deduce that
\begin{equation}\label{es63}
\mathscr{S}_0=\mu_0~~\mathbb{P}\text{-a.s.}.
\end{equation}
 Therefore, for any subsequence (still denoted by $(N)_{N\in\mathbb{N}}$) there exists a subsequence $(N_k)_{k\in\mathbb{N}}$  such that $\mathscr{S}^{N_k}$ weakly converges to a martingale solution $\mathscr{S}$ to (\ref{eqNSE111})  in $\mathscr{P}_2(\mathbb{S})$, as $k\to\infty$.

Furthermore, if the assumption $\mathbf{H4}$ holds for $K$ and $\sigma$, by Lemma \ref{uniqueness} we know that the solutions of the martingale problem is unique.  Then $\mathscr{S}^N$  converges weakly to the unique martingale solution $\Gamma$ in $\mathscr{P}_2(\mathbb{S})$ as $N\to\infty$. Note that all subsequential limits of $\{\mathbb{P}\circ(\mathscr{S}^N)^{-1}\}$ are identified as $\delta_{\Gamma}$, it implies that
$$\mathbb{W}_{2,T,{\mathbb{H}}}(\mathscr{S}^N,\Gamma)\to 0~~\text{in probability}.$$
To verify (\ref{es111}), it is sufficient to show that the family $\{\mathbb{W}_{2,T,{\mathbb{H}}}(\mathscr{S}^N,\Gamma)^2\}_{N\in\mathbb{N}}$ is uniformly integrable. By (\ref{es1132}) we have
$$\Gamma\in \mathscr{P}_p(\mathbb{S}).$$
This combining with (\ref{es116}) implies the uniform integrability of family $\{\mathbb{W}_{2,T,{\mathbb{H}}}(\mathscr{S}^N,\Gamma)^2\}_{N\in\mathbb{N}}$.
Hence we finish the proof of Theorem \ref{th4}. \hspace{\fill}$\Box$

\section{Appendix}\label{appendix}
\setcounter{equation}{0}
 \setcounter{definition}{0}
The classical Skorokhod theorem can only be applied in metric space. In this work, we use the following Jakubowski's version of the Skorokhod theorem in the form presented by Brze\'{z}niak and Ondrej\'{a}t \cite{BO}.
\begin{lemma}\label{sko1}$($Skorokhod Theorem$)$
Let $\mathscr{Y}$ be a topological space such that there exists a sequence of continuous functions $f_m:\mathscr{Y}\to \mathbb{R}$ that separates points of  $\mathscr{Y}$. Let us denote by $\mathscr{S}$ the $\sigma$-algebra generated by the maps $f_m$. Then

(i) every compact subset of $\mathscr{Y}$ is metrizable;

(ii) if $(\mu_m)$ is tight sequence of probability measures on $(\mathscr{Y},\mathscr{S})$, then there exists a subsequence  denoted also by $(m)$, a probability space $(\Omega,\mathscr{F},\mathbb{P})$ with $\mathscr{Y}$-valued Borel measurable variables $\xi_m$, $\xi$ such that $\mu_m$ is the law of $\xi_m$ and $\xi_m$ converges to $\xi$ almost surely on $\Omega$. Moreover, the law of $\xi$ is a Random measure.
\end{lemma}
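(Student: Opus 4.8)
The plan is to use the separating sequence $(f_m)$ to embed $\mathscr{Y}$ into a genuine metric space, push the measures forward, invoke the \emph{classical} Skorokhod representation there, and then pull the representing random variables back to $\mathscr{Y}$; the real difficulty is concentrated entirely in this last pull-back step.

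For part (i) I would consider $F:=(f_m)_{m\geq1}:\mathscr{Y}\to\mathbb{R}^{\mathbb{N}}$. Each $f_m$ is continuous, so $F$ is continuous, and $F$ is injective because $(f_m)$ separates points. Since $\mathbb{R}^{\mathbb{N}}$ is metrizable and the $f_m$ also separate the points of any compact $K\subset\mathscr{Y}$, the subspace $K$ is Hausdorff; hence $F|_K:K\to F(K)$ is a continuous bijection from a compact space onto a Hausdorff space, and is therefore a homeomorphism. Consequently $K$ is homeomorphic to the metrizable subset $F(K)\subset\mathbb{R}^{\mathbb{N}}$ and is itself metrizable.

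For part (ii) I would first replace $F$ by $\bar F:=(\arctan f_m)_{m\geq1}:\mathscr{Y}\to M$, where $M:=[-\tfrac{\pi}{2},\tfrac{\pi}{2}]^{\mathbb{N}}$ is a \emph{compact} metric space; $\bar F$ is still continuous and injective, and the argument of (i) shows $\bar F|_K$ is a homeomorphism onto its image for every compact $K$. Because $\mathscr{S}=\sigma(f_m:m\geq1)=\sigma(\bar F)$, the image measures $\nu_m:=\mu_m\circ\bar F^{-1}$ are well-defined Borel measures on $M$; being measures on a compact metric space they are automatically tight, so by Prokhorov's theorem a subsequence satisfies $\nu_{m_k}\Rightarrow\nu$, and the classical Skorokhod theorem then provides a probability space carrying $M$-valued variables $\eta_k,\eta$ with $\mathrm{Law}(\eta_k)=\nu_{m_k}$, $\mathrm{Law}(\eta)=\nu$ and $\eta_k\to\eta$ a.s. Tightness of $(\mu_m)$ supplies increasing compacts $K_n\subset\mathscr{Y}$ with $\inf_m\mu_m(K_n)\geq1-1/n$; the sets $\bar F(K_n)$ are compact, hence closed, in $M$, so the Portmanteau theorem gives $\nu(\bar F(K_n))\geq1-1/n$ and therefore $\eta\in A:=\bigcup_n\bar F(K_n)$ almost surely, and likewise each $\eta_k\in A$ a.s. Setting $\xi_k:=\bar F^{-1}(\eta_k)$ and $\xi:=\bar F^{-1}(\eta)$ (defined on $A$, arbitrary off it) produces $\mathscr{Y}$-valued, $\mathscr{S}$-measurable variables whose laws are exactly $\mu_{m_k}$ and $\nu\circ\bar F^{-1}$, since $\sigma(\bar F)=\mathscr{S}$ forces every $B\in\mathscr{S}$ to be of the form $\bar F^{-1}(C)$.

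The \textbf{main obstacle} is to upgrade the convergence $\eta_k\to\eta$ in $M$ to $\xi_k\to\xi$ in the original topology of $\mathscr{Y}$. On each fixed compact $K_n$ the inverse $\bar F^{-1}$ is continuous, so if the representing points $\eta_k$ eventually stayed inside a single $\bar F(K_n)$ the convergence would be immediate; the delicate scenario is that $\eta_k\to\eta$ while the pre-images $\xi_k$ leave every compact subset of $\mathscr{Y}$, for a compact convergent set in $M$ need not lie inside one member of the increasing closed exhaustion $\{\bar F(K_n)\}$. Resolving this is precisely the nontrivial content of Jakubowski's theorem: rather than using the classical Skorokhod theorem as a black box, one constructs the representation on a concrete standard probability space by an inductive, diagonal procedure compatible with the exhaustion $(K_n)$, which prevents the lifted sequence from escaping to noncompact regions. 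Once $\xi_k\to\xi$ a.s. has been secured, the remaining assertion is routine: from $\mathrm{Law}(\xi)(K_n)=\nu(\bar F(K_n))\geq1-1/n$ the limiting law is inner regular with respect to compact sets, i.e.\ a Radon measure, which is the stated conclusion.
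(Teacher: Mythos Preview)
The paper does not give a proof of this lemma at all: it is stated in the Appendix as a known result, attributed to Jakubowski and quoted in the form given by Brze\'{z}niak and Ondrej\'{a}t \cite{BO}, and the paper simply invokes it. So there is no ``paper's own proof'' to compare against; your proposal already goes further than the paper does.

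As for the content of your sketch: part (i) is correct and complete. For part (ii), your strategy---embed into a compact metric cube, apply classical Skorokhod there, and pull back via the compact exhaustion $(K_n)$---is exactly Jakubowski's strategy, and you have correctly isolated the genuine difficulty: the inverse $\bar F^{-1}$ is only continuous on each $\bar F(K_n)$, and a convergent sequence $\eta_k\to\eta$ in $M$ need not eventually lie in a single $\bar F(K_n)$. However, you then stop and essentially cite Jakubowski's construction rather than carrying it out. That is honest, but it means your proposal is an outline rather than a proof. The missing ingredient is the explicit construction on $[0,1]$ (with Lebesgue measure): one builds, for each $k$, a measurable map $\xi_k:[0,1]\to\mathscr{Y}$ by partitioning $[0,1]$ into pieces whose images lie in successively larger $K_n$'s, choosing the partition so that the pieces are nested as $k$ varies and the exceptional mass decays geometrically; the diagonal/Borel--Cantelli argument then forces $\xi_k(\omega)$ to remain in a fixed $K_n$ for all large $k$, for a.e.\ $\omega$, and continuity of $\bar F^{-1}$ on that compact finishes the job. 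Your final paragraph about Radon-ness of the limit law is correct (and yes, ``Random measure'' in the statement is a typo for ``Radon measure'').
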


In order to apply the above type of Skorokhod theorem, we recall the definitions of the countably generated Borel space and the standard
Borel space (cf. \cite[Chapter V, Definition 2.1 and 2.2]{P1}).
\begin{definition}
(Countably generated Borel space) A
Borel space $(X, \mathscr{B})$ is said to be countably generated if there exists a denumerable class
$\mathcal{D} \subset \mathscr{B}$ such that $\mathcal{D}$ generates $\mathscr{B}$.
\end{definition}

\begin{definition}\label{de5}
(Standard Borel space) A countably
generated Borel space $(X, \mathscr{B})$ is called standard if there exists a Polish
space $Y$ such that the $\sigma$-algebras $\mathscr{B}$ and $\mathscr{Y}$ are $\sigma$-isomorphic.
\end{definition}

\vspace{2mm}
We introduce the following lemma (cf.~\cite{CD1}), which is crucial in the proof of the tightness of  sequence  $\{\mathscr{S}^{N}\}_{N\in\mathbb{N}}$ in $\mathscr{P}_2(\mathbb{S})$.
\begin{lemma}\label{lem00}
If $(E,d)$ is a complete separable metric space, for any $ p>1$, any subset $\mathscr{K}\subset\mathscr{P}_p(E)$, relatively compact for
the topology of weak convergence of probability measures, any $x_0\in E$, and any
sequences $(a_m)_{m\geq 1}$ and $(b_m)_{m\geq 1}$ of positive real numbers tending to $\infty$ with $m$, then the
set
$$\mathscr{K}\cap\Big\{\mu\in\mathscr{P}_p(E):\forall m\geq 1,\int_{\{d(x_0,x)\geq a_m\}}d(x_0,x)^pd\mu(x)<\frac{1}{b_m}\Big\}$$
is relatively compact for the Wasserstein distance $\mathbb{W}_p$.
\end{lemma}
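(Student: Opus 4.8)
The plan is to deduce the statement from the standard characterisation of relative compactness in the Wasserstein space $(\mathscr{P}_p(E),\mathbb{W}_p)$: a family $\mathscr{A}\subset\mathscr{P}_p(E)$ is relatively $\mathbb{W}_p$-compact if and only if it is tight (equivalently, relatively compact for the topology of weak convergence) and the maps $x\mapsto d(x_0,x)^p$ are uniformly integrable over $\mathscr{A}$, i.e.\ $\lim_{R\to\infty}\sup_{\mu\in\mathscr{A}}\int_{\{d(x_0,x)\geq R\}}d(x_0,x)^p\,d\mu(x)=0$. Write $\mathscr{A}$ for the intersection set in the statement. Since $\mathscr{A}\subset\mathscr{K}$ and $\mathscr{K}$ is relatively compact for weak convergence, $\mathscr{A}$ is automatically tight, so the whole proof reduces to checking the uniform integrability of the tails.

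First I would verify this uniform integrability directly from the definition of $\mathscr{A}$. Fix $\varepsilon>0$ and pick $m$ large enough that $1/b_m<\varepsilon$, which is possible because $b_m\to\infty$. Then for every $\mu\in\mathscr{A}$ and every $R\geq a_m$ one has $\int_{\{d(x_0,x)\geq R\}}d(x_0,x)^p\,d\mu(x)\leq\int_{\{d(x_0,x)\geq a_m\}}d(x_0,x)^p\,d\mu(x)<1/b_m<\varepsilon$, which gives exactly $\lim_{R\to\infty}\sup_{\mu\in\mathscr{A}}\int_{\{d(x_0,x)\geq R\}}d(x_0,x)^p\,d\mu(x)=0$. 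Taking in particular $R=a_1$ also yields $\sup_{\mu\in\mathscr{A}}\int_E d(x_0,x)^p\,d\mu(x)\leq a_1^p+1/b_1<\infty$, so $\mathscr{A}$ indeed sits inside $\mathscr{P}_p(E)$ with a uniformly bounded $p$-th moment.

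Next I would run the usual extraction argument to convert these two facts into relative compactness. Let $(\mu_k)\subset\mathscr{A}$ be an arbitrary sequence. By relative weak compactness of $\mathscr{K}$ (Prokhorov's theorem, using that $E$ is Polish), some subsequence — still denoted $(\mu_k)$ — converges weakly to a measure $\mu\in\mathscr{P}(E)$. Lower semicontinuity of $\nu\mapsto\int_E d(x_0,x)^p\,d\nu(x)$ under weak convergence, together with the uniform moment bound, shows $\mu\in\mathscr{P}_p(E)$; and the tail uniform integrability established above upgrades weak convergence to $\int_E d(x_0,x)^p\,d\mu_k(x)\to\int_E d(x_0,x)^p\,d\mu(x)$. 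By the equivalence between $\mathbb{W}_p$-convergence and ``weak convergence plus convergence of $p$-th moments'' (see \cite{CD1} and standard optimal transport theory), this means $\mathbb{W}_p(\mu_k,\mu)\to 0$. Since every sequence in $\mathscr{A}$ has a $\mathbb{W}_p$-convergent subsequence and $\mathscr{P}_p(E)$ is metrisable by $\mathbb{W}_p$, $\mathscr{A}$ is relatively compact for $\mathbb{W}_p$.

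The one genuinely substantive ingredient — and hence the main obstacle — is the last upgrade, namely that under tightness the tail condition forces $\int_E d(x_0,x)^p$ to pass to the limit and not merely to be lower semicontinuous; this is precisely where one needs uniform integrability of the tails rather than just a uniform moment bound, and it is the content of the $\mathbb{W}_p$-convergence criterion invoked above. Everything else is routine $\varepsilon$-bookkeeping, so I would keep that part brief and spend the writing budget on stating the criterion cleanly and citing it.
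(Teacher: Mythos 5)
Your proposal is correct. Note that the paper does not actually prove Lemma \ref{lem00}: it is quoted directly from \cite{CD1}, and your argument --- tightness inherited from $\mathscr{K}$ plus the $\varepsilon$--$m$ verification that the defining tail condition gives uniform integrability of $d(x_0,\cdot)^p$, followed by the standard equivalence ``$\mathbb{W}_p$-convergence $\Leftrightarrow$ weak convergence plus convergence of $p$-th moments'' --- is precisely the standard route by which the cited result is established, so it faithfully fills in the omitted proof.
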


Before presenting the proof of Lemma \ref{lem6}, we give the following  compactness result, which is a minor modification of \cite[Lemma C.2]{GRZ}.
\begin{lemma}\label{lem55}
Let $\mathbb{X}\subset\mathbb{H}$ compactly. Let $\mathscr{N}_1\in\mathfrak{M}^q$ for some $q\geq 1$, and $\mathscr{Z}$ a subset of $C([0,T];\mathbb{X}^*)$. If $ \mathscr{Z}$ is equi-continuous in $C([0,T];\mathbb{X}^*)$ and
\begin{equation}\label{es29}
\sup_{x\in \mathscr{Z}}\sup_{t\in[0,T]}\|x_t\|_{\mathbb{H}}+\sup_{x\in \mathscr{Z}}\Big(\int_0^T\mathscr{N}_1(x_t)dt+\int_0^T\|x_t\|_{\mathbb{V}}^qdt\Big)<\infty.
\end{equation}
Then  $\mathscr{Z}\subset \mathbb{K}_2=C([0,T];\mathbb{X}^*)\cap L^{q}([0,T];\mathbb{Y})\cap L^{q}_w([0,T];{\mathbb{V}})$ and is relatively compact in $\mathbb{K}_2$.

\end{lemma}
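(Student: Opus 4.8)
\textbf{Proof proposal for Lemma \ref{lem55}.}

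The plan is to adapt the Dubinskii--Aubin--Lions type compactness argument from \cite[Lemma C.2]{GRZ}, splitting the verification of relative compactness in $\mathbb{K}_2$ into compactness in each of the three component topologies. First I would note that the set $\{x\in\mathbb{Y}:\mathscr{N}_1(x)\le 1\}$ is compact in $\mathbb{Y}$ by the definition of the class $\mathfrak{N}^q\supset\mathfrak{M}^q$, and that $\mathscr{N}_1$ is $q$-homogeneous; hence by \eref{es00} the sublevel sets of $\mathscr{N}_1$ control the $\mathbb{V}$-norm, and more importantly $\mathscr{N}_1$ dominates a norm whose sublevel sets embed compactly into $\mathbb{Y}$. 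This is the substitute for the classical statement "$\mathbb{V}\hookrightarrow\mathbb{Y}$ compactly" that powers the Aubin--Lions lemma.

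Second, I would establish $\mathscr{Z}\subset\mathbb{K}_2$. The inclusion $\mathscr{Z}\subset C([0,T];\mathbb{X}^*)$ is assumed. Since $\mathbb{X}\subset\mathbb{H}$ compactly and hence $\mathbb{H}\simeq\mathbb{H}^*\subset\mathbb{X}^*$, the uniform bound $\sup_{x\in\mathscr{Z}}\sup_{t}\|x_t\|_{\mathbb{H}}<\infty$ in \eref{es29} together with equi-continuity in $\mathbb{X}^*$ gives, by an interpolation-type argument (a weakly convergent sequence in $\mathbb{H}$ that converges strongly in $\mathbb{X}^*$ converges strongly in any space intermediate between $\mathbb{X}^*$ and $\mathbb{H}$, in particular in $\mathbb{X}^*$; combined with the integrability of $\mathscr{N}_1$ one upgrades to $L^q([0,T];\mathbb{Y})$), that each $x\in\mathscr{Z}$ lies in $L^q([0,T];\mathbb{Y})$; the bound $\int_0^T\|x_t\|_{\mathbb{V}}^q\,dt<\infty$ gives $x\in L^q([0,T];\mathbb{V})$ and a fortiori $x\in L^q_w([0,T];\mathbb{V})$.

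Third, for the relative compactness: take any sequence $\{x^n\}\subset\mathscr{Z}$. Equi-continuity in $C([0,T];\mathbb{X}^*)$ plus the fact that $\{x^n_t:n,t\}$ is bounded in $\mathbb{H}$, hence (by compactness of $\mathbb{X}\subset\mathbb{H}$, i.e.~$\mathbb{H}\subset\mathbb{X}^*$ compactly) relatively compact in $\mathbb{X}^*$, lets me apply the Arzel\`a--Ascoli theorem to extract a subsequence converging in $C([0,T];\mathbb{X}^*)$ to some $x$. The uniform bound $\int_0^T\mathscr{N}_1(x^n_t)\,dt\le M$ and Fatou (using lower semicontinuity of $\mathscr{N}_1$) give $\int_0^T\mathscr{N}_1(x_t)\,dt<\infty$, and by \eref{es00} also $\int_0^T\|x_t\|_{\mathbb V}^q\,dt<\infty$, so $x\in\mathbb{K}_2$. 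The $L^q_w([0,T];\mathbb{V})$-convergence $x^n\to x$ follows because $\{x^n\}$ is bounded in the reflexive space $L^q([0,T];\mathbb{V})$, hence has a weakly convergent subsequence, and its weak limit must coincide with $x$ by testing against elements of $L^{q/(q-1)}([0,T];\mathbb{V}^*)$ and using the already-established $C([0,T];\mathbb{X}^*)$-convergence. Finally, to get strong $L^q([0,T];\mathbb{Y})$-convergence I would use a Lions-type interpolation inequality: for each $\varepsilon>0$ and a.e.~$t$,
$$\|x^n_t-x_t\|_{\mathbb{Y}}\le \varepsilon\big(\mathscr{N}_1(x^n_t)^{1/q}+\mathscr{N}_1(x_t)^{1/q}\big)+C_\varepsilon\|x^n_t-x_t\|_{\mathbb{X}^*},$$
which comes from the compactness $\{\mathscr{N}_1\le1\}\Subset\mathbb{Y}$ together with $\mathbb{Y}\subset\mathbb{X}^*$; raising to the $q$-th power, integrating over $[0,T]$, using the uniform bound on $\int\mathscr{N}_1$ and the $C([0,T];\mathbb{X}^*)$-convergence, then sending $n\to\infty$ and $\varepsilon\to0$ yields $x^n\to x$ in $L^q([0,T];\mathbb{Y})$.

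I expect the main obstacle to be the interpolation inequality in the last step, since $\mathscr{N}_1$ is only a homogeneous functional with compact sublevel sets rather than a genuine norm; one has to argue by contradiction (if the inequality failed there would be a sequence in the compact set $\{\mathscr{N}_1\le 1\}$ bounded away from $\mathbb Y$-small but $\mathbb X^*$-small, contradicting compactness) to get the required estimate uniformly. Everything else is a routine combination of Arzel\`a--Ascoli, reflexivity/weak compactness, and Fatou applied to the lower semicontinuous functional $\mathscr{N}_1$; since the statement is explicitly flagged as a minor modification of \cite[Lemma C.2]{GRZ}, I would also simply cite that lemma for the parts of the argument that are unchanged, only spelling out where the hypothesis on $\mathscr{N}_1\in\mathfrak{M}^q$ replaces the compact embedding $\mathbb{V}\hookrightarrow\mathbb{Y}$ used there.
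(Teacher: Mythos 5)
Your proposal is correct and, as far as the genuinely new content of this lemma goes, it is the paper's own argument: the paper simply cites \cite[Lemma C.2]{GRZ} for relative compactness in $\mathbb{K}_1=C([0,T];\mathbb{X}^*)\cap L^{q}([0,T];\mathbb{Y})$ and then adds exactly your final component, namely that the uniform bound $\sup_{x\in\mathscr{Z}}\int_0^T\|x_t\|_{\mathbb{V}}^q\,dt<\infty$ from \eref{es29} gives relative compactness in $L^{q}_w([0,T];\mathbb{V})$ via Banach--Alaoglu, while your Arzel\`a--Ascoli plus Ehrling-type contradiction argument (using compactness of $\{\mathscr{N}_1\le 1\}$ in $\mathbb{Y}$ and the homogeneity of $\mathscr{N}_1$, in the two-variable form you wrote, which avoids any subadditivity assumption) is a re-derivation of the cited lemma, which you also correctly offer to quote instead. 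One small ordering slip in the spelled-out version: Fatou for $\int_0^T\mathscr{N}_1(x_t)\,dt$ needs a.e.\ convergence $x^n_t\to x_t$ in $\mathbb{Y}$, since $\mathscr{N}_1$ is only lower semicontinuous for the $\mathbb{Y}$-topology, and at that stage you only have convergence in $C([0,T];\mathbb{X}^*)$; the easy fix is either to apply your interpolation inequality to differences $x^n_t-x^m_t$ so as to show the sequence is Cauchy in $L^{q}([0,T];\mathbb{Y})$ first (finiteness of $\int_0^T\mathscr{N}_1(x_t)\,dt$ is then obtained afterwards along an a.e.\ convergent subsequence, and is in any case not needed for membership in $\mathbb{K}_2$), or to note that the limit's $\mathbb{K}_2$-membership only requires $\int_0^T\|x_t\|_{\mathbb{V}}^q\,dt<\infty$, which follows from weak convergence in $L^{q}([0,T];\mathbb{V})$ and weak lower semicontinuity of the norm.
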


\begin{proof}
Recall \cite[Lemma C.2]{GRZ} which has proved that  $\mathscr{Z}$ is relatively compact in $\mathbb{K}_1=C([0,T];\mathbb{X}^*)\cap L^{q}([0,T];\mathbb{Y})$. Thus it suffices to show that $\mathscr{Z}$ is relatively compact in $L^{q}_w([0,T];{\mathbb{V}})$. In fact, by (\ref{es29}) we have
$$\sup_{x\in \mathscr{Z}}\int_0^T\|x_t\|_{\mathbb{V}}^qdt <\infty,$$
the assertion follows directly from the Banach-Alaoglu Theorem.
\end{proof}

\vspace{2mm}
Now we are in the position to finish the proof of Lemma \ref{lem6}.

\noindent\textbf{Proof of Lemma \ref{lem6}}~
For $\gamma\in(0,1)$ and $R>0$, set
$$K_{\gamma,R}:=\Big\{x\in C([0,T];\mathbb{X}^*): \sup_{t\in[0,T]}\|x_t\|_{\mathbb{H}}+\sup_{s\neq t\in[0,T]}\frac{\|x_t-x_s\|_{\mathbb{X}^*}}{|t-s|^{\gamma}}+\int_0^T\mathscr{N}_1(x_t)dt+\int_0^T\|x_t\|_{\mathbb{V}}^qdt\leq R\Big\}.$$
Then $K_{\gamma,R}$ is compact in $\mathbb{K}_2$ by Lemma \ref{lem55}. According to Lemmas \ref{lem13} and \ref{lem2}, (\ref{es00}) and Chebyschev's inequality, we have for some $\gamma\in(0,1)$ and any $R>0$,
$$\mathbb{P}^n(X^n_{\cdot}\in K_{\gamma,R})\leq \frac{C_T}{R},$$
which implies that Lemma \ref{lem6} holds. \hspace{\fill}$\Box$

\vspace{5mm}

\noindent\textbf{\large Acknowledgements} The authors would like to thank the referees for their very constructive suggestions and thank Professor Michael  R\"{o}ckner for many helpful discussions and
insightful suggestions, in particular for the use of Jakubowski's version of the Skorokhod theorem.


\begin{thebibliography}{4}


\bibitem{AV}
\textsc{Agresti, A.}  and \textsc{Veraar, M.} (2022).
The critical variational setting for stochastic evolution equations.
\textit{Probab. Theory Related Fields} \textbf{188} 957--1015.



\bibitem{A1}
\textsc{Aldous, D.} (1978).
Stopping times and tightness,
\textit{Ann. Probab.} \textbf{6} (2) 335--340.

\bibitem{Al91}
\textsc{Allaire, G.} (1991).
 Homogenization of the Navier-Stokes equations in open sets perforated with tiny holes.
\textit{Arch. Ration. Mech. Anal.}
\textbf{113}, 209--259.


\bibitem{BR1}
\textsc{Barbu, V.} and \textsc{R\"{o}ckner, M.} (2020),
From nonlinear Fokker-Planck equations to solutions of distribution dependent SDE.
\textit{Ann. Probab.}
\textbf{48} 1902--1920.
%
%
\bibitem{BR2}
\textsc{Barbu, V.} and \textsc{R\"{o}ckner, M.} (2021),
Solutions for nonlinear Fokker-Planck equations with measures as initial data and McKean-Vlasov equations.
\textit{J. Funct. Anal.}
\textbf{280} Paper No. 108926, 35 pp.


\bibitem{BR3}
\textsc{Barbu, V.} and \textsc{R\"{o}ckner, M.} (2023),
Nonlinear Fokker-Planck equations with time-dependent coefficients.
\textit{ SIAM J. Math. Anal. }
\textbf{55} 1--18.

\bibitem{BR4}
\textsc{Barbu, V.} and \textsc{R\"{o}ckner, M.} (2022),
The ergodicity of nonlinear Fokker-Planck flows in $L^1(\mathbb{R}^d)$.
\textit{arXiv:2210.13624}.

\bibitem{BR5}
\textsc{Barbu, V.} and \textsc{R\"{o}ckner, M.} (2022),
Nonlinear Fokker-Planck equations with fractional Laplacian and McKean-Vlasov SDEs with L\'{e}vy-noise.
\textit{Probab. Theory Related Fields} \textbf{189} 849--878.



%
\bibitem{BKKX}
\textsc{Bhatt, A. G., Kallianpur, G., Karandikar, R. L.}  and \textsc{Xiong, J.} (1998).
On interacting systems of Hilbert-space-valued diffusions.
\textit{Appl. Math. Optim.}
\textbf{37}  151--188.


\bibitem{BM1}
\textsc{Brze\'{z}niak, Z.}  and   \textsc{Motyl, E.} (2013).
Existence of a martingale solution of the stochastic Navier-Stokes equations in unbounded 2D and 3D domains.
\textit{J. Differential Equations}  \textbf{254}  1627--1685.


\bibitem{BO}
\textsc{Brze\'{z}niak, Z.}  and  \textsc{Ondrej\'{a}t, M.} (2013).
Stochastic geometric wave equations with values in compact Riemannian homogeneous spaces.
\textit{Ann. Probab.}
\textbf{41}  1938--1977.





\bibitem{CD1}
\textsc{Carmona, R.} and \textsc{Delarue, F.} (2018).
Probabilistic Theory of Mean Field Games with Applications. Vol. I and II.
\textit{Springer International Publishing}.



\bibitem{CHJZ}
\textsc{Chen, L.,  Holzinger, A.,  J\"{u}ngel, A.} and \textsc{Zamponi, N.} (2022).
 Analysis and mean-field derivation of a porous-medium equation with fractional diffusion.
\textit{Comm. Partial Differential Equations}
\textbf{47} 2217--2269.



\bibitem{CKS}
\textsc{Chiang, T. S., Kallianpur, G.} and  \textsc{Sundar, P.} (1991).
Propagation of chaos and the McKean-Vlasov equation in duals of nuclear spaces.
\textit{Appl. Math. Optim.}
\textbf{24} 55--83.


\bibitem{C1}
\textsc{Criens, D.} (2023).
Propagation of chaos for weakly interacting mild solutions to stochastic partial differential equations.
\textit{J. Stat. Phys.}
 \textbf{190} Paper No. 114, 40 pp.

\bibitem{CS}
\textsc{Cucker, F.} and \textsc{Smale, S.} (2007).
Emergent behavior in flocks.
\textit{IEEE Trans. Automat. Control}
\textbf{52}  852--862.

\bibitem{DD96}
\textsc{Da Prato, G.} and \textsc{Debussche, A.} (1996).
 Stochastic Cahn-Hilliard
equation.
 \textit{Nonlinear Anal.}  \textbf{26} 241--263.

 \bibitem{Daz1}
 \textsc{Da Prato, G.} and  \textsc{Zabczyk, J.} (1992).
 Stochastic Equations in Infinite Dimensions. Cambridge University Press.

\bibitem{Da83}
\textsc{Dawson, D. A.} (1983).
 Critical dynamics and fluctuations for a mean-field model of cooperative behavior.
\textit{J. Statist. Phys.}
\textbf{31} 29--85.

\bibitem{DGR08}
\textsc{Desvillettes, L., Golse, F.} and \textsc{Ricci, V.} (2008).
The mean-field limit for solid particles in a Navier-Stokes flow.
\textit{J. Stat. Phys.}
 \textbf{131} 941--967.

\bibitem{DCBC}
\textsc{D'Orsogna, M. R., Chuang, Y. L., Bertozzi, A. L.} and \textsc{Chayes, L.} (2006).
Self-propelled particles with soft-core interactions: Patterns, stability, and collapse.
\textit{Phys. Rev. Lett.}
\textbf{96} 104302.


\bibitem{ES1}
\textsc{E, W.} and \textsc{Shen, H.} (2013).
Mean field limit of a dynamical model of polymer systems.
\textit{Sci. China Math.}
\textbf{56} 2591--2598.



\bibitem{FOS}
\textsc{Flandoli, F., Olivera, C.} and \textsc{Simon, M.} (2020).
Uniform approximation of 2 dimensional Navier-Stokes equation by stochastic interacting particle systems.
\textit{SIAM J. Math. Anal.}
\textbf{52} 5339--5362.

\bibitem{FLR21}
\textsc{Flandoli, F., Leocata, M.} and \textsc{Ricci, C.} (2021).
The Navier-Stokes-Vlasov-Fokker-Planck system as a scaling limit of particles in a fluid.
\textit{J. Math. Fluid Mech.}
 \textbf{23} Paper No. 40, 39 pp.

\bibitem{FMRT}
\textsc{Foias, C., Manley, O., Rosa, R.} and \textsc{Temam, R.} (2001).
Navier-Stokes equations and turbulence (Vol. 83).
Cambridge University Press.

\bibitem{GLS}
\textsc{Gess, B., Liu, W.} and \textsc{Schenke A.} (2020).
Random attractors for locally monotone stochastic partial differential equations.
\textit{J. Differential Equations}
\textbf{269} 3414--3455.


\bibitem{GZ1}
\textsc{Glatt-Holtz, N.} and \textsc{Ziane, M.} (2009).
Strong pathwise solutions of the stochastic Navier-Stokes system.
\textit{Adv. Differential Equations} \textbf{14}, 567--600.


\bibitem{GRZ}
\textsc{Goldys, B., R\"{o}ckner, M.} and \textsc{Zhang, X.} (2009).
Martingale solutions and Markov selections for stochastic partial differential equations.
\textit{Stochastic Process. Appl.} \textbf{119} 1725--1764.

\bibitem{G1}
\textsc{Golse, F.} (2016).
On the dynamics of large particle systems in the mean field limit. In Macroscopic and large scale
phenomena: coarse graining, mean field limits and ergodicity.
Volume 3 of Lect. Notes Appl. Math. Mech.,
pages 1--144. Springer.


\bibitem{GL}
\textsc{Guo, S.} and \textsc{Luo, D.} (2023).
Scaling limit of moderately interacting particle systems with singular interaction and environmental noise.
\textit{Ann. Appl. Probab.} \textbf{33} 2066--2102.





\bibitem{HSS}
\textsc{Hammersley, W., $\check{S}$i$\check{S}$ka, D.} and  \textsc{Szpruch, L.} (2021).
McKean-Vlasov SDEs under measure dependent Lyapunov conditions.
\textit{Ann. Inst. Henri Poincar\'{e} Probab. Stat.}
\textbf{57} 1032--1057.
%
\bibitem{H1}
\textsc{Han, Y.} (2025).
Solving McKean-Vlasov SDEs via relative entropy.
\textit{Ann. Appl. Probab.},
\textbf{35}, 858--897

\bibitem{HRZ1}
\textsc{Hao, Z., R\"{o}ckner, M.} and \textsc{Zhang, X.} (2021).
Euler scheme for density dependent stochastic differential equations.
\textit{J. Differential Equations} \textbf{274}  996--1014.

\bibitem{HRZ2}
\textsc{Hao, Z., R\"{o}ckner, M.} and \textsc{Zhang, X.} (2024).
Strong convergence of propagation of chaos for McKean-Vlasov SDEs with singular interactions.
\textit{SIAM J. Math. Anal.}
 \textbf{56}  2661--2713.




\bibitem{HRZ3}
\textsc{Hao, Z., R\"{o}ckner, M.} and \textsc{Zhang, X.} (2023).
Second order fractional mean-field SDEs with singular kernels and measure initial data.
\textit{arXiv:2302.04392}.



\bibitem{HHL}
\textsc{Hong, W., Hu, S.} and \textsc{Liu, W.} (2024)
 McKean-Vlasov SDEs and SPDEs with locally monotone coefficients.
\textit{Ann. Appl. Probab.},
\textbf{34}, 2136--2189.
%


%
\bibitem{HLL1}
\textsc{Hong, W., Li, S.} and \textsc{Liu, W.} (2022).
Strong convergence rates in averaging principle for slow-fast McKean-Vlasov SPDEs.
\textit{J. Differential Equations}
\textbf{316}  94--135.
%

%

\bibitem{HKXZ}
\textsc{Hu Y., Kouritzin M. A., Xia P.} and \textsc{Zheng J.} (2023).
On mean-field super-Brownian motions.
\textit{Ann. Appl. Probab.},
\textbf{33}, 3872--3915.





\bibitem{HW1}
\textsc{Huang, X.} and \textsc{Wang, F.-Y.} (2021).
McKean-Vlasov SDEs with drifts discontinuous under Wasserstein distance.
\textit{Discrete Contin. Dyn. Syst.}
\textbf{41} 1667--1679.
%

%
\bibitem{JO04}
\textsc{Jabin, P.-E.} and \textsc{Otto, F.} (2004).
 Identification of the dilute regime in particle sedimentation.
\textit{Commun. Math. Phys.}
 \textbf{250} 415--432.

\bibitem{J1}
\textsc{Jabin, P.-E.} (2014).
 A review of the mean field limits for Vlasov equations.
 \textit{Kinet. Relat. Models}
\textbf{7} 661--711.


\bibitem{JW}
\textsc{Jabin, P.-E.} and \textsc{Wang, Z.} (2018).
Quantitative estimates of propagation of chaos for stochastic systems with $W^{-1,\infty}$ kernels.
\textit{Invent. Math.}
\textbf{214}  523--591.
%
\bibitem{KAC}
\textsc{Kac, M.} (1956).
Foundations of kinetic theory.
\textbf{III} 171--197.
 University of California Press, Berkeley and Los Angeles.
%
\bibitem{K1}
\textsc{Kallianpur, G.} (1992).
Stochastic differential equation models for spatially distributed neurons and propagation of chaos for interacting systems.
\textit{Math. Biosci.}
\textbf{112} 207--224.


\bibitem{KX}
\textsc{Kallianpur, G.} and \textsc{Xiong, J.} (1994).
Asymptotic behavior of a system of interacting nuclear-space-valued stochastic differential equations driven by Poisson random measures.
\textit{Appl. Math. Optim.}
\textbf{30} 175--201.

\bibitem{YK78}
\textsc{Kuramoto, Y.} (1978).
 Diffusion-induced chaos in reaction systems.
 \textit{Progress of Theoretical Physics Supplement} \textbf{64} 346--367.



\bibitem{Lacker}
\textsc{Lacker, D.} (2018).
Mean field games and interacting particle systems.
 \textit{Springer.}

\bibitem{Liang}
\textsc{Liang, S.} (2021).
Stochastic hypodissipative
hydrodynamic equations:
well-poseness, stationary
solutions and ergodicity.
PhD Thesis, Bielefeld University.


\bibitem{LR2}
\textsc{Liu, W.} and \textsc{R\"{o}ckner, M.} (2010).
SPDE in Hilbert space with locally monotone coefficients.
\textit{J. Funct. Anal.}
\textbf{259}  2902--2922.
%
\bibitem{LR13}
\textsc{Liu, W.} and \textsc{R\"{o}ckner, M.} (2013).
Local and global well-posedness of SPDE with generalized coercivity conditions.
\textit{J. Differential Equations}
\textbf{254} 725--755.
%
\bibitem{LR1}
\textsc{Liu, W.} and \textsc{R\"{o}ckner, M.} (2015).
Stochastic Partial Differential Equations: An Introduction.
\textit{Universitext},
 Springer.
%


\bibitem{M}
\textsc{McKean, H.P.} (1966).
A class of Markov processes associated with nonlinear parabolic equations.
\textit{Proc. Nat. Acad.
Sci. U.S.A.}
\textbf{56} 1907--1911.





\bibitem{MV}
\textsc{Mishura, Y.} and  \textsc{Veretennikov, A.} (2020).
Existence and uniqueness theorems for solutions of McKean-Vlasov stochastic equations.
\textit{Theory Probab. Math. Statist.}
\textbf{103} 59--101.
%

\bibitem{M14}
\textsc{Motyl, E.} (2014).
Stochastic hydrodynamic-type evolution equations driven by L\'{e}vy noise in 3D un-bounded domains-abstract framework and applications.
\textit{Stochastic Process. Appl.}
\textbf{124} 2052--2097.

\bibitem{NC}
\textsc{Novick-Cohen, A.} (1998).
The Cahn-Hilliard equation: mathematical
and modeling perspectives.
\textit{Adv. Math. Sci. Appl.}
\textbf{8} 968--985.


\bibitem{P1}
\textsc{Parthasarathy, K.R.} (1967).
Probability Measures on Metric Spaces.
Academic Press.


\bibitem{RTW}
\textsc{Ren, P., Tang, H.} and \textsc{Wang, F.-Y.} (2024).
Distribution-path dependent nonlinear SPDEs with application to stochastic transport type equations.
\textit{Potential Anal.}
\textbf{61} 379--407.


\bibitem{RSZ}
\textsc{R\"{o}ckner, M., Schmuland, B.} and \textsc{Zhang, X.} (2008).
Yamada-Watanabe theorem for stochastic evolution equations in infinite dimensions.
\textit{Condens. Matter Phys.}
\textbf{54} 247--259.


\bibitem{RSZ1}
\textsc{R\"{o}ckner, M., Shang, S.} and \textsc{Zhang, T.} (2024).
Well-posedness of stochastic partial differential equations with fully local monotone coefficients.
\textit{Math. Ann.}
\textbf{390} 3419--3469.


\bibitem{RXZ}
\textsc{R\"{o}ckner, M., Xie, L.} and \textsc{Zhang, X.} (2020).
Superposition principle for non-local Fokker-Planck-Kolmogorov operators.
\textit{ Probab. Theory Related Fields}
\textbf{178} 699--733.


\bibitem{RZ21}
\textsc{R\"{o}ckner, M.} and \textsc{Zhang, X.} (2021).
Well-posedness of distribution dependent SDEs with singular drifts.
\textit{Bernoulli}
\textbf{27} 1131--1158.

\bibitem{RZZ}
\textsc{R\"{o}ckner, M., Zhu, R.} and \textsc{Zhu, X.} (2015).
Existence and uniqueness of solutions to stochastic functional differential equations in infinite dimensions.
\textit{Nonlinear Anal.}
\textbf{125}  358--397.


\bibitem{SCHEUTZOW}
\textsc{Scheutzow, M.} (1987).
Uniqueness and nonuniqueness of solutions of Vlasov-McKean equations.
\textit{J. Aust. Math. Soc. Ser. A }
\textbf{43} 246--256.
%
\bibitem{SSZZ}
\textsc{Shen, H., Smith, S., Zhu, R.} and \textsc{Zhu, X.} (2022).
Large $N$ limit of the $O(N)$ linear sigma model via stochastic quantization.
\textit{Ann. Probab.}
\textbf{50} 131--202.

\bibitem{SZZ22}
\textsc{Shen, H., Zhu, R.} and \textsc{Zhu, X.} (2022).
Large N limit of the $O(N)$ linear sigma model in 3D.
\textit{Comm. Math. Phys.}
\textbf{394} 953--1009.

\bibitem{S3}
\textsc{Simon J.} (1987).
Compact sets in the space $L^p(0,T;B)$.
\textit{Ann. Mat. Pura Appl.}
\textbf{164} 65--96.

 \bibitem{SG80}
\textsc{Sivashinsky G.I.} (1980).
 On flame propagation under conditions of stoichiometry.
  \textit{SIAM J. Appl. Math.}  \textbf{39} 67--82.

\bibitem{S2}
\textsc{Spohn, H.} (1991).
Large Scale Dynamics of Interacting Particles.
 Texts and Monographs in Physics. Springer.

\bibitem{S1}
\textsc{Sznitman, A.-S.} (1991).
\textit{Topics in propagation of chaos.}
 \'{E}cole d'\'{E}t\'{e} de Probabilit\'{e}s de Saint-Flour XIX-1989, 165--251,
Lecture Notes in Math. 1464, Springer.
%
\bibitem{Taira}
\textsc{Taira, K.} (1995).
Analytic semigroups and semilinear initial boundary value problems.
Cambridge University Press.
%

%
\bibitem{WFY}
\textsc{Wang, F.-Y.} (2018).
Distribution dependent SDEs for Landau type equations.
\textit{Stochastic Process. Appl.}
\textbf{128} 595--621.

\bibitem{WFY3}
\textsc{Wang, F.-Y.} (2023).
Exponential ergodicity for singular reflecting McKean-Vlasov SDEs.
\textit{Stochastic Process. Appl.}
\textbf{160} 265--293.


\bibitem{WFY2}
\textsc{Wang, F.-Y.} (2023).
Exponential ergodicity for non-dissipative McKean-Vlasov SDEs.
\textit{Bernoulli}
\textbf{29} 1035--1062.



\bibitem{Yang12}
\textsc{Yang, D.} (2012).
Kolmogorov equation associated to a stochastic Kuramoto-Sivashinsky equation.
\textit{J. Funct. Anal.}
\textbf{263} 869--895.

\end{thebibliography}
\end{document}